\documentclass{article}
\usepackage{import}
\usepackage{AGUtils}

\newcounter{subtheorem}
\counterwithin{theoremgrp}{subtheorem}

\numberwithin{equation}{section}

\title{Analysis of Log-Weighted Quadrature Domains}

\author{Andrew J. Graven}

\date{\vspace{-4ex}}

\begin{document}

\maketitle

\begin{abstract}
This paper studies plane domains satisfying a quadrature identity with respect to the singular weight $\rho_0(w)=|w|^{-2}$. These are referred to as \emph{log-weighted quadrature domains} (LQDs). The logarithmic singularity at $w=0$ leads to phenomena not present in the classical theory: in particular, when the domain contains the origin, the associated quadrature data are no longer unique, but are determined only up to a point charge at $0$. A generalized Schwarz function characterization of LQDs is established together with a natural formulation of the inverse problem in the singular setting. In the simply connected case, it is shown that a domain is an LQD if and only if the outer factor of its Riemann map extends to the exponential of a rational function. This characterization yields explicit formulae relating the quadrature function and the Riemann map via the Faber transform, thereby extending earlier formulae from the non-singular theory. Several basic classes of LQDs are also covered, and explicit examples are computed.
\end{abstract}

\section{Introduction}\label{sec:Introduction}
The purpose of this article is to analyze plane domains admitting a quadrature identity with respect to the weight $\rho_0(w):=|w|^{-2}$. In particular, we are concerned with domains $\Omega\subset\Ch$ for which
\begin{equation}
\int_{\Omega}\dfrac{f(w)}{|w|^2}dA(w)=\oint_{\partial\Omega}f(w)h(w)dw
\end{equation}
for all $f\in L_a^1(\Omega;\rho_0)$, for some rational function $h$. We refer to such domains as \emph{log-weighted quadrature domains} (LQDs). LQDs arise naturally as the image of classical quadrature domains under the exponential map (Theorem \ref{thm:LQDsFromQDsExponential}). LQDs also appear as the complement of local droplets of the logarithmic potential
$$Q(w)=\frac{1}{2}\ln^2|w|^2-2\Re(H(w)),$$
where $H'$ is rational (\cite{GravenMakarov2025}, \S4.1.4). We also note (but do not show here) that LQDs are preserved under Hele-Shaw flow weighted with respect to $\rho_0(w)=|w|^{-2}$. Weighted Hele-Shaw flow has received some attention in the literature, particularly in the context of Hele-Shaw flow on Riemann surfaces and the Polubarinova-Galin equation \cite{hedenmalm2004HypHS,GustafssonLinPGRatFcns}.

This work is a generalization of the theory developed in Graven \& Makarov \cite{GravenMakarov2025} to include \emph{singular} LQDs - domains containing the metric singularity at $w=0$. The presence of the singularity leads to a basic phenomenon absent from the classical and non-singular theory: when $0\in\Omega$, every admissible test function vanishes at the origin, and as a result the quadrature function is no longer uniquely determined by the domain, but only up to the addition of a point charge $q/w$ at $0$. Thus the singular theory therefore requires new formulations of the coincidence equation, of the Schwarz function, and of the direct/inverse problems. The main contribution of the present paper is to show that, despite this loss of uniqueness, much of the classical and non-singular structure survives in modified form.

Our first main result is a Schwarz function characterization of singular LQDs. We show that a domain is an LQD if and only if it admits a generalized Schwarz function $S_0$ with boundary values
$$S_0(w)\dEquals\dfrac{\ln|w|^2}{w}.$$
This yields, in particular, a Sakai-type boundary regularity theorem: the boundary of an LQD has only finitely many singular points, each of which is a cusp or a double point. We also establish several basic invariance properties and an electrostatic interpretation of LQDs.

In the simply connected setting, we obtain a sharper description in terms of the Riemann map. Whereas classical QDs are precisely those domains admitting a rational Riemann map, in the log-weighted setting the correct object is the outer factor of the Riemann map. In particular, we show that a simply connected domain is an LQD if and only if the outer part of its Riemann map extends to the exponential of a rational function. This leads to explicit Faber transform formulae relating the quadrature function and the Riemann map. In later sections, we apply these results to null, monomial, and one-point families of LQDs, with complete classifications in some cases and partial classifications in others.

After fixing some conventions and general notation, we will begin with a discussion of classical quadrature domains and several structural results which will later be adapted to the log-weighted setting.\\

{\it General notation:} Throughout, $\Ch$ denotes the Riemann sphere. A domain is a connected open subset of $\Ch$. For a set $\Omega\subseteq\Ch$, we denote by $\Cl(\Omega)$ its closure, $\Omega^c$ its complement, $\partial\Omega$ its boundary, $\Int(\Omega)$ its interior, and
$$\Omega\IntComp:=\Ch\setminus\Cl(\Omega)$$
the exterior of $\Omega$. We call a domain rectifiable if $\partial\Omega$ is rectifiable. When $f$ and $g$ are functions defined on $\partial\Omega$, we write $f\dEquals g$ to denote equality on $\partial\Omega$. $f^{\#}(z):=\overline{f(\overline{z}^{-1})}$ denotes the reflection of $f$ about the unit circle.\\
$\ln(w)$ denotes the principal branch of the natural logarithm (branch cut along the negative real axis), unless otherwise stated.\\
Contour integrals are normalized such that the standard factor of $2\pi i$ is suppressed. $dA=\frac{dxdy}{\pi}$ denotes the normalized area measure. If $\Omega$ is an unbounded domain then integrals are understood in terms of their Cauchy principal value,
\begin{align*}
    \int_{\Omega}fdA&:=\lim_{R\to\infty}\int_{\Omega\cap\D_R}fdA.
\end{align*}

{\it Function spaces:} If $\Omega$ is a domain, we denote by $\M(\Omega)$ the space of meromorphic functions on $\Omega$, $H(\Omega)$ the space of analytic functions on $\Omega$, and $H^\infty(\Omega)$ the Hardy space of bounded analytic functions on $\Omega$. We define
$$A(\Omega):=H(\Omega)\cap C^0(\Cl(\Omega)),$$
the space of functions analytic in $\Omega$ which extend continuously to the boundary.\\
If $\Omega$ is unbounded, we define
$$\A_0(\Omega):=\left\{f\in\A(\Omega):f(\infty)=0\right\},$$
the $\A(\Omega)$-subspace of functions which vanish at $\infty$.\\
For a non-negative weight $\rho$, we denote by $L_a^1(\Omega;\rho)$ the subspace of $\A(\Omega)$ consisting of functions $f$ which are $L^1-$bounded with respect to the weight $\rho$:
$$\int_{\Omega}|f|\rho dA<\infty.$$
We denote by $\Rat(\Omega)$ the set of rational functions whose poles lie only in $\Omega$, and $\Rat_0(\Omega)$ the subclass vanishing at $\infty$.

\subsection{Quadrature Domains}\label{subsec:QDs}

Given a rectifiable domain $\Omega$ and an integrable test class $\mathcal{F}=\mathcal{F}(\Omega)$ of functions analytic in $\Omega$ and continuous on $\Cl(\Omega)$, we say that $\Omega$ is a \emph{quadrature domain} with respect to $\mathcal{F}$ if it satisfies a Green's theorem-like identity of the form
\begin{equation}\label{eqn:ClassicContourQID}
\int_{\Omega}fdA=\oint_{\partial\Omega}f(w)h(w)dw,\;\;\;\;f\in\mathcal{F}(\Omega),
\end{equation}
where $h$ is a rational function with poles only in $\Omega$, referred to as a \emph{quadrature function}. The poles of $h$ are referred to as \emph{quadrature nodes}. The most well-known example of such a domain is the open disk $\Omega=\D_r(w_0)$. In particular, if $f\in\A(\D_r(w_0))$ then by Green's theorem and the fact that $|w-w_0|^2\dEquals r^2$, we find
\begin{align*}
\int_{\D_r(w_0)}fdA(w)&=\oint_{\partial\D_r(w_0)}f(w)\overline{w}dw=\oint_{\partial\D_r(w_0)}f(w)\left(\dfrac{r^2}{w-w_0}+\overline{w_0}\right)dw=\oint_{\partial\D_r(w_0)}f(w)\dfrac{r^2}{w-w_0}dw
\end{align*}

Hence, $\Omega=\D_r(w_0)$ satisfies an identity in the form of Equation \ref{eqn:ClassicContourQID}, with $h(w)=\frac{r^2}{w-w_0}$. Aharonov \& Shapiro \cite{AharonovShapiro} demonstrated the converse statement: disks are the only finitely connected bounded domains which satisfy the mean value property.

Note that if $\Omega$ is bounded, then Equation \ref{eqn:ClassicContourQID} is equivalent to the existence of a finite collection of $a_k\in\Omega$, $c_k\in\C$, and $m_k\in\N$ such that
\begin{equation}\label{eqn:ClassicPtQID}
\int_{\Omega}fdA=\sum_{k}c_kf^{(m_k)}(a_k),\;\;\;\;f\in\mathcal{F}(\Omega).
\end{equation}
This is commonly referred to as a \emph{quadrature identity}. These two formulations are related via the residue theorem, whence we obtain the formula $h(w)=\sum_{k}\frac{c_km_k!}{(w-a_k)^{m_k+1}}$. Returning to the prior example of $\Omega=\D_r(w_0)$, we recover the classical mean value property for the disk,
\begin{align*}
    \int_{\D_r(w_0)}fdA&=r^2f(w_0).
\end{align*}

More formally, we define a quadrature domain in the bounded case along the lines of \cite{GravenMakarov2025} as
\begin{definition}[Bounded quadrature domain]
A bounded rectifiable domain $\Omega\subset\C$ is a quadrature domain if there exists $h\in\Rat_0(\Omega)$ such that Equation \ref{eqn:ClassicContourQID} holds for all $f\in\A(\Omega)$.
\end{definition}
However, if $\Omega$ is an unbounded domain, then $\A(\Omega)$ is not an integrable test class (even in principal value). Hence, a slightly smaller test class is necessary.
\begin{definition}[Unbounded quadrature domain]
An unbounded rectifiable domain $\Omega\subset\Ch$ is a quadrature domain if there exists $h\in\Rat(\Omega)$ such that Equation \ref{eqn:ClassicContourQID} holds for all $f\in\A_0(\Omega)$.
\end{definition}
By $\Omega\in\QD(h)$, we denote that $\Omega$ is a bounded or unbounded QD with quadrature function $h$. We furthermore write $\Omega\in\QD$ if there exists $h$ such that $\Omega\in\QD(h)$. \emph{We will always require that $\Omega=\Int(\Cl(\Omega))$} because otherwise arbitrarily many QDs with the same quadrature function could be constructed via deletion of subsets of measure zero. The following discussion (see e.g. \cite{GravenMakarov2025,Lee_2015}) summarizes several important properties/characterizations of quadrature domains.
\begin{lemma}\label{lemma:QDChars}
Let $\Omega$ be a domain. Then the following are equivalent
\begin{enumerate}
    \item $\Omega\in\QD$,
    \item $C^\Omega\vert_{\Omega\IntComp}$ extends to a rational function on $\Ch$,
    \item $\Omega$ admits a Schwarz function.
\end{enumerate}
\end{lemma}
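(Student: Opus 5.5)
We prove the cycle (1)$\Rightarrow$(2)$\Rightarrow$(3)$\Rightarrow$(1). Here $C^\Omega$ is the Cauchy transform of area measure on $\Omega$,
$$C^\Omega(z)=\int_\Omega\frac{dA(w)}{z-w},$$
taken in principal value when $\Omega$ is unbounded. A Schwarz function is a function $S$ meromorphic in $\Omega$, continuous up to $\partial\Omega$, with $S\dEquals\overline{w}$. Throughout we use the standing assumptions that $\Omega$ is rectifiable and $\Omega=\Int(\Cl(\Omega))$.

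\textbf{(1)$\Rightarrow$(2).} Fix $z\in\Omega\IntComp$ and take the test function $f_z(w)=\frac{1}{z-w}$. It lies in $\A(\Omega)$, and in $\A_0(\Omega)$ in the unbounded case. The quadrature identity gives
$$C^\Omega(z)=\oint_{\partial\Omega}\frac{h(w)}{z-w}\,dw.$$
Since $h$ is rational with poles only in $\Omega$, we deform the contour onto small circles around the poles of $h$ (and around $\infty$ if needed). The residue computation shows that the right-hand side is a rational function $R$ of $z$ with poles at the quadrature nodes. Concretely, $R$ is, up to sign, the principal parts of $h$ at its poles in $\Omega$. Hence $C^\Omega|_{\Omega\IntComp}$ extends to the rational function $R$.

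\textbf{(2)$\Rightarrow$(3).} Let $R$ be the rational extension. By the Cauchy--Pompeiu formula, $\bar\partial C^\Omega=\overline{z}$ in the distributional sense on $\Omega$, and $C^\Omega$ is continuous on $\C$. Define
$$S(z):=\overline{z}-C^\Omega(z)+R(z),\qquad z\in\Omega.$$
Then $\bar\partial S=0$, so $S$ is meromorphic in $\Omega$, with poles only at the poles of $R$ in $\Omega$. Continuity of $C^\Omega$, together with $C^\Omega=R$ on $\Omega\IntComp$, gives $C^\Omega=R$ on $\partial\Omega$ wherever $\partial\Omega$ lies in the closure of $\Omega\IntComp$. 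This holds on all of $\partial\Omega$ because $\Omega=\Int(\Cl(\Omega))$. Therefore $S\dEquals\overline{w}$.

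\textbf{(3)$\Rightarrow$(1).} By Green's theorem, for $f\in\A(\Omega)$,
$$\int_\Omega f\,dA=\oint_{\partial\Omega}f\,\overline{w}\,dw=\oint_{\partial\Omega}fS\,dw.$$
Subtract from $S$ its holomorphic part in $\Omega$; that part contributes nothing by Cauchy's theorem. What remains is the sum $h$ of the principal parts of $S$ at its finitely many poles in $\Omega$, which is a rational function in $\Rat_0(\Omega)$, or in $\Rat(\Omega)$ when $\infty\in\Omega$. Thus $\int_\Omega f\,dA=\oint_{\partial\Omega}fh\,dw$, and $\Omega\in\QD(h)$.

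\textbf{Main obstacle.} The delicate point is justifying Green's and Cauchy's theorems for functions only continuous up to a merely rectifiable boundary. The same issue appears in the unbounded case, where one needs the principal-value truncation by $\D_R$ together with the decay $f(\infty)=0$. There one checks that the boundary integral over $|w|=R$ tends to zero, or converges to the residue at $\infty$ that is absorbed into $h$.

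Finiteness of the poles of $S$ in (3) also needs a word. If the Schwarz function is only assumed meromorphic, its poles could a priori accumulate at $\partial\Omega$. I would handle this by approximating $\Omega$ from inside by smooth subdomains and invoking the continuity of $S$ up to the boundary, which bounds $S$ near $\partial\Omega$ and forces the poles into a compact subset of $\Omega$. These points are routine, so I would cite the standard references \cite{GravenMakarov2025,Lee_2015} for them.
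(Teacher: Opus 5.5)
Your cycle (1)$\Rightarrow$(2)$\Rightarrow$(3)$\Rightarrow$(1) is correct and is essentially the standard route the paper relies on. The paper only cites the literature for this lemma, but it runs the same argument for the log-weighted analogue, Theorem~\ref{thm:LQDSFEEquiv}: your $S=\overline{z}-C^{\Omega}+R$ is exactly the coincidence equation $S=h+C^{\Omega\IntComp}$ of Equation~\ref{eqn:QuadCoincidence}, and in fact $R=h$ exactly, not merely up to sign.

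Two small corrections. First, Cauchy--Pompeiu gives $\bar\partial C^{\Omega}=1=\bar\partial\overline{z}$ on $\Omega$, not $\bar\partial C^{\Omega}=\overline{z}$. Second, the paper gets the boundary regularity needed for Green's and Cauchy's theorems in (3)$\Rightarrow$(1) more cleanly: it applies Sakai's theorem (Theorem~\ref{thm:SakaiRegularity}) to $S$ as a local Schwarz function at every boundary point, which makes $\partial\Omega$ piecewise $C^1$ and hence rectifiable, with no interior-approximation argument or a priori rectifiability assumption needed.
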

Where a \emph{Schwarz function} $S:\Cl(\Omega)\rightarrow\Ch$ is a meromorphic function continuous up to the boundary such that $S(w)\dEquals\overline{w}$,\footnote{Recall that $\dEquals$ denotes equality on $\partial\Omega$.} and $C^\Omega$ denotes the \emph{Cauchy transform}
\begin{align*}
    C^{\Omega}(w)&=\int_{\Omega}\dfrac{dA(\xi)}{w-\xi}.
\end{align*}
Furthermore if $\Omega\in\QD(h)$, then its Schwarz function satisfies the identity \cite{davis_1974}
\begin{equation}\label{eqn:QuadCoincidence}
S(w)=h(w)+C^{\Omega\IntComp}(w).
\end{equation}

The boundary regularity of quadrature domains depends crucially on the existence of a Schwarz function; in particular, a \emph{local Schwarz function}. This follows from the celebrated Sakai regularity theorem \ref{thm:SakaiRegularity}. We say that $S$ is a local Schwarz function for $\Omega$ at $w_0\in\partial\Omega$ if there exists $\epsilon>0$ such that $S\in\A(\D_\epsilon(w_0)\cap\Cl(\Omega))$ and $S(w)=\overline{w}$ on $\D_\epsilon(w_0)\cap\partial\Omega$.
\begin{theorem}[Sakai Regularity \cite{SakaiRegularity}]\label{thm:SakaiRegularity}
    If $S$ is a {\it local Schwarz function} at a singular point $w_0$ of $\partial\Omega$, then $w_0$ is either a (conformal) {\it cusp}, a {\it double point}, or a {\it degenerate point}.
\end{theorem}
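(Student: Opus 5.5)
The plan is to turn the local Schwarz function into a free-boundary problem, read off the local topology of $\partial\Omega$ near $w_0$ from it, and then use Schwarz reflection and a Puiseux-type expansion to get the geometry. Throughout, fix $\epsilon>0$ so that $S\in\A(\D_\epsilon(w_0)\cap\Cl(\Omega))$ and $S(w)=\overline{w}$ on $\D_\epsilon(w_0)\cap\partial\Omega$.

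\textbf{Step 1 (potential reformulation).} Fix a base point and set $G(w)=\int^{w}S(\xi)\,d\xi$ on the part of $\Omega\cap\D_\epsilon(w_0)$ near $w_0$. Define
$$u(w)=|w|^2-2\Re G(w)+c,$$
choosing the constant $c$ so that $u(w_0)=0$. Then $\partial_w u=\overline{w}-S(w)$, which vanishes on $\partial\Omega\cap\D_\epsilon(w_0)$, and $\Delta u$ is a positive constant in $\Omega\cap\D_\epsilon(w_0)$. Two consequences follow.
\begin{itemize}
\item Since $\nabla u$ vanishes on each boundary arc, $u$ is constant along it; after normalising, $u=|\nabla u|=0$ on $\partial\Omega$ near $w_0$.
\item Extending $u$ by $0$ across the complement gives a local solution of an obstacle-type problem, with $\partial\Omega$ as its free boundary.
\end{itemize}
This buys the standard a priori facts: nondegeneracy, quadratic growth $u(w)\asymp\mathrm{dist}(w,\partial\Omega)^2$, and hence that $\partial\Omega$ has zero area and $\Omega$ has a uniform density bound at boundary points.

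\textbf{Step 2 (finite local structure).} The goal is to show that $\D_\delta(w_0)\cap\Omega$ has only finitely many components for small $\delta$, each bounded near $w_0$ by Jordan arcs ending at $w_0$. I expect this to be the main obstacle, since a priori $\partial\Omega$ could oscillate or accumulate near $w_0$.
\begin{itemize}
\item First try: for each component $U$, apply a Phragmén--Lindelöf argument to the analytic function $\overline{w}-S(w)$ restricted to $\partial U$. Combined with the area lower bound from Step 1, this should give a uniform lower bound on the angular opening of each component, and hence only finitely many of them.
\item Fallback: Sakai's original route. Show that the anticonformal reflection $R(w)=\overline{S(w)}$, which fixes $\partial\Omega$ pointwise, forces $\partial\Omega\cap\D_\delta(w_0)$ to be locally the zero set of a real-analytic-type function. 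Then the {\L}ojasiewicz structure of such zero sets gives finitely many analytic arcs emanating from $w_0$.
\end{itemize}

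\textbf{Step 3 (reflection and local normal form).} Fix one component $U$. Take a conformal map $\varphi$ from the upper half-disk onto $U$, sending the real diameter onto $\partial U$ near $w_0$, with $\varphi(0)=w_0$.
\begin{itemize}
\item On the real segment $S(\varphi(\zeta))=\overline{\varphi(\zeta)}$. So $\zeta\mapsto\overline{S(\varphi(\bar\zeta))}$ provides the Schwarz reflection of $\varphi$ across the segment, and $\varphi$ extends analytically to a full neighbourhood of $0$.
\item Hence $\varphi(\zeta)=w_0+a\zeta^{n}+\dots$ with $a\neq0$, and $\partial U$ is the image of a real-analytic curve.
\item Because $\varphi$ is injective on the half-disk, $n\in\{1,2\}$. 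If $n=1$, $\partial U$ is an analytic arc through $w_0$. If $n=2$, $\partial U$ has a conformal cusp at $w_0$ (its two arcs are tangent), and $U$ itself occupies either the inside or the outside of the cusp.
\end{itemize}

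\textbf{Step 4 (assembling the components).} Combine the finitely many components from Step 2, using that $w_0$ is a singular point of $\partial\Omega$.
\begin{itemize}
\item \emph{Cusp.} Exactly one component, and it is of cusp type ($n=2$).
\item \emph{Double point.} Two components, each bounded by a smooth arc ($n=1$); they must touch tangentially. A transversal crossing is excluded because the quadratic growth of $u$ from Step 1 forbids corners.
\item \emph{Degenerate point.} Otherwise the complement of $\Omega$ near $w_0$ has empty interior, consisting of slits or an isolated point. This is possible only because we assumed $\Omega=\Int(\Cl(\Omega))$ rather than excluding it; this is the degenerate case.
\end{itemize}
Since Step 2 leaves no other configuration, these are the only possibilities, which proves the theorem.
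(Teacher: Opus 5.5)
The paper contains no proof of this statement. It is quoted from Sakai, with a pointer to \S3.2 of Lee--Makarov, and used as a black box, so your outline has to stand on its own. Steps 3--4 (Schwarz reflection of a boundary-normalized conformal map, $n\in\{1,2\}$ by injectivity, angle counting) are the standard way to finish, modulo the slips below. But the whole difficulty of Sakai's theorem sits in Step 2, which you flag and then do not supply, and neither of your routes works.

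\emph{First route.} $\overline{w}-S(w)$ is not analytic, since $\partial_{\bar w}(\overline{w}-S)=1$, so there is no Phragm\'en--Lindel\"of argument to run on it. Moreover, a density bound $|\Omega\cap\D_r(x)|\ge c\,r^2$ concerns $\Omega$ as a whole. It gives no lower bound on the ``opening'' of an individual component (a notion not even defined while $\partial U$ may be wild). It also does not control boundary points that lie on the boundary of no single component.

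\emph{Fallback.} This is circular. $S$ exists only on the $\Omega$ side, so there is no real-analytic function on a neighbourhood of $w_0$ whose zero set is $\partial\Omega$. Producing one amounts to continuing $S$ across $\partial\Omega$, which is the regularity being proved, so {\L}ojasiewicz's theorem cannot be invoked; nor is this Sakai's route.

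Step 3 then silently uses what Step 2 was supposed to deliver:
\begin{itemize}
\item each component $U$ is simply connected;
\item $w_0$ corresponds to a single access (prime end) of $U$;
\item $\partial U$ near $w_0$ is a Jordan arc, so that a half-disk map with continuous boundary values exists.
\end{itemize}
The reflection itself does not need Jordan arcs: it can be run on a.e.\ radial boundary values of the bounded functions $\varphi$ and $\overline{S\circ\varphi}$. What it cannot give is control over how $\Omega$ reaches $w_0$. That means deciding whether the prime ends over $w_0$ have point impressions, and whether infinitely many components or inaccessible boundary points accumulate there. This is the real content of Sakai's paper.

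Step 1 is also not justified as stated. Nondegeneracy, quadratic growth and positive density are theorems about the classical obstacle problem, which requires $u\ge 0$ and $u\equiv 0$ on the whole free boundary. You only know $\nabla u=0$ on $\partial\Omega$. Concluding that $u$ takes a single constant value there presupposes that $\partial\Omega\cap\D_\epsilon(w_0)$ is connected and tame: different boundary pieces could carry different constants, and even a connected set of critical points of a $C^1$ function need not lie in one level set. Nothing gives $u\ge 0$ either, so you are in the no-sign version of the problem, where these estimates do not come for free. Even with the sign, nondegeneracy gives $\sup_{\D_r(x)}u\ge c\,r^2$, not the pointwise bound $u\asymp\mathrm{dist}(\cdot,\partial\Omega)^2$.

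Smaller slips:
\begin{itemize}
\item When $n=2$ the half-disk is opened to angle $2\pi$, so $U$ always lies \emph{outside} the cusp. If the horn has zero width, this is a slit endpoint, i.e.\ degenerate, not a cusp.
\item In the two-sided case, tangency follows from the angle count $\pi+\pi=2\pi$, not from quadratic growth.
\item Step 4 should count accesses rather than components: an interior point of a slit is one component with two accesses.
\item The isolated-point case has $U=\D_\delta(w_0)\setminus\{w_0\}$, which is not simply connected, so Step 3 never reaches it.
\item The normalization $\Omega=\Int(\Cl(\Omega))$ is what \emph{excludes} degenerate points (this is how the paper uses it), not what makes them possible. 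The theorem as stated carries no such hypothesis.
\end{itemize}
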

See \cite{Lee_2015}, \S3.2 for a more formal discussion of Sakai regularity and the relevant terms (cusps, double point, local Schwarz function, etc.). An immediate consequence of Sakai's theorem is the following regularity theorem for QDs
\begin{corollary}\label{cor:QDBoundaryRegularity}
    If $\Omega\in\QD$ then $\partial\Omega$ has finitely many singular points, and each is a cusp or a double point.
\end{corollary}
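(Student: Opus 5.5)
The plan is to derive Corollary \ref{cor:QDBoundaryRegularity} from Theorem \ref{thm:SakaiRegularity}, using the global Schwarz function supplied by Lemma \ref{lemma:QDChars}.

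\textbf{Step 1: restrict the Schwarz function.} Since $\Omega\in\QD$, Lemma \ref{lemma:QDChars} gives a Schwarz function $S$. It is meromorphic in $\Omega$, continuous up to $\partial\Omega$, and satisfies $S(w)\dEquals\overline{w}$. Because $S$ is continuous on $\Cl(\Omega)$ with finite boundary values, its poles (the quadrature nodes) cannot accumulate on $\partial\Omega$. So each $w_0\in\partial\Omega$ has a small disk $\D_\epsilon(w_0)$ on which $S\in\A(\D_\epsilon(w_0)\cap\Cl(\Omega))$. Hence $S$ restricts to a local Schwarz function at every boundary point.

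\textbf{Step 2: classify each singular point.} By Theorem \ref{thm:SakaiRegularity}, every singular boundary point is a cusp, a double point, or a degenerate point. We must exclude degenerate points. In Sakai's terminology, a degenerate point is one where $\Omega$ lies locally on both sides of a boundary arc, i.e. the arc is a slit. This contradicts the standing convention $\Omega=\Int(\Cl(\Omega))$, since such a slit would lie in $\Int(\Cl(\Omega))\setminus\Omega$. I would make this precise using the local description in \cite{Lee_2015}, \S3.2.

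\textbf{Step 3: finiteness (the main obstacle).} Sakai's theorem is local, so it only shows singular points are isolated. I see two routes to a global bound.

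\emph{First route (compactness).} Near each boundary point, Sakai's local description shows $\partial\Omega$ is a finite union of real-analytic arcs. In particular, singular points cannot accumulate at a point of $\partial\Omega$. For bounded $\Omega$, $\partial\Omega$ is compact, so there are finitely many singular points. For unbounded $\Omega$, $\partial\Omega\subset\C$ may be non-compact. In that case I would use that $S$ is meromorphic near $\infty$ in $\Omega$, by the rational extension of $C^\Omega$ in Lemma \ref{lemma:QDChars}. Combined with $S\dEquals\overline{w}$, this forces $\partial\Omega$ to be bounded or to lie in a compact subset of $\Ch$ where the same argument applies.

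\emph{Second route (algebraicity).} Alternatively, one can cite the algebraicity of $\partial\Omega$: $\partial\Omega$ lies in the real algebraic curve obtained by eliminating between $S$ and the rational function $C^{\Omega\IntComp}$ in (\ref{eqn:QuadCoincidence}). A real algebraic curve has finitely many singular points.

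I expect the care to lie in this finiteness step, and especially in the unbounded case. The restriction to local Schwarz functions and the exclusion of degenerate points should be routine.
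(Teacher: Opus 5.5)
Your proposal is correct and is essentially the paper's argument. The paper treats this corollary as immediate from Theorem~\ref{thm:SakaiRegularity}, and its parallel proof of Lemma~\ref{lemma:GSFBoundaryRegularity} follows your Steps 1--2 (restrict the global Schwarz function to local ones at every boundary point, rule out degenerate points by $\Omega=\Int(\Cl(\Omega))$) and then defers finiteness to \cite{Lee_2015}, \S3.2, which your first route (singular points are isolated by Sakai's local description, and $\partial\Omega$ is compact) supplies correctly.

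Two cleanups are needed. First, when $\Omega$ is unbounded with $\infty\in\Omega$ (the case the paper has in mind), $\partial\Omega$ is compact for free and nothing about $S$ near $\infty$ is required; if $\infty\in\partial\Omega$, your remark about $S$ being meromorphic near $\infty$ in $\Omega$ does not apply. Second, drop the second route: it is $C^{\Omega}$ restricted to $\Omega\IntComp$ (namely $h$) that is rational, not $C^{\Omega\IntComp}$, and algebraicity of $\partial\Omega$ is a separate theorem (Aharonov--Shapiro, Gustafsson via the Schottky double) that does not follow from eliminating in (\ref{eqn:QuadCoincidence}).
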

An important consequence of Corollary \ref{cor:QDBoundaryRegularity} is that the boundary of a quadrature domain is piecewise $C^1$. This level of boundary regularity implies a number of nice properties for function spaces on $\Omega$ (e.g. Lemma \ref{lemma:AnalyticDecompositionLemmaV2}).

A central problem in the theory of quadrature domains is uniqueness. On the one hand, uniqueness of the quadrature function associated to a given quadrature
domain is well established. However, the inverse problem of existence and uniqueness of quadrature domains
associated to a given quadrature function remains open. Non-uniqueness occurs in the multiply connected setting: for instance, a disk and the complement of an annulus with the same inner radius give distinct examples with the same quadrature data; see also Varchenko and Etingof (\cite{DropOrderFour}, \S2.3). In contrast, it is conjectured that simply connected quadrature domains are uniquely determined by their quadrature function, up to conformal radius. Aharonov and Shapiro (1976) showed that disks are the only one-point quadrature domains among bounded finitely connected domains \cite{AharonovShapiro}, and proved a similar result for the cardioid. Later on, we will establish the uniqueness of null LQDs (\S\ref{subsec:NullLQDs}), and obtain partial uniqueness results for several other families.

\subsection{Simply Connected Quadrature Domains}
A core question in the study of quadrature domains is the {\it inverse problem} of recovering a domain from its quadrature function. The {\it direct problem} conversely is concerned with recovering the quadrature function associated to a given domain. In the simply connected setting, both questions reduce to a problem of relating the quadrature function to the Riemann map. The following classical theorem characterizes simply connected quadrature domains precisely via their Riemann maps.
\begin{theorem}[\cite{AharonovShapiro}]\label{thm:QDIffRationalRiemann}
    A simply connected domain is a quadrature domain if and only if its Riemann map extends to a rational function.
\end{theorem}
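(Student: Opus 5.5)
The plan is to prove both directions through the Schwarz function characterization of Lemma \ref{lemma:QDChars}. Let $\Omega$ be simply connected with Riemann map $\phi:\D\to\Omega$ (or from $\D\IntComp$ when $\Omega$ is unbounded and $\infty\in\Omega$; the argument is the same). The unit circle has Schwarz function $\overline{z}=1/z$. This suggests that $S\circ\phi$ and $\phi^{\#}$ should agree, so rationality of $\phi$ and meromorphy of $S$ can be traded for one another through reflection in the unit circle.

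\emph{($\Leftarrow$)} Suppose $\phi$ extends to a rational function $R$ on $\Ch$. Since $\phi$ is univalent on $\D$ and $R$ is rational, $\partial\Omega=R(\partial\D)$ is a rectifiable analytic curve, possibly with finitely many singular points. Set
$$S:=R^{\#}\circ\phi^{-1}\quad\text{on }\Cl(\Omega).$$
Then $S$ is meromorphic in $\Omega$, because $R^{\#}(z)=\overline{R(1/\overline{z})}$ is rational and $\phi^{-1}$ is analytic. It is continuous up to $\partial\Omega$, because $\phi^{-1}$ extends continuously to the Jordan-type boundary; at double points I would take one-sided limits. On $|z|=1$ we have $R^{\#}(z)=\overline{R(z)}$, so $S(w)\dEquals\overline{w}$. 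Lemma \ref{lemma:QDChars} then gives $\Omega\in\QD$.

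\emph{($\Rightarrow$)} Suppose $\Omega\in\QD$. By Lemma \ref{lemma:QDChars}, $\Omega$ has a Schwarz function $S$. By Corollary \ref{cor:QDBoundaryRegularity}, $\partial\Omega$ is piecewise $C^1$ with finitely many cusps and double points. Define $g:=S\circ\phi$ on $\Cl(\D)$. It is meromorphic in $\D$ with finitely many poles, since $S$ has finitely many poles in $\Omega$: by Equation \ref{eqn:QuadCoincidence}, these are the poles of $h$. It is continuous up to $\partial\D$ and satisfies $g(z)=\overline{\phi(z)}$ on $|z|=1$. Now define
$$\Phi(z)=\begin{cases}\phi(z), & |z|\le 1,\\ \overline{g(1/\overline{z})}, & |z|\ge 1.\end{cases}$$
The two definitions agree continuously on the circle. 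By the Painlev\'e/Morera reflection principle across $\partial\D$, which only needs continuity of the glued function, $\Phi$ is meromorphic on $\Ch$ minus a finite set. It therefore extends to a rational function, and $\phi$ is its restriction to $\D$.

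I expect the main obstacle to be the boundary behaviour at the singular points: continuity of $g$ at preimages of cusps and double points, and removability of the finitely many exceptional points on $\partial\D$. I would handle it as follows. First, $\phi$ extends continuously to $\Cl(\D)$ by Carath\'eodory, since $\partial\Omega$ is a locally connected curve. Second, $\Phi$ is bounded near each exceptional boundary point: $\phi$ is bounded there, and so is $S$ near $\partial\Omega$, because it is continuous on $\Cl(\Omega)$ and its poles lie in $\Omega$. Continuity on the circle plus boundedness makes those isolated points removable singularities. Poles of $\Phi$ can only come from poles of $g$, and there are finitely many. In the unbounded case, $\infty$ is handled by the analogous argument with $\A_0$, where $S$ has at most a simple-pole-type growth at $\infty$ coming from $h$.
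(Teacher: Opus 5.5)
The paper gives no proof of this statement. It is quoted from Aharonov and Shapiro as background, so there is no internal argument to match. Your argument is the standard Schwarz-reflection proof, and it is correct.

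One point should be stated more carefully. In ($\Leftarrow$), $\phi^{-1}$ does \emph{not} extend continuously to a double point, but $R^{\#}\circ\phi^{-1}$ does. If $w_n\to w_0\in\partial\Omega$, then every limit point $\zeta$ of $\phi^{-1}(w_n)$ lies on $\partial\D$ with $R(\zeta)=w_0$. Hence $R^{\#}(\zeta)=\overline{R(\zeta)}=\overline{w_0}$, whichever preimage $\zeta$ is. In ($\Rightarrow$), once Carath\'eodory gives $\phi\in C^0(\Cl(\D))$ and $S$ is finite and continuous near $\partial\Omega$, the glued function is continuous on a full annulus around $\partial\D$. So Painlev\'e applies to the whole circle, and there are no exceptional points left to remove.

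Compare this with the paper's own machinery. Theorem~\ref{thm:ClassicBQDRationalRiemannIffQD} gives $\varphi=\varphi(0)+\Phi_{\varphi}^{-1}(h)^{\#}$. This is the Cauchy-projection form of your identity $\overline{\phi}=S\circ\phi$ on $\partial\D$: project both sides onto $\D\IntComp$, using that the projection of $S$ onto $\Omega\IntComp$ is $h$. Rationality of $\varphi$ then follows because the inverse Faber transform maps rational functions to rational functions. The log-weighted analogue, Theorem~\ref{thm:SCLQDCharacterization}, has the same shape:
\begin{itemize}
\item its forward direction uses projection and the Faber transform instead of gluing;
\item its reverse direction is exactly your ($\Leftarrow$) construction, with $S_0=(r+r^{\#})\circ\psi/w$ in place of $R^{\#}\circ\phi^{-1}$.
\end{itemize}
Your gluing route is more elementary, needing only Carath\'eodory and Painlev\'e and no Faber theory. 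The projection route is what produces the explicit formulae relating $h$ and $\varphi$ that the paper relies on later.

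Two small corrections.
\begin{itemize}
\item In the unbounded case, $S$ need not have at most a simple pole at $\infty$. Here $h\in\Rat(\Omega)$ may have a pole of any order there: for $\phi(z)=cz+az^{-n}$ one gets $S=\phi^{\#}\circ\phi^{-1}\sim\overline{a}(w/c)^{n}$. Your argument only needs $S=h+C^{\Omega\IntComp}$ to be meromorphic at $\infty$, which holds because $\Omega\IntComp$ is bounded.
\item In ($\Leftarrow$), the standing convention $\Omega=\Int(\Cl(\Omega))$ is never checked, and for unbounded domains it can fail. The map $\phi(z)=z+1/z$ sends $\D\IntComp$ rationally onto $\Ch\setminus[-2,2]$. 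That domain has Schwarz function $S(w)=w$ but is not a QD under the convention. So the unbounded case of the statement, and Lemma~\ref{lemma:QDChars} as phrased, need this caveat. Your appeal to the lemma inherits the issue rather than resolving it.
\end{itemize}
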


Hence, Theorem \ref{thm:QDIffRationalRiemann} reduces the inverse and direct problems for simply connected QDs to a problem of relating the finitely many coefficients of the quadrature function to the finitely many coefficients of the Riemann map. Theorems 1.5 and 1.6 in \cite{GravenMakarov2025} exactly characterize this relationship via the Faber transform $\Phi_{\varphi}$. The result in the bounded case states:

\begin{theorem}[\cite{GravenMakarov2025}]\label{thm:ClassicBQDRationalRiemannIffQD}
Let $\Omega$ be a bounded and simply connected QD with quadrature function $h\in\Rat_0(\Omega)$ and Riemann map $\varphi:\D\rightarrow\Omega$. In this case,
\begin{equation}\label{eqn:BoundedFaberTransformHFormula}
h=\Phi_{\varphi}\left(\varphi^{\#}-\overline{\varphi(0)}\right)
\end{equation}
and
\begin{equation}\label{eqn:BoundedFaberTransformPhiFormula}
\varphi=\varphi(0)+\Phi_{\varphi}^{-1}(h)^{\#}.
\end{equation}
\end{theorem}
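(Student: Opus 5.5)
We need to prove the two formulas: $h=\Phi_\varphi(\varphi^\#-\overline{\varphi(0)})$ and $\varphi=\varphi(0)+\Phi_\varphi^{-1}(h)^\#$. The Faber transform is not yet defined in the text above. We take the standard convention for bounded $\Omega$: for $g$ meromorphic on $\Ch\setminus\Cl(\D)$ with $g(\infty)=0$, $\Phi_\varphi(g)$ is the unique function in $\Rat_0(\Omega)$, or more generally analytic on $\Omega^c$ and vanishing at $\infty$, such that $\Phi_\varphi(g)\circ\varphi-g$ extends analytically to $\D$ across the unit circle. Equivalently, $\Phi_\varphi(g)(w)=\oint_{\partial\D}\frac{g(\zeta)\varphi'(\zeta)}{w-\varphi(\zeta)}\,d\zeta$ for $w\in\Omega\IntComp$.

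The plan has three steps.

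\emph{Step 1: pull the Schwarz function back to the circle.} By Lemma \ref{lemma:QDChars} and Theorem \ref{thm:QDIffRationalRiemann}, $\varphi$ is rational and $\Omega$ has a Schwarz function $S=h+C^{\Omega\IntComp}$, by (\ref{eqn:QuadCoincidence}). On $\partial\D$ we have $\overline{\varphi(z)}=\varphi^\#(z)$, so $S\circ\varphi\dEquals\varphi^\#$ on $|z|=1$. Both sides are meromorphic near the circle, hence
$$h\circ\varphi+C^{\Omega\IntComp}\circ\varphi=\varphi^\#$$
on $\D$ by analytic continuation.

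\emph{Step 2: split into parts analytic inside and outside.} The term $C^{\Omega\IntComp}$ is analytic on $\Omega$, so $C^{\Omega\IntComp}\circ\varphi$ is analytic in $\D$. Since $h\in\Rat_0(\Omega)$, $h$ is analytic on $\Omega^c$ with $h(\infty)=0$. The function $\varphi^\#$ is analytic on $\Ch\setminus\Cl(\D)$ with value $\overline{\varphi(0)}$ at $\infty$. Hence
$$h\circ\varphi-\bigl(\varphi^\#-\overline{\varphi(0)}\bigr)=-C^{\Omega\IntComp}\circ\varphi-\overline{\varphi(0)}$$
is analytic in $\D$. The function $\varphi^\#-\overline{\varphi(0)}$ vanishes at $\infty$, so by the defining property and uniqueness of the Faber transform, $h=\Phi_\varphi(\varphi^\#-\overline{\varphi(0)})$.

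Uniqueness needs an argument. Suppose $k$ is analytic on $\Omega^c$, $k(\infty)=0$, and $k\circ\varphi$ extends analytically to $\D$. Then $k$ is analytic on $\Omega$ as well, so $k$ is entire on $\Ch$ and vanishes at $\infty$, giving $k\equiv0$. This requires $\partial\Omega$ to be regular enough that functions analytic on each side and continuous across the boundary glue; Corollary \ref{cor:QDBoundaryRegularity} supplies that, since $\partial\Omega$ is a piecewise $C^1$ curve with finitely many cusps.

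\emph{Step 3: invert.} $\Phi_\varphi$ is injective on functions vanishing at $\infty$, by the uniqueness above. Therefore $\Phi_\varphi^{-1}(h)=\varphi^\#-\overline{\varphi(0)}$. Applying $\#$, which is an involution, gives $\Phi_\varphi^{-1}(h)^\#=\varphi-\varphi(0)$, which is (\ref{eqn:BoundedFaberTransformPhiFormula}).

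The main obstacle is Step 1's continuation and the gluing in the uniqueness argument at boundary cusps and double points. At a double point, $\varphi$ is not injective on $\partial\D$, and $S$ may be two-valued there. I would handle this by working on the circle, where $\varphi^\#$ is globally single-valued, rather than on $\partial\Omega$. The identity $S\circ\varphi=\varphi^\#$ needs to hold only on arcs of the circle, after which it extends to all of $\D$ by the identity theorem.
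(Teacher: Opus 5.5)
Your proof is correct, up to a few small slips listed below. The paper does not prove this statement; it is quoted from \cite{GravenMakarov2025}. Its proofs of the log-weighted analogues, Theorems \ref{thm:SCLQDCharacterization} and \ref{thm:LQDFTInverseProb}, go through Cauchy projections. Transplanted to the classical case, that route is the one-line chain
\[
h=\AnalyticIn{\overline{w}}{\Omega\IntComp}=\AnalyticIn{\varphi^{\#}\circ\psi}{\Omega\IntComp}=\Phi_\varphi\bigl(\AnalyticIn{\varphi^{\#}}{\D\IntComp}\bigr)=\Phi_\varphi\bigl(\varphi^{\#}-\overline{\varphi(0)}\bigr).
\]
It uses $\overline{w}\dEquals h+C^{\Omega\IntComp}$, the fact that $\AnalyticInNoBracket{\cdot}{\Omega\IntComp}$ annihilates $\A(\Omega)$ and fixes $\A_0(\Omega\IntComp)$, and the identity (\ref{eqn:InteriorFaberTransformProjectionExtension}); the second formula then follows from bijectivity of $\Phi_\varphi$. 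You use the same splitting of the Schwarz function, but you identify $\Phi_\varphi(g)$ through a jump condition ($F\in\A_0(\Omega\IntComp)$ with $F\circ\varphi-g$ extending to $\A(\D)$) plus a Painlev\'e-type gluing argument for uniqueness. That condition is just the Plemelj restatement of $\Phi_\varphi(g)=\AnalyticIn{g\circ\psi}{\Omega\IntComp}$, so the substance is the same. The projection route needs no uniqueness argument, since Cauchy's theorem disposes of the $\A(\Omega)$ part directly. Yours makes explicit where the boundary regularity of Corollary \ref{cor:QDBoundaryRegularity} is used.

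The slips are as follows. First, the constant in Step 2 has the wrong sign: $h\circ\varphi-(\varphi^{\#}-\overline{\varphi(0)})=\overline{\varphi(0)}-C^{\Omega\IntComp}\circ\varphi$. Second, in Step 3, injectivity of $\Phi_\varphi$ is not the uniqueness you proved; that result only shows $F$ is determined by $g$. You need the same Liouville argument on the unit circle: if $g\in\A_0(\D\IntComp)$ and $g|_{\partial\D}$ extends to $\A(\D)$, then $g\equiv0$. Alternatively, compute $\Phi_\varphi^{-1}(h)=\AnalyticIn{h\circ\varphi}{\D\IntComp}$ directly from the splitting $h\circ\varphi=\bigl(\varphi^{\#}-\overline{\varphi(0)}\bigr)+\bigl(\overline{\varphi(0)}-C^{\Omega\IntComp}\circ\varphi\bigr)$ on $\partial\D$. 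Third, the asserted equivalence of your characterization with the integral formula needs a line; for rational $g$ it follows from residues and univalence of $\varphi$ on $\D$. Fourth, in Step 1, $S\circ\varphi$ is a priori defined only on $\Cl(\D)$. The identity in $\D$ therefore comes from the maximum principle after clearing the finitely many poles, not from continuation across the circle. Finally, the double-point obstacle you flag is not real: $S$ is single-valued and continuous on $\Cl(\Omega)$. Only $\psi$ is two-valued there, on a null set, which is harmless for the contour integrals and irrelevant to a condition posed on $\partial\D$.
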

In particular, the Faber transform provides an implicit solution to the inverse problem, and an explicit solution to the direct problem. The result in the unbounded case (\cite{GravenMakarov2025}, Theorem 1.6) is analogous. The details of the Faber transform and its properties are covered in the Appendix \S\ref{subsec:FaberTransform}.\\

In brief, the \emph{interior Faber transform} $\Phi_\varphi$ associated to a simply connected bounded domain $\Omega$ is a bijective linear operator from functions analytic in the exterior disk $\A_0(\D\IntComp)$ to functions analytic in the exterior of $\Omega$, $\A_0(\Omega\IntComp)$. Similarly, the \emph{exterior Faber transform} is a bijective linear operator $\Phi_{\varphi}:\A(\D)\rightarrow\A(\Omega\IntComp)$.

In particular, if $\varphi$ is a Riemann map associated to $\Omega$ and $\psi=\varphi^{-1}$, then we compute the Faber transform of $f$ by precomposing $f$ with $\psi$, then projecting onto the part of $f\circ\psi$ which is analytic in $\Omega\IntComp$: 
$$\Phi_{\varphi}(f):=\AnalyticIn{f\circ\psi}{\Omega\IntComp},$$
where $\AnalyticIn{\cdot}{\Omega}$ is the Cauchy projection, given by
$$\AnalyticIn{f}{\Omega}(w):=\oint_{\partial\Omega}\dfrac{f(\xi)}{\xi-w}d\xi.$$
The most important three properties of the Faber transform $\Phi_{\varphi}$ for our purposes are as follows
\begin{itemize}
    \item $\Phi_{\varphi}$ is an isomorphism;
    \item $\Phi_{\varphi}$ takes rational functions to rational functions;
    \item $\Phi_{\varphi}$ is \emph{computable}: the Faber transform of a rational function can be computed in finitely many steps.
\end{itemize}
These three properties are crucial for establishing a useful relationship between the quadrature function and Riemann map associated to a given QD or LQD. See Appendix \S\ref{subsec:CauchyTransformFaberTransform} for a more detailed exposition of the Cauchy projection and Faber transform, as well as their various properties (see also \cite{GravenMakarov2025}, \S1.5.1).

\subsection{Outline}

Section \ref{sec:TheoryOfLQDs} develops the structural theory of LQDs: the coincidence equation, the generalized Schwarz-function, essential uniqueness of the quadrature function, boundary regularity, and several invariance properties. Section \ref{sec:LQDInvDirectProbs} treats the first model inverse problems, namely null and certain one-point LQDs. Section \ref{sec:SimplyConnectedLQDs} gives the main simply connected characterization of LQDs in terms of the inner/outer factorization of the Riemann map and derives the corresponding Faber transform formulae. Sections \ref{sec:MonomialLQDs} and \ref{sec:OnePointLQDs} then apply this general framework to concrete families of monomial and one-point LQDs.

\section{Theory of Log-Weighted Quadrature Domains}\label{sec:TheoryOfLQDs}
In this section we develop the basic theory of \emph{log-weighted quadrature domains} (LQDs). We begin by clearly defining LQDs and highlighting a method of constructing LQDs from classical QDs via the exponential map (Theorem \ref{thm:LQDsFromQDsExponential}). We then derive the singular and non-singular coincidence equations, use them to establish the essential uniqueness of the quadrature data, and prove a generalized Schwarz-function characterization of LQDs. From this characterization we obtain boundary regularity, an electrostatic characterization, and several basic invariance properties that will be used throughout the remainder of the paper.

We consider domains $\Omega\subset\Ch$ such that
\begin{equation}\label{eqn:LQDQID}
\int_{\Omega}\dfrac{f(w)}{|w|^2}dA(w)=\oint_{\partial\Omega}f(w)h(w)dw,
\end{equation}
for all $f\in L_a^1(\Omega;\rho_0)$, with $h$ rational. Note that when $0\in\Omega$, admissible test functions vanish at $0$; when $\Omega$ is unbounded, admissible test functions vanish at infinity. To ensure essential uniqueness of the quadrature function $h$, we place the additional constraint that $h\in\Rat_0(\Omega)$ when $\Omega$ is bounded, and $h\in\Rat(\Omega)$ when $\Omega$ is unbounded.

\begin{definition}[Log-Weighted Quadrature Domain]
Let $\Omega\subset\Ch$ be a rectifiable domain for which $0,\infty\notin\partial\Omega$. We say that $\Omega$ is a \emph{log-weighted quadrature domain} if there exists a rational $h$ (satisfying the above conditions) such that Equation \ref{eqn:LQDQID} is satisfied.
\end{definition}
Throughout the remainder of the paper we will denote by $\Omega\in\QD_0(h)$ that $\Omega$ is an LQD with quadrature function $h$, and by $\Omega\in\QD_0$ we denote that there exists an $h$ for which $\Omega\in\QD_0(h)$. As with classical QDs, we will always require that $\Omega=\Int(\Cl(\Omega))$ because otherwise arbitrarily many QDs with the same quadrature function could be constructed via deletion of subsets of measure zero. We can generate some easy first examples of LQDs by considering the images of certain classical QDs under the exponential map. In particular, 
\begin{theorem}\label{thm:LQDsFromQDsExponential}
If $\Omega\in\QD(h)$ is a bounded domain on which $w\mapsto e^w$ is injective, then $e^\Omega\in\QD_0(\widetilde{h})$, where $\widetilde{h}(w)=\AnalyticIn{\frac{h\circ\ln(w)}{w}}{e^{\Omega}\IntComp}$.
\end{theorem}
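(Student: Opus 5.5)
Write $D:=e^{\Omega}$ and let $E:\Omega\to D$, $E(z)=e^z$, which is a conformal bijection by assumption. The plan is to pull the weighted integral over $D$ back to an unweighted integral over $\Omega$, apply the quadrature identity there, and push the resulting contour integral forward again.

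\textbf{Step 1: the change of variables.} Under $w=e^z$ we have $dA(w)=|e^z|^2dA(z)$, and this cancels the weight $|w|^{-2}$ exactly. So for $f\in L^1_a(D;\rho_0)$,
$$\int_D \frac{f(w)}{|w|^2}\,dA(w)=\int_\Omega f(e^z)\,dA(z).$$

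\textbf{Step 2: the test class and boundary issues.}
\begin{itemize}
\item Since $\Omega$ is bounded, $D$ is bounded and $0\notin \Cl(D)$. Hence $\rho_0$ is bounded on $\Cl(D)$, so $L^1_a(D;\rho_0)=\A(D)$.
\item Then $g:=f\circ E$ lies in $\A(\Omega)$, and $\Omega\in\QD(h)$ gives
$$\int_\Omega g\,dA=\oint_{\partial\Omega}g(z)h(z)\,dz.$$
\item I would also check that $0,\infty\notin\partial D$ and that $D$ is rectifiable. Rectifiability holds because $E$ is Lipschitz on the compact set $\Cl(\Omega)$.
\item The condition $D=\Int(\Cl(D))$ is inherited from $\Omega$ provided $E$ extends to a homeomorphism $\Cl(\Omega)\to\Cl(D)$. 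This extension is the main subtle point. Injectivity is assumed only on $\Omega$, so two boundary points of $\Omega$ differing by $2\pi i$ could be identified under $E$, producing double points of $\partial D$. I would handle this by working with $\partial D$ as the parametrized image curve $E(\partial\Omega)$, which is all that the contour integral needs.
\end{itemize}

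\textbf{Step 3: pushing the contour forward.} Parametrize $\partial D$ by $w=e^z$, so $z=\ln w$ is the continuous branch of the inverse of $E$ along the curve, and $dz=dw/w$. This gives
$$\oint_{\partial\Omega} f(e^z)h(z)\,dz=\oint_{\partial D} f(w)\,\frac{h(\ln w)}{w}\,dw.$$
Here $\ln$ must be read as the inverse of $E$ on $\Cl(D)$, not the principal branch. Since $D$ may wind around $0$ without containing it, this branch is analytic on a neighborhood of $\Cl(D)$ only when $D$ does not separate $0$ from $\infty$. Otherwise one must use the branch on a slit neighborhood and check that the jump contributions cancel; I expect this to be the main obstacle.

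Set $k(w):=h(\ln w)/w$. It is analytic in a neighborhood of $\partial D$, with singularities inside $D$ at the images of the poles of $h$.

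\textbf{Step 4: reducing $k$ to a rational quadrature function.} Decompose $k$ on $\partial D$ by Cauchy projections as
$$k=\AnalyticIn{k}{D}+\AnalyticIn{k}{D\IntComp}.$$
The first term extends analytically into $D$. Multiplied by $f\in\A(D)$, its contour integral vanishes by Cauchy's theorem. Therefore the identity holds with $\widetilde h=\AnalyticIn{k}{D\IntComp}$.

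It remains to show that $\widetilde h\in\Rat_0(D)$. Near a pole $a$ of $h$, write $h(z)=\sum_j c_j(z-a)^{-j}+(\text{analytic})$. Then $h(\ln w)/w$ near $e^a$ has a principal part consisting of finitely many negative powers of $(w-e^a)$, obtained from the Laurent expansion of the conformal local map $w\mapsto \ln w-a$.

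The exterior projection $\AnalyticIn{k}{D\IntComp}$ is minus the sum of the principal parts of $k$ at its singularities in $D$. The sign depends on convention. Since $\AnalyticIn{\cdot}{D}$ is given by the Cauchy integral over $\partial D$, deforming the contour to small circles around the $e^{a}$ expresses it via these principal parts. Hence $\widetilde h$ is a finite sum of terms $c/(w-e^a)^m$, so it is rational, vanishes at $\infty$, and has poles only in $D$.

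Finally, the relation $\Omega=\Int(\Cl(\Omega))$ transfers to $D$ as discussed in Step 2, which completes $D\in\QD_0(\widetilde h)$.
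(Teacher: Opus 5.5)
Your route is the paper's: pull back by $w=e^z$ so the weight cancels, apply $\Omega\in\QD(h)$, and push forward with $dz=dw/w$. Then split $k=h(\ln w)/w$ by Cauchy projections, discard the part analytic in $D$, and read off $\widetilde h$ from the principal parts of $k$ at the points $e^{a}$.

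The step you leave open as the ``main obstacle'' does not arise in the form you state. The inverse $L:=E^{-1}\colon D\to\Omega$ is a single-valued holomorphic logarithm on all of $D$, with $e^{L(w)}=w$ and $L'(w)=1/w$. Hence $\oint_\gamma \frac{dw}{w}=\oint_\gamma L'(w)\,dw=0$ for every closed curve $\gamma\subset D$. So $D$ never winds around $0$, and $0$ lies in the unbounded component of $\Ch\setminus D$; this is exactly the fact the paper's proof invokes. No slit or jump bookkeeping is needed.

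There is also a sign issue in Step 4. With the paper's orientation conventions, $\AnalyticIn{k}{D\IntComp}$ is the sum of the principal parts with a \emph{plus} sign; cf.\ Lemma \ref{lemma:AnalyticDecompositionLemmaV2}(4) and the example $\widetilde h=\frac{4}{w-e^{-1}}$. A minus sign would contradict $k\dEquals\AnalyticIn{k}{D}+\AnalyticIn{k}{D\IntComp}$.

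The real subtlety is on the boundary, and there your Step 3 claim that $k$ is analytic in a neighborhood of $\partial D$ fails. Injectivity is assumed only on $\Omega$, so $E$ may glue points of $\partial\Omega$ that differ by $2\pi i$. Take $\Omega=\D_{\pi}(0)\in\QD(\pi^2/w)$: $e^w$ is injective on $\Omega$, yet $\pm i\pi\mapsto-1$. So $\partial D$ has a double point at $-1$, $\Cl(D)$ contains the whole unit circle, and $k=h(L(w))/w$ has two different boundary limits $\pm i\pi$ at $-1$. This is the $\alpha=\pi^2$ case of Theorem \ref{thm:NZOnePtLQD}. Lemma \ref{lemma:AnalyticDecompositionLemmaV2}(1), which the paper's proof also uses, is then unavailable.

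The clean fix is to stay on the $\Omega$ side, where no branch of $\ln$ is needed. For $w\notin E(\partial\Omega)$ set $I(w):=\oint_{\partial\Omega}\frac{h(z)}{e^z-w}\,dz$. Let $R(w):=-\sum_a\mathrm{Res}_{z=a}\frac{h(z)}{e^z-w}$, summed over the poles of $h$. This $R$ is rational, vanishes at $\infty$, and has poles only at the $e^{a}$; for a simple pole $\frac{c}{z-a}$ it equals $\frac{c}{w-e^{a}}$.

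For $w\in D\IntComp$ one has $e^z\neq w$ on $\Cl(\Omega)$, so
\[
\AnalyticIn{k}{D\IntComp}(w)=-\oint_{\partial D}\frac{k(\xi)}{\xi-w}\,d\xi=-I(w)=R(w),
\]
that is, $\widetilde h=R$.

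For $w\in D$ the integrand has one extra simple pole, at $z=L(w)$, with residue $k(w)$. So $k-R=I$ on $D$ away from the $e^{a}$. Moreover $I$ is holomorphic on $D$, because $E(\partial\Omega)\cap D=\emptyset$: a point $z_1\in\partial\Omega$ with $z_1=z_2+2\pi i m$, $z_2\in\Omega$, $m\neq0$, would contradict injectivity on the open set $\Omega$. It follows that $h(z)-e^zR(e^z)$ is holomorphic in $\Omega$. By the same injectivity argument $e^{a}\notin E(\partial\Omega)$, so this function is continuous on $\Cl(\Omega)$.

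Cauchy's theorem on $\Omega$ then gives
\[
\oint_{\partial\Omega}f(e^z)h(z)\,dz=\oint_{\partial\Omega}f(e^z)R(e^z)e^z\,dz=\oint_{\partial D}f(w)R(w)\,dw,
\]
with $\partial D$ traversed as the curve $E(\partial\Omega)$, double points included.

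The same example shows that $E$ need not extend to a homeomorphism $\Cl(\Omega)\to\Cl(D)$. So your Step 2 justification of $D=\Int(\Cl(D))$ does not cover this case; the conclusion still holds there, but it needs its own argument.
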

Theorem \ref{thm:LQDsFromQDsExponential} also tells us that if $\{p_k\}$ are the poles of $h$, then $\{e^{p_k}\}$ are the poles of the quadrature function for $e^{\Omega}$. It follows from the residue theorem that these poles also have the same multiplicity.
\begin{proof}[Proof of Theorem \ref{thm:LQDsFromQDsExponential}]
Under the above hypotheses, $f\in L_a^1(e^{\Omega};\rho_0)$ $\iff$ $f(e^w)\in \A(\Omega)$, so
\begin{align*}
    \int_{e^{\Omega}}\dfrac{f(w)}{|w|^2}dA(w)&=\int_{\Omega}f(e^w)dA(w)=\oint_{\partial\Omega}f(e^w)h(w)dw=\oint_{\partial e^{\Omega}}f(w)\dfrac{h\circ\ln(w)}{w}dw.
\end{align*}
By Theorem \ref{thm:SakaiRegularity}, $\partial\Omega$ is piecewise $C^1$, which implies the same for $\partial e^{\Omega}$. Hence, by Lemma \ref{lemma:AnalyticDecompositionLemmaV2}(1),
$$\frac{h\circ\ln(w)}{w}\dEquals\AnalyticIn{\frac{h\circ\ln(w)}{w}}{e^\Omega}+\AnalyticIn{\frac{h\circ\ln(w)}{w}}{e^\Omega\IntComp},$$
where the first term is analytic in $e^{\Omega}$. Moreover, as $0$ is in the unbounded component of $(e^{\Omega})^c$, $\frac{h\circ\ln(w)}{w}\in\M(e^{\Omega})$ and extends continuously to $\partial e^{\Omega}$, so $\widetilde{h}(w):=\AnalyticIn{\frac{h\circ\ln(w)}{w}}{e^{\Omega}\IntComp}\in\Rat_0(e^{\Omega})$ by Lemma \ref{lemma:AnalyticDecompositionLemmaV2}. The $\AnalyticInNoBracket{}{e^\Omega}$ term drops out because it is analytic in $e^\Omega$, and we obtain
\begin{align*}
    \int_{e^{\Omega}}\dfrac{f(w)}{|w|^2}dA(w)&=\oint_{\partial e^{\Omega}}f(w)\AnalyticIn{\frac{h\circ\ln(w)}{w}}{e^\Omega}dw+\oint_{\partial e^{\Omega}}f(w)\widetilde{h}(w)dw=\oint_{\partial e^{\Omega}}f(w)\widetilde{h}(w)dw,
\end{align*}
and we may conclude that $e^{\Omega}\in\QD_0(\widetilde{h})$.
\end{proof}

For example, consider $\D_{2}(-1)\in\QD\left(\frac{4}{w+1}\right)$. As the exponential map is injective on $\D_{2}(-1)$, we find that $e^{\D_{2}(-1)}$ is an LQD, with quadrature function $\widetilde{h}$ having a simple pole at $e^{-1}$. Note that $\frac{1}{\ln(w)+1}=\frac{1}{ew-1}+G(w)$ for some $G\in\A(e^{\D_{2}(-1)})$. Hence, by Lemma \ref{lemma:AnalyticDecompositionLemmaV2}(3,4),
\begin{align*}
    \widetilde{h}(w)&=\AnalyticIn{\dfrac{4}{w}\dfrac{1}{\ln(w)+1}}{e^{\D_{2}(-1)}\IntComp}=\AnalyticIn{\dfrac{4}{w}\left(\dfrac{1}{ew-1}+G(w)\right)}{e^{\D_{2}(-1)}\IntComp}=\AnalyticIn{\dfrac{4}{w-e^{-1}}-\dfrac{4}{w}}{e^{\D_{2}(-1)}\IntComp}=\dfrac{4}{w-e^{-1}}.
\end{align*}
That is, $e^{\D_2(-1)}\in\QD_0\left(\frac{4}{w-e^{-1}}\right)$. Figure \ref{fig:LQDsFromQDsExponential} shows $\D_2(-1)$ and its image under the exponential map.

\begin{figure}[ht]
  \centering
\includegraphics[width=0.3\linewidth,height=0.3\linewidth]{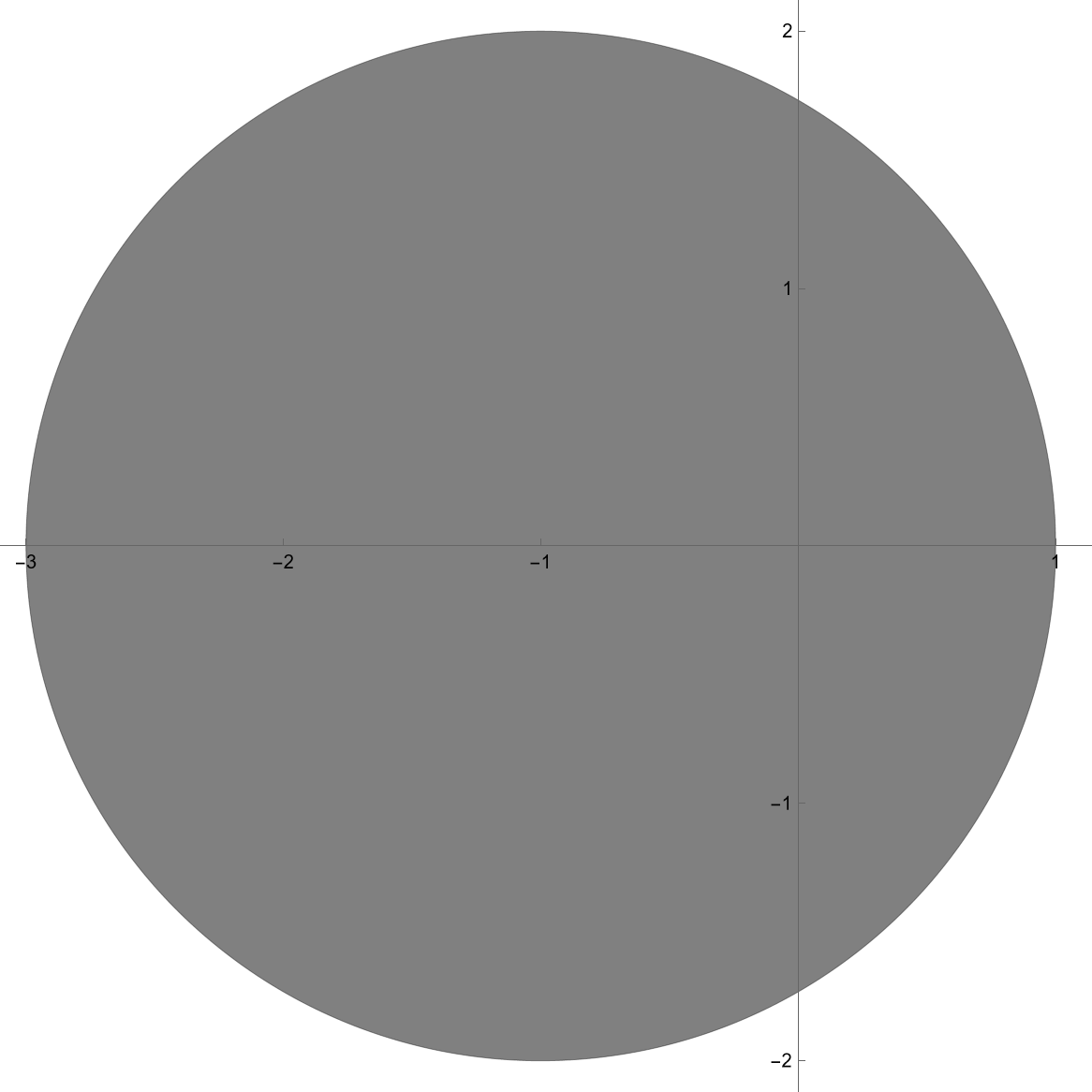}\tab\tab\tab\tab\tab\tab\tab\includegraphics[width=0.3\linewidth,height=.3\linewidth]{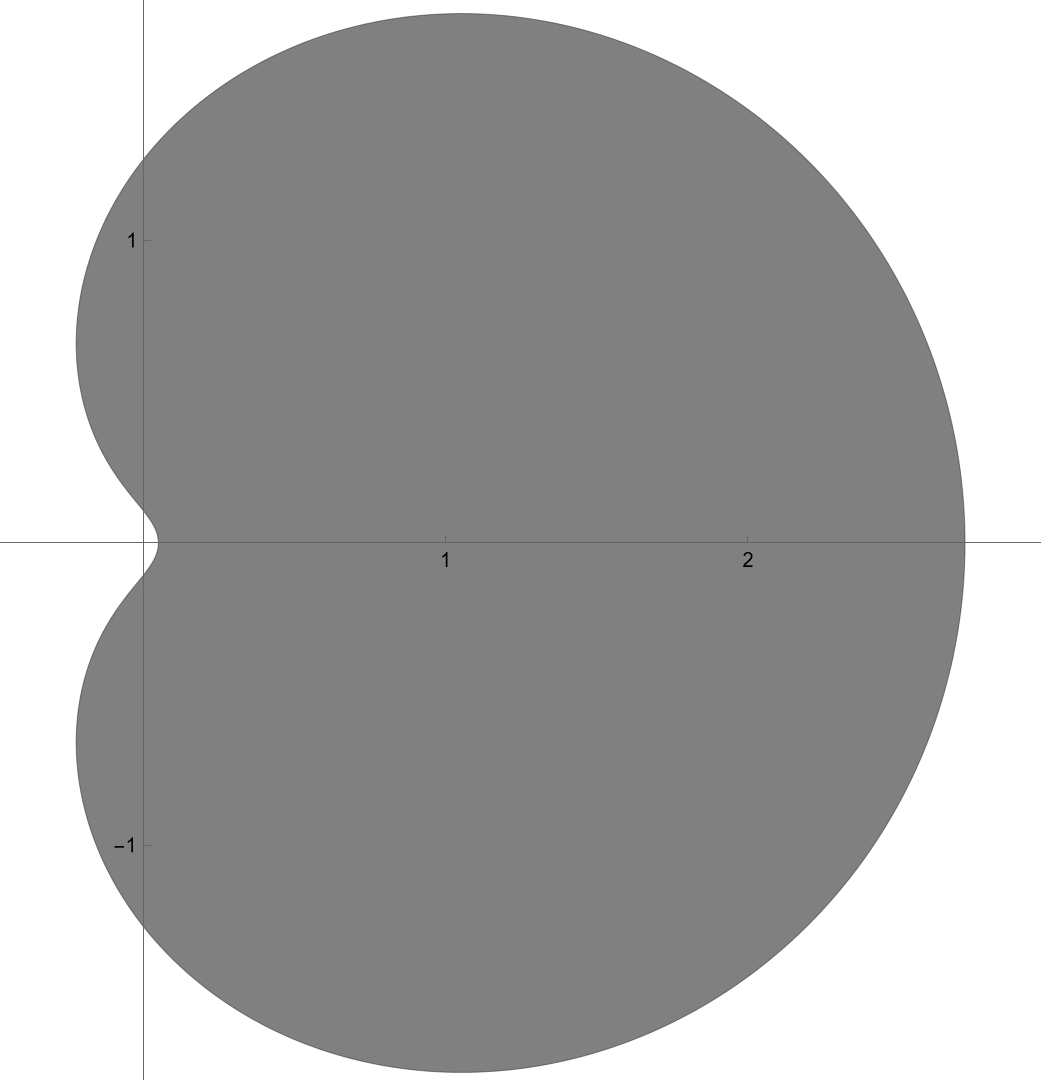}
\vspace{.5em}
\caption{$\D_{2}(-1)\in\QD\left(\frac{4}{w+1}\right)$ and its image under the exponential map, $e^{\D_2(-1)}\in\QD_0\left(\frac{4}{w-e^{-1}}\right)$.}\label{fig:LQDsFromQDsExponential}\vspace{1.5em}
\end{figure}

\subsection{The Generalized Coincidence Equation}
The primary distinction of LQDs from classical QDs is the existence of a metric singularity at $w=0$. We refer to LQDs containing zero as \emph{singular}. We likewise refer to LQDs not containing $0$ as \emph{non-singular}. The effect of this singularity is captured by the additional $\frac{q}{w}$ term in the \emph{coincidence equation} for singular LQDs (Equation \ref{eqn:LQDCEZero}). Graven \& Makarov \cite{GravenMakarov2025} obtained the following generalization of the coincidence equation (Equation \ref{eqn:QuadCoincidence}) for non-singular LQDs.
\begin{theorem}[\cite{GravenMakarov2025}, \S4]\label{thm:LQDCENoZero}
    If $\Omega\in\QD_0(h)$ is a domain not containing zero, then there exists a unique $G\in\A(\Omega)$ such that
    \begin{equation}\label{eqn:LQDCENoZero}
        \dfrac{\ln|w|^2}{w}\dEquals h(w)+G(w).
    \end{equation}
    Moreover, $G(w)=C_{\rho_0}^{\Omega\IntComp\setminus\D_r}(w)+\frac{\ln|r|^2}{w}$ for any $r\in(0,d(0,\partial\Omega))$.
\end{theorem}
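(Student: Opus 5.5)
We construct $G$ directly from the Cauchy transform with respect to $\rho_0$, in the same way that the classical coincidence equation $S=h+C^{\Omega\IntComp}$ arises. Fix $r\in(0,d(0,\partial\Omega))$. Since $0\notin\Omega$ and $0\notin\partial\Omega$, the disk $\D_r$ lies in $\Omega\IntComp$. The weighted Cauchy transform is
$$C_{\rho_0}^{E}(w)=\int_E\frac{dA(\xi)}{|\xi|^2(w-\xi)},$$
and we take $E=\Omega\IntComp\setminus\D_r$, where the weight is bounded. We use principal values at $\infty$ when $\Omega\IntComp$ is unbounded; if instead $\Omega$ is unbounded, then $E$ is bounded.

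\textbf{Step 1 (a Green's formula).} Apply Green's theorem to $\overline{\partial}$ of $\ln|\xi|^2/\xi\cdot\frac{1}{w-\xi}$. We use $\overline{\partial}_\xi(\ln|\xi|^2)=1/\overline{\xi}$, so $\overline{\partial}_\xi\bigl(\ln|\xi|^2/\xi\bigr)=1/|\xi|^2$ away from $0$. On the annular region $\Cl(\Omega)^c\setminus\D_r$ (or on $\C\setminus(\Omega\cup\D_r)$), this expresses $C_{\rho_0}^{E}(w)$, for $w\in\Omega$, as a boundary integral. The boundary integral over $\partial\Omega$ is, up to orientation sign, the Cauchy projection of $\ln|\xi|^2/\xi$. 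The boundary integral over $|\xi|=r$ is
$$\oint_{|\xi|=r}\frac{\ln r^2}{\xi}\frac{d\xi}{w-\xi}=\frac{\ln r^2}{w}$$
for $w\notin\D_r$, by residues. This gives the identity
$$\AnalyticIn{\tfrac{\ln|\cdot|^2}{\cdot}}{\Omega}=C_{\rho_0}^{E}+\frac{\ln r^2}{w}\quad\text{on }\Omega.$$
It also shows that the right-hand side is independent of $r$, by a direct check using the rotational symmetry of $\D_{r'}\setminus\D_r$.

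\textbf{Step 2 (decomposition and the role of $h$).} By Lemma \ref{lemma:AnalyticDecompositionLemmaV2}(1), we have $\frac{\ln|w|^2}{w}\dEquals\AnalyticIn{\cdot}{\Omega}+\AnalyticIn{\cdot}{\Omega\IntComp}$. This requires piecewise-$C^1$ boundary, which is the expected obstacle: we cannot yet invoke Sakai regularity. I would first prove Step 3 for general rectifiable boundaries using Cauchy integrals taken in the sense of nontangential boundary values, then bootstrap.

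\textbf{Step 3 (identifying the exterior part with $h$).} Apply the quadrature identity to the test functions $f(\xi)=\frac{1}{w-\xi}$ with $w\in\Omega\IntComp$. These lie in $L_a^1(\Omega;\rho_0)$ because $0\notin\Cl(\Omega)$, and they vanish at $\infty$. By Green's theorem applied to $f\cdot\ln|\xi|^2/\xi$ on $\Omega$,
$$\int_\Omega\frac{f}{|\xi|^2}dA=\oint_{\partial\Omega}f\,\frac{\ln|\xi|^2}{\xi}d\xi.$$
Combined with \eqref{eqn:LQDQID}, this yields
$$\oint_{\partial\Omega}\frac{\ln|\xi|^2/\xi-h(\xi)}{w-\xi}d\xi=0\quad\text{for all }w\in\Omega\IntComp.$$
This says $\AnalyticIn{\ln|\cdot|^2/\cdot}{\Omega\IntComp}=\AnalyticIn{h}{\Omega\IntComp}=h$, using $h\in\Rat_0(\Omega)$ in the bounded case. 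The unbounded case, with the possible polynomial part of $h$, is handled by the same computation together with a principal-value estimate at $\infty$ and the exclusion of $\infty$ from $\partial\Omega$. Hence $\ln|w|^2/w-h$ has vanishing exterior Cauchy projection. It is therefore the boundary value of $G:=\AnalyticIn{\ln|\cdot|^2/\cdot}{\Omega}$. This $G$ extends continuously to $\Cl(\Omega)$, since it is the Cauchy integral of a function that is smooth near $\partial\Omega$ and whose exterior projection is the continuous function $h$. By Step 1, $G=C_{\rho_0}^{\Omega\IntComp\setminus\D_r}+\frac{\ln r^2}{w}$.

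\textbf{Step 4 (uniqueness).} If $G_1,G_2\in\A(\Omega)$ both satisfy \eqref{eqn:LQDCENoZero}, then $G_1-G_2\dEquals0$. Since $G_1-G_2$ is analytic in $\Omega$ and continuous on $\Cl(\Omega)$ with zero boundary values, the maximum principle gives $G_1=G_2$. When $\Omega$ is unbounded, we apply the maximum principle on the sphere, where $\infty\in\Omega$.
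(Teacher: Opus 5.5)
Your Steps 1 and 4, and the test-function computation in Step 3, are fine. The gap is exactly where you flagged it, and the proposed repair does not close it.

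What you need is that $G:=\AnalyticIn{\ln|\cdot|^2/\cdot}{\Omega}$ belongs to $\A(\Omega)$ and satisfies $G\dEquals\frac{\ln|w|^2}{w}-h(w)$. Getting this from ``the exterior Cauchy projection of $\frac{\ln|w|^2}{w}-h$ vanishes'' is a Sokhotski--Plemelj statement with continuous boundary values. That is Lemma \ref{lemma:AnalyticDecompositionLemmaV2}(1), which, as you note, is not available for a merely rectifiable $\partial\Omega$. The sentence in Step 3 asserting continuity of $G$ only restates what must be proved. Your fallback (nontangential limits, then ``bootstrap'') is not specified. If the bootstrap means upgrading $\partial\Omega$ to piecewise $C^1$, it is circular. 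In this paper Theorem \ref{thm:LQDBoundaryRegularity} comes from Theorem \ref{thm:LQDSFEEquiv}, whose forward direction uses the generalized Schwarz function built from this very theorem. Sakai's theorem itself also requires a Schwarz function continuous up to the boundary, which is what is not yet known.

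The missing fact is elementary: the Cauchy transform of a density that is bounded near $\partial\Omega$ is continuous across $\partial\Omega$. With it, your Step 1 identity $G=C_{\rho_0}^{\Omega\IntComp\setminus\D_r}+\frac{\ln r^2}{w}$ gives $G\in\A(\Omega)$ at once. For the boundary values, do not rewrite the quadrature identity as a boundary integral. Applied to $\xi\mapsto\frac{1}{w-\xi}$, it says directly that $C_{\rho_0}^{\Omega}=h$ on $\Omega\IntComp$, hence on $\partial\Omega$ by continuity. Since $\partial\Omega$ has zero area, Lemma \ref{lemma:LOGRenormCauchyTrans} then gives $C_{\rho_0}^{\Omega}+C_{\rho_0}^{\Omega\IntComp\setminus\D_r}=C_{\rho_0}^{\C\setminus\D_r}=\frac{\ln|w|^2-\ln r^2}{w}$ on $\partial\Omega$.

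That is the paper's whole proof. It works with area integrals throughout, so it needs neither Green's theorem on $\Omega$ nor any jump relation. Your Green's-theorem rewriting in Step 3 is what creates the difficulty. It trades $C_{\rho_0}^{\Omega}$, which is automatically continuous across $\partial\Omega$, for a Cauchy integral over a rectifiable curve. Equivalently, the Cauchy--Pompeiu formula for $\ln|\xi|^2/\xi$ gives $\AnalyticIn{\ln|\cdot|^2/\cdot}{\Omega}=\frac{\ln|w|^2}{w}-C_{\rho_0}^{\Omega}$ in $\Omega$ and $\AnalyticIn{\ln|\cdot|^2/\cdot}{\Omega\IntComp}=C_{\rho_0}^{\Omega}$ in $\Omega\IntComp$. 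That is the jump relation you wanted, valid for any rectifiable boundary. Once patched, your version also shows that $G$ is the interior Cauchy projection of $\ln|w|^2/w$, which the paper does not need here.
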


$C_{\rho}^{U}$ denotes the $\rho-$weighted {\it Cauchy transform} of $U\subseteq\Ch$, which is given by
\begin{equation*}
C_\rho^{U}(w)=\int_{U}\dfrac{\rho(\xi)}{w-\xi}dA(\xi).
\end{equation*}

Obtaining a generalized coincidence equation for singular LQDs is complicated by the fact that the Cauchy kernel $k_w(\xi)=\frac{1}{\xi-w}$ is no longer an element of the test class $L_a^1(\Omega;\rho_0)$. Hence, the quadrature identity cannot be directly applied to $k_w$ to obtain the quadrature function in terms of the weighted Cauchy transform. Instead, the modified Cauchy kernel $\frac{\xi}{\xi-w}$, vanishing at $0$ is employed. We obtain the following generalization of Theorem \ref{thm:LQDCENoZero} to singular LQDs.
\begin{theorem}\label{thm:LQDCEZero}
    If $\Omega\in\QD_0(h)$ is a domain containing zero, then there exist unique $q\in\C$ and $G\in\A(\Omega)$ such that
    \begin{equation}\label{eqn:LQDCEZero}
        \dfrac{\ln|w|^2}{w}\dEquals h(w)+\dfrac{q}{w}+G(w),
    \end{equation}
    where $G=C_{\rho_0}^{\Omega\IntComp}$.
\end{theorem}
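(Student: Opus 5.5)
The plan is to mimic the classical derivation of the coincidence equation, with the Cauchy kernel replaced by the modified kernel $k_w(\xi)=\frac{\xi}{\xi-w}$, which vanishes at $0$. The proof has three steps: existence of $q$ and $G$, uniqueness, and the identification $G=C_{\rho_0}^{\Omega\IntComp}$.

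\textbf{Existence.} Assume first that $\Omega$ is bounded, and fix $w\in\Omega\IntComp$. Then $k_w\in L_a^1(\Omega;\rho_0)$, since $|k_w(\xi)|\rho_0(\xi)\lesssim|\xi|^{-1}$ near $0$, which is integrable. The left side of Equation \ref{eqn:LQDQID} becomes
$$\int_\Omega\frac{dA(\xi)}{\overline{\xi}(\xi-w)}.$$
Since $\partial_{\bar\xi}\ln|\xi|^2=1/\overline{\xi}$ and the singularity of $\ln|\xi|^2$ at $0$ is integrable, Green's theorem (applied on $\Omega\setminus\D_\epsilon$, with $\epsilon\to0$, where the small circle contributes $O(\epsilon\ln\epsilon)\to0$) rewrites this as
$$\oint_{\partial\Omega}\frac{\ln|\xi|^2}{\xi-w}\,d\xi .$$
The quadrature identity therefore gives
$$\oint_{\partial\Omega}\frac{\xi\,F(\xi)}{\xi-w}\,d\xi=0\qquad\text{for all } w\in\Omega\IntComp,\qquad\text{where } F(\xi):=\frac{\ln|\xi|^2}{\xi}-h(\xi).$$
So the exterior Cauchy projection of $\xi F$ vanishes. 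By the decomposition Lemma \ref{lemma:AnalyticDecompositionLemmaV2}, $\xi F\dEquals K$ for some $K\in\A(\Omega)$. Set $q:=K(0)$ and $G(\xi):=(K(\xi)-K(0))/\xi\in\A(\Omega)$; this gives Equation \ref{eqn:LQDCEZero}.

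If $\Omega$ is unbounded, $k_w$ does not vanish at $\infty$. Instead I use differences $k_w-k_{w'}$ for $w,w'\in\Omega\IntComp$; these vanish at both $0$ and $\infty$. The same computation then shows the exterior projection of $\xi F$ is constant, which can be absorbed in the same way.

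\textbf{Uniqueness.} Suppose two pairs $(q,G)$ and $(q',G')$ both satisfy Equation \ref{eqn:LQDCEZero}. Then $(q-q')+w(G-G')$ lies in $\A(\Omega)$ and vanishes on $\partial\Omega$. By the maximum principle it vanishes identically. Evaluating at $w=0$ gives $q=q'$, and then $G=G'$.

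\textbf{Identification of $G$.} Fix $w\in\Omega$. Both $0$ and $w$ lie in $\Omega$, so $\frac{\ln|\xi|^2}{\xi(w-\xi)}$ is smooth on $\Cl(\Omega\IntComp)$, and its $\partial_{\bar\xi}$-derivative is $\frac{1}{|\xi|^2(w-\xi)}$. Apply Green's theorem on $\Omega\IntComp\cap\D_R$. The contribution of $|\xi|=R$ is $O(\ln R/R)\to0$. Tracking orientations, this gives
$$C_{\rho_0}^{\Omega\IntComp}(w)=\oint_{\partial\Omega}\frac{\ln|\xi|^2}{\xi}\,\frac{d\xi}{\xi-w},$$
which is the Cauchy projection onto $\Omega$ of the boundary data $\frac{\ln|\xi|^2}{\xi}$. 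Now substitute the existence result, $\frac{\ln|\xi|^2}{\xi}\dEquals h+q/\xi+G$. Both $h\in\Rat_0(\Omega)$ and $q/\xi$ are analytic off $\Omega$ and vanish at $\infty$, so they project to $0$, while $G$ projects to itself. Hence $G=C_{\rho_0}^{\Omega\IntComp}$. In the unbounded case, the term of $h$ that is polynomial at $\infty$ needs separate bookkeeping together with the principal-value convention.

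\textbf{Main obstacle.} The delicate point is justifying Lemma \ref{lemma:AnalyticDecompositionLemmaV2} and the Green's theorem manipulations for a domain that is merely rectifiable. Sakai-type regularity is not yet available here, since it is derived later from this very equation. My first attempt would be to run the argument with $\partial\Omega$ rectifiable and all boundary functions continuous. I would then use that a continuous function on a rectifiable curve whose exterior Cauchy integral vanishes is the boundary value of a function in $\A(\Omega)$ (a Smirnov-class argument). If needed, I would instead approximate $\Omega$ from inside by smooth subdomains.
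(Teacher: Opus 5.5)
Your argument is correct in substance, but it is organized differently from the paper's.

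\textbf{The paper's route.} The paper evaluates a single area integral, $I(w)=\int_\Omega\frac{dA(\xi)}{\overline{\xi}(\xi-w)}$ (a two-point version when $\Omega$ is unbounded), on both sides of $\partial\Omega$. For $w\in\Omega\IntComp$ it uses the quadrature identity, as you do. For $w\in\Omega$ it uses Green's theorem with the pole at $w$, the splitting $\frac{1}{\xi-w}=\frac{w}{\xi(\xi-w)}+\frac{1}{\xi}$, and a second Green's theorem on $\Omega\IntComp$, which gives $I(w)=-\ln|w|^2+wC_{\rho_0}^{\Omega\IntComp}(w)+\mathrm{const}$. Since $1/\overline{\xi}$ is bounded near $\partial\Omega$, $I$ is continuous across the boundary. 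Matching the two expressions there produces existence, $q$, and $G=C_{\rho_0}^{\Omega\IntComp}$ in one step.

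\textbf{Your route, and what each buys.} You use only the exterior evaluation. You extract an abstract $K\in\A(\Omega)$ with $\ln|\xi|^2-\xi h\dEquals K$, set $q=K(0)$ and $G=(K-K(0))/\xi$, and identify $G$ afterwards by projecting onto $\Omega$. Your Green computation on $\Omega\IntComp$ is the same identity the paper uses. Your ordering isolates the structural content: the modified-kernel identity says exactly that $\ln|\xi|^2-\xi h$ is the trace of a function in $\A(\Omega)$. It also makes $q=\oint_{\partial\Omega}\frac{\ln|\xi|^2}{\xi}\,d\xi-\oint_{\partial\Omega}h\,d\xi$ explicit, including its dependence on $h$. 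By contrast, in the bounded case the paper's evaluation $\oint_{\partial\Omega}\frac{\xi h(\xi)}{\xi-w}\,d\xi=-wh(w)$ drops the residue at $\infty$, which is exactly $\oint_{\partial\Omega}h\,d\xi$. Its displayed $q_\Omega$ is therefore off by that constant. What the paper's ordering buys is the passage to the boundary, which comes for free from continuity of an area Cauchy transform.

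\textbf{The step that needs repair.} That passage is the one step of yours that needs fixing. Lemma \ref{lemma:AnalyticDecompositionLemmaV2}(1) assumes $f$ is holomorphic near a piecewise $C^1$ boundary. But $\ln|\xi|^2-\xi h$ is not holomorphic, and only rectifiability is available at this stage. You do not need a Smirnov-class argument or interior approximation, though.
\begin{itemize}
\item The term $\xi h$ is handled by residues.
\item For $\ln|\xi|^2$, apply Pompeiu's formula to $u=\chi\ln|\xi|^2$, where $\chi$ is a compactly supported smooth cutoff equal to $1$ near $\partial\Omega$ and vanishing near $0$. For $w\notin\partial\Omega$ this gives $\oint_{\partial\Omega}\frac{u(\xi)}{\xi-w}\,d\xi=T(w)+u(w)\mathbf{1}_{\Omega}(w)$, with $T(w)=\int_\Omega\frac{\partial_{\overline{\xi}}u(\xi)}{\xi-w}\,dA(\xi)$ continuous on $\C$.
\item Hence both Cauchy projections of $\ln|\xi|^2$ extend continuously to $\partial\Omega$ and sum to $\ln|w|^2$ there. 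This is the paper's continuity device in disguise.
\end{itemize}

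\textbf{Two minor remarks.} First, the unbounded case needs no special bookkeeping. $\Omega\IntComp$ is bounded, so no principal value enters. Both $h\in\Rat(\Omega)$ (polynomial part included) and $q/\xi$ project to $0$ onto $\Omega$. And $G$ projects to $G-G(\infty)=G$, because your $G=(K-K(0))/\xi$ vanishes at $\infty$. Second, in your uniqueness step (the same as the paper's), $w(G-G')\in\A(\Omega)$ requires $G-G'$ to vanish at $\infty$ when $\Omega$ is unbounded. Multiplying by $\frac{w}{w-w_0}$ with $w_0\in\Omega\IntComp$, instead of by $w$, avoids this restriction.
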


In order to proceed with the proof of Theorems \ref{thm:LQDCENoZero} and \ref{thm:LQDCEZero}, we will need the following lemma which allows us to appropriately ``renormalize'' the weighted Cauchy transform:
\begin{lemma}\label{lemma:LOGRenormCauchyTrans}
    For all $|w|>r>0$,
    \begin{equation}\label{eqn:LOGRenormCauchyTrans}
    C_{\rho_0}^{\C\setminus\D_r}(w)=\dfrac{\ln|w|^2-\ln|r|^2}{w}
\end{equation}
\end{lemma}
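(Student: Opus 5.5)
The plan is to compute the principal-value integral
\[
C_{\rho_0}^{\C\setminus\D_r}(w)=\lim_{R\to\infty}\int_{r<|\xi|<R}\frac{1}{|\xi|^2(w-\xi)}\,dA(\xi)
\]
in polar coordinates, expanding the Cauchy kernel in a geometric series on each circle $|\xi|=s$. Write $\xi=se^{i\theta}$ and recall $dA=\frac{1}{\pi}s\,ds\,d\theta$. Split the radial range at $s=|w|$.

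\textbf{Inner shell $r<s<|w|$.} Here
\[
\frac{1}{w-\xi}=\frac1w\sum_{n\ge0}(\xi/w)^n,
\]
and the angular average of $\xi^n$ vanishes for $n\ge1$. So the circle $|\xi|=s$ contributes
\[
\frac{1}{\pi}\cdot\frac{1}{s^2}\cdot s\cdot 2\pi\cdot\frac1w=\frac{2}{sw}\,ds.
\]
Integrating from $r$ to $|w|$ gives $\frac{2(\ln|w|-\ln r)}{w}=\frac{\ln|w|^2-\ln|r|^2}{w}$.

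\textbf{Outer shell $|w|<s<R$.} Here
\[
\frac{1}{w-\xi}=-\frac1\xi\sum_{n\ge0}(w/\xi)^n,
\]
and every term involves $\xi^{-n-1}$ with $n+1\ge1$. Each of these has zero angular mean, so every circle contributes $0$. The outer shell therefore contributes $0$ for every $R$, and the limit $R\to\infty$ is trivially $0$.

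\textbf{Justification.} Interchanging sum and integral is valid on each circle with $s\ne|w|$, since the series converge uniformly in $\theta$ there. The only delicate point is the integrable singularity at $\xi=w$, which sits on the circle $s=|w|$. The function $\frac{1}{|\xi|^2(w-\xi)}$ is absolutely integrable near $w$, because $1/|w-\xi|$ is locally integrable in the plane. Hence the integral over $r<|\xi|<R$ can be computed as the limit of integrals over the annuli with $\big||\xi|-|w|\big|>\epsilon$ removed, and the removed strip contributes $o(1)$ as $\epsilon\to0$. Equivalently, Fubini in polar coordinates applies, and the single circle $s=|w|$ has measure zero.

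I expect this step to be the main, though mild, obstacle. The rest is the routine angular-mean computation, and summing the two shells gives Equation \ref{eqn:LOGRenormCauchyTrans}.
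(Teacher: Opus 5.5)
Your proof is correct, but it takes a different route from the paper.

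The paper applies Green's theorem on the annulus $r<|\xi|<R$. It uses the $\bar\partial$-primitive $\frac{\ln|\xi|^2}{\xi(w-\xi)}$ of the integrand. The term $\frac{\ln|w|^2}{w}$ comes from the pole at $\xi=w$, and the two boundary circles are evaluated by residues: the outer circle gives $0$ and the inner circle gives $-\frac{\ln|r|^2}{w}$.

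You instead apply Fubini in polar coordinates together with the ``shell'' fact: the angular mean of $\frac{1}{w-se^{i\theta}}$ is $\frac{1}{w}$ for $s<|w|$ and $0$ for $s>|w|$. What your version buys:
\begin{itemize}
\item It is more elementary.
\item It handles the point $\xi=w$ cleanly via local integrability. The paper leaves implicit the excision of a small disk around $w$, or equivalently the distributional $\bar\partial$ of the Cauchy kernel.
\item It shows directly that the region $|\xi|>|w|$ contributes exactly zero for every $R$. In fact the integrand decays like $|\xi|^{-3}$, so the integral converges absolutely and no principal value is needed.
\item It extends at once to any radial weight: the transform is $\frac{1}{w}$ times the weighted mass of $\{r<|\xi|<|w|\}$.
\end{itemize}

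The paper's route exhibits $\frac{\ln|\xi|^2}{\xi}$ as a $\bar\partial$-primitive of $\rho_0$. This is exactly the mechanism that drives the coincidence equations and the generalized Schwarz function later on, and it does not depend on the radial symmetry of the region.
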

To see this, we express the left side in Cauchy principal value and apply Green's theorem
\begin{align*}
    C_{\rho_0}^{\C\setminus\D_r}(w)&=\lim_{R\to\infty}\int_{r<|\xi|<R}\dfrac{|\xi|^{-2}}{w-\xi}dA(\xi)\\
    &=\dfrac{\ln|w|^2}{w}+\lim_{R\to\infty}\oint_{\partial\D_R}\dfrac{\ln|\xi|^2}{\xi(w-\xi)}d\xi-\oint_{\partial\D_r}\dfrac{\ln|\xi|^2}{\xi(w-\xi)}d\xi\\
    &=\dfrac{\ln|w|^2}{w}+\lim_{R\to\infty}\ln|R|^2\oint_{\partial\D_R}\dfrac{d\xi}{\xi(w-\xi)}d\xi-\ln|r|^2\oint_{\partial\D_r}\dfrac{d\xi}{\xi(w-\xi)}.
\end{align*}
Applying the residue theorem to the latter two terms, we obtain $0$ and $-\frac{\ln|r|^2}{w}$ respectively. The desired formula follows.

\begin{proof}[Proof of Theorem \ref{thm:LQDCENoZero}]\label{proof:LQDCENoZero}
Fix $0\notin\Omega\in\QD_0(h)$ and $r<d(0,\partial\Omega)$. Note that when $w\in\Omega\IntComp$, the quadrature identity implies
\begin{align*}
    C_{\rho_0}^{\Omega}(w)&=\int_{\Omega}\dfrac{|\xi|^{-2}}{w-\xi}dA(\xi)=\oint_{\partial\Omega}\dfrac{h(\xi)}{w-\xi}d\xi=h(w).
\end{align*}
On the other hand if $w\in\Omega$, then $C_{\rho_0}^{\Omega\IntComp\setminus\D_r}$ is analytic and extends continuously to the boundary (\cite{BellCauchyTransform}, Chapter 17). Hence, for each $w\in\partial\Omega$,
\begin{align*}
    C_{\rho_0}^{\C\setminus\D_r}(w)=C_{\rho_0}^{\Omega\IntComp\setminus\D_r}(w)+C_{\rho_0}^{\Omega}(w)=h(w)+C_{\rho_0}^{\Omega\IntComp\setminus\D_r}(w).
\end{align*}
Hence, by Lemma \ref{lemma:LOGRenormCauchyTrans},
\begin{align*}
    \dfrac{\ln|w|^2}{w}\dEquals h(w)+\left(C_{\rho_0}^{\Omega\IntComp\setminus\D_r}(w)+\dfrac{\ln|r|^2}{w}\right).
\end{align*}
Equation \ref{eqn:LQDCENoZero} follows by setting $G$ equal to the rightmost term. For uniqueness of $G$: Suppose there existed $G_1$ and $G_2$ such that Equation \ref{eqn:LQDCENoZero} holds. Subtracting these equations from each other, we immediately find that $G_1\dEquals G_2$. As $G_1$ and $G_2$ extend continuously to the boundary, it follows from the maximum modulus principle that $G_1=G_2$ on $\Omega$.
\end{proof}

The proof Theorem \ref{thm:LQDCEZero} has two steps. In the bounded case, the modified kernel $\frac{\xi}{\xi-w}$ directly yields the required decomposition. In the unbounded case, one instead uses a two-point kernel to compensate for the behavior at infinity, and the resulting constant term is again absorbed into the charge $q$.

\begin{proof}[Proof of Theorem \ref{thm:LQDCEZero}]\label{proof:LQDCEZero}
Fix $0\in\Omega\in\QD_0(h)$ and $r<d(0,\partial\Omega)$.
We first consider the case in which $\Omega$ is bounded. If $w\in\Omega\IntComp$ then, applying the quadrature identity to $\left(\xi\mapsto\frac{\xi}{w-\xi}\right)\in L_a^1(\Omega;\rho_0)$ yields
\begin{align*}
    \int_{\Omega}\dfrac{\xi|\xi|^{-2}}{\xi-w}dA(\xi)&=\oint_{\partial\Omega}\dfrac{\xi h(\xi)}{\xi-w}d\xi=-wh(w).
\end{align*}
On the other hand, if $w\in\Omega$ then, by Green's theorem ($\Omega\in\QD_0$ by assumption, so its boundary is rectifiable), 
\begin{align*}
    \int_{\Omega}\dfrac{\xi|\xi|^{-2}}{\xi-w}dA(\xi)&=-\ln|w|^2+\oint_{\partial\Omega}\dfrac{\ln|\xi|^2}{\xi-w}d\xi=-\ln|w|^2+w\oint_{\partial\Omega}\dfrac{\ln|\xi|^2}{\xi(\xi-w)}d\xi+\oint_{\partial\Omega}\dfrac{\ln|\xi|^2}{\xi}d\xi.
\end{align*}
Applying Green's theorem to the first integral, and splitting the second to isolate the pole at $0$, we obtain
\begin{align*}
    \int_{\Omega}\dfrac{\xi|\xi|^{-2}}{\xi-w}dA(\xi)&=-\ln|w|^2-w\int_{\Omega\IntComp}\dfrac{|\xi|^{-2}}{\xi-w}d\xi+\left(\oint_{\partial(\Omega\setminus\D_r)}\dfrac{\ln|\xi|^2}{\xi}d\xi+\oint_{\partial\D_r}\dfrac{\ln|\xi|^2}{\xi}d\xi\right)
\end{align*}
We recognize the first integral as a weighted Cauchy transform, and note that the second two integrals integrate to a constant depending only on $\Omega$:
\begin{align*}
    \int_{\Omega}\dfrac{\xi|\xi|^{-2}}{\xi-w}dA(\xi)&=-\ln|w|^2+wC_{\rho_0}^{\Omega\IntComp}(w)+q_{\Omega}.
\end{align*}
where $q_\Omega=\int_{\Omega\setminus\D_r}|\xi|^{-2}dA(\xi)+\ln|r|^2$. Hence,
\begin{align*}
    \dfrac{\ln|w|^2}{w}&\dEquals h(w)+C_{\rho_0}^{\Omega\IntComp}(w)+\dfrac{q_{\Omega}}{w}
\end{align*}

Finally, we consider the case in which $\Omega$ is unbounded. Fix $w_0\in\Omega\IntComp$. If $w\in\Omega\IntComp$ then, applying the quadrature identity to $\frac{\xi}{(w-w_0)(w-\xi)}\in L_a^1(\Omega;\rho_0)$ yields
\begin{align*}
    \int_{\Omega}\dfrac{\xi|\xi|^{-2}}{(\xi-w_0)(\xi-w)}dA(\xi)&=\oint_{\partial\Omega}\dfrac{\xi h(\xi)}{(\xi-w_0)(\xi-w)}d\xi=-\dfrac{wh(w)-w_0h(w_0)}{w-w_0}.
\end{align*}
On the other hand, if $w\in\Omega$ then, by Green's theorem,
\begin{align*}
    \int_{\Omega}\dfrac{\xi|\xi|^{-2}}{(\xi-w_0)(\xi-w)}dA(\xi)&=-\dfrac{\ln|w|^2}{w-w_0}+\oint_{\partial\Omega}\dfrac{\ln|\xi|^2}{(\xi-w_0)(\xi-w)}d\xi\\
    &=\dfrac{-1}{w-w_0}\left(\ln|w|^2-w\oint_{\partial\Omega}\dfrac{\ln|\xi|^2}{\xi(\xi-w)}d\xi+w_0\oint_{\partial\Omega}\dfrac{\ln|\xi|^2}{\xi(\xi-w_0)}d\xi\right)
\end{align*}
Applying Green's theorem to the first integral, and splitting the second to isolate the pole at $0$, we obtain
\begin{align*}
    \int_{\Omega}\dfrac{\xi|\xi|^{-2}}{(\xi-w_0)(\xi-w)}dA(\xi)&=\dfrac{-1}{w-w_0}\left(\ln|w|^2+w\int_{\Omega\IntComp}\dfrac{|\xi|^{-2}}{\xi-w}dA(\xi)+w_0\left(\oint_{\partial(\Omega\setminus\D_r)}\dfrac{\ln|\xi|^2d\xi}{\xi(\xi-w_0)}+\oint_{\partial\D_r}\dfrac{\ln|r|^2d\xi}{\xi(\xi-w_0)}\right)\right)
\end{align*}
We recognize the first integral as a weighted Cauchy transform, the second integral is also recognized as a weighted Cauchy transform after applying Green's theorem, and the third integral may be calculated directly via the residue theorem.
\begin{align*}
    \int_{\Omega}\dfrac{\xi|\xi|^{-2}}{(\xi-w_0)(\xi-w)}dA(\xi)&=\dfrac{-1}{w-w_0}\left(\ln|w|^2-wC_{\rho_0}^{\Omega\IntComp}(w)+w_0\left(\int_{\Omega\setminus\D_r}\dfrac{|\xi|^{-2}}{\xi-w_0}dA(\xi)-\dfrac{\ln|r|^2}{w_0}\right)\right)\\
    &=\dfrac{-1}{w-w_0}\left(\ln|w|^2-wC_{\rho_0}^{\Omega\IntComp}(w)-w_0C_{\rho_0}^{\Omega\setminus\D_r}(w_0)+\ln|r|^2\right).
\end{align*}
Hence for each $w\in\partial\Omega$,
$$\dfrac{\ln|w|^2}{w}\dEquals h(w)+C_{\rho_0}^{\Omega\IntComp}(w)+\dfrac{q_{\Omega,w_0}}{w},$$
where $q_{\Omega,w_0}=w_0C_{\rho_0}^{\Omega\setminus\D_r}(w_0)-w_0h(w_0)+\ln|r|^2$. For the uniqueness of $G$ and $q$, suppose there existed two pairs $G_1,q_1$ and $G_2,q_2$ for which Equation \ref{eqn:LQDCEZero} holds. Subtracting these from each other, and multiplying through by $w$, we find that $wG_1(w)+q_1\dEquals wG_2(w)+q_2$. Both sides of the equation are analytic in $\Omega$ and extend continuously to the boundary, it follows from the identity theorem that $wG_1(w)+q_1=wG_2(w)+q_2$ on $\Omega$. Plugging in $w=0$, we find that $q_1=q_2$, from which it follows that $G_1=G_2$ on $\Omega$.
\end{proof}

With the generalized coincidence equation established, our next goal is to characterize the conditions under which the quadrature function $h$ associated to a given LQD is unique. In particular, we show that $h$ is unique outright when $0\notin\Omega$, and unique modulo an additional point-charge term $q/w$ when $0\in\Omega$. Next, we recover an analogue of the Schwarz function for LQDs (Lemma \ref{lemma:QDChars}): we introduce a \emph{generalized Schwarz function} $S_0$ whose boundary values satisfy $S_0(w)\dEquals\frac{\ln|w|^2}{w}$, and show that $\Omega$ is an LQD if and only if such an $S_0$ exists. We then record three further consequences that will be used repeatedly later: a Sakai-type boundary regularity theorem (only finitely many singular boundary points, each a cusp or double point), an electrostatic characterization of LQDs, and basic invariance properties of the class under scaling, inversion, and power maps.

\subsection{Essential Uniqueness of the Quadrature Function}
A direct consequence of Theorems \ref{thm:LQDCENoZero} and \ref{thm:LQDCEZero} is the essential uniqueness of the quadrature function associated to a given LQD. For non-singular LQDs, we obtain:
\begin{corollary}\label{cor:LQDQuadFuncUniquenessNoZero}
    If $\Omega\in\QD_0(h)$ is non-singular, then $h$ is unique.
\end{corollary}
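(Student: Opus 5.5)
We argue by subtracting two coincidence equations. Suppose $\Omega\in\QD_0(h_1)\cap\QD_0(h_2)$ with $0\notin\Omega$. Theorem \ref{thm:LQDCENoZero} gives unique $G_1,G_2\in\A(\Omega)$ with
\begin{equation*}
\frac{\ln|w|^2}{w}\dEquals h_j(w)+G_j(w),\qquad j=1,2.
\end{equation*}
Subtracting, we obtain $h_1-h_2\dEquals G_2-G_1$ on $\partial\Omega$. Set $g:=h_1-h_2$. It is rational with poles only in $\Omega$, so it is analytic on $\Omega\IntComp$ and continuous up to $\partial\Omega$. Likewise $G_2-G_1$ is analytic in $\Omega$ and continuous up to $\partial\Omega$.

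The plan is to glue these two functions into one function on $\Ch$. Define $F:=g$ on $\Cl(\Omega\IntComp)$ and $F:=G_2-G_1$ on $\Cl(\Omega)$. They agree on $\partial\Omega$, so $F$ is continuous on $\Ch$ and analytic off $\partial\Omega$. To conclude that $F$ is entire on $\Ch$, we need $\partial\Omega$ to be removable for continuous functions analytic off it. Rectifiability is not enough in general, but we can argue via the Cauchy projection. Since $g$ is rational with poles in $\Omega$, $g=\AnalyticIn{g}{\Omega\IntComp}$ in $\Omega\IntComp$. Moreover, $\AnalyticIn{G_2-G_1}{\Omega\IntComp}=0$, because $G_2-G_1\in\A(\Omega)$. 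This step uses Lemma \ref{lemma:AnalyticDecompositionLemmaV2}, or equivalently Cauchy's theorem on the rectifiable domain $\Omega$. Since the two functions agree on $\partial\Omega$, the Cauchy integrals agree, so $g=\AnalyticIn{g}{\Omega\IntComp}=\AnalyticIn{G_2-G_1}{\Omega\IntComp}=0$ on $\Omega\IntComp$. A rational function vanishing on an open set is identically zero, so $h_1=h_2$.

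The normalization at infinity deserves a check. When $\Omega$ is bounded, $h_j\in\Rat_0(\Omega)$, so $g(\infty)=0$, and $\Omega\IntComp$ contains a neighbourhood of $\infty$, where the Cauchy projection representation is valid. When $\Omega$ is unbounded, $\infty\in\Omega$ and $G_j\in\A(\Omega)$ is defined at $\infty$. In that case I would compute $\oint_{\partial\Omega}\frac{g(\xi)}{\xi-w}d\xi$ for $w\in\Omega\IntComp$ directly. It equals $\oint_{\partial\Omega}\frac{(G_2-G_1)(\xi)}{\xi-w}d\xi$. For the right-hand side, deform the contour inside $\Omega$ towards $\infty$; the result is zero because $\frac{(G_2-G_1)(\xi)}{\xi-w}$ is analytic in $\Omega$ and decays like $1/\xi$ at infinity. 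The residue at $\infty$ is $(G_2-G_1)(\infty)$, so that constant term needs care. For the left-hand side, the residues of $g$ in $\Omega$, including at $\infty$, give $g(w)$ up to a constant. Any leftover constant can be matched against the same constant $(G_2-G_1)(\infty)$. Thus $g$ is constant on $\Omega\IntComp$, and then the boundary identity forces that constant to be $G_2-G_1$ evaluated appropriately. The cleanest route is to note that $g-c$ and $G_2-G_1-c$ still agree on $\partial\Omega$, so we may reduce to the vanishing case. We are left with $h_1-h_2$ constant.

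The main obstacle is this unbounded case: ruling out a constant difference. A constant $c$ is in $\Rat(\Omega)$, since its only pole-type behaviour is at $\infty\in\Omega$. But $\oint_{\partial\Omega}f\,c\,dw=0$ for every $f\in\A_0(\Omega)$ with the required decay; indeed $f$ vanishes at $\infty$, so $f(w)=O(1/w)$ and the residue at $\infty$ is $f$'s $1/w$ coefficient. That is not zero in general. I would resolve this by observing that $h$ is determined by the identity tested against $f\in L^1_a(\Omega;\rho_0)$, which requires $f=O(|w|^{-\epsilon})$ only weakly. Test functions such as $f(w)=1/(w-a)$ with $a\in\Omega\IntComp$ lie in $L^1_a(\Omega;\rho_0)$ because $\int|w|^{-3}$ converges at $\infty$, and they give $\oint c/(w-a)\,dw=c\neq0$ after accounting for the orientation at $\infty$. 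Hence the quadrature identity itself forces $c=0$. In fact the simplest overall route is to apply the quadrature identity to the kernels $k_a(\xi)=\frac{1}{\xi-a}$ for $a\in\Omega\IntComp$, as in the proof of Theorem \ref{thm:LQDCENoZero}. This shows $h_1(a)=C_{\rho_0}^{\Omega}(a)=h_2(a)$ on the open set $\Omega\IntComp$, so $h_1=h_2$ by the identity theorem.
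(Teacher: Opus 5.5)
Your proof is correct, but what actually settles it is your last paragraph, which is a slightly more direct route than the paper's.

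\textbf{The paper's argument.} By Theorem \ref{thm:LQDCENoZero}, $G=C_{\rho_0}^{\Omega\IntComp\setminus\D_r}+\frac{\ln|r|^2}{w}$ depends on $\Omega$ alone. So $h_1$ and $h_2$ satisfy the coincidence equation with the \emph{same} $G$. Subtracting gives $h_1\dEquals h_2$, and the identity theorem finishes.

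\textbf{Why your first attempt stalled.} By allowing $G_1\neq G_2$ you discarded exactly this information. That is what forced the gluing detour and the constant ambiguity you then had to fight. Your diagnosis is right: when $\Omega$ is unbounded, $(h,G)\mapsto(h+c,G-c)$ keeps $h\in\Rat(\Omega)$ and $G\in\A(\Omega)$. So a boundary identity with an unspecified $G$ determines $h$ only up to an additive constant.

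\textbf{Your final argument.} You test against $\xi\mapsto\frac{1}{a-\xi}$. It lies in $L_a^1(\Omega;\rho_0)$ because $0,a\notin\Cl(\Omega)$ and it vanishes at $\infty$. This gives $h_j=C_{\rho_0}^{\Omega}$ on the nonempty open set $\Omega\IntComp\ni 0$. This is the same computation on which the paper's ``$G$ is independent of $h$'' rests, but used directly. It never needs boundary values, Lemma \ref{lemma:LOGRenormCauchyTrans}, or the boundary continuity of $C_{\rho_0}^{\Omega\IntComp\setminus\D_r}$. It is complete on its own, so everything before it can be dropped, including the loosely argued residue bookkeeping at $\infty$.

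\textbf{What the paper's formulation buys.} It is uniform with Corollary \ref{cor:LQDQuadFuncUniquenessZero}. When $0\in\Omega$ the Cauchy kernel is no longer admissible, so your direct route does not transfer there. The coincidence-equation argument does.

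\textbf{A small correction.} You wrote that rectifiability is not enough for removability. In fact, rectifiable arcs (indeed any closed set of $\sigma$-finite length) \emph{are} removable for continuous functions analytic off them, by Besicovitch's theorem. In any case $\partial\Omega$ is piecewise $C^1$ by Theorem \ref{thm:LQDBoundaryRegularity}, so your gluing step was valid via Morera.
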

\begin{proof}
Suppose $0\notin\Omega$ is an LQD with two quadrature functions $h_1$ and $h_2$ (that is, $\Omega\in\QD_0(h_1)\cap\QD_0(h_2)$). Then, $h_1$ and $h_2$ each satisfy Equation \ref{eqn:LQDCENoZero} for the same $G$, as $G$ is independent of $h$. Subtracting these equations, we obtain $h_1\dEquals h_2$. It follows from the identity theorem that $h_1=h_2$. 
\end{proof}

On the other hand, if $0\in\Omega$, then we only obtain uniqueness up to a one-parameter family of quadrature functions.
\begin{corollary}\label{cor:LQDQuadFuncUniquenessZero}
    If $\Omega\in\QD_0(h)$ is a domain containing zero, then $h$ is unique modulo the addition of a point charge $\frac{q}{w}$ at the origin. In particular, $\left\{\widetilde{h}:\Omega\in\QD_0(\widetilde{h})\right\}=\left\{h(w)+\frac{q}{w}:q\in\C\right\}$.
\end{corollary}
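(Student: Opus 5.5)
The claim is an equality of sets, so I will prove both inclusions separately.

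\emph{First inclusion: every $h+\frac{q}{w}$ is a quadrature function.} Fix $q\in\C$ and $f\in L_a^1(\Omega;\rho_0)$. Since $0\in\Omega$, integrability of $f|w|^{-2}$ near the origin forces $f(0)=0$, as noted after Equation \ref{eqn:LQDQID}. Hence $f(w)/w$ is analytic in $\Omega$ and continuous up to $\partial\Omega$.
\begin{itemize}
\item If $\Omega$ is bounded, Cauchy's theorem gives $\oint_{\partial\Omega}f(w)\frac{q}{w}\,dw=0$.
\item If $\Omega$ is unbounded, $f(\infty)=0$ as well, so $f(w)/w=O(w^{-2})$ at $\infty$. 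The contour integral, read in principal value as in the conventions, still vanishes.
\end{itemize}
Thus $\oint_{\partial\Omega}f\,(h+\frac{q}{w})\,dw=\oint_{\partial\Omega}fh\,dw=\int_\Omega f\rho_0\,dA$. Moreover $h+\frac{q}{w}$ is rational with its new pole at $0\in\Omega$, and it still vanishes at $\infty$ when $\Omega$ is bounded. So it lies in the required class $\Rat_0(\Omega)$ or $\Rat(\Omega)$, and $\Omega\in\QD_0(h+\frac{q}{w})$.

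\emph{Second inclusion: every quadrature function has this form.} Suppose $\Omega\in\QD_0(\widetilde h)$. Apply Theorem \ref{thm:LQDCEZero} to both $h$ and $\widetilde h$. This gives constants $q_1,q_2$ and functions $G_1=G_2=C_{\rho_0}^{\Omega\IntComp}$ with
\[
h+\frac{q_1}{w}+G\dEquals\frac{\ln|w|^2}{w}\dEquals\widetilde h+\frac{q_2}{w}+G .
\]
The key point is that $G$ is the weighted Cauchy transform of the exterior. It depends only on $\Omega$, not on the quadrature function. In the unbounded case one should check that the $w_0$-dependent contributions were all absorbed into the constant $q$, so that $G$ is indeed independent of $h$. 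Subtracting, $R:=h-\widetilde h+\frac{q_1-q_2}{w}$ vanishes on $\partial\Omega$.

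\emph{Conclusion: $R\equiv 0$.} $R$ is rational with poles only in $\Omega$, so it is analytic on a neighbourhood of $\Cl(\Omega\IntComp)$ and vanishes on $\partial\Omega$. Since $\partial\Omega$ is a rectifiable curve, hence uncountable and with accumulation points, and $R$ is rational, $R$ vanishes on an infinite set and therefore $R\equiv0$. This gives $\widetilde h=h+\frac{q_1-q_2}{w}$, completing the proof.

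The main obstacle is confirming that $G$ is genuinely independent of the quadrature function in the unbounded case; the rest is routine. If that independence fails, I would instead redo the argument directly: subtract the two quadrature identities to get $\oint_{\partial\Omega}f\,(h-\widetilde h)\,dw=0$ for all admissible $f$. Then test against $f(\xi)=\frac{\xi}{\xi-w}$, or the two-point kernel for unbounded $\Omega$, with $w\in\Omega\IntComp$. This shows $w(h-\widetilde h)(w)$ is constant on $\Omega\IntComp$, and that constant is the charge $q$.
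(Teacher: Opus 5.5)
Your proposal is correct and follows the paper's proof. The inclusion $h+\frac{q}{w}$ comes from $f(0)=0$ for every admissible $f$, and the converse comes from subtracting the two singular coincidence equations, which share the same domain-dependent term $G$ (the $\rho_0$-weighted Cauchy transform of the exterior), and then applying the identity theorem. Your extra checks fill in details the paper dismisses as ``by inspection'': that $h+\frac{q}{w}$ stays in the admissible rational class, and that the contour integral of $f(w)\frac{q}{w}$ vanishes at $\infty$ in the unbounded case because $f(\infty)=0$.
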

\begin{proof}
Suppose $0\in\Omega$ is an LQD with two quadrature functions $h_1$ and $h_2$ (that is, $\Omega\in\QD_0(h_1)\cap\QD_0(h_2)$). Then, $h_1$ and $h_2$ each satisfy Equation \ref{eqn:LQDCEZero}. Noting that $G$ is independent of $h$ (it depends only on the domain), we obtain $\frac{\ln|w|^2}{w}\dEquals h_j(w)+\frac{q_j}{w}+G(w)$ ($q_j\in\C$, $j\in\{1,2\}$). Subtracting these equations and applying the identity theorem yields $h_1(w)-h_2(w)=\frac{q_2-q_1}{w}$ for all $w\in\C$. In particular, there exists $q\in\C$ such that $h_2(w)=h_1(w)+\frac{q}{w}$. 

On the other hand, if $\Omega\in\QD_0(h)$, then $\Omega\in\QD_0\left(h(w)+\frac{q}{w}\right)$ for all $q\in\C$ by inspection of the quadrature identity, using the fact that $f\in L_a^1(\Omega;\rho_0)\Rightarrow f(0)=0$.
\end{proof}

Considering Theorem \ref{thm:LQDCEZero} together with Corollary \ref{cor:LQDQuadFuncUniquenessZero}, we arrive at an apparent paradox: The former asserts that $q$ (in Equation \ref{eqn:LQDCEZero}) is uniquely associated to $\Omega\in\QD_0(h)$, whereas the latter asserts that $h(w)+\frac{q}{w}$ is also a quadrature function for $\Omega$ for any $q\in\C$. What is happening here? The root of the problem is that $q$ (in Equation \ref{eqn:LQDCEZero}) depends on both $\Omega$ and $h$. Hence we must instead consider the sum $h(w)+\frac{q}{w}$ which is, indeed uniquely associated to $\Omega$ (this follows from essentially the same argument as in the proof of Corollary \ref{cor:LQDQuadFuncUniquenessZero}). Now consider the decomposition
$$h(w)+\frac{q}{w}=\left(h(w)-\dfrac{\Res{0}h}{w}\right)+\frac{q+\Res{0}h}{w},$$
where the first term is the part which is residue-free at $0$, and the second captures the residue at $0$. It follows that the second term is, in fact, independent of $h$. This motivates the introduction of the following definition. 
\begin{definition}\label{def:QD0hq}
    Fix $q\in\C$ and $h$ rational such that $\Res{0}h=0$. We denote by $\Omega\in\QD_0(h;q)$ that $\Omega\in\QD_0(h)$, and Equation \ref{eqn:LQDCEZero} is satisfied with this value of $q$.
\end{definition}
It is immediate that if $\Omega\in\QD_0(h;q)$, then the pair $(h,q)$ is uniquely associated to $\Omega$. From the perspective of potential theory, $q$ can be interpreted as the charge at $0$.

\subsection{The Generalized Schwarz Function and Boundary Regularity}
Recall that quadrature domains may be equivalently characterized as those domains which admit a Schwarz function (Lemma \ref{lemma:QDChars}). The coincidence equations suggest that the correct analogue of the classical Schwarz function should have boundary values $\frac{\ln|w|^2}{w}$. We now show that this is indeed the right notion. In particular, we show that a domain $\Omega$ is an LQD if and only if it admits a \emph{generalized Schwarz function}: A function $S_0\in\M(\Omega)$ extending continuously to $\partial\Omega$ such that $S_0(w)\dEquals\frac{\ln|w|^2}{w}$. 

It is important to note that \emph{$S_0$ always has finitely many poles} - a fact which is clear when $\Omega$ is bounded. When $\Omega$ is unbounded, this follows from the fact that $S_0$ is meromorphic at $\infty$, which means that the singularity at infinity (if any) is isolated.

\begin{theorem}\label{thm:LQDSFEEquiv}
If $\Omega$ is a domain with $0,\infty\notin\partial\Omega$, then $\Omega\in\QD_0$ if and only if it admits a generalized Schwarz function.
\end{theorem}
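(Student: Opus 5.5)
The forward direction follows directly from the coincidence equations. If $\Omega\in\QD_0(h)$ with $0\notin\Omega$, Theorem \ref{thm:LQDCENoZero} gives $G\in\A(\Omega)$ with $\frac{\ln|w|^2}{w}\dEquals h(w)+G(w)$. Since $h$ is rational with poles only in $\Omega$, the function $S_0:=h+G$ is meromorphic in $\Omega$, extends continuously to $\partial\Omega$, and has the required boundary values. If $0\in\Omega$, Theorem \ref{thm:LQDCEZero} instead gives $S_0:=h+\frac{q}{w}+G$ with $G=C_{\rho_0}^{\Omega\IntComp}$, and this is again meromorphic in $\Omega$ with the correct boundary values. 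In the unbounded case one should check that $S_0$ is meromorphic at $\infty$. This holds because $h\in\Rat(\Omega)$, and $G$ is a Cauchy transform of a compactly supported measure away from $0$ (or is analytic at $\infty$ by construction), so it is analytic at $\infty$.

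For the converse, suppose $S_0$ is a generalized Schwarz function. As remarked before the theorem, $S_0$ has finitely many poles in $\Omega$. Let $h$ be the sum of the principal parts of $S_0$ at these poles, including the polynomial part at $\infty$ when $\Omega$ is unbounded. In the bounded case I normalize so that $h\in\Rat_0(\Omega)$, and in the unbounded case $h\in\Rat(\Omega)$. Then $G:=S_0-h$ is analytic in $\Omega$ and continuous up to $\partial\Omega$. Now take $f\in L_a^1(\Omega;\rho_0)$ and apply Green's theorem to $f(w)\frac{\ln|w|^2}{w}$, whose $\bar\partial$-derivative is $f(w)|w|^{-2}$ away from $0$. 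If $0\notin\Omega$, this gives
\[
\int_\Omega \frac{f}{|w|^2}\,dA=\oint_{\partial\Omega} f\,\frac{\ln|w|^2}{w}\,dw=\oint_{\partial\Omega} f\,(h+G)\,dw,
\]
and $\oint fG\,dw=0$ by Cauchy's theorem, which is the quadrature identity. If $0\in\Omega$, I excise a small disk $\D_\epsilon$. The inner boundary contributes $\ln\epsilon^2\oint_{\partial\D_\epsilon}\frac{f(w)}{w}dw=\ln\epsilon^2\, f(0)=0$, using $f(0)=0$; more precisely $f(w)=O(w)$, so the term is $O(\epsilon\ln\epsilon)\to0$. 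For unbounded $\Omega$, I truncate by $\D_R$. The boundary term on $\partial\D_R$ is $\oint f\ln R^2\,\frac{dw}{w}$, which vanishes because $f(\infty)=0$ and $f$ is $L^1$ against $\rho_0$, forcing $f=O(1/w)$ so that the term is $O(\ln R/R)$. The contribution $\oint fG$ likewise vanishes by Cauchy's theorem, using $fG\to0$ suitably fast at $\infty$ since $G$ is analytic there.

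I expect the main obstacle to be the regularity required to justify Green's theorem and Cauchy's theorem on $\partial\Omega$. We only know that $\Omega$ is a domain, not that it is rectifiable, and $S_0$ is merely continuous up to the boundary. The plan is to first show that the existence of $S_0$ yields local Schwarz functions. Near any $w_0\in\partial\Omega$ with $w_0\neq0,\infty$, choose a local branch $L$ of $\log w$ and set $T(w):=\overline{L(w)}$. Then $\ln|w|^2=L+\bar L$, so on the boundary $\bar L(w)=wS_0(w)-L(w)$. Hence $\Omega':=L(\Omega\cap\D_\epsilon(w_0))$ carries the local Schwarz function $\zeta\mapsto e^{\zeta}S_0(e^\zeta)-\zeta$ near $L(w_0)$. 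Sakai's Theorem \ref{thm:SakaiRegularity} then shows that $\partial\Omega'$ is piecewise real-analytic near $L(w_0)$, apart from cusps and double points; degenerate points are excluded by the convention $\Omega=\Int(\Cl(\Omega))$. Transferring this back through the exponential shows that $\partial\Omega$ is piecewise $C^1$ and rectifiable. Combined with compactness of $\partial\Omega$, which follows from $0,\infty\notin\partial\Omega$, this justifies the integral manipulations above. I would treat this rectifiability step carefully and regard the rest as routine.
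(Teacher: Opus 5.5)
Your proof is correct and follows the paper's argument. The coincidence equations give the forward direction, and the converse goes through Sakai regularity, Green's theorem, the principal-part splitting $S_0=h+G$, and Cauchy's theorem. Your log-coordinate local Schwarz function $\zeta\mapsto e^{\zeta}S_0(e^{\zeta})-\zeta$ is just the logarithm of the paper's $S(w)=e^{wS_0(w)}/w$ from Lemma \ref{lemma:GSFBoundaryRegularity}; the paper's version avoids choosing log branches and transporting Sakai's conclusion back through $\exp$.

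Your explicit handling of the excised disk at $0$ and the truncation at $\infty$ fills in details the paper leaves implicit. One correction there: analyticity of $G$ at $\infty$ alone only gives $fG=O(1/w)$. For $\oint_{\partial\Omega}fG\,dw=0$ you need the constant term of $S_0$ at $\infty$ absorbed into $h$, so that $G(\infty)=0$.
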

In this case, the generalized Schwarz function $S_0$ is given by the RHS of Equation \ref{eqn:LQDCENoZero} when $0\notin\Omega$ and RHS of Equation \ref{eqn:LQDCEZero} when $0\in\Omega$. In order to complete the proof of Theorem \ref{thm:LQDSFEEquiv}, we will need the following regularity result: 

\begin{lemma}\label{lemma:GSFBoundaryRegularity}
    If $\Omega$ admits a generalized Schwarz function, then $\partial\Omega$ has finitely many singular points, and each is a cusp or a double point.
\end{lemma}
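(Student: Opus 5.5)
The strategy is to reduce to Sakai's theorem (Theorem \ref{thm:SakaiRegularity}) by manufacturing a classical local Schwarz function at every boundary point from $S_0$. Since $0,\infty\notin\partial\Omega$, every $w_0\in\partial\Omega$ has a disk $\D_\epsilon(w_0)$ avoiding $0$ and the branch cut of a suitably chosen branch of $\ln$. On this disk $\ln|w|^2=\ln w+\overline{\ln w}$. Choose $\epsilon$ small enough that $S_0$ has no poles in $\D_\epsilon(w_0)\cap\Cl(\Omega)$; this is possible because $S_0$ has finitely many poles, all in $\Omega$, and it is continuous up to $\partial\Omega$. The boundary identity $S_0(w)\dEquals\frac{\ln|w|^2}{w}$ then gives
$$\overline{\ln w}\dEquals wS_0(w)-\ln w\quad\text{on }\D_\epsilon(w_0)\cap\partial\Omega.$$
So $T(w):=wS_0(w)-\ln w$ is analytic in $\D_\epsilon(w_0)\cap\Omega$, continuous up to the boundary, and has boundary values $\overline{\ln w}$.

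To recover a Schwarz function for $\overline{w}$ itself, set $S(w):=\exp(\overline{\,\cdot\,})$ composed appropriately. Concretely, $\overline{w}=\exp(\overline{\ln w})$, so we define
$$S(w):=\exp\bigl(wS_0(w)-\ln w\bigr)=\frac{e^{wS_0(w)}}{w}.$$
This $S$ is analytic in $\D_\epsilon(w_0)\cap\Omega$ and continuous up to the boundary, and $S(w)=\overline{w}$ on $\D_\epsilon(w_0)\cap\partial\Omega$. The branch of $\ln$ has in fact dropped out of the formula. Hence $S$ is a local Schwarz function at $w_0$ in the sense required by Theorem \ref{thm:SakaiRegularity}. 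It follows that each singular point of $\partial\Omega$ is a cusp, a double point, or a degenerate point.

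It remains to exclude degenerate points and to obtain finiteness. The plan is to follow the argument behind Corollary \ref{cor:QDBoundaryRegularity}, which rests on Sakai's finer local description. There, degenerate points correspond to boundary arcs where $\Omega$ lies on both sides, i.e.\ slits. These are ruled out by the standing convention $\Omega=\Int(\Cl(\Omega))$.

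Finiteness comes from the structure of the local Schwarz function. By Sakai, near each boundary point $\partial\Omega$ is contained in a real-analytic set $\{w: S(w)=\overline{w}\}$ which has only isolated singularities. Since $\partial\Omega$ is compact in $\C$ (or in $\Ch$ with $\infty\notin\partial\Omega$, hence $\partial\Omega$ is compact in $\C$), a finite cover by disks $\D_\epsilon(w_0)$ yields finitely many singular points.

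The main obstacle is this last step. Sakai's theorem as quoted is purely local, so we must check that the singular points are isolated in each disk, and that the degenerate case is excluded exactly as in the classical setting. I would handle both by citing the version in \cite{Lee_2015}, \S3.2. That version is stated for arbitrary local Schwarz functions, so it applies verbatim to our $S=e^{wS_0}/w$.
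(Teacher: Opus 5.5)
Your proof is correct and follows the paper's argument: the paper also takes $S(w)=e^{wS_0(w)}/w$ as a local Schwarz function at every boundary point, on a collar of $\partial\Omega$ that avoids the poles of $S_0$ and the origin. It then applies Sakai's theorem, excludes degenerate points via the convention that $\Omega$ equals the interior of its closure, and cites \S3.2 of \cite{Lee_2015} for finiteness. Your detour through a local branch of $\ln w$ is harmless, since, as you observe, the branch cancels in the final formula for $S$.
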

\begin{proof}
Let $S_0$ be a generalized Schwarz function for $\Omega$. Define
$$S(w):=\dfrac{e^{wS_0(w)}}{w}.$$
For $w\in\partial\Omega$, the boundary condition for $S_0$ gives
$$S(w)\dEquals\dfrac{e^{\ln|w|^2}}{w}=\dfrac{|w|^2}{w}=\overline{w}.$$
Since $S_0\in\M(\Omega)$ and is continuous up to $\partial\Omega$, its poles are finite in number and bounded away from $\partial\Omega$. Choose $\epsilon>0$ sufficiently small that $S_0$ is holomorphic in 
$$N_\epsilon=\{w\in\Omega:d(w,\partial\Omega)<\epsilon\},$$
and $d(0,\partial\Omega)>\epsilon$. Then $S$ is holomorphic on $N_\epsilon\cap\Cl(\Omega)$ with boundary values $S(w)\dEquals\overline{w}$. Thus $S$ is a local Schwarz function at every boundary point of $\Omega$. Sakai's regularity theorem (\ref{thm:SakaiRegularity}) then implies that $\partial\Omega$ has finitely many singular points, each a cusp or a double point ($\partial\Omega$ has no degenerate points on account of the assumption that $\Omega=\Int(\Cl(\Omega))$). The finiteness of the singular set follows exactly as in the classical quadrature-domain case (\S3.2 of \cite{Lee_2015}).
\end{proof}

\noindent With this regularity result in hand, we are now prepared to prove Theorem \ref{thm:LQDSFEEquiv}.

\begin{proof}[Proof of Theorem \ref{thm:LQDSFEEquiv}]
Suppose $\Omega\in\QD_0(h)$. Then by Theorems \ref{thm:LQDCENoZero} and \ref{thm:LQDCEZero}, $\Omega$ satisfies Equation \ref{eqn:LQDCEZero} (with $q=0$ when $0\notin\Omega$). Set $S_0$ equal to the RHS of the equation: $S_0(w):=h(w)+qw^{-1}+G(w)$. As $S_0\in\M(\Omega)$ and extends continuously to the boundary ($0\notin\partial\Omega$ by assumption), we find that $S_0(w)\dEquals\frac{\ln|w|^2}{w}$ is a generalized Schwarz function for $\Omega$.

On the other hand, suppose $\Omega$ admits a generalized Schwarz function $S_0$. By Lemma \ref{lemma:GSFBoundaryRegularity}, we know that $\partial\Omega$ is piecewise $C^1$, so Green's theorem applies. Hence, for each $f\in L_a^1(\Omega;\rho_0)$,
\begin{align*}
    \int_{\Omega}\dfrac{f(w)}{|w|^2}dA(w)&=\oint_{\partial\Omega}f(w)\dfrac{\ln|w|^2}{w}dw=\oint_{\partial\Omega}f(w)S_0(w)dw.
\end{align*}
Since $S_0$ has finitely many poles in $\Omega$, let $h$ be the rational function obtained by summing the principal parts of $S_0$ at those poles, normalized so that $h\in\Rat_0(\Omega)$ in the bounded case and $h\in\Rat(\Omega)$ in the unbounded case. Then
$$G=S_0-h$$
is holomorphic in $\Omega$ and extends continuously to $\partial\Omega$. Hence,
$$\int_{\partial\Omega}f(w)G(w)dw=0$$
by Cauchy's theorem. It follows that
\begin{align*}
    \int_{\Omega}\dfrac{f(w)}{|w|^2}dA(w)&=\oint_{\partial\Omega}f(w)h(w)dw,
\end{align*}
so $\Omega\in\QD_0$.
\end{proof}

An immediate consequence of Theorem \ref{thm:LQDSFEEquiv} and Lemma \ref{lemma:GSFBoundaryRegularity} is the following boundary regularity result for LQDs:
\begin{theorem}\label{thm:LQDBoundaryRegularity}
    If $\Omega\in\QD_0$ then $\partial\Omega$ has finitely many singular points, and each is a cusp or a double point. In particular, $\partial\Omega$ is piecewise $C^1$.
\end{theorem}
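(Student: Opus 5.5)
The plan is to chain Theorem \ref{thm:LQDSFEEquiv} with Lemma \ref{lemma:GSFBoundaryRegularity}, and then deduce piecewise $C^1$ regularity from the local structure at regular and singular points. Suppose $\Omega\in\QD_0(h)$. By Theorem \ref{thm:LQDSFEEquiv}, specifically its forward direction which uses only the coincidence equations of Theorems \ref{thm:LQDCENoZero} and \ref{thm:LQDCEZero}, $\Omega$ admits a generalized Schwarz function
$$S_0=h+\tfrac{q}{w}+G,$$
with $q=0$ when $0\notin\Omega$. This function is meromorphic in $\Omega$ and continuous up to $\partial\Omega$, because $0,\infty\notin\partial\Omega$. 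One point must be checked here. The forward direction should not rely on Lemma \ref{lemma:GSFBoundaryRegularity} in a circular way. The coincidence equations are proved by applying Green's theorem on a rectifiable boundary, and rectifiability is part of the definition of an LQD. So this step is sound.

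Next, apply Lemma \ref{lemma:GSFBoundaryRegularity} to $S_0$. Set $S=e^{wS_0}/w$; this is a local Schwarz function at every boundary point. Sakai's theorem (Theorem \ref{thm:SakaiRegularity}) then shows that each singular point is a cusp, a double point, or a degenerate point. Degenerate points are excluded by the standing convention $\Omega=\Int(\Cl(\Omega))$. Finiteness of the singular set follows as in \S3.2 of \cite{Lee_2015}, using compactness of $\partial\Omega$ in $\Ch$ together with isolation of singular points in Sakai's local description.

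For the final clause, I will use Sakai's local classification. Near a regular point, $\partial\Omega$ is a real-analytic arc, since the existence of the local Schwarz function $S$ forces analyticity. Near a cusp or double point, $\partial\Omega$ is a union of finitely many real-analytic arcs meeting there with one-sided tangents. So away from finitely many points the boundary is a union of analytic arcs, and at each exceptional point it is a finite union of arcs that are $C^1$ up to the endpoint. Compactness of $\partial\Omega$ then makes the boundary a finite union of $C^1$ arcs, i.e. piecewise $C^1$.

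The main obstacle is the finiteness claim when $\Omega$ is unbounded, because compactness has to be used in $\Ch$. This is harmless since $\infty\notin\partial\Omega$, so $\partial\Omega$ is a compact subset of $\C$ and the standard argument applies directly.
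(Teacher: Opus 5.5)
Your proposal is correct and follows the paper's argument. The paper likewise obtains a generalized Schwarz function from the forward direction of Theorem \ref{thm:LQDSFEEquiv} and then invokes Lemma \ref{lemma:GSFBoundaryRegularity}, that is, Sakai's theorem applied to $e^{wS_0}/w$. Your additional remarks only make explicit what the paper leaves implicit: the non-circularity check, the compactness of $\partial\Omega$ in $\C$ when $\infty\notin\partial\Omega$, and the derivation of piecewise $C^1$ regularity from the local arc structure.
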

\begin{proof}
By Theorem \ref{thm:LQDSFEEquiv}, $\Omega$ has a generalized Schwarz function. The conclusion follows by Lemma \ref{lemma:GSFBoundaryRegularity}. 
\end{proof}

\subsection{LQDs: An Electrostatic Perspective}
LQDs have a natural physical interpretation from the perspective of electrostatics. In particular, \emph{a domain $\Omega$ is an LQD if and only if there is a finite configuration of point charges (and multipoles) on the interior of $\Omega$ whose electrostatic field in $\Omega^c$ is equal to that of a charge density $\rho_0$ on $\Omega$}. More formally,
\begin{theorem}\label{thm:ElectrostaticLQDInterpretation}
    Fix a domain $\Omega$ such that $0,\infty\notin\partial\Omega$, and $r\in(0,d(0,\partial\Omega))$. Then,
    \begin{enumerate}
        \item If $0\notin\Omega$ then $\Omega\in\QD_0(h)$ if and only if $C_{\rho_0}^{\Omega}(w)=h(w)$ on $\Omega\IntComp$;
        \item If $0\in\Omega$ then $\Omega\in\QD_0(h;q)$ if and only if $C_{\rho_0}^{\Omega\setminus\D_r}(w)+\frac{\ln|r|^2}{w}=h(w)+\frac{q}{w}$ on $\Omega\IntComp$.
    \end{enumerate}  
\end{theorem}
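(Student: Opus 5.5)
Both parts will follow the same pattern. The forward direction reads off the identity by testing the quadrature identity against Cauchy-type kernels, as in the proofs of Theorems \ref{thm:LQDCENoZero} and \ref{thm:LQDCEZero}. The converse rebuilds a generalized Schwarz function from the exterior identity and then applies Theorem \ref{thm:LQDSFEEquiv}.

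\emph{Part (1), forward direction.} This was already carried out in the proof of Theorem \ref{thm:LQDCENoZero}. For $w\in\Omega\IntComp$ we have $\xi\mapsto\frac{1}{w-\xi}\in L_a^1(\Omega;\rho_0)$, because $0\notin\Omega$ and $0\notin\partial\Omega$. If $\Omega$ is unbounded this function also vanishes at $\infty$. Testing the quadrature identity against it and applying the residue theorem in $\Omega\IntComp$ gives $C_{\rho_0}^{\Omega}=h$ there.

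\emph{Part (1), converse.} Suppose $C_{\rho_0}^{\Omega}=h$ on $\Omega\IntComp$. Define
$$S_0:=h+C_{\rho_0}^{\Omega\IntComp\setminus\D_r}+\frac{\ln|r|^2}{w}\quad\text{on }\Cl(\Omega).$$
The middle term is analytic in $\Omega$ and continuous up to $\partial\Omega$. The weighted Cauchy transform of $\Omega$ is continuous across $\partial\Omega$, since the weight is bounded near $\partial\Omega$. So for $w\in\partial\Omega$ the values $h(w)$ and $C_{\rho_0}^{\Omega}(w)$ agree by continuity. Lemma \ref{lemma:LOGRenormCauchyTrans} then gives
$$S_0\dEquals C^{\C\setminus\D_r}_{\rho_0}+\frac{\ln|r|^2}{w}=\frac{\ln|w|^2}{w}.$$
Hence $S_0$ is a generalized Schwarz function, and $\Omega\in\QD_0$ by Theorem \ref{thm:LQDSFEEquiv}. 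The principal parts of $S_0$ are exactly those of $h$. The Green's theorem computation in that proof therefore shows that $h$ itself is a quadrature function, after the normalization $h\in\Rat_0(\Omega)$ or $h\in\Rat(\Omega)$ has been checked.

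\emph{Part (2).} First reduce to comparing with Theorem \ref{thm:LQDCEZero}, where $G=C_{\rho_0}^{\Omega\IntComp}$. I would check directly, using Green's theorem and Lemma \ref{lemma:LOGRenormCauchyTrans}, that for $w$ off $\Cl(\Omega)$
$$C_{\rho_0}^{\Omega\setminus\D_r}(w)+\frac{\ln|r|^2}{w}+C_{\rho_0}^{\Omega\IntComp}(w)=\frac{\ln|w|^2}{w}.$$
In the unbounded case this holds in principal value, and by continuity it also holds on $\partial\Omega$. This is the same bookkeeping as in the proof of Theorem \ref{thm:LQDCEZero}, with $\D_r$ removed.

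Then $\Omega\in\QD_0(h;q)$ means that $\frac{\ln|w|^2}{w}\dEquals h+\frac{q}{w}+C_{\rho_0}^{\Omega\IntComp}$. Subtracting the two identities, this is equivalent to
$$F:=C_{\rho_0}^{\Omega\setminus\D_r}+\frac{\ln|r|^2}{w}-h-\frac{q}{w}\dEquals 0\quad\text{on }\partial\Omega.$$
The function $F$ is analytic in $\Omega\IntComp$, continuous up to $\partial\Omega$, and vanishes at $\infty$ (or is bounded there). It therefore vanishes identically on each component of $\Omega\IntComp$ by the maximum principle. Conversely, if the exterior identity holds, the boundary relation follows by continuity. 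That boundary relation makes $S_0:=h+\frac{q}{w}+C^{\Omega\IntComp}_{\rho_0}$ a generalized Schwarz function, so Theorem \ref{thm:LQDSFEEquiv} gives $\Omega\in\QD_0(h+q/w)$. Uniqueness in Theorem \ref{thm:LQDCEZero} then identifies the charge as $q$, which gives $\Omega\in\QD_0(h;q)$.

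\textbf{Main obstacle.} The delicate step is the identity displayed under Part (2) in the unbounded case and for components of $\Omega\IntComp$ that surround or avoid $0$. One must check that the principal-value limits at $\infty$ contribute no extra constant over $w$. That constant would be absorbed silently into $q$, and the claimed normalization with $\ln|r|^2$ must be matched exactly. I would handle it as in Lemma \ref{lemma:LOGRenormCauchyTrans}, by residue computations on large circles.
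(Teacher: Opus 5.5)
Your proposal is correct and follows essentially the paper's route: Lemma~\ref{lemma:LOGRenormCauchyTrans} converts between the exterior Cauchy-transform identity and the boundary coincidence equation, with the Cauchy-kernel test for (1) and Theorem~\ref{thm:LQDCEZero} for (2). The differences are minor. You close the converses through Theorem~\ref{thm:LQDSFEEquiv} rather than running Green's theorem directly as the paper does, and you add the maximum-principle step extending the boundary identity to all of $\Omega\IntComp$, which the paper leaves implicit. Your ``main obstacle'' is not a real one: since $\D_r\subset\Omega$, the lemma applies pointwise on $\Cl(\Omega\IntComp)$, and the integrand is $O(|\xi|^{-3})$ at $\infty$, so the integrals converge absolutely and no stray $1/w$ constant can arise.
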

\noindent See Appendix \ref{subsection:ElectrostaticLQDInterpretation} for the proof.

For example, when $0\notin\Omega$, $\overline{C_{\rho_0}^{\Omega}}$ is precisely the electrostatic field due to the charge density $\rho_0$ on $\Omega$, and $\overline{h}$ is the electrostatic field due to a collection of point charges (and multipoles) at the poles of $h$. This phenomenon is illustrated in Figure \ref{fig:LQDCauchyMotherBody}.

\FloatBarrier
\begin{figure}[H]
\centering
\includegraphics[width=.95\linewidth,height=.27\linewidth]{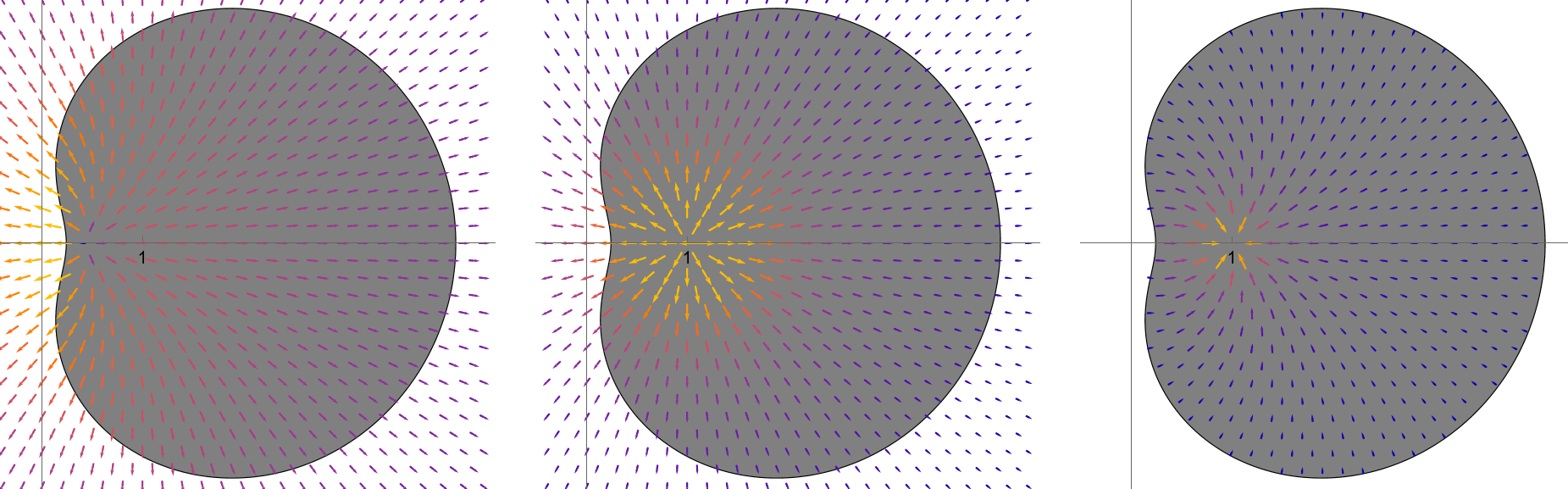}\vspace{.6em}
\caption{The electrostatic field for $\Omega\in\QD_0\left(h\right)$ ($h(w)=\frac{2}{w-1}$) for $\overline{C_{\rho_0}^{\Omega}}$ (left), $\overline{h}$ (center), and $\overline{C_{\rho_0}^{\Omega}}-\overline{h}$ (right).}\label{fig:LQDCauchyMotherBody}\vspace{1em}
\end{figure}

\subsection{Invariance Properties}
The class of log-weighted quadrature domains has a number of invariance properties. In particular, if $a\in\C\setminus\{0\}$, and $k\in\Z_{+}$, then
\begin{enumerate}
    \item Scale invariance (\S\ref{subsubsec:ScalingLaw}):\;\;\;\;\;\; $\Omega\in\QD_0$ $\iff$ $a^{-1}\Omega\in\QD_0$,
    \item Inversion invariance (\S\ref{subsubsec:InversionInvariance}): $\Omega\in\QD_0$ $\iff$ $\Omega^{-1}\in\QD_0$,\footnote{Where $\Omega^{-1}=\{w^{-1}\in\Ch:w\in\Omega\}$.}
    \item Power invariance (\S\ref{subsubsec:PowerInvariance}):\;\;\;\;\; $\Omega\in\QD_0$ $\iff$ $\Omega^{\frac{1}{k}}\in\QD_0$, (assuming $\Omega^{\frac{1}{k}}$ is connected),
\end{enumerate}
where $\Omega^{\frac{1}{k}}:=\{w\in\Ch:w^k\in\Omega\}$ is the full $k$th root preimage of $\Omega$. The specific relationships between the quadrature functions of these domains and their transformed counterparts are given in the \S\ref{subsubsec:ScalingLaw}-\ref{subsubsec:PowerInvariance}. Some of these properties are genuinely new while others are, in some sense, ``inherited'' from invariance properties of classical quadrature domains via Theorem \ref{thm:LQDsFromQDsExponential}. For example, the scale-invariance property of $\QD_0$ can be interpreted as a consequence of the translation invariance of $\QD$ on account of the fact that the exponential map takes addition to multiplication. Of course, LQDs are not generally translation invariant, as this is not a symmetry of the underlying weight $\rho_0(w)=|w|^{-2}$.\\

\noindent We will begin by covering scale invariance and its implications for ``traveling wave'' families of LQDs.

\subsubsection{Scaling Law and Traveling Waves}\label{subsubsec:ScalingLaw}
Fix $\alpha\in\C\setminus\{0\}$ and $\Omega\in\QD_0(h)$, and note that $f\in L_a^1(a^{-1}\Omega;\rho_0)$ $\implies$ $f(a^{-1}w)\in L_a^1(\Omega;\rho_0)$. Applying the quadrature identity, we obtain
\begin{align*}
    \int_{a^{-1}\Omega}\dfrac{f(w)}{|w|^2}dA(w)&=\int_{\Omega}\dfrac{f(a^{-1}w)}{|a^{-1}w|^2}|a|^{-2}dA(w)=\oint_{\partial\Omega}f(a^{-1}w)h(w)dw=\oint_{\partial a^{-1}\Omega}f(w)h(aw)adw.
\end{align*}
Hence, $a^{-1}\Omega\in\QD_0(h(aw)a)$. Carrying out the same argument with $a^{-1}$, we conclude that
\begin{theorem}\label{thm:ScalingLaw}
    For each $a\in\C\setminus\{0\}$ and domain $\Omega$, $\Omega\in\QD_0(h)$ if and only if $a^{-1}\Omega\in\QD_0(h(aw)a)$.
\end{theorem}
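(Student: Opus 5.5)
The plan is a direct change of variables in the quadrature identity, followed by the same argument run with $a$ replaced by $a^{-1}$ to get the converse. Fix $a\in\C\setminus\{0\}$ and $\Omega\in\QD_0(h)$, and set $\widetilde\Omega:=a^{-1}\Omega$.

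\emph{Admissibility of $\widetilde\Omega$.} Since $w\mapsto a^{-1}w$ is a linear homeomorphism of $\Ch$ fixing $0$ and $\infty$, the domain $\widetilde\Omega$ inherits the relevant structural features of $\Omega$:
\begin{itemize}
\item it is rectifiable;
\item it satisfies $\widetilde\Omega=\Int(\Cl(\widetilde\Omega))$;
\item it has $0,\infty\notin\partial\widetilde\Omega$;
\item it is bounded iff $\Omega$ is, and contains $0$ iff $\Omega$ does.
\end{itemize}
The candidate quadrature function is $\widetilde h(w):=a\,h(aw)$. It is rational, with poles at $a^{-1}\cdot(\text{poles of }h)\subset\widetilde\Omega$. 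Moreover, $\widetilde h$ vanishes at $\infty$ whenever $h$ does, so $\widetilde h\in\Rat_0(\widetilde\Omega)$, respectively $\Rat(\widetilde\Omega)$, exactly as required in the definition.

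\emph{Test classes.} The map $f\mapsto f(a^{-1}\,\cdot\,)$ is a bijection from $L_a^1(\widetilde\Omega;\rho_0)$ onto $L_a^1(\Omega;\rho_0)$. To see this, substitute $w=a^{-1}u$, which gives $dA(w)=|a|^{-2}dA(u)$ and $|w|^{-2}=|a|^2|u|^{-2}$. Hence
\[
\int_{\widetilde\Omega}|f(w)||w|^{-2}\,dA(w)=\int_\Omega |f(a^{-1}u)||u|^{-2}\,dA(u).
\]
This shows the weight $\rho_0$ is exactly scale invariant. The same substitution preserves analyticity and continuity up to the boundary. In the unbounded case it also preserves principal values, because $\D_R$ is mapped to $\D_{R/|a|}$ and $R\to\infty$ in both.

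\emph{The computation.} For $f\in L_a^1(\widetilde\Omega;\rho_0)$, I would apply the quadrature identity for $\Omega$ to $g=f(a^{-1}\,\cdot\,)$ and then substitute $u=aw$ on the boundary, which gives $du=a\,dw$ and $\partial\Omega=a\,\partial\widetilde\Omega$ with orientation preserved. This yields
\[
\int_{\widetilde\Omega}\frac{f(w)}{|w|^2}\,dA(w)=\oint_{\partial\widetilde\Omega} f(w)\,a\,h(aw)\,dw,
\]
which is the computation displayed just before the statement. Hence $\widetilde\Omega\in\QD_0(\widetilde h)$.

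\emph{Converse.} Applying the forward direction to $\widetilde\Omega$ with $a^{-1}$ in place of $a$ returns $a\widetilde\Omega=\Omega$ with quadrature function $a^{-1}\widetilde h(a^{-1}w)=h(w)$.

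The only step needing care is the bookkeeping in the admissibility step: that the normalization of $h$ (vanishing at $\infty$ in the bounded case) and the principal-value convention for unbounded domains are preserved. Both follow immediately from the fact that scaling fixes $0$ and $\infty$.
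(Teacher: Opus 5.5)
Your proposal is correct and follows essentially the paper's proof: the paper makes the same change of variables $w\mapsto a^{-1}w$ in the quadrature identity, using that $|w|^{-2}\,dA(w)$ is invariant under scaling, and gets the converse by rerunning the argument with $a^{-1}$ in place of $a$. Your additional checks on admissibility (the normalization of $h$ at $\infty$, the bijection of test classes, and the principal-value convention) spell out details the paper leaves implicit.
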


We say that a family $\{\Omega_t\}_{t}\subseteq\QD_0(h)$ is a traveling wave if there exists $t_0>0$ and a smooth increasing function $\alpha(t)$ such that $\alpha(\infty)=\infty$ and for each $t>t_0$, $\Omega_t=\alpha(t)\Omega_{t_0}$.

\begin{corollary}\label{cor:TravelingWaveV1}
    If $\{\Omega_t\}_{0<t<\infty}\subseteq\QD_0(h)$ is a traveling wave, then it consists of either disks or exterior disks centered at the origin.
\end{corollary}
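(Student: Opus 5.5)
The plan is to use the scaling law (Theorem \ref{thm:ScalingLaw}) to turn the traveling-wave hypothesis into a functional equation for $h$, and then to use essential uniqueness of the quadrature function (Corollaries \ref{cor:LQDQuadFuncUniquenessNoZero} and \ref{cor:LQDQuadFuncUniquenessZero}) to pin $h$ down.

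Write $a=\alpha(t)/\alpha(t_0)>0$. Then $\Omega_t=a\Omega_{t_0}$, and $a$ ranges over an interval $[1,\infty)$, because $\alpha$ is smooth, increasing and unbounded. By Theorem \ref{thm:ScalingLaw}, applied with $a^{-1}$ in place of $a$, we get $a\Omega_{t_0}\in\QD_0(a^{-1}h(w/a))$. We also have $\Omega_t\in\QD_0(h)$.

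\textbf{Case $0\notin\Omega_{t_0}$.} Corollary \ref{cor:LQDQuadFuncUniquenessNoZero} gives
\[
h(w)=a^{-1}h(w/a)\qquad\text{for all } a\ge 1 .
\]

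\textbf{Case $0\in\Omega_{t_0}$.} Corollary \ref{cor:LQDQuadFuncUniquenessZero} gives
\[
h(w)=a^{-1}h(w/a)+\frac{q(a)}{w}\qquad\text{for all } a\ge 1 .
\]

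In both cases the functional equation forces $h(w)=c/w$. A pole $p\neq0$ of $h$ would produce a pole of $a^{-1}h(w/a)$ at $ap$, which moves with $a$. So $h$ has no nonzero poles, and $h$ is a Laurent polynomial $\sum c_n w^n$. Comparing coefficients gives $c_n=a^{-n-1}c_n$ for every $n\ne -1$ and a continuum of $a$, so $c_n=0$ unless $n=-1$. Note also that if $0\notin\Omega$, the normalization in the definition puts the poles of $h$ inside $\Omega$. A pole at $0$ is then impossible, so in that case $h=0$ when $\Omega$ is bounded, and in the unbounded case only a pole at $\infty$ or at $0\in\Omega$ remains.

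Next we identify the domain. Once the quadrature data are rotation-invariant in this sense, the key tool is the generalized Schwarz function from Theorem \ref{thm:LQDSFEEquiv}. We have $S_0=c/w+G$ for the fixed domain, and along the whole family we know $S_0^{(t)}(w)\dEquals \ln|w|^2/w$. I would rather argue directly. From $\Omega_t=a\Omega_{t_0}$, the generalized Schwarz functions are related by
\[
S_0^{(t)}(w)=a^{-1}S_0^{(t_0)}(w/a)+\frac{\ln a^2}{w}.
\]
This follows by substituting into the boundary identity $\ln|w|^2=\ln|w/a|^2+\ln a^2$.

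Consider the bounded case with $0\in\Omega$. Then $wS_0(w)=c+q+wG(w)$ is holomorphic on $\Omega$ with boundary values $\ln|w|^2$. Hence $wS_0(w)-\ln|w|^2$ vanishes on $\partial\Omega$. Equivalently, $u(w):=\Re(wG(w))+\mathrm{const}-\ln|w|^2$ is harmonic on $\Omega\setminus\{0\}$, has a logarithmic singularity at $0$, and vanishes on the boundary. So $u$ is a Green's function with pole at $0$, up to scaling, with an extra holomorphic constraint.

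The main obstacle is to deduce rotational symmetry from the scaling family. My approach is to compare the coincidence equations for $\Omega_t$ and $\Omega_{t_0}$, which share the same $h$. The function $G_t-G_{t_0}$, defined on the smaller domain, equals
\[
\frac{q(a)}{w}+C^{\Omega_{t}\IntComp}_{\rho_0}-C^{\Omega_{t_0}\IntComp}_{\rho_0}.
\]
Using Theorem \ref{thm:ElectrostaticLQDInterpretation}, the annular-type region $\Omega_t\setminus\Omega_{t_0}$ then has weighted Cauchy transform $(q(a)+\text{const})/w$ outside $\Omega_t$. So $\Omega_t\setminus\Omega_{t_0}$, for $t$ near $t_0$, is a null region whose field outside is that of a point charge at $0$. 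Differentiating in $t$ at $t_0$ gives a boundary measure $\frac{d}{dt}\log\alpha\,\Re(w\bar n)|w|^{-2}\,ds$ whose exterior Cauchy transform is $c'/w$, that is, whose Newtonian potential agrees with $\log|w|$ outside $\Omega$. I would then conclude, via harmonic measure from $0$ or a Hele-Shaw/balayage uniqueness argument, that $|w|$ is constant on $\partial\Omega$. Hence $\Omega$ is a centered disk. In the unbounded case one applies inversion, or argues identically, to get an exterior disk.
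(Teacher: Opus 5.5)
Your first half is correct and is exactly the paper's argument: the scaling law plus essential uniqueness give $h(w)=a^{-1}h(w/a)$, resp.\ $h(w)=a^{-1}h(w/a)+q(a)/w$, and these force $h=c/w$. One side remark is confused, though. If $0\notin\Omega_t$, then $c=0$ whether $\Omega_t$ is bounded or not, since $h$ may only have poles in $\Omega_t$ and $c/w$ has no pole at $\infty$.

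\textbf{The gap is in identifying the domain.} Once $h=c/w$ you are essentially finished. If $0\notin\Omega_t$ then $h=0$. If $0\in\Omega_t$, Corollary \ref{cor:LQDQuadFuncUniquenessZero} gives $\Omega_t\in\QD_0(0)$. In both cases $\Omega_t$ is a null LQD, and Theorem \ref{thm:NullLQDClass} gives the conclusion; this is what the paper does.

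You do not use this. Instead you start a differentiation/balayage argument that is not carried out:
\begin{itemize}
\item It presupposes $\Omega_{t_0}\subseteq\Omega_t$ when it speaks of an ``annular-type region'' and of $G_t-G_{t_0}$ ``on the smaller domain''. Dilation by $a>1$ enlarges a domain only if it is star-shaped with respect to $0$, and nothing gives you that.
\item Its decisive step (``via harmonic measure from $0$ or a Hele-Shaw/balayage uniqueness argument'') is only named, not proved.
\item The unbounded case (``apply inversion, or argue identically'') is also not argued.
\end{itemize}
As written, the proposal does not prove that $\Omega_t$ is a centered disk or exterior disk.

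You in fact wrote down the key observation and then passed over it. With $h=c/w$, set $F(w):=wS_0(w)=c+q+wG(w)$.
\begin{itemize}
\item $F$ is holomorphic in $\Omega$, including at $\infty$ in the unbounded case, because the $G$ in Theorems \ref{thm:LQDCENoZero} and \ref{thm:LQDCEZero} vanishes at $\infty$.
\item $F$ is continuous up to $\partial\Omega$.
\item $F$ has the \emph{real} boundary values $\ln|w|^2$.
\end{itemize}
Hence $\Im F$ is harmonic, continuous on $\Cl(\Omega)$, and zero on $\partial\Omega$. By the maximum principle $\Im F\equiv 0$, so $F$ is a real constant. Thus $|w|$ is constant on $\partial\Omega$. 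Connectedness together with $\Omega=\Int(\Cl(\Omega))$ then forces $\Omega$ to be a disk or an exterior disk centered at the origin; a bounded domain omitting $0$ is impossible. This self-contained route even bypasses the citation of Theorem \ref{thm:NullLQDClass}, and it needs no Green's function, no nestedness, and no Hele-Shaw reasoning.
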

\begin{proof}
By Theorem \ref{thm:ScalingLaw}, $\Omega_{t_0}\in\QD_0(h)\cap \QD_0(h(\alpha(t)w)\alpha(t))$. Then by Corollaries \ref{cor:LQDQuadFuncUniquenessNoZero} and \ref{cor:LQDQuadFuncUniquenessZero}, either
\begin{enumerate}
    \item $h(w)=h(\alpha(t)w)\alpha(t)$, when $0\notin\Omega$,
    \item $h(w)=h(\alpha(t)w)\alpha(t)+\frac{q}{w}$ for some $q\in\C$, when $0\in\Omega$.
\end{enumerate}
Applying the assumption that $h$ is rational, and expanding each functional equation in a Laurent series about $w=0$, we find that $h(w)=\frac{c}{w}$ for some $c\in\C$.

In the first case, we find that $c=0$ because $h\in\A(\Omega\IntComp)$ and $0\in\Omega\IntComp$. Hence, $\Omega_t\in\QD_0(0)$ for all $t>t_0$. By Theorem \ref{thm:NullLQDClass}, each $\Omega_t$ must be an exterior disk centered at the origin. Similarly, in the second case, we find that $\Omega_t\in\QD_0\left(\frac{c}{w}\right)$. By Corollary \ref{cor:LQDQuadFuncUniquenessZero} we conclude that $\Omega_t\in\QD_0(0)$. Again by Theorem \ref{thm:NullLQDClass}, each $\Omega_t$ must be a disk centered at the origin.
\end{proof}

\subsubsection{Invariance Under Inversion}\label{subsubsec:InversionInvariance}
Another interesting property of the class of LQDs is their invariance under inversion. This is essentially a consequence of the fact that $(w^{-1})^{\ast}\frac{dA(w)}{|w|^2}=\frac{|-w^{-2}|^2dA(w)}{|w^{-1}|^2}=\frac{dA(w)}{|w|^2}$, where $(w^{-1})^{\ast}$ is the pullback by $w\mapsto w^{-1}$. The following theorem is a direct generalization of the Graven \& Makarov's \cite{GravenMakarov2025} result for non-singular LQDS.
\begin{theorem}\label{thm:LQDInversion}
If $\Omega\subseteq\Ch$ then $\Omega\in\QD_0(h)$ if and only if $\Omega^{-1}\in\QD_0(-h(w^{-1})w^{-2})$.
\end{theorem}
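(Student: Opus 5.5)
The plan is to imitate the scaling-law argument of \S\ref{subsubsec:ScalingLaw}, with the substitution $w\mapsto w^{-1}$ in place of $w\mapsto a^{-1}w$. Write $\iota(w)=w^{-1}$. Since $\iota$ is an involution, it suffices to prove one direction: if $\Omega\in\QD_0(h)$ then $\Omega^{-1}\in\QD_0(\widetilde h)$ with $\widetilde h(w)=-h(w^{-1})w^{-2}$. Applying this to $\Omega^{-1}$ and $\widetilde h$, and using $-\widetilde h(w^{-1})w^{-2}=h(w)$, gives the converse.

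The first step is the test classes. I claim that $f\in L_a^1(\Omega^{-1};\rho_0)$ if and only if $f\circ\iota\in L_a^1(\Omega;\rho_0)$. Analyticity and continuity up to the boundary are clearly preserved. The weighted $L^1$ norm is preserved because $\iota^*(\rho_0\,dA)=\rho_0\,dA$, as computed just before the statement. The normalization constraints also match. Since $\iota$ swaps $0$ and $\infty$, the condition ``vanishes at $0$ when $0\in\Omega$'' corresponds to ``vanishes at $\infty$ when $\infty\in\Omega^{-1}$'', and vice versa. The hypothesis $0,\infty\notin\partial\Omega$ is symmetric. Rectifiability and $\Omega=\Int(\Cl(\Omega))$ transfer, since by Theorem \ref{thm:LQDBoundaryRegularity} $\partial\Omega$ is piecewise $C^1$ and stays away from $0$ and $\infty$.

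The second step is the computation. Changing variables $w=\iota(u)$ in the area integral gives
\begin{align*}
\int_{\Omega^{-1}}\frac{f(w)}{|w|^2}dA(w)=\int_{\Omega}\frac{f(u^{-1})}{|u|^2}dA(u)=\oint_{\partial\Omega}f(u^{-1})h(u)\,du.
\end{align*}
Now substitute $u=w^{-1}$, so that $du=-w^{-2}dw$. The map $\iota$ preserves orientation, so the positively oriented boundary of $\Omega$ (domain on the left) goes to the positively oriented boundary of $\Omega^{-1}$. This yields $\oint_{\partial\Omega^{-1}}f(w)\bigl(-h(w^{-1})w^{-2}\bigr)dw$, which is the desired identity.

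The step I expect to be the main obstacle is checking that $\widetilde h$ lies in the admissible class: $\Rat_0(\Omega^{-1})$ if $\Omega^{-1}$ is bounded, and $\Rat(\Omega^{-1})$ otherwise. The poles of $\widetilde h$ in $\C\setminus\{0\}$ are the images of the poles of $h$, so they lie in $\Omega^{-1}$. The remaining points are $0$ and $\infty$.
\begin{itemize}
\item \emph{Behavior at $0$.} $\widetilde h$ can have a pole at $0$ only through $w^{-2}h(w^{-1})$, which is governed by the behavior of $h$ at $\infty$. If $\infty\notin\Omega$, then $\Omega$ is bounded and $h\in\Rat_0(\Omega)$ vanishes at $\infty$, so $h(w^{-1})=O(w)$ and $\widetilde h=O(w^{-1})$. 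A residue-type pole at $0$ then appears, and it must be absorbed. If $0\notin\Omega^{-1}$ is impossible to arrange, I would instead use the freedom from Corollary \ref{cor:LQDQuadFuncUniquenessZero}, or the Cauchy projection of Lemma \ref{lemma:AnalyticDecompositionLemmaV2}, to drop the principal part at $0$. This is legitimate because $\oint_{\partial\Omega^{-1}}f(w)\,c\,w^{-1}dw=0$ whenever $0\notin\Cl(\Omega^{-1})$.
\item \emph{Behavior at $\infty$.} Here I would check that $\widetilde h(\infty)=0$ whenever $\Omega^{-1}$ is bounded.
\end{itemize}
I would go through the four cases ($0\in\Omega$ or not, $\infty\in\Omega$ or not). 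In each case I would either verify membership directly or replace $\widetilde h$ by its projection, which changes nothing in the quadrature identity.
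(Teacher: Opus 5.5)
Your change-of-variables computation is exactly the paper's proof: invariance of $\rho_0\,dA$ under $\iota$, preservation of boundary orientation, $du=-w^{-2}dw$, and the involution for the converse. The paper stops there and never checks that $-h(w^{-1})w^{-2}$ satisfies the $\Rat/\Rat_0$ normalization built into $\QD_0(\cdot)$. You are right that this is the real issue, but your proposed fix does not prove the statement; it changes it.

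If $\infty\notin\Omega$, then automatically $0\notin\Cl(\Omega^{-1})$, because $0\in\Omega^{-1}$ iff $\infty\in\Omega$, and $\infty\notin\partial\Omega$. So the alternative you hedge against never occurs, and Corollary \ref{cor:LQDQuadFuncUniquenessZero} is not available. Since $h(\infty)=0$, $\widetilde h$ has at most a simple pole at $0$, with residue $\Res{\infty}h$ (residues of the form $h(u)\,du$ are invariant under $u=w^{-1}$). Removing it, as you propose, yields $\Omega^{-1}\in\QD_0\bigl(-h(w^{-1})w^{-2}-\Res{\infty}h\,w^{-1}\bigr)$. The quadrature identity is unchanged, but whenever $\Res{\infty}h\neq0$ the normalized quadrature function is not the one in the statement. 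Calling the projection harmless hides the fact that the conclusion differs.

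This cannot be repaired, because the statement is false there. If $\Omega$ is bounded with $0\notin\Omega$, testing with $f\equiv1$ gives $-\Res{\infty}h=\oint_{\partial\Omega}h\,dw=\int_\Omega|w|^{-2}dA>0$. Since $\Omega^{-1}$ is then non-singular, Corollary \ref{cor:LQDQuadFuncUniquenessNoZero} shows its quadrature function cannot be $-h(w^{-1})w^{-2}$.

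Concretely, for $0<\alpha<\pi^2$ the domain $\Omega=\{|\ln w|^2<\alpha\}$ satisfies $\Omega^{-1}=\Omega\in\QD_0\bigl(\frac{\alpha}{w-1}\bigr)$ by Theorem \ref{thm:NZOnePtLQD}. The formula instead produces $\frac{\alpha}{w-1}-\frac{\alpha}{w}$, and feeding that function back in shows the reverse implication fails too. Even more simply, $\D\in\QD_0(w^{-1})$, but $-w^{-1}\notin\Rat(\D\IntComp)$.

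What your four-case check would actually establish is the following.
\begin{itemize}
\item The forward implication holds verbatim when $\infty\in\Omega$. Then $0\in\Omega^{-1}$, and if $\Omega^{-1}$ is bounded, $h$ is analytic at $0$, so $\widetilde h=O(w^{-2})$ at $\infty$.
\item By the involution, the reverse implication holds verbatim when $0\in\Omega$.
\item Hence the equivalence is correct as written when $0,\infty\in\Omega$, which is the setting of Corollary \ref{cor:LQDInversion}.
\end{itemize}
In all other cases you must either discard the principal part at the origin or read $\QD_0(\cdot)$ as the bare quadrature identity without the normalization. You should state the result in that corrected form.
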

\begin{proof}[Proof of Theorem \ref{thm:LQDInversion}]
If $\Omega\in\QD_0(h)$ then $f\in L_a^1(\Omega^{-1};\rho_0)$ $\implies$ $f(w^{-1})\in L_a^1(\Omega;\rho_0)$. Taking the following integral, inverting, applying the quadrature identity, and inverting back, we obtain
\begin{align*}
\int_{\Omega^{-1}}\dfrac{f(w)}{|w|^2}dA(w)&=\int_{\Omega}\dfrac{f(w^{-1})}{|w|^2}dA(w)\\
&=\oint_{\partial\Omega}f(w^{-1})h(w)dw\\
&=\oint_{\partial\Omega^{-1}}f(w)(-h(w^{-1})w^{-2})dw
\end{align*}
so $\Omega^{-1}\in\QD_0\left(-h(w^{-1})w^{-2}\right)$. This implies the backwards direction as well because $\widetilde{h}(w)=-h(w^{-1})w^{-2}$ $\implies$ $-\widetilde{h}(w^{-1})w^{-2}=h(w)$.
\end{proof}

When $\Omega\in\QD_0(h;q)$ is singular and unbounded (in which case $\Omega^{-1}\in\QD_0(\widetilde{h},\widetilde{q})$ is likewise singular and unbounded), we obtain a direct relation between $q$ and $\widetilde{q}$ in terms of the $\rho_0-$weighted area of $\Omega\IntComp$, $A_{\rho_0}(\Omega\IntComp)=\int_{\Omega\IntComp}|\xi|^{-2}dA(\xi)$.

\begin{corollary}\label{cor:LQDInversion}
If $\Omega\subseteq\Ch$ contains both $0$ and $\infty$, then $\Omega\in\QD_0(h;q)$ if and only if $\Omega^{-1}\in\QD_0(-h(w^{-1})w^{-2};-q-A_{\rho_0}(\Omega\IntComp))$.
\end{corollary}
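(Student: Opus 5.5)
The plan is to work directly with the boundary form of the coincidence equation, Theorem \ref{thm:LQDCEZero}, and push it through the inversion $w\mapsto w^{-1}$. Theorem \ref{thm:LQDInversion} already gives $\Omega^{-1}\in\QD_0(\widetilde{h})$ with $\widetilde{h}(z)=-h(z^{-1})z^{-2}$. So only the charge needs to be identified, together with the fact that the new pair is the one singled out by Definition \ref{def:QD0hq}.

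\textbf{Step 1: the coincidence equation for $\Omega$.} Suppose $\Omega\in\QD_0(h;q)$. Write $A:=A_{\rho_0}(\Omega\IntComp)$, which is finite because $0,\infty\in\Omega$ keeps $\Omega\IntComp$ away from both singularities of $\rho_0$. By Theorem \ref{thm:LQDCEZero},
$$\frac{\ln|w|^2}{w}\dEquals h(w)+\frac{q}{w}+G(w),\qquad G=C_{\rho_0}^{\Omega\IntComp}.$$
Since $\Omega\IntComp$ is compact and avoids $0$, $G$ is analytic near $\infty$ with expansion $G(w)=A/w+O(w^{-2})$. It is also analytic and finite at $w=0$.

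\textbf{Step 2: invert.} Substitute $w=z^{-1}$ for $z\in\partial\Omega^{-1}$ and multiply by $-z^{-2}$. This gives
$$\frac{\ln|z|^2}{z}\dEquals \widetilde{h}(z)-\frac{q}{z}-\frac{G(z^{-1})}{z^2}.$$
Now set $\widetilde{G}(z):=-z^{-2}G(z^{-1})+A/z$, so that
$$\frac{\ln|z|^2}{z}\dEquals \widetilde{h}(z)+\frac{-q-A}{z}+\widetilde{G}(z).$$
It remains to check that $\widetilde{G}\in\A(\Omega^{-1})$:
\begin{itemize}
\item Near $z=0$, the expansion of $G$ at $\infty$ gives $-z^{-2}G(z^{-1})=-A/z+O(1)$, so the $A/z$ exactly cancels the pole.
\item Near $z=\infty$, both terms are $O(z^{-1})$.
\item Elsewhere in $\Omega^{-1}$, analyticity and continuity up to the boundary are inherited from $G$.
\end{itemize}
The uniqueness part of Theorem \ref{thm:LQDCEZero}, applied to $\Omega^{-1}$ with quadrature function $\widetilde{h}$, then identifies the charge as $\widetilde{q}=-q-A$. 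One could additionally verify $\widetilde{G}=C_{\rho_0}^{(\Omega^{-1})\IntComp}$ by a change of variables using the inversion invariance of $\rho_0\,dA$, but uniqueness makes this unnecessary.

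\textbf{Step 3: the converse.} By the inversion invariance of $\rho_0\,dA$, $A_{\rho_0}((\Omega^{-1})\IntComp)=A$. Applying the forward direction to $\Omega^{-1}$ therefore returns charge $-(-q-A)-A=q$ and quadrature function $h$. This yields the reverse implication.

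\textbf{Main obstacle.} The delicate point is the normalization in Definition \ref{def:QD0hq}, which requires $\Res{0}\widetilde{h}=0$. Here $\Res{0}\widetilde{h}=-(\text{coefficient of } z \text{ in } h(z^{-1}))$, which is minus the $w^{-1}$-coefficient of the expansion of $h$ at $\infty$. Because $h$ may have poles at other points of $\Omega$, including $\infty$, this is not automatically zero.

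I would first try to show that this coefficient vanishes, or is forced, by combining $\Res{0}h=0$ with the uniqueness of the combination $h+q/w$. If it does not vanish, the fallback is to interpret the statement through the unique sum $\widetilde{h}+\widetilde{q}/z$ discussed before Definition \ref{def:QD0hq}. Moving $\Res{0}\widetilde{h}$ into the charge would then shift $\widetilde{q}$ accordingly. I would check carefully whether the stated formula implicitly assumes this residue is zero.
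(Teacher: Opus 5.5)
Your argument is the paper's own proof: substitute $w\mapsto w^{-1}$ in the singular coincidence equation, multiply by $-w^{-2}$, and split $-A_{\rho_0}(\Omega\IntComp)/w$ off $-G(w^{-1})w^{-2}$ using $G(w)=A_{\rho_0}(\Omega\IntComp)/w+O(w^{-2})$. Your explicit appeal to uniqueness and your involution argument for the converse make precise what the paper leaves implicit.

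On your obstacle, drop the first plan: $\Res{0}\widetilde{h}=\Res{\infty}h$ is generally nonzero (for $h=\frac{\alpha}{w-1}$ one gets $\widetilde{h}=\frac{\alpha}{w-1}-\frac{\alpha}{w}$). So the corollary must be read with $\widetilde{h}$ un-normalized. This is exactly how the paper uses it after Theorem \ref{thm:UnboundedSingularOnePtLQD}, where it moves $-\frac{\alpha}{w}$ into the charge to obtain $-q-\alpha-A_{\rho_0}(\Omega\IntComp)$.
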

\begin{proof}
That $q\mapsto-q$, follows by substituting $w\mapsto w^{-1}$ in Equation \ref{eqn:LQDCEZero}, and multiplying through by $-w^{-2}$ to obtain
\begin{align*}
    \dfrac{\ln|w|^2}{w}&\dEquals -h(w^{-1})w^{-2}-\dfrac{q}{w}-\dfrac{G(w^{-1})}{w^2}.
\end{align*}
Note that $\frac{G(w^{-1})}{w^2}\in\A(\Omega^{-1})$, unless $\infty\in\Omega$. In this case, $0\in\Omega^{-1}$, and $G(\infty)=0$, so
$$-\dfrac{G(w^{-1})}{w^2}=\dfrac{\Res{\infty}G}{w}-\dfrac{G(w^{-1})+w\Res{\infty}G}{w^2}=\dfrac{\Res{\infty}G}{w}+\widetilde{G}(w),$$
where $\widetilde{G}\in\A(\Omega^{-1})$. Finally, note that
\begin{align*}
    G(w)&=\int_{\Omega\IntComp}\dfrac{|\xi|^{-2}dA(\xi)}{w-\xi}=\dfrac{1}{w}\int_{\Omega\IntComp}|\xi|^{-2}dA(\xi)+O(w^{-2}),
\end{align*}
so $\Res{\infty}G=-A_{\rho_0}(\Omega\IntComp)$.
\end{proof}

An important consequence of inversion invariance is that it gives a bijective correspondence between bounded singular LQDs and unbounded non-singular LQDs. Hence, we can study one family by considering corresponding domains in the other.

\subsubsection{The Power Map}\label{subsubsec:PowerInvariance}
Considering that $\ln|w^k|=k\ln|w|$, another natural transformation to investigate is the power map $w\mapsto w^k$. For a domain $\Omega\subseteq\Ch$, we therefore introduce its full $k$th root preimage, $\Omega^{\frac{1}{k}}:=\{w\in\Ch:w^k\in\Omega\}$, which is $k-$fold rotationally symmetric. The theorem below shows that $\Omega\in\QD_0$ if and only if $\Omega^{\frac{1}{k}}$ is a disjoint union of LQDs (if $\Omega^{\frac{1}{k}}$ is connected, then it is a disjoint union of one LQD). The key point is that the weight $\rho_0(w)=|w|^{-2}$ interacts especially well with the change of variables induced by $w\mapsto w^k$, so the log-weighted quadrature identity is preserved. In particular, this result gives a systematic way to generate symmetric examples and, conversely, to represent symmetric LQDs via simpler ones.

\begin{theorem}\label{thm:PowerInvariance}
    Fix a domain $\Omega$, and $k\in\Z_{+}$. Then $\Omega\in\QD_0$ $\iff$ $\Omega^{\frac{1}{k}}=\bigsqcup_{j}\Omega_j$, where $\Omega_j\in\QD_0$. In this case, there exist rational functions $h$ and $\{h_j\}_j$ such that $\Omega\in\QD_0(h)$, $\Omega_j\in \QD_0(h_j)$, and $\sum_{j}h_j=\frac{1}{k}w^{k-1}h(w^k)$.
\end{theorem}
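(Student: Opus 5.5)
The plan is to work with generalized Schwarz functions, via Theorem \ref{thm:LQDSFEEquiv}, rather than with the quadrature identity directly. The key identity is that for $\pi_k(w)=w^k$,
$$\frac{\ln|w^k|^2}{w^k}\cdot k w^{k-1}=k^2\,\frac{\ln|w|^2}{w},\qquad\text{equivalently}\qquad \frac{1}{k}w^{k-1}\,\frac{\ln|w^k|^2}{w^k}=\frac{\ln|w|^2}{w}.$$
We use throughout that $0\in\Omega^{\frac1k}\iff 0\in\Omega$, that $\partial(\Omega^{\frac1k})=\pi_k^{-1}(\partial\Omega)$ (because $\pi_k$ is open and proper), and hence that $0,\infty\notin\partial\Omega^{\frac1k}$.

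For the forward direction, let $S_0$ be a generalized Schwarz function of $\Omega$ and set $T(w):=\frac1k w^{k-1}S_0(w^k)$. This function is meromorphic on $\Omega^{\frac1k}$ and continuous up to the boundary. By the identity above, $T(w)\dEquals\frac{\ln|w|^2}{w}$ on $\partial\Omega^{\frac1k}$. Restricting $T$ to each connected component $\Omega_j$ of $\Omega^{\frac1k}$ gives a generalized Schwarz function there, and Theorem \ref{thm:LQDSFEEquiv} then gives $\Omega_j\in\QD_0$.

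For the converse, suppose each component $\Omega_j$ has a generalized Schwarz function $T_j$, and glue these into a function $T$ on $\Omega^{\frac1k}$. Let $\zeta=e^{2\pi i/k}$. Since $\Omega^{\frac1k}$ is invariant under rotation by $\zeta$, the function $\zeta T(\zeta w)$ has boundary values $\zeta\frac{\ln|w|^2}{\zeta w}=\frac{\ln|w|^2}{w}$. So we may average and set $\widetilde T(w)=\frac1k\sum_{m}\zeta^mT(\zeta^m w)$, which still has the right boundary values and satisfies $\widetilde T(\zeta w)=\zeta^{-1}\widetilde T(w)$. Then $w\widetilde T(w)$ is $\zeta$-invariant, so it equals $F(w^k)$ for a function $F$ meromorphic on $\Omega$ and continuous to the boundary. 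This descent step is the main point to check carefully; it needs boundedness near $0$ when $0\in\Omega$, but there the function is meromorphic, so it is fine. Set $S_0(u):=kF(u)/u$. Then
$$S_0(u)\dEquals k\,\frac{\ln|w|^2}{u}=\frac{\ln|u|^2}{u},$$
so Theorem \ref{thm:LQDSFEEquiv} gives $\Omega\in\QD_0$.

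For the relation between quadrature functions, take $h$ to be the sum of the principal parts of $S_0$, as in the proof of Theorem \ref{thm:LQDSFEEquiv}. Then $\frac1kw^{k-1}h(w^k)$ is rational, its poles lie in $\Omega^{\frac1k}$, and $T-\frac1kw^{k-1}h(w^k)$ is analytic on $\Omega^{\frac1k}$. Any pole at $\infty$ or $0$ is handled by the same normalization, since the power map preserves these points. Decompose $\frac1kw^{k-1}h(w^k)$ into the principal parts at poles lying in the various $\Omega_j$, and let $h_j$ be the part supported in $\Omega_j$, normalized as in the definition. Then $h_j$ differs from $T|_{\Omega_j}$ by a function in $\A(\Omega_j)$, so $\Omega_j\in\QD_0(h_j)$, and $\sum_j h_j=\frac1kw^{k-1}h(w^k)$.

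The main obstacle is the normalization at $\infty$ in the unbounded case, since membership in $\Rat_0(\Omega_j)$ versus $\Rat(\Omega_j)$ must be arranged consistently. If there is a mismatch in constants or polynomial parts, we will absorb it into the component containing $\infty$, which exists when $\Omega$ is unbounded, and into the free point-charge term $q/w$ of Corollary \ref{cor:LQDQuadFuncUniquenessZero} in the singular case.
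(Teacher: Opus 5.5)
Your forward direction matches the paper's: pull back $S_0$ along $w\mapsto w^k$ with the factor $\frac1k w^{k-1}$. Your converse takes a genuinely different and correct route. The paper first shows, via the electrostatic/Cauchy-transform characterization, that $k$-fold symmetry of $\Omega^{1/k}$ forces $\sum_j h_j=w^{k-1}g(w^k)$ with $g$ rational (Lemma~\ref{lemma:LQDDomainSymmetryToQuadSymmetry}). It then checks the quadrature identity for $\Omega$ with $g$ directly, using the substitution $u=w^k$ and contour bookkeeping over the components. You never touch the quadrature identity. Averaging $\zeta^mT(\zeta^m w)$ makes $w\widetilde T(w)$ rotation invariant, which spares you a boundary-uniqueness argument that $wT(w)$ already is. This function descends to a meromorphic $F$ on $\Omega$, and Theorem~\ref{thm:LQDSFEEquiv} finishes.

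Your route is more self-contained and produces the relation between quadrature functions by explicit construction in both directions. The paper's argument instead proves the relation for the \emph{given} $h_j$. You recover that stronger form too. By Corollaries~\ref{cor:LQDQuadFuncUniquenessNoZero} and~\ref{cor:LQDQuadFuncUniquenessZero}, any other choice of the $h_j$ changes $\sum_j h_j$ only by some $c/w$. Since $c/w=\frac1k w^{k-1}\cdot\frac{kc}{w^k}$, this is absorbed by replacing $h$ with $h+kc/u$.

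Two small repairs are needed. First, in the last step of the converse, use $\widetilde T=\frac1k w^{k-1}S_0(w^k)$ rather than the glued $T$. Second, the mismatch at $\infty$ that you worry about does not occur. Choose $h$ so that $S_0-h=O(u^{-1})$ at $\infty$; this is required anyway for $\oint f\,(S_0-h)\,du=0$ when $f=O(u^{-1})$. Then $\frac1k w^{k-1}(S_0-h)(w^k)=O(w^{-1})$, so the whole polynomial part of the pulled-back $h$ goes to the unique component containing $\infty$, with no stray constants.
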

\emph{When $\Omega^{\frac{1}{k}}$ is connected, Theorem \ref{thm:PowerInvariance} reduces to the statement $\Omega\in\QD_0(h)$ $\iff$ $\Omega^{\frac{1}{k}}\in\QD_0\left(\frac{1}{k}w^{k-1}h(w^k)\right)$.} Before proceeding with the proof, we will need the following lemma (proof in Appendix \ref{proof:LQDDomainSymmetryToQuadSymmetry}) which asserts a that $k-$fold rotational symmetry of an LQD implies a certain symmetry of its quadrature function.
\begin{lemma}\label{lemma:LQDDomainSymmetryToQuadSymmetry}
If $\Omega=\bigsqcup_{j}\Omega_j$, where $\Omega_j\in\QD_0(h_j)$, is a $k-$fold rotationally symmetric disjoint finite union of LQDs, then there exists a rational function $g$ such that $\sum_{j}h_j(w)=:h(w)=w^{k-1}g(w^k)$. 
\end{lemma}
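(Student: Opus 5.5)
The plan is to use the rotation $R(w)=\omega w$, with $\omega=e^{2\pi i/k}$, which preserves both $\Omega$ and the weight $\rho_0$. Pulling the quadrature identity back under $R$ produces a second admissible quadrature function for $\Omega$. The essential uniqueness results (Corollaries \ref{cor:LQDQuadFuncUniquenessNoZero} and \ref{cor:LQDQuadFuncUniquenessZero}) then force $h$ to be invariant under the induced action. That invariance is exactly the statement $h(w)=w^{k-1}g(w^k)$.

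\textbf{Step 1: the union $\Omega$ satisfies a quadrature identity with $h=\sum_j h_j$.} Given $f\in L_a^1(\Omega;\rho_0)$, each restriction $f|_{\Omega_j}$ is admissible for $\Omega_j$. Summing the identities over $j$ gives
\[
\int_\Omega f\rho_0\,dA=\sum_j\oint_{\partial\Omega_j}fh_j\,dw .
\]
To turn the right-hand side into a single integral over $\partial\Omega$ I must discard the cross terms $\oint_{\partial\Omega_j}fh_i\,dw$ for $i\neq j$. These vanish because $h_i$ has its poles in $\Omega_i$, so $fh_i$ is analytic in the component where it is integrated. One point needs care: the components are disjoint open sets, but their closures may touch at double points. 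Since each $h_i$ is continuous on $\partial\Omega_j$, I will simply test the identity component-wise instead. Concretely, I take $f$ supported (analytically) on one component, i.e.\ $f=f_j$ on $\Omega_j$ and $0$ elsewhere.

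\textbf{Step 2: compare $h$ with its rotated version.} For $f$ admissible on $\Omega$, the change of variables $w\mapsto\omega w$ gives
\[
\int_\Omega f(w)\rho_0\,dA=\int_\Omega f(\omega w)\rho_0\,dA=\oint_{\partial\Omega} f(\omega w)h(w)\,dw=\oint_{\partial\Omega} f(w)\,\omega^{-1}h(\omega^{-1}w)\,dw,
\]
using the rotational invariance of both $\Omega$ and $\rho_0$. The same computation applies to each component: $R$ permutes the $\Omega_j$, and each $\Omega_j$ lies in $\QD_0(h_j)$. So $\omega^{-1}h_j(\omega^{-1}w)$ is a quadrature function for $R(\Omega_j)$. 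If $R(\Omega_j)=\Omega_{\sigma(j)}$, Corollary \ref{cor:LQDQuadFuncUniquenessNoZero} (for a component omitting $0$) gives
\[
h_{\sigma(j)}(w)=\omega^{-1}h_j(\omega^{-1}w).
\]
For the component containing $0$, which is at most one and is then $R$-invariant, Corollary \ref{cor:LQDQuadFuncUniquenessZero} gives the same identity up to a term $q/w$. Summing over $j$ then yields
\[
h(\omega w)\,\omega=h(w)+\frac{c}{w}
\]
for some constant $c$.

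\textbf{Step 3: remove the $c/w$ term.} Compare the residues at $0$ on both sides: the left has residue $\Res{0}h$ and the right has $\Res{0}h+c$, so $c=0$. Alternatively, normalize by choosing the representative with $\Res{0}h$ suitable, as in Definition \ref{def:QD0hq}.

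\textbf{Step 4: conclude the form of $h$.} We now have $h(\omega w)\omega=h(w)$, so $wh(w)$ is invariant under $w\mapsto\omega w$. A rational function invariant under rotation by $\omega$ is a rational function of $w^k$: averaging over the cyclic group shows its Laurent expansions contain only powers divisible by $k$. Hence $wh(w)=G(w^k)$ with $G$ rational. Then $h(w)=w^{-1}G(w^k)=w^{k-1}g(w^k)$ where $g(u)=G(u)/u$.

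\textbf{Main obstacle.} I expect the hardest part to be the bookkeeping for the singular component and the normalization conventions for membership in $\Rat_0$ versus $\Rat$, together with the possible permutation of components under $R$. The residue comparison in Step 3 should handle the singular case.
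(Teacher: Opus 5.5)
Your proof is correct, but it reaches $\omega h(\omega w)=h(w)$ by a different route than the paper.

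\textbf{The paper's route.} It uses the electrostatic characterization (Theorem \ref{thm:ElectrostaticLQDInterpretation}) to write each $h_l$, on the exterior of $\Omega$, as a $\rho_0$-weighted Cauchy transform, with a correction $-\frac{q-\ln|r|^2}{w}$ for the component containing $0$. Summing, $h$ becomes the potential of the single rotation-invariant set $\Omega\setminus\D_r$ plus a multiple of $1/w$. The symmetry is read off from the invariance of $\rho_0$ and of that set, and extended by the identity theorem. Finally, $g$ is produced by factoring the numerator and denominator of $w^{1-k}h$ into rotation orbits.

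\textbf{Your route.} You argue component by component. The scaling law (Theorem \ref{thm:ScalingLaw}) transports $h_j$ to a quadrature function for $\Omega_{\sigma(j)}$. Essential uniqueness (Corollaries \ref{cor:LQDQuadFuncUniquenessNoZero} and \ref{cor:LQDQuadFuncUniquenessZero}) identifies it with $h_{\sigma(j)}$, up to a point charge on the component containing $0$. Your residue comparison then removes that charge, because $h\mapsto\omega h(\omega\,\cdot)$ fixes the $w^{-1}$ coefficient; the same fact appears silently in the paper as the invariance of its $1/w$ term.

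\textbf{What each buys.} Your argument needs no Cauchy transforms, no auxiliary radius, and no extension from the exterior. It also makes explicit both the permutation of components and why the non-uniqueness at $0$ is harmless. The paper's argument dispenses with the uniqueness corollaries and handles the whole sum at once.

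\textbf{Minor points.} Your Step 1 and the first display of Step 2 are never used and can be dropped, since everything follows from the componentwise identities. In Step 4, the Laurent expansion at $0$ gives $wh(w)=G(w^k)$ only locally, so add a line for the global rationality of $G$. For example, $G(u):=u^{1/k}h(u^{1/k})$ is independent of the choice of root by the invariance, is meromorphic on $\C\setminus\{0\}$, and is meromorphic at $0$ and $\infty$ because $wh(w)$ is rational; hence $G$ is rational. In its favour, your Laurent viewpoint correctly forces the order of $wh(w)$ at the origin to be a multiple of $k$. The paper's orbit factorization passes over this case, since the orbit of $0$ is a single point.
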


\begin{proof}[Proof of Theorem \ref{thm:PowerInvariance}]
For the forward direction, we suppose that $\Omega\in\QD_0(h)$. By Theorem \ref{thm:LQDSFEEquiv}, $\Omega$ admits a generalized Schwarz function $S_0(w)\dEquals\frac{\ln|w|^2}{w}$. We then consider $w\in\partial\Omega^{\frac{1}{k}}$ ($\implies w^k\in\partial\Omega$), so
$$\widetilde{S_0}(w):=\frac{1}{k}w^{k-1}S_0(w^k)=\frac{1}{k}w^{k-1}\frac{\ln|w^k|^2}{w^k}=\frac{\ln|w|^2}{w}.$$
Fix a connected component $\Omega_j\subseteq\Omega^{\frac{1}{k}}$ and note that at each $w\in\Omega^{\frac{1}{k}}\supseteq\Omega_j$ ($\implies w^k\in\Omega$), $\widetilde{S_0}$ is meromorphic in $\Omega_j$ as the product and composition of a meromorphic function with a polynomial. It also inherits continuity up to the boundary from $S_0$. Hence, $\widetilde{S_0}$ is a generalized Schwarz function for each connected component $\Omega_j$ of $\Omega^{\frac{1}{k}}$, and we conclude that $\Omega^{\frac{1}{k}}$ is a disjoint union of LQDs.\\

For the reverse direction, suppose that $\Omega^{\frac{1}{k}}=\bigsqcup_{j}\Omega_j$, where $\Omega_j\in\QD_0(h_j)$. As $\Omega^{\frac{1}{k}}$ is $k-$fold symmetric about the origin, Lemma \ref{lemma:LQDDomainSymmetryToQuadSymmetry} implies that there exists a rational function $g$ such that
$$\sum_{j}h_j(w)=:h(w)=\frac{1}{k}w^{k-1}g(w^k).$$
If $w^k\in\Omega$, that means that there exists $z\in\Omega^{\frac{1}{k}}$ such that $w^k=z^k$. In particular, $e^{2\pi i\frac{j}{k}}w\in\Omega^{\frac{1}{k}}$ for some $j\in\Z$. As $\Omega^{\frac{1}{k}}$ is $k-$fold rotationally symmetric, this means that $e^{2\pi i\frac{j}{k}}w\in\Omega^{\frac{1}{k}}$ for every $j\in\Z$. In particular, $\Omega^{\frac{1}{k}}$ is a $k-$to$-1$ cover of $\Omega$ under the map $w\mapsto w^k$ (and hence on the boundary as well). Fix $f\in L_a^1(\Omega;\rho_0)$, so that $f(w^k)\in L_a^1(\Omega^{\frac{1}{k}};\rho_0)$. Then,
\begin{align*}
    \int_{\Omega}\dfrac{f(w)}{|w|^2}dA(w)&=\dfrac{1}{k}\int_{\Omega^{\frac{1}{k}}}\dfrac{f(w^k)}{|w^k|^2}k^2|w|^{2(k-1)}dA(w)=k\sum_j\int_{\Omega_j}\dfrac{f(w^k)}{|w|^2}dA(w)=k\sum_j\oint_{\partial\Omega_j}f(w^k)h_j(w)dw
\end{align*}
As each $h_j$ is residue-free in $\Omega_j^c$, we may deform each contour $\partial\Omega_j$ to the contour $\partial\Omega^{\frac{1}{k}}$ without changing the value of the integral. Doing this, we obtain
\begin{align*}
    \int_{\Omega}\dfrac{f(w)}{|w|^2}dA(w)&=k\oint_{\partial\Omega^{\frac{1}{k}}}f(w^k)\sum_jh_j(w)dw=\oint_{\partial\Omega^{\frac{1}{k}}}f(w^k)w^{k-1}g(w^k)dw=\oint_{\partial\Omega}f(w)g(w)dw.
\end{align*}
That is, $\Omega\in\QD_0(g)$. The desired conclusion follows.
\end{proof}

\section{The Inverse and Direct Problems for LQDs}\label{sec:LQDInvDirectProbs}

The general results of §2 make it possible to begin addressing the inverse and direct problems for log-weighted quadrature domains in explicit families. The aim of the present section is to analyze the simplest such families, both in order to obtain concrete classification theorems and to illustrate the new geometric features introduced by the singular weight $\rho_0(w)=|w|^{-2}$. The null case, $\QD_0(0)$, provides the most basic test of the theory: the singularity at the origin induces an additional family of null LQDs.

\subsection{Classification of Null LQDs}\label{subsec:NullLQDs}
An LQD $\Omega$ is referred to as a \emph{null LQD} if $0$ is its quadrature function: $\Omega\in\QD_0(0)$. We prove the following classification theorem for null LQDs.
\begin{theorem}\label{thm:NullLQDClass}
A domain is a null LQD if and only if it is a disk or exterior disk centered at the origin.
\end{theorem}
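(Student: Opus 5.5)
The "if" direction I would settle by direct computation with the generalized Schwarz function (Theorem \ref{thm:LQDSFEEquiv}). On $\partial\D_r$ one has $\frac{\ln|w|^2}{w}\dEquals\frac{\ln r^2}{w}$. For the disk $\D_r$, which contains $0$, the function $S_0(w)=\frac{\ln r^2}{w}$ is meromorphic in $\D_r$ with its only pole at the origin. By Corollary \ref{cor:LQDQuadFuncUniquenessZero}, the principal part $\frac{\ln r^2}{w}$ can be absorbed as a point charge, so $\D_r\in\QD_0(0)$ with $q=\ln r^2$. For the exterior disk $\Omega=\Ch\setminus\Cl(\D_r)$, the same function $\frac{\ln r^2}{w}$ is holomorphic in $\Omega$ and vanishes at $\infty$, so $h=0$. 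Alternatively, Theorem \ref{thm:LQDInversion} transports the first case to the second.

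For the "only if" direction, suppose $\Omega\in\QD_0(0)$. I would first use inversion (Theorem \ref{thm:LQDInversion}, under which $h=0$ maps to $h=0$) to reduce to the case $0\in\Omega$ whenever possible. I would then split into cases according to whether $0$ lies in $\Omega$ and whether $\Omega$ is bounded.

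\textbf{Main case: $0\in\Omega$, $\Omega$ bounded.} By Theorem \ref{thm:LQDCEZero}, $\frac{\ln|w|^2}{w}\dEquals\frac{q}{w}+G(w)$ with $G\in\A(\Omega)$. Multiplying by $w$ gives $\ln|w|^2\dEquals q+wG(w)$. The left side is real, so the holomorphic function $F=q+wG$ has real boundary values on the bounded domain $\Omega$. Hence $\Im F$ is harmonic with zero boundary values (boundary regularity from Theorem \ref{thm:LQDBoundaryRegularity}), so $\Im F\equiv0$ and $F$ is a real constant. Evaluating at $0$ gives $F\equiv q$, so $wG\equiv0$ and $G\equiv0$. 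Therefore $|w|^2=e^q$ on $\partial\Omega$, i.e. $\partial\Omega\subseteq\partial\D_r$ with $r=e^{q/2}$. Since $\Omega$ is a bounded domain containing $0$ with $\Omega=\Int(\Cl(\Omega))$ and boundary contained in the circle, $\Omega=\D_r$.

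\textbf{Case: $0\notin\Omega$, $\Omega$ unbounded.} Inversion maps this to the main case, so $\Omega$ is an exterior disk.

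\textbf{Remaining cases.} These are $0\notin\Omega$ with $\Omega$ bounded, and $0\in\Omega$ with $\Omega$ unbounded; inversion swaps the two. Take $0\notin\Omega$, $\Omega$ bounded. Theorem \ref{thm:LQDCENoZero} gives $\frac{\ln|w|^2}{w}\dEquals G(w)$ with $G\in\A(\Omega)$. Then $\ln|w|^2\dEquals wG(w)$ is real-valued, so by the same argument $wG$ equals a real constant $c$ on $\Omega$. This gives $|w|=e^{c/2}$ on $\partial\Omega$, so $\partial\Omega$ lies on a circle centered at $0$. A bounded domain with boundary on such a circle is the disk, which contains $0$, a contradiction. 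So this case cannot occur, and by inversion neither can the other.

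\textbf{Main obstacle.} The delicate point is justifying that a holomorphic function with real boundary values is constant. This needs the maximum principle for $\Im F$, which requires continuity up to $\partial\Omega$ and, in the unbounded case, behavior at $\infty$. The inversion reduction is chosen precisely to avoid the unbounded case. A further topological step requires care: concluding $\Omega=\D_r$ from $\partial\Omega\subseteq\partial\D_r$. For this I would use connectedness, the fact that $\Omega$ contains a point of $\D_r$, and the condition $\Omega=\Int(\Cl(\Omega))$.
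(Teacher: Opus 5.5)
Your ``remaining cases'' step has a genuine gap: inversion does \emph{not} swap the two configurations you list. A bounded $\Omega$ with $0\notin\Omega$ contains neither $0$ nor $\infty$, and then neither does $\Omega^{-1}$. An unbounded $\Omega$ with $0\in\Omega$ contains both $0$ and $\infty$, and then so does $\Omega^{-1}$. Inversion maps each configuration to itself. So the case $0,\infty\in\Omega$ (complement a compact subset of $\C\setminus\{0\}$) is never actually treated. It is precisely an unbounded case, which your inversion trick was meant to sidestep, and your maximum-principle argument as written only covers bounded $\Omega$. Your handling of the ``neither'' case itself is fine, though the paper's one-liner is quicker: test $f\equiv1$ to get $0<\int_\Omega|w|^{-2}dA=0$.

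The repair is to run your own argument on $\Ch$.
\begin{itemize}
\item If $0,\infty\in\Omega$, Theorem \ref{thm:LQDCEZero} gives $G=C_{\rho_0}^{\Omega\IntComp}$, where $\Omega\IntComp$ is bounded and away from $0$.
\item Hence $wG(w)\to A_{\rho_0}(\Omega\IntComp)$ as $w\to\infty$, so $F=q+wG$ is holomorphic at $\infty$ with $F(\infty)=q+A_{\rho_0}(\Omega\IntComp)$.
\item $\Cl(\Omega)$ is compact in $\Ch$, and $\Im F$ is harmonic in $\Omega$ (including at $\infty$), continuous on $\Cl(\Omega)$, and zero on $\partial\Omega\neq\emptyset$. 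The maximum principle gives $F\equiv F(0)=q$.
\item Comparing with $F(\infty)$ forces $A_{\rho_0}(\Omega\IntComp)=0$. Then $\Omega\IntComp=\emptyset$ and $\Omega=\Int(\Cl(\Omega))=\Ch$, a contradiction.
\end{itemize}

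With this case added, your proof is correct and takes a different route from the paper's. The paper exponentiates the coincidence equation into a classical Schwarz function $(U+A)/w$. It then invokes Aharonov--Shapiro together with Lee--Makarov in the bounded case, and Sakai's null-QD classification plus a remark excluding ellipses in the unbounded case. You instead get $\partial\Omega\subseteq\{|w|=e^{q/2}\}$ directly from the maximum principle, the same device the paper uses for Theorem \ref{thm:NZOnePtLQD}, and reach exterior disks by inversion. Your route is self-contained and needs no external classification theorems.

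The paper's own proof also does not spell out the $0,\infty\in\Omega$ configuration, since Sakai's null-QD theorem does not cover a Schwarz function with a pole at $0$. Making this case explicit is therefore worthwhile.
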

\begin{proof}[Proof of Theorem \ref{thm:NullLQDClass}]\label{proof:NullLQDClass}
Let $\Omega\in\QD_0(0)$. We first note that $\Omega$ must contain at least one of $0,\infty$ because otherwise $1\in L_a^1(\Omega;\rho_0)$ and
$$0<\int_{\Omega}|w|^{-2}dA(w)=\oint_{\partial\Omega}1\cdot0dw=0.$$
Then, by Equations \ref{eqn:LQDCENoZero} and \ref{eqn:LQDCEZero}, we find that $\ln|w|^2\dEquals wG(w)+q$ for some $q\in\C$ ($q=0$ if $0\notin\Omega$) and $G\in\A(\Omega)$.\\\\
If $\Omega$ is unbounded then $G\in\A_0(\Omega)$ and there exists $q'$, $\widetilde{G}\in\A_0(\Omega)$ such that
$$\ln\left|we^{-q'}\right|^2\dEquals \widetilde{G}(w)\;\;\;\;\implies\;\;\;\;\left|we^{-q'}\right|^2\dEquals e^{\widetilde{G}(w)},$$
where $e^{\widetilde{G}(w)}-1\in\A_0(\Omega)$. Hence rearranging the above equation we obtain $U\in\A_0(\Omega)$ and $A\in\C$ for which $|w|^2\dEquals U(w)+A$. Thus if $f\in\A_0(\Omega)$,
$$\int_{\Omega}f(w)dA(w)=\oint_{\partial\Omega}f(w)\overline{w}dw=\oint_{\partial\Omega}f(w)\dfrac{U(w)+A}{w}dw.$$
If $0\notin\Omega$, then this equals $0$. On the other hand, if $0\in\Omega$, then we obtain $f(0)\left(U(0)+A\right)$. In particular, $\Omega$ is either a (classical) null quadrature domain or a (classical) one-point quadrature domain with its quadrature node at $w=0$. 

Aharonov \& Shapiro \cite{AharonovShapiro} showed that a finitely connected bounded domain is a one-point QD if and only if it is a disk. When combined with Lee \& Makarov's \cite{Lee_2015} topological bounds for QDs, we are able to drop the assumption of finite-connectedness. Hence, if $\Omega\in\QD_0(0)$ is bounded, it is a disk centered at $0$.

On the other hand, Sakai \cite{SakaiNullQD} showed that a domain with compact boundary is a null QD with respect to the test class $L_a^1(\Omega)$ if and only if it is an exterior disk or an exterior ellipse. However, when considered with respect to the larger test class $A_0(\Omega)$, only exterior disks are null (that ellipses are not follows from direct computation). Hence, if $\Omega\in\QD_0(0)$ is unbounded, it is an exterior disk centered at $0$.

For the reverse direction, take $r>0$ and $f\in L_a^1(\D_r;\rho_0)$, which implies $f(0)=0$. Integrating against $\rho_0$ and changing to polar coordinates, we obtain
\begin{align*}
    \int_{\D_r}\dfrac{f(w)}{|w|^2}dA(w)&=2\int_{0}^r\oint_{\partial\D}\dfrac{f(sw)}{sw}dw ds=0
\end{align*}
The last equality follows from the fact that $f(0)=0$, so $\frac{f(sw)}{sw}$ is analytic in $\D$. The argument for $(\D_r)\IntComp$ is entirely analogous.
\end{proof}

\subsection{Partial Classification of Non-Singular One-Point LQDs}\label{subsec:BasicOnePTLQDEx}
A classic result due to Aharonov \& Shapiro \cite{AharonovShapiro} states that a finitely connected bounded domain is a one-point classical QD if and only if it is a disk. We prove the following analog of this result for LQDs: A simply connected bounded domain not containing zero is a one-point LQD if and only if it is biholomorphic to a disk via the exponential map $w\mapsto e^{w}$. In particular, we show the following:
\begin{theorem}\label{thm:NZOnePtLQD}
Fix $\alpha,w_0\in\C\setminus\{0\}$. If $\Omega$ is a simply connected bounded domain not containing zero, then $\Omega\in\QD_0\left(\frac{\alpha}{w-w_0}\right)$ if and only if $0<\alpha\leq\pi^2$ and $\Omega=\{w\in\C:|\ln(ww_0^{-1})|^2<\alpha\}$.
\end{theorem}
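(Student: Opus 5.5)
The plan is to pull $\Omega$ back by a logarithm and reduce the statement to the Aharonov--Shapiro classification of one-point classical QDs. First I normalize $w_0=1$ using the scaling law, Theorem \ref{thm:ScalingLaw} with $a=w_0$. This gives $\Omega\in\QD_0\left(\frac{\alpha}{w-w_0}\right)$ if and only if $w_0^{-1}\Omega\in\QD_0\left(\frac{\alpha w_0}{w_0w-w_0}\right)=\QD_0\left(\frac{\alpha}{w-1}\right)$. The target set transforms correctly, since $\{|\ln(ww_0^{-1})|^2<\alpha\}=w_0\{|\ln w|^2<\alpha\}$. So it suffices to treat $w_0=1$.

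\textbf{Forward direction.} Since $\Omega\in\QD_0(h)$ is bounded, $h\in\Rat_0(\Omega)$, so the pole $1$ lies in $\Omega$. Because $\Omega$ is simply connected with $0\notin\Omega$, there is a branch $L$ of the logarithm on $\Omega$ with $L(1)=0$. Since $0\notin\partial\Omega$ and $\partial\Omega$ is piecewise $C^1$ (Theorem \ref{thm:LQDBoundaryRegularity}), $L$ extends continuously to $\Cl(\Omega)$. Set $D:=L(\Omega)$. Then $\exp:D\to\Omega$ is a biholomorphism, and $D$ is a bounded, simply connected, rectifiable domain containing $0$.

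Given $g\in\A(D)$, put $f:=g\circ L\in\A(\Omega)$. Since $|w|^{-2}$ is bounded on $\Cl(\Omega)$, we have $f\in L_a^1(\Omega;\rho_0)$. Changing variables as in the proof of Theorem \ref{thm:LQDsFromQDsExponential}, run in reverse, gives
\begin{align*}
\int_D g\,dA=\int_{\Omega}\frac{f(w)}{|w|^2}dA(w)=\oint_{\partial\Omega}f(w)\frac{\alpha}{w-1}dw=\oint_{\partial D}g(z)\frac{\alpha e^z}{e^z-1}dz .
\end{align*}
Because $\exp$ is injective on $\Cl(D)$, the only zero of $e^z-1$ in $\Cl(D)$ is $z=0$. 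There $\frac{\alpha e^z}{e^z-1}=\frac{\alpha}{z}+(\text{analytic})$, so the right side equals $\alpha g(0)$. Hence $D\in\QD\left(\frac{\alpha}{z}\right)$ is a bounded simply connected one-point QD. By Aharonov--Shapiro, $D$ is a disk centered at $0$, and testing with $g=1$ gives radius $\sqrt{\alpha}$. In particular $\alpha>0$.

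Injectivity of $\exp$ on $D=\D_{\sqrt\alpha}(0)$ forces $\sqrt\alpha\le\pi$. Otherwise the open disk contains $\pm i(\pi-\epsilon')$ for small $\epsilon'>0$, which differ by less than $2\pi$; more precisely it contains two points $z$ and $z+2\pi i$, which have the same image. Since $|\operatorname{Im} z|<\pi$ on $D$, we have $L=\ln$ (the principal branch) on $\Omega$, and so $\Omega=e^D=\{|\ln w|^2<\alpha\}$.

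\textbf{Converse.} For $0<\alpha\le\pi^2$, $\exp$ is injective on $D=\D_{\sqrt\alpha}(0)\in\QD\left(\frac{\alpha}{z}\right)$, since no two points of the open disk differ by a nonzero multiple of $2\pi i$. Theorem \ref{thm:LQDsFromQDsExponential} then gives $e^D\in\QD_0(\widetilde h)$ with $\widetilde h=\AnalyticIn{\frac{\alpha}{w\ln w}}{e^D\IntComp}$. Exactly as in the worked example $e^{\D_2(-1)}$, I write $\frac{1}{\ln w}=\frac{1}{w-1}+(\text{analytic near }\Cl(e^D))$. Then $\frac{\alpha}{w\ln w}$ has principal part $\frac{\alpha}{w-1}$ at $1$, and its other singularity, at $0$, lies outside $\Cl(e^D)$. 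Therefore $\widetilde h=\frac{\alpha}{w-1}$. Finally, $e^D$ is simply connected, bounded, and avoids $0$.

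\textbf{Expected obstacle.} The main point of care is the boundary case $\alpha=\pi^2$. There $\Cl(D)$ contains $\pm i\pi$, which have the same image $-1$, so $\exp$ is not injective on the closure and $e^D$ acquires a double point at $-1$. I need to check that Theorem \ref{thm:LQDsFromQDsExponential} still applies, since it only requires injectivity on the open domain, and that $L$ still extends continuously from each side, so that the contour integrals are legitimate. A secondary point is justifying that every $g\in\A(D)$ arises as $f\circ\exp$ with $f\in L^1_a(\Omega;\rho_0)$. This holds because $0\notin\Cl(\Omega)$, so the weight is harmless.
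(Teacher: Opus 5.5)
Your route differs from the paper's. The paper stays on $\Omega$: from the coincidence equation it builds $\widehat G$, analytic in $\Omega$, with $|\ln w|^2\dEquals\widehat G$. The maximum principle then puts $\partial\Omega$ on a level set of $|\ln w|^2$, and $\alpha\le\pi^2$ is read off from univalence of $w_0e^{z\sqrt{\alpha}}$. That argument is self-contained, while yours outsources the rigidity to Aharonov--Shapiro but makes the branch identification and the bound $\alpha\le\pi^2$ more transparent.

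However, your forward direction has a gap in exactly the extremal case $\alpha=\pi^2$. The statement includes this case, and you cannot exclude it a priori. Your claims that $L$ extends continuously to $\Cl(\Omega)$ and that $\exp$ is injective on $\Cl(D)$ are false there. The domain $\{|\ln w|^2<\pi^2\}$ has a double point at $-1$, where $L\to i\pi$ from one side and $L\to -i\pi$ from the other. Hence $f=g\circ L$ is in general not continuous on $\Cl(\Omega)$ (take $g(z)=z$). So it is not an admissible test function, and the quadrature identity of $\Omega$ cannot be applied to it. The obstruction is continuity at double points, not the weight, so your ``secondary point'' misdiagnoses the issue. What your argument actually gives is $\int_D g\,dA=\alpha g(0)$ only for those $g\in\A(D)$ that take equal values at each pair of boundary points of $D$ identified by $\exp$. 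That is not the hypothesis of Aharonov--Shapiro.

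The repair is to pull back the generalized Schwarz function instead of test functions, since composition in the direction $\Cl(D)\to\Cl(\Omega)$ is always continuous.
\begin{itemize}
\item By Theorem \ref{thm:LQDCENoZero}, $S_0=\frac{\alpha}{w-1}+G$ with $G\in\A(\Omega)$ and $S_0\dEquals\frac{\ln|w|^2}{w}$.
\item Set $S_D(z):=e^zS_0(e^z)-z$. It is continuous on $\Cl(D)$.
\item $S_D$ is meromorphic in $D$ with a single simple pole at $z=0$ of residue $\alpha$. A boundary point of $D$ cannot map to the interior point $1$, so $0$ is the only preimage of $1$ in $\Cl(D)$; this also replaces your appeal to injectivity on $\Cl(D)$.
\item On $\partial D$ we have $S_D(z)=2\Re z-z=\overline{z}$.
\end{itemize}
Green's theorem then gives $\int_D g\,dA=\oint_{\partial D}g\,S_D\,dz=\alpha g(0)$ for every $g\in\A(D)$, and the rest of your argument goes through.

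In the converse you can likewise avoid both the Cauchy projection and the discontinuity of the principal branch at $-1$. For $f\in L_a^1(e^D;\rho_0)$ one has $f\circ\exp\in\A(D)$, so directly $\int_{e^D}f|w|^{-2}dA=\int_D f(e^z)\,dA=\alpha f(1)=\oint_{\partial e^D}f(w)\frac{\alpha}{w-1}dw$.
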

Moreover, $\partial\Omega$ develops a double point at $\alpha=\pi^2$. Here and throughout the paper, $\ln$ denotes the principal branch of the natural logarithm, with branch cut along the negative real axis.

\begin{proof}[Proof of Theorem \ref{thm:NZOnePtLQD}]
Beginning with the reverse direction, consider the bounded domain
$$\Omega=\{w\in\C:\left|\ln(ww_0^{-1})\right|^2<\alpha\}$$
for some $w_0\neq0$ and $\alpha>0$. To solve the direct problem for $\Omega$, we would like to show that $\Omega\in\QD_0$ and determine its quadrature function. We proceed by direct computation of the quadrature identity. Note that if $w\in\partial\Omega$, then $\left|\ln(ww_0^{-1})\right|^2=\alpha$, so
$$\dfrac{\ln|w|^2}{w}=\dfrac{\ln(\overline{ww_0^{-1}})+\ln(w\overline{w_0})}{w}=\dfrac{1}{w}\left(\dfrac{\alpha}{\ln(ww_0^{-1})}+\ln(w\overline{w_0})\right)=\dfrac{1}{w}\dfrac{\alpha w_0}{w-w_0}+G(w)=:S_0(w)$$
for some $G\in\A(\Omega)$. $S_0$ is meromorphic in $\Omega$ and extends continuously to the boundary with boundary values $\frac{\ln|w|^2}{w}$. Hence $S_0$ is a generalized Schwarz function for $\Omega$ so, $\Omega\in\QD_0$ by Theorem \ref{thm:LQDSFEEquiv}. 

We will now compute the associated quadrature function $h$ directly. Note that $0\notin\Omega$ so, by Green's theorem and Cauchy's integral theorem, for each $f\in L_a^1(\Omega;\rho_0)$,
\begin{align*}
    \int_{\Omega}\dfrac{f(w)}{|w|^2}dA(w)&=\oint_{\partial\Omega}f(w)\dfrac{\ln|w|^2}{w}dw\\
    &=\oint_{\partial\Omega}f(w)\left(\dfrac{1}{w}\dfrac{\alpha w_0}{w-w_0}+G(w)\right)dw\\
    &=\oint_{\partial\Omega}f(w)\dfrac{\alpha}{w-w_0}dw+\oint_{\partial\Omega}f(w)\left(G(w)-\dfrac{\alpha}{w}\right)dw\\
    &=\oint_{\partial\Omega}f(w)\dfrac{\alpha}{w-w_0}dw.
\end{align*}
Hence, $\Omega\in\QD_0\left(\frac{\alpha}{w-w_0}\right)$.\\

For the forward direction, suppose that $\Omega\in\QD_0\left(\frac{\alpha}{w-w_0}\right)$ for some $\alpha,w_0\in\C\setminus\{0\}$, and that $\Omega$ is a bounded simply connected domain not containing zero. We will show that $\alpha>0$ and $\partial\Omega=\{w\in\C:|\ln(ww_0^{-1})|^2=\alpha\}$. Firstly, the fact that $\alpha>0$ follows immediately from applying the quadrature identity to $1\in L_a^1(\Omega;\rho_0)$:
\begin{align*}
    0<\int_{\Omega}\dfrac{dA(w)}{|w|^2}&=\oint_{\partial\Omega}\dfrac{\alpha}{w-w_0}dw=\alpha.
\end{align*}

By Theorem \ref{thm:ScalingLaw}, $\Omega\in\QD_0\left(\frac{\alpha}{w-w_0}\right)$ if and only if $w_0^{-1}\Omega\in\QD_0\left(\frac{\alpha}{w-1}\right)$. Hence, we can wlog consider the case in which $w_0=1$. Next, by Theorem \ref{thm:LQDCENoZero}, there exists $G\in\A(\Omega)$ such that $\frac{\ln|w|^2}{w}\dEquals\frac{\alpha}{w-1}+G(w)$. Hence,
\begin{align*}
    \ln|w|^2&\dEquals\dfrac{w\alpha}{w-1}+wG(w)
\end{align*}
As $0\notin\Omega$ is bounded and simply connected, we know that $\Omega\IntComp$ is connected and contains both $0$ and $\infty$. Hence, isolating the simple pole of $\frac{1}{\ln(w)}$ at $w=1$, we find that $\frac{w\alpha}{w-1}=\frac{\alpha}{\ln(w)}+\widetilde{G}(w)$ for some $\widetilde{G}\in\A(\Omega)$. Therefore
\begin{align*}
    \ln(\overline{w})+\ln(w)&\dEquals \frac{\alpha}{\ln(w)}+\widetilde{G}(w)+wG(w)
\end{align*}
Finally, rearranging, we obtain
\begin{align*}
    \overline{\ln(w)}&\dEquals \frac{\alpha}{\ln(w)}+\widetilde{G}(w)+\ln(w)+wG(w).
\end{align*}
Hence, $\widehat{G}(w):=\alpha+\ln(w)(\widetilde{G}(w)+\ln(w)+wG(w))$ is a function analytic in $\Omega$ for which
\begin{align*}
    |\ln(w)|^2&\dEquals \widehat{G}(w).
\end{align*}
Therefore $\Im(\widehat{G})=0$ so, by the maximum principle, $\widehat{G}$ is real-valued and analytic in $\Omega$ and hence constant on $\Omega$. As $\Re(\widehat{G})\geq0$ on $\partial\Omega$, we conclude that $\widehat{G}$ is a non-negative real constant on $\Omega$. In particular, $\exists c\geq0$ such that $|\ln(w)|^2\dEquals c$. Then, applying the quadrature identity calculation from the first part, we find that $c=\alpha$. In particular, this demonstrates that if $\Omega\in\QD_0\left(\frac{\alpha}{w-1}\right)$, then $\partial\Omega\subseteq\{w\in\C:|\ln(w)|^2=\alpha\}=:\Gamma_\alpha$. 

Finally, we must determine which connected component of $\Gamma_\alpha^c$ is $\Omega$. Note that $1\in\Omega$ because $1$ is the location of the unique quadrature node. Furthermore, $|\ln(1)|^2-\alpha=-\alpha<0$, so $\Omega$ must be the connected component of $\Gamma_\alpha^c$ on which $|\ln(w)|^2<\alpha$. This concludes the proof of Theorem \ref{thm:NZOnePtLQD}.
\end{proof}

The Riemann map $\varphi:\D\rightarrow\C$ associated to $\Omega$ such that $\varphi(0)=w_0$ and $\varphi'(0)>0$ is given by $\varphi(z)=w_0e^{z\sqrt{\alpha}}$. We note that $\varphi$ is univalent in $\D$ if and only if $0<\alpha\leq\pi^2$, which is precisely the set of $\alpha$ for which $|\ln(ww_0^{-1})|^2<\alpha$ is simply connected.

\section{Simply Connected Log-Weighted Quadrature Domains}\label{sec:SimplyConnectedLQDs}
This section is primarily concerned with solving the inverse and direct problems for simply connected log-weighted quadrature domains. Recall that the {\it inverse problem} for log-weighted quadrature domains is concerned with recovering an LQD from its quadrature function. Conversely, the {\it direct problem} for LQDs is concerned with determining whether a given domain is an LQD and, if so, identifying its quadrature function. 

In the simply connected setting, domains are represented by their associated Riemann map. Hence, we may reduce the inverse and direct problems to those of characterizing the Riemann map in terms of the quadrature function and vice-versa. In the case of classical quadrature domains, this scheme is aided by the fact that the associated Riemann map is rational: the inverse and direct problems reduce to the determination of the relationship between the finitely many coefficients of the quadrature function and the finitely many coefficients of the Riemann map.

The situation is more complicated for log-weighted quadrature domains. While the Riemann map associated to a simply connected LQD is not necessarily rational, it turns out that it can still be represented in terms of rational functions. In particular, if $\varphi$ is the Riemann map associated to a simply connected domain $\Omega$ and has the {\it inner-outer factorization} $\varphi(z)=\varphi_{\rm in}\varphi_{\rm out}$ (\S\ref{subsec:InnerOuterFactorization}) then
\begin{theorem}\label{thm:SCLQDCharacterization}
$\Omega\in\QD_0$ if and only if $\varphi_{\rm out}$ extends to the exponential of a rational function.
\end{theorem}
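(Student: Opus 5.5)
The plan is to pull the generalized Schwarz function back to the unit disk and observe that, on $\partial\D$, its boundary condition becomes a reflection identity for $\log\varphi_{\rm out}$. Then we invoke Theorem \ref{thm:LQDSFEEquiv} in both directions.

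\textbf{Setup.} Let $\varphi:\D\to\Omega$ be the Riemann map. Since $0,\infty\notin\partial\Omega$, $\varphi$ omits $0$ except possibly at one interior point. It has a pole only if $\infty\in\Omega$. After normalizing so that the special point is $\varphi^{-1}(0)$ or $\varphi^{-1}(\infty)$ at $z=0$, the inner factor is a unimodular multiple of $z^m$ with $m\in\{-1,0,1\}$. Thus $\varphi=z^m\varphi_{\rm out}$, where $\varphi_{\rm out}$ is analytic and zero-free in $\D$. We may therefore fix a single-valued branch $L:=\log\varphi_{\rm out}\in H(\D)$. The key identity is that for $|z|=1$ we have $|z^m|=1$, so
$$\ln|\varphi(z)|^2=\ln|\varphi_{\rm out}(z)|^2=L(z)+\overline{L(z)}=L(z)+L^{\#}(z).$$

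\textbf{Forward direction.} Suppose $\Omega\in\QD_0$. By Theorem \ref{thm:LQDBoundaryRegularity}, $\partial\Omega$ is piecewise $C^1$, so by Carath\'eodory $\varphi$ extends continuously to $\Cl(\D)$. It is nonvanishing and finite on $\partial\D$ because $0,\infty\notin\partial\Omega$, so $L$ extends continuously to $\Cl(\D)$. Let $S_0$ be the generalized Schwarz function from Theorem \ref{thm:LQDSFEEquiv}, and set
$$F(z):=\varphi(z)\,S_0(\varphi(z)).$$
Then $F$ is meromorphic in $\D$ with finitely many poles, and it is continuous up to $\partial\D$. On $\partial\D$ it satisfies $F=L+L^{\#}$. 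Hence the function equal to $F-L$ on $\Cl(\D)$ and to $L^{\#}$ on $\Ch\setminus\D$ is well defined and continuous across $\partial\D$. It is meromorphic off $\partial\D$, so by Morera/Painlev\'e it is meromorphic on all of $\Ch$, i.e.\ rational. Consequently $L^{\#}$, and therefore $L=(L^{\#})^{\#}$, extends to a rational function, and $\varphi_{\rm out}=e^{L}$ as required.

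\textbf{Converse.} Suppose $\varphi_{\rm out}=e^{R}$ with $R$ rational. I read ``extends'' as meaning that $R$ has no poles on $\Cl(\D)$, since $\varphi_{\rm out}$ must be analytic and zero-free there. Then $\varphi=z^me^{R}$ is continuous on $\Cl(\D)$ and $\partial\Omega=\varphi(\partial\D)$ is rectifiable. Also $R^{\#}$ is rational, with poles only at the reflections of the poles of $R$, which lie in $\D$. With $\psi=\varphi^{-1}$, define
$$S_0(w):=\frac{R(\psi(w))+R^{\#}(\psi(w))}{w}.$$
This function is meromorphic in $\Omega$, including at $w=0$ and $w=\infty$ where these lie in $\Omega$. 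It is continuous up to $\partial\Omega$, and by the key identity it equals $\ln|w|^2/w$ there. Theorem \ref{thm:LQDSFEEquiv} then gives $\Omega\in\QD_0$.

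\textbf{Main obstacle.} I expect the hard part to be the regularity bookkeeping rather than the algebra. In the forward direction, we need continuity of $L$ up to $\partial\D$, a consistent branch of the logarithm, and the justification for gluing across $\partial\D$. This is where Theorem \ref{thm:LQDBoundaryRegularity} and the hypothesis $0,\infty\notin\partial\Omega$ are essential. In the unbounded case we must also check that the $z^{-1}$ inner normalization, i.e.\ the pole of $\varphi$ at $\psi(\infty)$, does not disturb the argument. I would verify this directly; if a uniform treatment proves awkward, I would fall back on reducing that case to the bounded one via Theorem \ref{thm:LQDInversion}, which replaces $\varphi$ by $1/\varphi$ and $L$ by $-L$.
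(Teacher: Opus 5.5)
Your converse is essentially the paper's: you build $S_0=(R+R^{\#})\circ\psi/w$ from $\ln|\varphi|^2=R+R^{\#}$ on $\partial\D$ and apply Theorem \ref{thm:LQDSFEEquiv}. Your forward direction, however, takes a genuinely different route, and it is correct.

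\textbf{How the two forward arguments differ.} The paper goes through Theorem \ref{thm:RiemannMapInnerOuterRepresentation}. It rewrites the Poisson formula for $\varphi_{\rm out}$ as $C\exp\bigl(\Phi_{\varphi}^{-1}(\AnalyticIn{\ln|w|^2}{\Omega\IntComp})^{\#}\bigr)$. It then substitutes the coincidence equation $\ln|w|^2=wh(w)+wG(w)+q$ on $\partial\Omega$ (Theorems \ref{thm:LQDCENoZero}, \ref{thm:LQDCEZero}) and evaluates the projection as $wh(w)+\Res{\infty}h$, or $wh(w)$ plus a constant in the unbounded case. Rationality follows because the Faber transform preserves rational functions. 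You instead pull $S_0$ back to $\D$ and reflect. The functions $F-L$ and $L^{\#}$ are continuous up to $\partial\D$ and agree there, so the glued function is meromorphic on $\Ch$ by Morera/Painlev\'e, hence rational.

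\textbf{What each route buys.} Your argument is more elementary and self-contained. It needs no Faber transform, no Poisson representation and no projection identities, and the coincidence equation enters only through the existence of $S_0$. It also treats the cases uniformly. In particular the pole of $\varphi$ is harmless: $S_0$ is meromorphic at $\infty$, so $F=\varphi\cdot(S_0\circ\varphi)$ is meromorphic there, and you never need the inversion fallback. The paper's route hands you the explicit formula for $r$ (Theorem \ref{thm:LQDFTInverseProb}), which drives the examples in Sections \ref{sec:MonomialLQDs}--\ref{sec:OnePointLQDs}. You can read that formula off your gluing as well. Writing $S_0=h+q/w+G$, the only poles of the glued function in $\D$ are those of $(wh)\circ\varphi$. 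Hence $L^{\#}$ is the sum of the principal parts of $(wh)\circ\varphi$ in $\D$ plus a constant, which up to that constant is the inverse Faber transform of Theorem \ref{thm:LQDFTInverseProb}.

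\textbf{Bookkeeping.} Your setup omits the case $0,\infty\in\Omega$, for example the domains of \S\ref{subsubsec:LOGWQDUnboundedOnePtZ}. There only one special point can be placed at $z=0$, so the inner factor is a unimodular multiple of $b_a(z)/z$, not $z^m$. This costs nothing, since you only use that $|\varphi_{\rm in}|=1$ on $\partial\D$ and that $\varphi/\varphi_{\rm in}$ is analytic and zero-free in $\D$.

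You should also note that your $\varphi/\varphi_{\rm in}$ matches the paper's $\varphi_{\rm out}$, which is defined by Lemma \ref{lemma:InnerOuterFactorization} and, in the unbounded case, on $\D\IntComp$ via $\varphi(z)/z$. The two agree up to a unimodular constant, because the log-modulus of yours is harmonic in $\D$ and continuous on $\Cl(\D)$, hence a Poisson integral. In the unbounded case they also differ by the substitution $z\mapsto 1/z$. Neither change affects being the exponential of a rational function.
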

The proof is in \S\ref{subsec:InnerOuterFactorization}, following a more formal discussion of the inner/outer factorization. As we shall see, $\varphi_{\rm in}$ is a product of Blaschke factors, so it is also rational. Combining these facts reduces the inverse and direct problems to the determination of the relationship between the finitely many poles and coefficients of the quadrature function and those of the rational functions comprising the Riemann map.

\subsection{Inner/Outer Function Factorization}\label{subsec:InnerOuterFactorization}
We use only the simplest form of the inner–outer factorization, namely the case of bounded analytic functions that extend continuously to the boundary and have no boundary zeros. In this setting no singular inner factor appears: the inner part is a finite Blaschke product determined by the zeros, and the outer part is given by the Poisson integral formula. We record the corresponding forms on $\D$ and $\D\IntComp$.

\begin{lemma}[Inner/Outer Factorization]\label{lemma:InnerOuterFactorization}\;
\begin{enumerate}[label=(\alph*)]
    \item Suppose $f\in H^\infty(\D)\cap C^0(\Cl(\D))$ is non-vanishing on $\partial\D$. Then $f$ admits a unique factorization $f=f_{\rm in}f_{\rm out}$, where $f_{\rm in}$ is the finite Blaschke product consisting of the roots of $f$ in $\D$, and
    \begin{equation}\label{eqn:InnerOuterFactorizationOuterFormula}
    f_{\rm out}(z)=\exp\left(\dfrac{1}{2}\oint_{\partial\D}\dfrac{\xi+z}{\xi-z}\dfrac{\ln|f(\xi)|^2}{\xi}d\xi\right),\;\;\;z\in\D.
\end{equation}
    \item Suppose $f\in H^\infty(\D\IntComp)\cap C^0(\Cl(\D\IntComp))$ is non-vanishing on $\partial\D\IntComp$. Then $f$ admits a unique factorization $f(z)=f_{\rm in}f_{\rm out}$, where $f_{\rm in}$ is the finite Blaschke product consisting of the roots of $f$ in $\D\IntComp$, and
    \begin{equation}\label{eqn:InnerOuterFactorizationOuterFormulaUnbounded}
    f_{\rm out}(z)=\exp\left(\dfrac{1}{2}\oint_{\partial\D\IntComp}\dfrac{\xi+z}{\xi-z}\dfrac{\ln|f(\xi)|^2}{\xi}d\xi\right),\;\;\;z\in\D\IntComp
    \end{equation}
\end{enumerate}
\end{lemma}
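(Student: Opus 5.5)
The plan for part (a) is to separate off the zeros and then reconstruct what remains from its boundary modulus. Since $f\in C^0(\Cl(\D))$ does not vanish on $\partial\D$, its zeros in $\Cl(\D)$ form a compact set avoiding $\partial\D$. Any accumulation point of zeros would force $f\equiv 0$ by the identity theorem, which is excluded by the boundary condition. So $f$ has finitely many zeros $a_1,\dots,a_n\in\D$, counted with multiplicity. Define the finite Blaschke product
$$f_{\rm in}(z)=\prod_{k=1}^n\dfrac{z-a_k}{1-\overline{a_k}z}.$$
It is analytic on a neighbourhood of $\Cl(\D)$ and satisfies $|f_{\rm in}|=1$ on $\partial\D$. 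Put $g:=f/f_{\rm in}$. After removing the removable singularities, $g$ is analytic and zero-free in $\D$ and continuous and nonvanishing on $\Cl(\D)$, with $|g|=|f|$ on $\partial\D$.

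Next, since $\D$ is simply connected and $g$ has no zeros, there is an analytic branch $L=\ln g$ on $\D$. Its real part $u=\ln|g|$ is harmonic in $\D$ and extends continuously to $\Cl(\D)$, with boundary values $\frac12\ln|f|^2$. By the Poisson/Herglotz representation, $u$ is the real part of
$$F(z)=\frac12\oint_{\partial\D}\dfrac{\xi+z}{\xi-z}\dfrac{\ln|f(\xi)|^2}{\xi}\,d\xi.$$
With the paper's normalization (the factor $2\pi i$ suppressed) we have $\frac{d\xi}{\xi}=d\theta/2\pi$ on $\partial\D$, so $\Re F$ is exactly the Poisson integral of $\frac12\ln|f|^2$. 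Hence $L-F$ is analytic with vanishing real part, and is therefore an imaginary constant $ic$. This gives $f=e^{ic}f_{\rm in}e^{F}$. We absorb the unimodular constant $e^{ic}$ into the Blaschke factor, as is standard since inner factors are defined up to a unimodular constant, and set $f_{\rm out}:=e^{F}$, which is formula \eqref{eqn:InnerOuterFactorizationOuterFormula}.

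For uniqueness, suppose $f=BO=B'O'$, where $B,B'$ are finite Blaschke products with the same zeros and $O,O'$ are given by the same formula. Then $O=O'$ is immediate, and $B=B'$ follows by division. Uniqueness of $O$ among zero-free functions with $\ln|O|$ equal to the Poisson integral comes from the same real-part argument: the difference of the logarithms is an analytic function with vanishing real part, hence constant.

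Part (b) reduces to part (a) via the inversion $z\mapsto 1/z$. Set $\tilde f(z):=f(1/z)$ for $z\in\D$, with $\tilde f(0)=f(\infty)$. Then $\tilde f\in H^\infty(\D)\cap C^0(\Cl(\D))$ is nonvanishing on $\partial\D$, so part (a) applies. Its Blaschke factors at the points $1/a_k$ pull back under $z\mapsto 1/z$ to Blaschke factors for $\D\IntComp$ at the zeros $a_k$ of $f$. Substituting $\xi\mapsto 1/\xi$ in the Herglotz integral reverses orientation and sends $\frac{\xi+z}{\xi-z}$ to $-\frac{\xi+z}{\xi-z}$. These two sign changes cancel once the integral is written over $\partial\D\IntComp$ with its induced (clockwise) orientation, which yields \eqref{eqn:InnerOuterFactorizationOuterFormulaUnbounded}.

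The one step needing real care is this bookkeeping of orientation and signs in (b), together with confirming that the kernel normalization matches the paper's convention of suppressing $2\pi i$. Part (a) is classical and contains no genuine obstacle.
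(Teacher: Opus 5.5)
Your proof is correct. Part (b) is the same reduction the paper uses; the paper reflects via $f^{\#}$ rather than $z\mapsto f(1/z)$, which makes no difference.

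Part (a) is where you genuinely differ. The paper does not prove (a). It invokes the canonical inner--outer factorization for $H^\infty$ (Mashreghi, Theorem 1.12) and observes that the singular inner factor is absent because $f$ has no boundary zeros. You instead prove this special case from scratch:
\begin{itemize}
\item finitely many zeros, by compactness and the identity theorem;
\item division by the finite Blaschke product;
\item a holomorphic logarithm of the zero-free quotient;
\item identification of its real part with the Poisson integral of $\ln|f|$, by uniqueness for the Dirichlet problem with continuous data.
\end{itemize}
The citation buys brevity and the full canonical uniqueness (among all inner/outer factorizations, up to unimodular constants). Your argument is self-contained and shows \emph{why} no singular inner factor can appear: $\ln|f/f_{\rm in}|$ is harmonic and continuous up to $\partial\D$, so it is exactly the Poisson integral of its boundary values, leaving no room for a singular measure.

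Absorbing $e^{ic}$ into $f_{\rm in}$ is also the right reading of the statement. The statement is only accurate up to such a constant, since the formula forces $f_{\rm out}(0)=\exp\bigl(\tfrac{1}{2\pi}\int_0^{2\pi}\ln|f(e^{i\theta})|\,d\theta\bigr)>0$, so $f_{\rm in}$ carries the phase of $f(0)$.

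One correction to the step you flagged. The substitution $\eta=1/\xi$, $\zeta=1/z$ produces three sign effects, not two:
\begin{itemize}
\item the kernel becomes $\frac{\eta+\zeta}{\eta-\zeta}=-\frac{\xi+z}{\xi-z}$;
\item the measure becomes $\frac{d\eta}{\eta}=-\frac{d\xi}{\xi}$;
\item the orientation of the circle reverses.
\end{itemize}
The first two cancel each other. The third is exactly what is absorbed by writing the integral over $\partial\D\IntComp$ with its clockwise orientation. As you phrased it (orientation reversal and kernel sign cancelling, then passing to the clockwise contour), the count would leave a stray minus sign; it is the $d\xi/\xi$ sign you omitted that compensates.

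Your final formula is nevertheless right, as the test $f\equiv e^{a}$ ($a\in\mathbb{R}$) confirms. For $|z|>1$, the only singularity of $\frac{\xi+z}{\xi(\xi-z)}$ inside the unit circle is at $\xi=0$, with residue $-1$. So the normalized clockwise integral equals $+1$, and the formula gives $f_{\rm out}\equiv e^{a}$, as it should.
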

Part (a) is the standard inner/outer factorization in the absence of boundary zeros, so the singular inner factor is absent, and the inner part reduces to a finite Blaschke product (see \cite{MashreghiInnnerFunctions2013}, Theorem 1.12 for a more detailed discussion). Part (b) follows by applying part (a) to the reflection of $f$, $f^{\#}$ on $\D$, then reflecting back.\\

For $\lambda\in\D$, let $$b_{\lambda}(z):=\frac{\overline{\lambda}}{|\lambda|}\frac{z-\lambda}{\overline{\lambda}z-1},$$
and set $b_0(z):=z$. We will use the following elementary properties of $b_\lambda$ repeatedly: $b_\lambda$ has a simple root at $\lambda$, $|b_\lambda|=1$ on $\partial\D$, and $b_\lambda^{\#}(z)=b_\lambda(z)^{-1}$.
\\

We now consider the consequences of this representation when $f$ is a Riemann map. The boundary regularity result established in \S\ref{sec:TheoryOfLQDs} means Lemma 4.1 applies directly. In this case, the preceding integral representation can be rewritten in terms of the Faber transform.
\begin{theorem}\label{thm:RiemannMapInnerOuterRepresentation}
If $\varphi$ is the Riemann map associated to a simply connected domain $\Omega$ with compact piecewise $C^1$ boundary such that $0\notin\partial\Omega$, then $\varphi(z)=\varphi_{\rm in}\varphi_{\rm out}$, where
\begin{equation}\label{eqn:RiemannMapInnerOuterRepresentation}
    \varphi_{\rm out}(z)=C\exp\left(\Phi_{\varphi}^{-1}\left(\AnalyticIn{\ln|w|^2}{\Omega\IntComp}\right)^{\#}(z)\right),
\end{equation}
for some $C\in\C$. If $\Omega$ is bounded, then $\varphi_{\rm in}(z)=1$ when $0\notin\Omega$, and $\varphi_{\rm in}(z)=b_{z_0}(z)$ when $0\in\Omega$. If $\Omega$ is unbounded, then $\varphi_{\rm in}(z)=z$ when $0\notin\Omega$, and $\varphi_{\rm in}(z)=zb_{z_0}(z)$ when $0\in\Omega$. $z_0$ is the unique root of $\varphi$.
\end{theorem}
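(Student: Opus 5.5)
We apply Lemma \ref{lemma:InnerOuterFactorization} to $\varphi$ and then rewrite the Poisson/Herglotz integral in \eqref{eqn:InnerOuterFactorizationOuterFormula} (resp.\ \eqref{eqn:InnerOuterFactorizationOuterFormulaUnbounded}) as a Faber transform.

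\textbf{Inner factor.} Since $\partial\Omega$ is compact, piecewise $C^1$ and avoids $0$, Carath\'eodory's theorem makes $\varphi$ continuous up to $\partial\D$ and nonvanishing there. We treat the two cases separately.
\begin{itemize}
\item Bounded $\Omega$: $\varphi\in H^\infty(\D)\cap C^0(\Cl(\D))$. By injectivity its only possible zero is $z_0=\varphi^{-1}(0)$, which exists exactly when $0\in\Omega$. So $\varphi_{\rm in}=1$ or $\varphi_{\rm in}=b_{z_0}$.
\item Unbounded $\Omega$: $\varphi:\D\IntComp\to\Omega$ with $\varphi(\infty)=\infty$. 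We factor the bounded function $\varphi(z)/z$ on $\D\IntComp$ by part (b). Its zeros are again only $z_0$ when $0\in\Omega$, and the factor $z$ supplies the pole at $\infty$. This gives $\varphi_{\rm in}=z$ or $zb_{z_0}$.
\end{itemize}
Because $|z|=1$ and $|b_{z_0}|=1$ on $\partial\D$, in every case $\ln|\varphi_{\rm out}|^2\dEquals\ln|\varphi|^2$ on $\partial\D$.

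\textbf{Outer factor.} Set $u(\xi)=\ln|\varphi(\xi)|^2$ on $\partial\D$; it is continuous and real. The Herglotz integral $\frac12\oint\frac{\xi+z}{\xi-z}\frac{u(\xi)}{\xi}d\xi$ equals $\AnalyticIn{u}{\D}(z)$ minus half the mean of $u$ (up to the sign and orientation conventions for $\D\IntComp$). Split $u=P_++P_-$ with $P_+=\AnalyticIn{u}{\D}$ and $P_-=\AnalyticIn{u}{\D\IntComp}\in\A_0(\D\IntComp)$. Since $u$ is real and $|\xi|=1$, we have $\overline{u}=u^{\#}$ on $\partial\D$. Hence $P_+^{\#}-P_-$ is analytic in $\D\IntComp$ and, up to a constant, vanishes on $\partial\D$. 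This gives
$$P_+=P_-^{\#}+\mathrm{const}.$$
So $\ln\varphi_{\rm out}=P_-^{\#}+\mathrm{const}$, and it remains to identify $P_-$ with $\Phi_\varphi^{-1}\bigl(\AnalyticIn{\ln|w|^2}{\Omega\IntComp}\bigr)$.

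\textbf{Identifying $P_-$.} Pull $\ln|w|^2$ back by $\varphi$ on the boundary: $u=(\ln|w|^2)\circ\varphi$. Decompose $\ln|w|^2\dEquals A+B$ on $\partial\Omega$ with $A=\AnalyticIn{\ln|w|^2}{\Omega}$ and $B=\AnalyticIn{\ln|w|^2}{\Omega\IntComp}$. Lemma \ref{lemma:AnalyticDecompositionLemmaV2} applies because $\partial\Omega$ is piecewise $C^1$. Then $A\circ\varphi$ is analytic in $\D$, so $\AnalyticIn{u}{\D\IntComp}=\AnalyticIn{B\circ\varphi}{\D\IntComp}$. By the Appendix properties, this last projection is the inverse Faber transform of $B$ (modulo constants, with the normalizations at $\infty$). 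Hence $P_-=\Phi_\varphi^{-1}(B)$ up to an additive constant. Exponentiating, all additive constants combine into the multiplicative constant $C$.

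\textbf{Main obstacle.} The main obstacle is bookkeeping of normalizations rather than the idea itself. Concretely:
\begin{itemize}
\item matching the definition of $\Phi_\varphi^{-1}$ (interior vs.\ exterior transform, domain $\A_0(\D\IntComp)$ vs.\ $\A(\D)$) in the bounded and unbounded cases;
\item checking that $\AnalyticIn{\cdot}{\Omega\IntComp}$ lands in the correct space, e.g.\ that $B$ vanishes at $\infty$ in the bounded case;
\item handling the behaviour at $\infty$ from the extra factor $z$ in the unbounded case, where $\ln|\varphi|^2$ has to be replaced by $\ln|\varphi/z|^2$ on the disk side, which agrees on $\partial\D$.
\end{itemize}
I would verify these against the Appendix's statement that $\Phi_\varphi^{-1}(g)=\AnalyticIn{g\circ\varphi}{\D\IntComp}$ (resp.\ $\D$), and otherwise push any discrepancy into the constant $C$.
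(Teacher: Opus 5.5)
Your proposal is correct and follows essentially the paper's own route: factor via Lemma \ref{lemma:InnerOuterFactorization} (applied to $\varphi(z)/z$ in the unbounded case), and read off the inner factor from the unique zero $z_0$. Then turn the Herglotz integral into the Cauchy projection $\AnalyticIn{\ln|\varphi|^2}{\D\IntComp}$ (resp.\ onto $\D$) after reflection, and finish with the identity $\AnalyticIn{f\circ\varphi}{\D\IntComp}=\Phi_{\varphi}^{-1}\left(\AnalyticIn{f}{\Omega\IntComp}\right)$ (resp.\ its exterior analogue).

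The differences are cosmetic. You get the reflection from the splitting $u=P_++P_-$ and the reality of $u$, rather than by changing variables in the integral. You also re-derive the projection identity rather than citing it. For that step, cite the identity itself or Plemelj for Lipschitz data, since item (1) of Lemma \ref{lemma:AnalyticDecompositionLemmaV2} assumes $f$ holomorphic near $\partial\Omega$, which $\ln|w|^2$ is not.
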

Because $\varphi$ has a simple pole at infinity in the unbounded case, it does not belong to $H^\infty(\D\IntComp)$, so the classical inner-outer factorization of Lemma \ref{lemma:InnerOuterFactorization} does not apply directly. However, the normalized map $f(z) := \varphi(z)/z$ is bounded and non-vanishing on the boundary, so $f \in H^\infty(\D\IntComp) \cap C^0(\Cl(\D\IntComp))$. Therefore, we define the factorization of $\varphi$ by decomposing $f = f_{\rm in}f_{\rm out}$ via Lemma \ref{lemma:InnerOuterFactorization}. We define the \emph{outer factor} of the Riemann map as $\varphi_{\rm out} := f_{\rm out}$ and the \emph{extended inner factor} as $\varphi_{\rm in}(z) := zf_{\rm in}(z)$. The proof of Theorem \ref{thm:RiemannMapInnerOuterRepresentation} is in Appendix \ref{proof:RiemannMapInnerOuterRepresentation}.\\

\noindent We are now prepared to prove Theorem \ref{thm:SCLQDCharacterization}, the central result of this paper.
\begin{proof}[Proof of Theorem \ref{thm:SCLQDCharacterization}]\;\\
We will begin by considering the bounded case: Suppose $\Omega\in\QD_0(h)$ is a simply connected bounded domain with Riemann map $\varphi:\D\rightarrow\Omega$. By the definition of $\QD_0$, $0\notin\partial\Omega$. Hence, by Theorem \ref{thm:RiemannMapInnerOuterRepresentation}, $\varphi=\varphi_{\rm in}\varphi_{\rm out}$, where $\varphi_{\rm in}=1$ when $0\notin\Omega$, $\varphi_{\rm in}=b_{z_0}$ when $0\in\Omega$ ($z_0$ the unique root of $\varphi$ in $\D$), and $\varphi_{\rm out}=Ce^{r^{\#}}$, where
\begin{align*}
r(z)&=\Phi_{\varphi}^{-1}\left(\AnalyticIn{\ln|w|^2}{\Omega\IntComp}\right)(z).
\end{align*}
By the generalized coincidence equation (\ref{eqn:LQDCENoZero}, \ref{eqn:LQDCEZero}), there exists $G\in\A(\Omega)$ and $q\in\C$ ($q=0$ if $0\notin\Omega$) such that $\ln|w|^2\dEquals wh(w)+wG(w)+q$. Substituting this into our formula for $r$, we obtain
\begin{align*}
r&=\Phi_{\varphi}^{-1}\left(\AnalyticIn{wh(w)+wG(w)+q}{\Omega\IntComp}\right).
\end{align*}
Next, $wG(w)+q\in\A(\Omega)$, so $\AnalyticIn{wG(w)+q}{\Omega\IntComp}=0$. It follows from calculus of residues that
$$\AnalyticIn{wh(w)}{\Omega\IntComp}=wh(w)+\Res{\infty}h,$$
Hence,
\begin{align*}
r&=\Phi_{\varphi}^{-1}\left(wh(w)+\Res{\infty}h\right).
\end{align*}
We conclude that $r$ is rational, as the Faber transform of a rational function.

For the reverse direction, suppose $\varphi_{\rm out}=e^{r^{\#}}$ with $r$ rational. Then, as $|\varphi_{\rm in}|\dEquals1$, we find that
$$\ln\left|\varphi\right|^{2}\dEquals r+r^{\#}.$$
Hence,
\begin{align*}
    S_0(w):=\dfrac{\left(r+r^{\#}\right)\circ\psi(w)}{w}
\end{align*}
is meromorphic in $\Omega$, extends continuously to $\partial\Omega$, and satisfies
$$S_0(w)\dEquals\dfrac{\ln|w|^2}{w}.$$
Thus $S_0$ is a generalized Schwarz function for $\Omega$, and Theorem \ref{thm:LQDSFEEquiv} yields $\Omega\in\QD_0$.\\

We will now consider the {\bf unbounded} case: Suppose $\Omega\in\QD_0(h)$ is a simply connected unbounded domain with Riemann map $\varphi:\D\IntComp\rightarrow\Omega$. By Theorem \ref{thm:RiemannMapInnerOuterRepresentation}, $\varphi=\varphi_{\rm in}\varphi_{\rm out}$, where $\varphi_{\rm in}(z)=z$ when $0\notin\Omega$, $\varphi_{\rm in}(z)=zb_{z_0}(z)$ when $0\in\Omega$ ($z_0$ the unique root of $\varphi$ in $\D$), and $\varphi_{\rm out}=Ce^{r^{\#}}$, where
\begin{align*}
r(z)&=\Phi_{\varphi}^{-1}\left(\AnalyticIn{\ln|w|^2}{\Omega\IntComp}\right)(z)
\end{align*}
As $\Omega\in\QD_0(h)$, Theorems \ref{thm:LQDCENoZero} and \ref{thm:LQDCEZero} tell us there exists $G\in\A_0(\Omega)$ and $q\in\C$ ($q=0$ if $0\notin\Omega$) such that $\ln|w|^2\dEquals wh(w)+wG(w)+q$. Substituting this into our formula for $r$, we obtain
\begin{align*}
r&=\Phi_{\varphi}^{-1}\left(\AnalyticIn{wh(w)+wG(w)+q}{\Omega\IntComp}\right).
\end{align*}
By calculus of residues, $\AnalyticIn{wG(w)+q}{\Omega\IntComp}=q-\Res{\infty}G=:q'$. As $wh(w)\in\A(\Omega\IntComp)$, $\AnalyticIn{wh(w)}{\Omega\IntComp}=wh(w)$. We find that
\begin{align*}
r&=\Phi_{\varphi}^{-1}\left(wh(w)\right)+q'.
\end{align*}
Hence $r$ is rational, as the Faber transform of a rational function. The argument for the reverse direction is entirely analogous to the argument in the bounded case. 
\end{proof}

In view of Theorem \ref{thm:SCLQDCharacterization}, it is convenient to adopt the following normalized factorizations of $\varphi$. Under this normalization, $\varphi_{\rm out}$ is independent of whether $\Omega\in\QD_0(h)$ is singular, and is encoded by a single rational function $r$.
\begin{equation}\label{eqn:LQDBoundedInnerOuterFactorization}
\begin{alignedat}{2}
    \varphi(z)&=w_0e^{r^{\#}(z)},\quad&&\text{when }0\notin\Omega,\\
    \varphi(z)&=\dfrac{w_0}{|z_0|}b_{z_0}(z)e^{r^{\#}(z)},\quad&&\text{when }0\in\Omega,
\end{alignedat}
\end{equation}
where the constant out front is chosen for the normalization $\varphi(0)=w_0\neq0$, and $r\in\A_0(\D\IntComp)$. When $\varphi(0)=0$, we instead take $\varphi(z)=cze^{r^{\#}(z)}$, where $\varphi'(0)=c>0$, to the same effect. On the other hand, when $\Omega$ is unbounded, we write
\begin{equation}\label{eqn:LQDUnboundedInnerOuterFactorization}
\begin{alignedat}{2}
    \varphi(z)&=cze^{r^{\#}(z)},\quad&&\text{when }0\notin\Omega,\\
    \varphi(z)&=c|z_0|zb_{z_0}(z)e^{r^{\#}(z)},\quad&&\text{when }0\in\Omega,
\end{alignedat}
\end{equation}
where the constant out front is chosen for the normalization $\varphi'(\infty)=c>0$ (the conformal radius of $\Omega$), and $r\in\A(\D)$, with $r(0)=0$.

\subsection{Faber Transform Formulae for Log-Weighted Quadrature Domains}
Under the normalizations (\ref{eqn:LQDBoundedInnerOuterFactorization}, \ref{eqn:LQDUnboundedInnerOuterFactorization}), Theorem 4.1 reduces the simply connected inverse and direct problems to a single question: how is the rational function $r$ related to the quadrature function $h$? The next theorem answers this explicitly via the Faber transform.

\begin{theorem}\label{thm:LQDFTInverseProb}
Let $\Omega\in\QD_0(h)$ be a simply connected domain with Riemann map $\varphi$. 
\begin{enumerate}
    \item If $\Omega$ is bounded, then $\varphi$ admits the factorization \ref{eqn:LQDBoundedInnerOuterFactorization}, where $r$ is given by
    \begin{equation}\label{eqn:LQDBoundedFTInverseProbFormula}
        r(z)=\Phi_{\varphi}^{-1}\left(wh(w)+\Res{\infty}h\right)(z),
    \end{equation}
    \item If $\Omega$ is unbounded, then $\varphi$ admits the factorization \ref{eqn:LQDUnboundedInnerOuterFactorization}, where $r$ is given by
    \begin{equation}\label{eqn:LQDUnboundedFTInverseProbFormula}
        r(z)=\Phi_{\varphi}^{-1}\left(wh(w)\right)(z)-\Phi_{\varphi}^{-1}\left(wh(w)\right)(0).
    \end{equation}
\end{enumerate}
\end{theorem}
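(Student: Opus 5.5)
The plan is to reuse the computation already carried out in the proof of Theorem \ref{thm:SCLQDCharacterization}, and then pin down the additive constants and normalizations so they match (\ref{eqn:LQDBoundedInnerOuterFactorization}) and (\ref{eqn:LQDUnboundedInnerOuterFactorization}). We start from Theorem \ref{thm:RiemannMapInnerOuterRepresentation}, which gives $\varphi=\varphi_{\rm in}\varphi_{\rm out}$ with $\varphi_{\rm out}=C\exp\bigl(\widetilde r^{\#}\bigr)$. Here
$$\widetilde r:=\Phi_{\varphi}^{-1}\bigl(\AnalyticIn{\ln|w|^2}{\Omega\IntComp}\bigr),$$
and $\varphi_{\rm in}$ is $1$, $b_{z_0}$, $z$, or $zb_{z_0}$ according to the case. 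Next, the coincidence equations (Theorems \ref{thm:LQDCENoZero}, \ref{thm:LQDCEZero}) give $\ln|w|^2\dEquals wh(w)+wG(w)+q$. Inserting this into the Cauchy projection, exactly as in the proof of Theorem \ref{thm:SCLQDCharacterization}, yields two cases:
$$\widetilde r=\Phi_\varphi^{-1}\bigl(wh(w)+\Res{\infty}h\bigr)\quad\text{(bounded)},\qquad \widetilde r=\Phi_\varphi^{-1}(wh(w))+q'\quad\text{(unbounded)}.$$

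The remaining work is bookkeeping of constants. In the bounded case the interior Faber transform maps $\A_0(\D\IntComp)\to\A_0(\Omega\IntComp)$. We check that $wh(w)+\Res{\infty}h$ lies in $\A_0(\Omega\IntComp)$: it is analytic off the poles of $h$, and since $h\in\Rat_0(\Omega)$ we have $h(w)=-\Res{\infty}h/w+O(w^{-2})$, so $wh+\Res{\infty}h\to0$ at $\infty$. Hence $\widetilde r\in\A_0(\D\IntComp)$, matching the requirement $r\in\A_0(\D\IntComp)$ in (\ref{eqn:LQDBoundedInnerOuterFactorization}). The multiplicative constant $C$, together with the unimodular constant in $b_{z_0}$, is then absorbed into the prefactor. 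We fix it by evaluating at $z=0$, using $r^{\#}(0)=\overline{r(\infty)}=0$: this gives $C=w_0$ when $0\notin\Omega$. When $0\in\Omega$ we have $b_{z_0}(0)=|z_0|$, which gives $w_0/|z_0|$; when $\varphi(0)=0$ we have $b_0(z)=z$, and matching $\varphi'(0)$ gives the constant $c$. So $r=\widetilde r$, which is (\ref{eqn:LQDBoundedFTInverseProbFormula}).

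In the unbounded case the exterior Faber transform maps $\A(\D)\to\A(\Omega\IntComp)$, so $\widetilde r\in\A(\D)$, but its value at $0$ is not yet normalized. We subtract the constant $\widetilde r(0)$ and absorb $e^{\overline{\widetilde r(0)}}$ into the constant in front. The unknown $q'$ cancels in this subtraction, since
$$\widetilde r-\widetilde r(0)=\Phi_\varphi^{-1}(wh)-\Phi_\varphi^{-1}(wh)(0).$$
This is exactly (\ref{eqn:LQDUnboundedFTInverseProbFormula}), and it satisfies $r(0)=0$. The prefactor is then identified with $c$ (respectively $c|z_0|$) by matching $\varphi'(\infty)=c>0$. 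Here we use $r^{\#}(\infty)=\overline{r(0)}=0$ and $zb_{z_0}(z)\sim z/|z_0|$ as $z\to\infty$ (for $z_0\neq0$). Positivity of the conformal radius forces the remaining unimodular constant to be $1$.

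The main obstacle I expect is confirming that the residue computations and the domains of the Faber transforms line up. Specifically, in the unbounded case $\AnalyticIn{wh(w)}{\Omega\IntComp}=wh(w)$ requires $wh$ to be analytic on $\Cl(\Omega\IntComp)$. This holds because $h\in\Rat(\Omega)$ and $0\in\Omega\IntComp$ when $0\notin\Omega$. When $0\in\Omega$, a possible $1/w$ term in $h$ becomes a constant in $wh$ and is harmless. So the only genuine care needed is tracking constants, and each of them is killed either by the normalization of $r$ or by the prefactor.
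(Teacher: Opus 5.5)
Your proposal is correct and follows essentially the same route as the paper. It starts from Theorem \ref{thm:RiemannMapInnerOuterRepresentation} together with the coincidence-equation computation in the proof of Theorem \ref{thm:SCLQDCharacterization}. In the bounded case it fixes the prefactor by evaluating at $z=0$, using $r^{\#}(0)=0$ and $b_{z_0}(0)=|z_0|$. In the unbounded case it subtracts $\widetilde r(0)$, which removes $q'$, and then matches $\varphi'(\infty)=c$. Your explicit check that $wh(w)+\Res{\infty}h$ vanishes at $\infty$, so that the interior inverse Faber transform applies, is a detail the paper leaves implicit.
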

\noindent Inverting our formulae for $r$, we obtain a formula for the quadrature function $h$ in terms of $r$:
\begin{equation}\label{eqn:LQDhFormula}
    h(w)=\dfrac{\Phi_{\varphi}\left(r\right)(w)-C}{w}
\end{equation}
for some $C\in\C$. Note that if $0\notin\Omega$, then $C$ is determined by the requirement that $h$ has no pole at $0$. On the other hand if $0\in\Omega$ then $C$ is arbitrary (Corollary \ref{cor:LQDQuadFuncUniquenessZero}).\\

Theorem \ref{thm:LQDFTInverseProb} gives the explicit relationship between the Riemann map (via $r$) and the quadrature function $h$. In particular, once $h$ is known, the formulae above recover $r$, and hence $\varphi$.
\begin{proof}[Proof of Theorem \ref{thm:LQDFTInverseProb}]\;\\
{\bf Bounded case:} Suppose $\Omega\in\QD_0(h)$ is bounded with Riemann map $\varphi:\D\rightarrow\Omega$. By Theorem \ref{thm:RiemannMapInnerOuterRepresentation} and the proof of Theorem \ref{thm:SCLQDCharacterization}, there exists $C\in\C$, and a rational function $r$ satisfying
$$r=\Phi_{\varphi}^{-1}\left(wh(w)+\Res{\infty}h\right)$$
such that
$$\varphi(z)=C\varphi_{\rm in}(z)e^{r^{\#}(z)},$$
where $\varphi_{\rm in}$ is either $1$, a Blaschke factor $b_{z_0}$, or $z$, when $0\notin\Omega$ $\varphi(0)\neq0$, and $\varphi(0)=0$ respectively. Since
$$\Phi_{\varphi}^{-1}:\A_0(\Omega\IntComp)\rightarrow\A_0(\D\IntComp)$$
in the bounded case, we have that $r\in\A_0(\D\IntComp)$, hence $r^{\#}\in\A(\D)$ and $r^{\#}(0)=0$. Hence the constant $C$ is determined by the normalization at $z=0$: If $\varphi(0)=w_0\neq0$, then
$$C=w_0,\;\;\;\text{ or }\;\;\; C=\dfrac{w_0}{b_{z_0}(0)}=\dfrac{w_0}{|z_0|},$$
when $0\notin\Omega$ and $0\in\Omega$ respectively. If $\varphi(0)=0$, then the normalization implies
$$\varphi(z)=cze^{r^{\#}(z)},$$
where $c=\varphi'(0)>0$. Hence $\varphi$ takes the normalized form given in Equation \ref{eqn:LQDBoundedInnerOuterFactorization}.\\

{\bf Unbounded case:} Suppose $\Omega\in\QD_0(h)$ is unbounded with Riemann map $\varphi:\D\IntComp\rightarrow\Omega$ normalized by $\varphi(\infty)=\infty$ and $\varphi'(\infty)=c>0$. By Theorem \ref{thm:RiemannMapInnerOuterRepresentation} and the proof of Theorem \ref{thm:SCLQDCharacterization}, there exists $C\in\C$, and a rational function $\widetilde{r}$ such that
$$\widetilde{r}=\Phi_{\varphi}^{-1}\left(wh(w)\right)+q'$$
for some $q'\in\C$, and
$$\varphi(z)=C\varphi_{\rm in}(z)e^{\widetilde{r}^{\#}(z)},$$
where $\varphi_{\rm in}$ is either $z$ or $zb_{z_0}(z)$, when $0\notin\Omega$ and $0\in\Omega$ respectively. Since
$$\Phi_{\varphi}^{-1}:\A(\Omega\IntComp)\rightarrow\A(\D)$$
in the unbounded case, we have that $\widetilde{r}\in\A(\D)$. Subtracting the constant $\widetilde{r}(0)$, we obtain a rational function 
$$r(z)=\widetilde{r}(z)-\widetilde{r}(0)=\Phi_{\varphi}^{-1}\left(wh(w)\right)(z)-\Phi_{\varphi}^{-1}\left(wh(w)\right)(0),$$
so that $r(0)=0$. Absorbing the corresponding multiplicative constant in $C$ and imposing the normalization $\varphi'(\infty)=c>0$, we obtain the precise forms of Equation \ref{eqn:LQDUnboundedInnerOuterFactorization}.
\end{proof}

Theorem \ref{thm:LQDFTInverseProb} and Equations \ref{eqn:LQDBoundedFTInverseProbFormula}, \ref{eqn:LQDUnboundedFTInverseProbFormula}, and \ref{eqn:LQDhFormula} will be used extensively throughout the remainder of the manuscript. In particular, we will apply these formulae to address the inverse and direct problems for several different classes of simply connected LQDs.

\subsection{Example: Constant Log-Weighted Quadrature Domains}\label{subsec:ConstantLQDExample}
In this section, we provide a classification of simply connected ``constant'' LQDs: $\Omega\in\QD_0(\alpha)$, $\alpha\in\C$. Note that $\Omega$ must be unbounded because $\infty$ is a quadrature node, so there are two cases to consider: (1) $0\notin\Omega$ and (2) $0\in\Omega$. Let $\varphi:\D\IntComp\rightarrow\Omega$ be the associated Riemann map. Graven and Makarov \cite{GravenMakarov2025} demonstrated that if $0\notin\Omega$, then $\varphi(z)=cze^{\overline{\alpha}cz^{-1}}$, where $0<c<|\alpha|^{-1}$ is the conformal radius of $\Omega$. Thus we will restrict our attention to the case in which $0\in\Omega$. 

\begin{theorem}
Fix $\alpha\neq0$ and a simply connected domain $\Omega$ containing zero. Then $\Omega\in\QD_0(\alpha)$ if and only if it admits a Riemann map of the form
\begin{equation}\label{eqn:ZConstantLQDRiemannMap}
    \varphi(z)=c|z_0|zb_{z_0}(z)e^{\overline{\alpha}cz^{-1}}.
\end{equation}
\end{theorem}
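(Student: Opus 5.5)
The plan is to use the unbounded case of Theorem \ref{thm:LQDFTInverseProb} for the forward direction, and the generalized Schwarz function criterion (Theorem \ref{thm:LQDSFEEquiv}) for the converse. The quadrature function $h\equiv\alpha$ is so simple that the Faber transform can be inverted by inspection. Throughout, $\varphi:\D\IntComp\rightarrow\Omega$ is normalized by $\varphi(\infty)=\infty$ and $\varphi'(\infty)=c>0$, and $\psi=\varphi^{-1}$. Since $0\in\Omega$, $\varphi$ has a unique root $z_0$.

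\textbf{Forward direction.} Suppose $\Omega\in\QD_0(\alpha)$. Theorem \ref{thm:LQDFTInverseProb}(2) gives the factorization \ref{eqn:LQDUnboundedInnerOuterFactorization} in the singular case, $\varphi(z)=c|z_0|zb_{z_0}(z)e^{r^{\#}(z)}$, with
$$r=\Phi_{\varphi}^{-1}(\alpha w)-\Phi_{\varphi}^{-1}(\alpha w)(0).$$
It remains to compute $\Phi_\varphi^{-1}(\alpha w)$.

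1. Take the candidate $\alpha c z\in\A(\D)$ and compute $\Phi_\varphi(\alpha cz)=\AnalyticIn{\alpha c\,\psi(w)}{\Omega\IntComp}$.
2. The function $\psi$ is analytic in $\Omega$ apart from its simple pole at $\infty$, with expansion $\psi(w)=w/c+b_0+O(w^{-1})$.
3. Since $\Omega\IntComp$ is bounded, the Cauchy projection onto $\Omega\IntComp$ discards the part of $\alpha c\psi$ that is analytic in $\Omega$ and vanishes at $\infty$. What remains is the polynomial part at $\infty$, so
$$\Phi_\varphi(\alpha cz)=\alpha w+\alpha c\,b_0.$$
4. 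Constants are fixed by $\Phi_\varphi$ up to a constant, and the normalization $r(0)=0$ removes that constant. Hence $r(z)=\alpha cz$, and therefore $r^{\#}(z)=\overline{\alpha}cz^{-1}$, which gives exactly \eqref{eqn:ZConstantLQDRiemannMap}.

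I expect this step to be the main obstacle. One must check that the constants are handled consistently with the normalization $r(0)=0$, and that the Cauchy projection convention in the unbounded setting really keeps the polynomial part at $\infty$. I would verify this with the residue computation already used in the proof of Theorem \ref{thm:SCLQDCharacterization}.

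\textbf{Converse.} Suppose $\varphi$ has the form \eqref{eqn:ZConstantLQDRiemannMap}, so in particular it is univalent onto $\Omega$. On $\partial\D$ we have $|zb_{z_0}(z)|=1$, so
$$\ln|\varphi(z)|^2\dEquals\ln(c^2|z_0|^2)+\overline{\alpha}cz^{-1}+\alpha c\overline{z^{-1}}=\ln(c^2|z_0|^2)+r^{\#}(z)+r(z),$$
with $r(z)=\alpha cz$; here we used $\overline{z^{-1}}=z$ on $\partial\D$. Define
$$S_0(w):=\frac{\ln(c^2|z_0|^2)+\alpha c\,\psi(w)+\overline{\alpha}c\,\psi(w)^{-1}}{w}.$$

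1. The map $\psi$ is analytic in $\Omega\setminus\{\infty\}$ and continuous up to $\partial\Omega$.
2. Since $|\psi|>1$ on $\Omega$, the function $\psi^{-1}$ is analytic there.
3. Hence $S_0\in\M(\Omega)$, with poles only at $0$ and $\infty$, and $S_0$ is continuous up to the boundary.
4. On the boundary, $S_0(w)\dEquals\ln|w|^2/w$.

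So $S_0$ is a generalized Schwarz function, and Theorem \ref{thm:LQDSFEEquiv} shows $\Omega\in\QD_0$.

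To identify the quadrature function, use formula \eqref{eqn:LQDhFormula}: $h=(\Phi_\varphi(r)-C)/w=(\alpha w+\alpha cb_0-C)/w$. This equals $\alpha$ plus a point charge at $0$. By Corollary \ref{cor:LQDQuadFuncUniquenessZero}, the charge may be discarded, so $\Omega\in\QD_0(\alpha)$.

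As an alternative check, the principal part of $S_0$ at $\infty$ is the constant $\alpha c\cdot(1/c)=\alpha$, coming from $\psi(w)/w\to 1/c$. This matches $h=\alpha$ directly, modulo $q/w$.
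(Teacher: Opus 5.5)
Your proposal is correct and follows the paper's proof. The forward direction is the same application of Theorem~\ref{thm:LQDFTInverseProb}(2) with $\Phi_{\varphi}^{-1}(\alpha w)=\alpha W_1=\alpha(cz+f_0)$. In the converse you unroll the reverse direction of Theorem~\ref{thm:SCLQDCharacterization} (the explicit generalized Schwarz function together with Theorem~\ref{thm:LQDSFEEquiv}), and then identify $h$ via Equation~\ref{eqn:LQDhFormula} and Corollary~\ref{cor:LQDQuadFuncUniquenessZero}, exactly as the paper does.

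One small correction: $S_0$ has no pole at $\infty$, since it tends to $\alpha$ there. That limiting constant is precisely the term the unbounded normalization assigns to $h$, so your alternative check is right, but ``poles only at $0$ and $\infty$'' should read ``at most a simple pole at $0$''.
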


\begin{proof}
By Equation \ref{eqn:LQDUnboundedInnerOuterFactorization}, we know that $\varphi(z)=c|z_0|zb_{z_0}(z)e^{r^{\#}(z)}$, where $z_0$ is the unique root of $\varphi$. Moreover, by Equation \ref{eqn:LQDUnboundedFTInverseProbFormula},
$r(z)=\Phi_{\varphi}^{-1}\left(w\alpha\right)(z)-\Phi_{\varphi}^{-1}\left(w\alpha\right)(0)$. By the linearity of the Faber transform and Equation \ref{eqn:FaberPolyFormulae}, we obtain
$$r(z)=\alpha W_1(z)-\alpha W_1(0)=\alpha cz+\alpha f_0-\alpha f_0,$$
where $W_j(z)=\Phi_{\varphi}^{-1}(w^j)$ is the $j$th inverse Faber polynomial (\S\ref{subsec:FaberTransform}). Hence,
$$\varphi(z)=c|z_0|zb_{z_0}(z)e^{\overline{\alpha}cz^{-1}}.$$

In particular we have shown that if $0\in\Omega\in\QD_0(\alpha)$ is simply connected, then it admits a Riemann map of the above form (Equation \ref{eqn:ZConstantLQDRiemannMap}). On the other hand, by Theorem \ref{thm:SCLQDCharacterization}, we know that the image $\Omega$ of the above $\varphi$ is in $\QD_0(h)$ for some $h$. Hence, as $\Omega$ is unbounded, Equation \ref{eqn:LQDhFormula} tells us there exists $C\in\C$ such that
\begin{align*}
    h(w)&=\dfrac{\Phi_{\varphi}\left(\alpha cz\right)(w)-C}{w}=\dfrac{\alpha c\left(\frac{w}{c}-\frac{f_0}{c}\right)-C}{w}=\alpha-\dfrac{\alpha f_0+C}{w}
\end{align*}
Thus $\Omega\in\QD_0\left(\alpha+\frac{q}{w}\right)$ for some $q\in\C$. We conclude by Corollary \ref{cor:LQDQuadFuncUniquenessZero} that $\Omega\in\QD_0(\alpha)$.
\end{proof}

\noindent This is only a special case of the class of \emph{monomial} LQDs, which is addressed in the next section.

\section{Monomial Log-Weighted Quadrature Domains}\label{sec:MonomialLQDs}
In this section, we consider simply connected LQDs with quadrature function $h(w)=\alpha k w^{k-1}$ for $k\in\Z_{+}$ and $\alpha\in\C\setminus\{0\}$. Note that all such domains are unbounded, as $\infty$ is a quadrature node.  When $\Omega$ is non-singular, we are able to construct simple solutions by exploiting the symmetry of the quadrature function. As we shall see, no such symmetry is possible when $\Omega$ is singular, resulting in a much larger space of possible solutions. Hence, we only construct explicit solutions in the singular case for $k=2$ (The $k=1$ case was covered in \S\ref{subsec:ConstantLQDExample}).

\subsection{Symmetric Non-Singular Monomial LQDs}
Suppose that $\Omega\in\QD_0(\alpha k w^{k-1})$ is a simply connected domain not containing zero. As $\Omega$ is unbounded, Theorem \ref{thm:LQDFTInverseProb} tells us that the associated Riemann map $\varphi:\D\IntComp\rightarrow\Omega$ is of the form $\varphi(z)=cze^{r^{\#}(z)}$, where $c>0$ is the conformal radius of $\Omega$ and
\begin{align*}
    r(z)&=\Phi_{\varphi}^{-1}\left(\alpha k w^{k}\right)(z)-\Phi_{\varphi}^{-1}\left(\alpha k w^{k}\right)(0).
\end{align*}
We recognize $\Phi_{\varphi}^{-1}\left(\alpha k w^{k}\right)(z)=\alpha k \Phi_{\varphi}^{-1}\left(w^{k}\right)(z)=\alpha k W_k(z)$, where $W_k$ is the $k$th inverse Faber polynomial (\S\ref{subsec:FaberTransform}). Hence, $r(z)=\alpha k\left(W_k(z)-W_k(0)\right)$, and we find
$$\varphi(z)=cze^{\overline{\alpha}k\left(W_k^{\#}(z)-\overline{W_k(0)}\right)}.$$

We now consider the implications of the symmetry of the quadrature function. By Theorem \ref{thm:ScalingLaw}, for each $j\in\{1,\hdots,k\}$, we have $\Omega\in\QD_0(\alpha k w^{k-1})$ if and only if $e^{-2\pi i\frac{j}{k}}\Omega\in\QD_0(\alpha k w^{k-1})$. That is, $\Omega\in\QD_0(\alpha kw^{k-1})$ implies each rotation of $\Omega$ by a multiple of $\frac{2\pi}{k}$ radians is an LQD with the same quadrature function. Hence, if $\Omega\in\QD_0(\alpha kw^{k-1})$ is the unique such domain of its conformal radius, then it has $k-$fold rotational symmetry about the origin. It follows via inspection of the Laurent expansion that if a simply connected domain has $k-$fold rotational symmetry about the origin, then so does its Riemann map: $\varphi\left(ze^{\frac{2\pi i}{k}}\right)=e^{\frac{2\pi i}{k}}\varphi\left(z\right)$. Hence, $W_k^{\#}\left(ze^{\frac{2\pi i}{k}}\right)-\overline{W_k(0)}=e^{\frac{2\pi i}{k}}\left(W_k^{\#}(z)-\overline{W_k(0)}\right)$. Iterating this identity, we find that each of the coefficients of $W_k(z)$ must equal zero except for the term of order $k$, $c^kz^k$. That is,
\begin{equation}\label{eqn:NonSingularMonomialLQD}
    \varphi(z)=cze^{\overline{\alpha}kc^kz^{-k}}
\end{equation}
In summary,
\begin{theorem}\label{thm:NonSingularMonomialLQD}
    If $\Omega\in\QD_0(\alpha k w^{k-1})$ is a simply connected domain not containing zero which is $k-$fold rotationally symmetric about the origin, then its associated Riemann map is given by Equation \ref{eqn:NonSingularMonomialLQD}.
\end{theorem}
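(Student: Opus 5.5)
The plan is to start from the unbounded, non-singular case of Theorem \ref{thm:LQDFTInverseProb}. Since $0\notin\Omega$ and $\Omega$ is unbounded, $\varphi(z)=cze^{r^{\#}(z)}$ with $c=\varphi'(\infty)>0$ and
$$r(z)=\alpha k\left(W_k(z)-W_k(0)\right).$$
Here $W_k=\Phi_{\varphi}^{-1}(w^k)$ is the $k$th inverse Faber polynomial. From Appendix \S\ref{subsec:FaberTransform}, $W_k$ is a polynomial of degree exactly $k$ with leading term $c^kz^k$. This is obtained by matching $\varphi(z)^k=c^kz^k+\dots$ at infinity, in the same way that $W_1(z)=cz+f_0$ was used in the constant case. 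It follows that $r^{\#}(z)=\overline{\alpha}k\sum_{j=1}^{k}\overline{a_j}z^{-j}$, where the $a_j$ are the coefficients of $W_k$ and $a_k=c^k$.

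Next I impose the symmetry. Because $\Omega$ is $k$-fold rotationally symmetric about $0$, the map $z\mapsto e^{-2\pi i/k}\varphi(e^{2\pi i/k}z)$ is again a Riemann map $\D\IntComp\to\Omega$ fixing $\infty$ with the same positive derivative $c$ there. By uniqueness of the normalized exterior Riemann map it equals $\varphi$, so $\varphi(e^{2\pi i/k}z)=e^{2\pi i/k}\varphi(z)$. Substituting the factorized form, the factor $cz$ already transforms correctly. Hence $\exp\bigl(r^{\#}(e^{2\pi i/k}z)-r^{\#}(z)\bigr)=1$ on $\D\IntComp$.

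The exponent is continuous and $2\pi i\Z$-valued on a connected set, so it is constant. Letting $z\to\infty$, where $r^{\#}\to0$ (since $r(0)=0$), shows the constant is $0$. Therefore $r^{\#}(e^{2\pi i/k}z)=r^{\#}(z)$. Comparing Laurent coefficients gives $\overline{a_j}\,(e^{-2\pi ij/k}-1)=0$, which kills every $a_j$ with $1\le j\le k-1$. Only the $j=k$ term survives, so
$$r^{\#}(z)=\overline{\alpha}k\,c^kz^{-k},$$
since $c>0$ makes $\overline{c^k}=c^k$. This gives Equation \ref{eqn:NonSingularMonomialLQD}.

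The step needing most care is the symmetry transfer. I must check that rotation preserves the normalization $\varphi'(\infty)>0$, so that uniqueness applies. I must also handle the logarithm of the exponential identity properly, rather than comparing exponents directly as the preceding discussion did. A secondary point is confirming, from the appendix, the leading coefficient of $W_k$ and that $W_k$ has degree exactly $k$, so that no terms beyond $z^{-k}$ appear in $r^{\#}$.
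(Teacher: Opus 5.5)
Your proposal is correct and follows essentially the same route as the paper. You write $\varphi(z)=cze^{r^{\#}(z)}$ with $r=\alpha k(W_k-W_k(0))$ via Theorem~\ref{thm:LQDFTInverseProb}, and then use the rotational symmetry $\varphi(e^{2\pi i/k}z)=e^{2\pi i/k}\varphi(z)$ to kill every coefficient of $W_k$ except the leading $c^kz^k$.

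Your version is more careful than the paper's sketch in three places:
\begin{itemize}
\item you justify the symmetry of $\varphi$ by uniqueness of the normalized exterior Riemann map;
\item you dispose of the $2\pi i\Z$ ambiguity in the exponent;
\item you obtain the correct invariance $r^{\#}(e^{2\pi i/k}z)=r^{\#}(z)$, whereas the identity displayed in the paper has a spurious factor $e^{2\pi i/k}$ on the right-hand side.
\end{itemize}
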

Furthermore, note that this family is univalent precisely when $0<c<|\alpha k^2|^{-\frac{1}{k}}$. A similar analysis is carried out in Graven \& Makarov \cite{GravenMakarov2025}.

\subsection{Singular Monomial LQDs}
Now suppose that $\Omega\in\QD_0(\alpha k w^{k-1})$ is a simply connected domain containing zero. As $\Omega$ is unbounded, Theorem \ref{thm:LQDFTInverseProb} tells us that the associated Riemann map $\varphi:\D\IntComp\rightarrow\Omega$ is of the form $\varphi(z)=c|z_0|zb_{z_0}(z)e^{r^{\#}(z)}$, where $c>0$ is the conformal radius of $\Omega$, $z_0\in\D\IntComp$ is the unique root of $\varphi$, and $r$ is given by Equation \ref{eqn:LQDUnboundedFTInverseProbFormula}. By the same argument as in the non-singular case, we find that
\begin{equation}\label{eqn:SingularMonomialLQD}
    \varphi(z)=c|z_0|zb_{z_0}(z)e^{\overline{\alpha}k\left(W_k^{\#}(z)-\overline{W_k(0)}\right)}.
\end{equation}
We note that $\Omega$ cannot be $k-$fold rotationally symmetric: If $\Omega$ were, then it must either be multiply connected, or equal the whole plane, because it contains an open neighborhood of both zero and infinity. As we assumed that $\Omega$ is connected, we would conclude that $\Omega=\Ch$, which is not conformally equivalent to $\D\IntComp$.

It remains to determine a relation involving the charge $q$ at the origin under the specification $\Omega\in\QD_0\left(\alpha k w^{k-1};q\right)$. We begin by computing the generalized Schwarz function. Note that $\frac{\ln|w|^2}{w}\dEquals \frac{\ln(\varphi\varphi^{\#})\circ\psi(w)}{w}$, so
\begin{align*}
    S_0(w)&:=\dfrac{\ln(\varphi\varphi^{\#})\circ\psi(w)}{w}=\dfrac{\ln|cz_0|^2+\overline{\alpha}k\left(W_k^{\#}\circ\psi(w)-\overline{W_k(0)}\right)+\alpha k\left(W_k\circ\psi(w)-W_k(0)\right)}{w}
\end{align*}
Applying the fact that $W_k\circ\psi(w)=w^k+O(1)$, we find that
\begin{align*}
S_0(w)&=\alpha kw^{k-1}+\dfrac{\ln|cz_0|^2-\alpha kW_k(0)}{w}+O(w^{-1}).
\end{align*}
Hence, $q=\ln|cz_0|^2-\alpha kW_k(0)$.\\

\subsubsection{Example: The case of k=2}
We consider the $k=2$ case as a basic example. As $W_2(z)$ is a polynomial of degree $2$ with leading term $c^2z^2$, we find that
$$\varphi(z)=c|z_0|zb_{z_0}(z)e^{2\overline{\alpha}\left(c^2z^{-2}+\beta z^{-1}\right)},$$
for some $\beta\in\C$. Applying Equation \ref{eqn:LQDhFormula} yields
\begin{align*}
    2\alpha w&=\dfrac{\Phi_{\varphi}\left(2\alpha(c^2z^2+\overline{\beta} z)\right)(w)-C}{w}\\
    &=\dfrac{2\alpha c^2}{w}F_2(w)+\dfrac{2\alpha \overline{\beta}}{w}F_1(w)-\dfrac{C}{w},
\end{align*}
where $F_1$ and $F_2$ are the first and second Faber polynomials associated to $\varphi$. Computing these, and substituting, we obtain
\begin{align*}
    2\alpha w&=2\alpha w+\dfrac{2\alpha}{c\overline{z_0}}\left(\overline{\beta}\overline{z_0}-4 c^2\overline{\alpha}\beta\overline{z_0}+2c^2(|z_0|^2-1)\right)+O(1)
\end{align*}
As the second term must $=0$, we find that
$$\beta=\dfrac{2 c^2 \left(|z_0|^2-1\right) \left(4 c^2 \alpha z_0+\overline{z_0}\right)}{|z_0|^2\left(16 c^4|\alpha|^2-1\right)}.$$
Figure \ref{fig:LQDMonomialFamSingular} shows two different families of solutions obtained using this method.

\begin{figure}[ht]
  \centering
\includegraphics[height=0.25\linewidth,width=.55\linewidth]{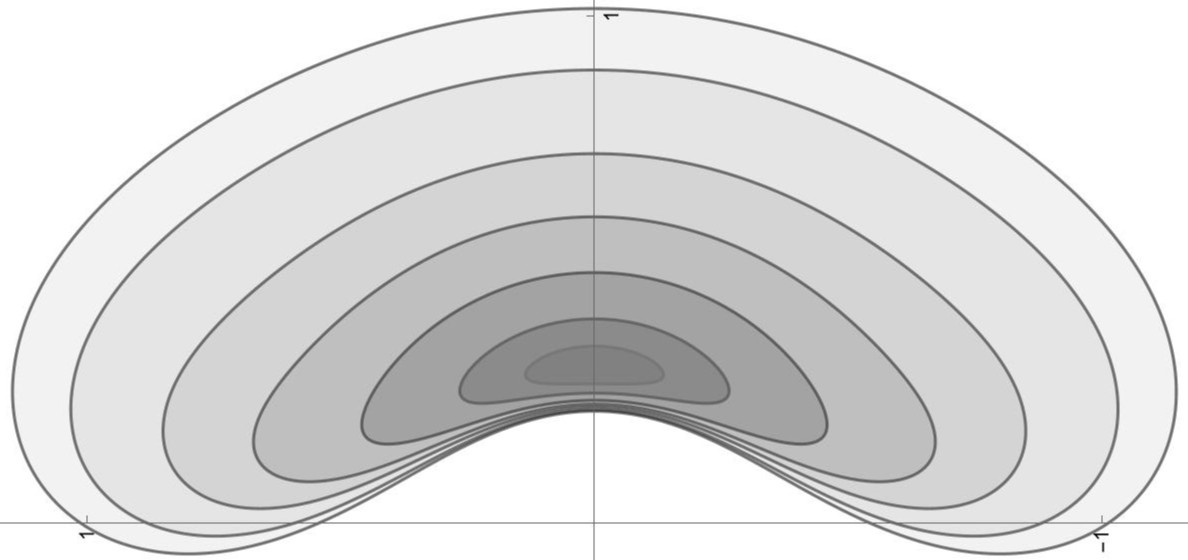}\;\;\includegraphics[height=0.25\linewidth,width=.45\linewidth]{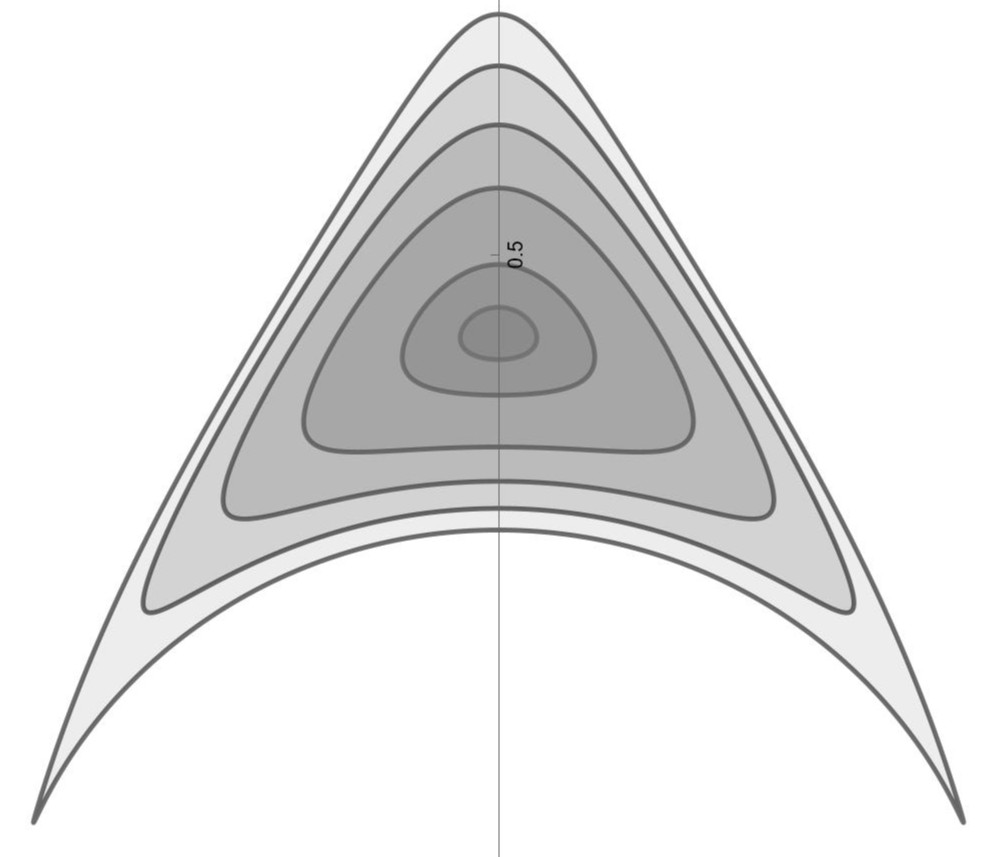}
\vspace{.5em}
\caption{Family of complements of singular monomial LQDs in $\QD_0(2\alpha w)$. For $\alpha=-2$, $q=-2$, $0<c\leq.3$ (left); and $\alpha=1$, $q=-2$, $0<c<.25$ (right). The plot is rotated so the positive real axis points upwards.}\label{fig:LQDMonomialFamSingular}\vspace{1.5em}
\end{figure}

\section{One-Point Log-Weighted Quadrature Domains}\label{sec:OnePointLQDs}

Let $\Omega$ be a simply connected LQD with quadrature function $\frac{\alpha}{w-w_0}$ for some $\alpha\in\C\setminus\{0\}$ and $w_0\in\C$; that is, $\Omega\in\QD_0\left(\frac{\alpha}{w-w_0}\right)$. If $w_0=0$ then $\Omega$ is a disk centered at the origin. This follows from Corollary \ref{cor:LQDQuadFuncUniquenessZero}, which tells us that $\QD_0\left(\frac{\alpha}{w}\right)=\left\{\Omega\in\QD_0(0):0\in\Omega\right\}$, and Theorem \ref{thm:NullLQDClass}, which tells us that the only null quadrature domains containing $0$ are disks centered at the origin. Hence, we will assume that $w_0\neq0$. Furthermore, by Theorem \ref{thm:ScalingLaw}, $\Omega\in\left(\frac{\alpha}{w-w_0}\right)$ if and only if $w_0^{-1}\Omega\in\left(\frac{\alpha}{w-1}\right)$. Hence, we may wlog assume that $w_0=1$.

The behavior of a given family of solutions depends crucially on the singularities of $\rho_0$ therein. If $0\in\Omega$, then the choice of charge $q\in\C$ at $0$ serves as a free parameter. Moreover, when $\infty\in\Omega$, the conformal radius $c$ of the domain serves as an additional free parameter. That is, for a given family of solutions, {\it there are as many free parameters as there are singularities} of $\rho_0$ in $\Omega$. Thus it is natural to break up the classification of simply connected $\Omega\in\QD_0\left(\frac{\alpha}{w-w_0}\right)$ into four cases:
\begin{enumerate}
    \item Non-Singular ($0\notin\Omega$):
    \begin{enumerate}
        \item $\Omega$ bounded.
        \item $\Omega$ unbounded.
    \end{enumerate}
    \item Singular ($0\in\Omega$):
    \begin{enumerate}
        \item $\Omega$ bounded.
        \item $\Omega$ unbounded.
    \end{enumerate}
\end{enumerate}

\subsection{Non-Singular One-Point LQDs}\label{subsec:LOGWQDOnePtNZ}
Let us begin by considering the more tractable case of non-singular one-point LQDs. In particular, we consider simply connected domains $\Omega\in\QD_0\left(\frac{\alpha}{w-w_0}\right)$ for $\alpha,w_0\in\C\setminus\{0\}$ not containing zero. The bounded case was addressed in \S\ref{subsec:BasicOnePTLQDEx}, so we shall restrict our consideration to unbounded $\Omega$, the results of which are summarized in Theorem \ref{thm:UnboundedNonSingularOnePtLQD}.

\begin{theorem}\label{thm:UnboundedNonSingularOnePtLQD}
Take $w_0,\alpha\in\C\setminus\{0\}$. There exists an unbounded simply connected domain $\Omega$ not containing zero for which $\Omega\in\QD_0\left(\frac{\alpha}{w-w_0}\right)$ if and only if there exist $c>0$, $\lambda\in\C$, and $z_1\in\D\IntComp$ for which $\Omega=\varphi(\D\IntComp)$, where $\varphi$ is univalent and given by
\begin{equation}\label{eqn:UnboundedNonSingularOnePtLQD}
    \varphi(z)=cze^{\frac{\lambda}{1-z\overline{z_1}}},
\end{equation}
where $\varphi(z_1)=w_0$, and the coefficients satisfy the relation $\lambda=\frac{\overline{\alpha}\overline{w_0}}{\overline{z_1}\overline{\varphi'(z_1)}}$.
\end{theorem}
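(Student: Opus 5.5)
We plan to use Theorem \ref{thm:LQDFTInverseProb} (unbounded case) for the forward direction and Theorem \ref{thm:SCLQDCharacterization} with Equation \ref{eqn:LQDhFormula} for the converse.

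\textbf{Forward direction.} Suppose $0\notin\Omega\in\QD_0\left(\frac{\alpha}{w-w_0}\right)$ is unbounded and simply connected, with Riemann map $\varphi:\D\IntComp\rightarrow\Omega$. By Equation \ref{eqn:LQDUnboundedInnerOuterFactorization} we may write $\varphi(z)=cze^{r^{\#}(z)}$, where $c>0$ and
$$r=\Phi_{\varphi}^{-1}\left(\frac{\alpha w}{w-w_0}\right)-\Phi_{\varphi}^{-1}\left(\frac{\alpha w}{w-w_0}\right)(0).$$
Since $\frac{\alpha w}{w-w_0}=\alpha+\frac{\alpha w_0}{w-w_0}$, linearity reduces this to the inverse Faber transform of a simple pole at $w_0\in\Omega$. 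Let $z_1=\psi(w_0)\in\D\IntComp$. The function $\frac{1}{\varphi(\zeta)-w_0}$ on $\D\IntComp$ has a simple pole at $z_1$ with residue $1/\varphi'(z_1)$. Hence its projection onto functions analytic in $\D$ is
$$\frac{1}{\varphi'(z_1)(z-z_1)}.$$
We then verify the standard identity $\Phi_\varphi\left(\frac{1}{\varphi'(z_1)(z-z_1)}\right)=\frac{1}{w-w_0}+\text{const}$. To do this, precompose with $\psi$ and compare principal parts in $\Omega$ at $w_0$: both sides have residue $1$ there, so their difference is analytic across $\partial\Omega$, hence constant. This gives
$$r(z)=\frac{\alpha w_0}{\varphi'(z_1)}\left(\frac{1}{z-z_1}+\frac{1}{z_1}\right)=\frac{\alpha w_0}{z_1\varphi'(z_1)}\,\frac{z}{z-z_1}.$$
Reflecting, $r^{\#}(z)=\frac{\overline{\alpha w_0}}{\overline{z_1\varphi'(z_1)}}\,\frac{1}{1-z\overline{z_1}}$, which is Equation \ref{eqn:UnboundedNonSingularOnePtLQD} with the stated $\lambda$. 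The conditions $\varphi(z_1)=w_0$ and univalence hold automatically, since $\varphi$ is a Riemann map.

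\textbf{Converse.} Given univalent $\varphi$ of this form with $\varphi(z_1)=w_0$ and the stated $\lambda$, set $\Omega=\varphi(\D\IntComp)$. We first check $0\notin\Omega$ and $0\notin\partial\Omega$, since $cz e^{(\cdot)}$ vanishes only at $z=0\notin\Cl(\D\IntComp)$. The outer factor is $e^{r^{\#}}$ with $r$ rational, so Theorem \ref{thm:SCLQDCharacterization} gives $\Omega\in\QD_0(h)$ for some $h$. Equation \ref{eqn:LQDhFormula} gives
$$h=\frac{\Phi_\varphi(r)-C}{w},$$
where $r(z)=\overline{\lambda}\frac{z}{z-z_1}$. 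By the Faber computation above,
$$\Phi_\varphi(r)=\overline{\lambda}\,z_1\varphi'(z_1)\frac{1}{w-w_0}+\text{const}.$$
Since $0\notin\Omega$, $C$ must cancel the pole of $h$ at $0$. This forces
$$h=\frac{\overline{\lambda}z_1\varphi'(z_1)}{w_0}\,\frac{1}{w-w_0},$$
and the relation on $\lambda$ turns this into $\frac{\alpha}{w-w_0}$.

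\textbf{Main obstacle.} The main risk is bookkeeping in the Faber transform, specifically the exact form of $\Phi_\varphi$ for a simple pole at $z_1$ in the unbounded (exterior) setting and its normalization constants. We will check this carefully against the Appendix definitions and Equation \ref{eqn:FaberPolyFormulae}.
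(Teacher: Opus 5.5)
Your proposal is correct and follows essentially the same route as the paper. The forward direction uses Theorem \ref{thm:LQDFTInverseProb} with the simple-pole inverse Faber formula, and the converse uses Theorem \ref{thm:SCLQDCharacterization} plus Equation \ref{eqn:LQDhFormula}, with the absence of a pole at $0$ fixing $C$. The differences are cosmetic: you keep $w_0$ general, so the factor $\overline{w_0}$ in $\lambda$ comes out directly (the paper normalizes to $w_0=1$ via Theorem \ref{thm:ScalingLaw}), and you rederive the simple-pole Faber formula from the projection definition instead of citing Equation \ref{eqn:FaberPolyFormulae}.
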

\begin{proof}[Proof of Theorem \ref{thm:UnboundedNonSingularOnePtLQD}]\label{proof:UnboundedNonSingularOnePtLQD}
For the forward direction, suppose that $0\notin\Omega\in\QD_0\left(\frac{\alpha}{w-1}\right)$ with $\alpha\in\C\setminus\{0\}$ is unbounded and simply connected with Riemann map $\varphi:\D\IntComp\rightarrow\Omega$, with $\varphi(z_1)=1$, and $\varphi'(\infty)=c>0$. By Theorem \ref{thm:LQDFTInverseProb}, $\varphi(z)=cze^{r^{\#}(z)}$, where $r(z)=\Phi_{\varphi}^{-1}\left(\frac{\alpha w}{w-1}\right)(z)-\Phi_{\varphi}^{-1}\left(\frac{\alpha w}{w-1}\right)(0)$. Computing the Faber transform via Equation \ref{eqn:FaberPolyFormulae}, we find
\begin{align*}
    \Phi_{\varphi}^{-1}\left(\dfrac{\alpha w}{w-1}\right)(z)&=\alpha+\alpha\Phi_{\varphi}^{-1}\left(\dfrac{1}{w-1}\right)(z)=\alpha\left(1+\dfrac{\psi'(1)}{z-\psi(1)}\right)
\end{align*}
so $r(z)=\frac{\alpha}{z_1\varphi'(z_1)}\frac{z}{z-z_1}$, and we recover Equation \ref{eqn:UnboundedNonSingularOnePtLQD} by setting $\lambda=\frac{\overline{\alpha}}{\overline{z_1}\overline{\varphi'(z_1)}}$. Also, $z_1\in\D\IntComp$ because otherwise $\varphi$ wouldn't be analytic.

For the reverse direction, suppose that there exist $c>0$, $\lambda\in\C$, and $z_1\in\D\IntComp$ for which $\Omega=\varphi(\D\IntComp)$, where $\varphi$ is univalent and given by Equation \ref{eqn:UnboundedNonSingularOnePtLQD}. Then, by Theorem \ref{thm:SCLQDCharacterization}, $\Omega\in\QD_0(h)$ for some $h$. We will now compute $h$ using Theorem \ref{thm:LQDFTInverseProb}, which tells us that there exists $C\in\C$ such that
\begin{align*}
h(w)&=\dfrac{\Phi_{\varphi}\left(\frac{\overline{\lambda}z}{z-z_1}\right)(w)-C}{w}=\dfrac{z_1\overline{\lambda}}{w}\Phi_{\varphi}\left(\frac{1}{z-z_1}\right)(w)+\dfrac{\overline{\lambda}-C}{w}.
\end{align*}
Computing the Faber transform via Equation \ref{eqn:FaberPolyFormulae} and substituting the formula for $\lambda$, we find
\begin{align*}
h(w)&=\dfrac{z_1\overline{\lambda}}{w}\dfrac{\varphi'(z_1)}{w-\varphi(z_1)}+\dfrac{\overline{\lambda}-C}{w}=\dfrac{\alpha}{w-1}+\dfrac{\overline{\lambda}-C-\alpha}{w}.
\end{align*}
Finally, as $h\in\A(\Omega\IntComp)$ and $0\in\Omega\IntComp$ ($\varphi(z)=0$ has no solutions in $\D\IntComp$, so $0\notin\Omega$), we find that $C=\overline{\lambda}-\alpha$, so $\Omega\in\QD_0\left(\frac{\alpha}{w-1}\right)$.
\end{proof}

\subsection{Singular One-Point LQDs}\label{subsec:LOGWQDOnePtZ}
We now turn our consideration to the more exotic case of singular one-point LQDs. In particular, we consider simply connected domains $\Omega\in\QD_0\left(\frac{\alpha}{w-w_0}\right)$ which contain zero (recall that we can wlog take $w_0=1$).

\subsubsection{Singular Bounded One-Point LQDs}\label{subsubsec:LOGWQDBoundedOnePtZ}

\begin{theorem}\label{thm:BoundedSingularOnePtLQD}
Take $w_0,\alpha\in\C\setminus\{0\}$. There exists a bounded simply connected domain $\Omega$ containing zero for which $\Omega\in\QD_0\left(\frac{\alpha}{w-w_0}\right)$ if and only if there are $\lambda\in\C$ and $z_0\in\D$ for which $\Omega=\varphi(\D)$, where $\varphi$ is univalent and given by
\begin{equation}\label{eqn:BoundedSingularOnePtLQD}
    \varphi(z)=\dfrac{w_0}{|z_0|}b_{z_0}(z)e^{\lambda z},
\end{equation}
where $\lambda=\frac{\overline{w_0}\overline{\alpha}}{\varphi'(0)}$. Moreover, if we specify that $\Omega\in\QD_0\left(\frac{\alpha}{w-w_0};q\right)$, then $\lambda=\overline{w_0}\overline{\alpha}\frac{q+\ln|z_0|^2}{\overline{\alpha}z_0+\alpha z_0^{-1}}$.
\end{theorem}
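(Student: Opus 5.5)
The plan is to specialize the bounded Faber-transform formula of Theorem \ref{thm:LQDFTInverseProb} to $h(w)=\frac{\alpha}{w-w_0}$, and then read off the charge from the generalized Schwarz function. By Theorem \ref{thm:ScalingLaw} I could take $w_0=1$, but I will keep $w_0$ general since the formulas are no harder. I use the interior Faber transform with $\varphi(0)=w_0$; this is the node, since $\Omega$ contains both $0$ and $w_0$. I also use the normalization (\ref{eqn:LQDBoundedInnerOuterFactorization}), $\varphi=\frac{w_0}{|z_0|}b_{z_0}e^{r^{\#}}$, where $z_0\in\D$ is the unique root of $\varphi$.

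\emph{Forward direction.} First compute the argument of $\Phi_\varphi^{-1}$. We have $wh(w)=\alpha+\frac{\alpha w_0}{w-w_0}$ and $\Res{\infty}h=-\alpha$, so
\begin{equation*}
wh(w)+\Res{\infty}h=\frac{\alpha w_0}{w-w_0}\in\A_0(\Omega\IntComp).
\end{equation*}
Next use the standard rule (Equation \ref{eqn:FaberPolyFormulae}) for the Faber transform of a simple pole, $\Phi_\varphi^{-1}\left(\frac{1}{w-\varphi(a)}\right)(z)=\frac{\psi'(\varphi(a))}{z-a}$, with $a=0$. This gives
\begin{equation*}
r(z)=\frac{\alpha w_0}{\varphi'(0)}\cdot\frac{1}{z}, \qquad r^{\#}(z)=\overline{\left(\frac{\alpha w_0}{\varphi'(0)}\right)}\,z=\lambda z .
\end{equation*}
Hence $\varphi(z)=\frac{w_0}{|z_0|}b_{z_0}(z)e^{\lambda z}$, and $\varphi$ is univalent since it is a Riemann map. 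One point needs care: the statement writes $\varphi'(0)$ without a conjugate. I will check the conjugation convention against the normalization, which presumably fixes the argument of $\varphi'(0)$ through the factor $\frac{w_0}{|z_0|}b_{z_0}$; if necessary I will state the relation in the form $\lambda=\overline{\alpha w_0/\varphi'(0)}$.

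\emph{Reverse direction.} Given a univalent $\varphi$ of this form, Theorem \ref{thm:SCLQDCharacterization} shows that $\Omega=\varphi(\D)\in\QD_0$, because $\varphi_{\rm out}=e^{\lambda z}$ is the exponential of a polynomial. To identify $h$, apply Equation \ref{eqn:LQDhFormula} with $r(z)=\bar\lambda/z$. Using $\Phi_\varphi(1/z)=\frac{\varphi'(0)}{w-w_0}$ together with the relation between $\lambda$ and $\varphi'(0)$, we get
\begin{equation*}
h(w)=\frac{\alpha w_0}{w(w-w_0)}-\frac{C}{w}=\frac{\alpha}{w-w_0}+\frac{-\alpha-C}{w}.
\end{equation*}
Since $0\in\Omega$, Corollary \ref{cor:LQDQuadFuncUniquenessZero} discards the $\frac{1}{w}$ term, so $\Omega\in\QD_0\left(\frac{\alpha}{w-w_0}\right)$.

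\emph{Charge.} Since $b_{z_0}b_{z_0}^{\#}=1$, we have on $\partial\Omega$
\begin{equation*}
\frac{\ln|w|^2}{w}\dEquals\frac{1}{w}\Big(\ln\tfrac{|w_0|^2}{|z_0|^2}+\lambda\psi(w)+\bar\lambda\,\psi(w)^{-1}\Big)=:S_0(w).
\end{equation*}
The right-hand side is meromorphic in $\Omega$, with poles only at $w=0$ (where $z=z_0$) and at $w=w_0$ (where $z=0$). The pole at $w_0$ reproduces $h$. The residue at $0$ gives, via Definition \ref{def:QD0hq} (note that $\Res{0}h=0$),
\begin{equation*}
q=\ln|w_0|^2-\ln|z_0|^2+\lambda z_0+\bar\lambda z_0^{-1}.
\end{equation*}
With $w_0=1$, and using that $\lambda/\bar\alpha$ is real under the normalization (that is, $\varphi'(0)$ has the argument forced by the normalization), write $\lambda=\bar\alpha t$ with $t\in\R$. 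Then $\lambda z_0+\bar\lambda z_0^{-1}=t(\bar\alpha z_0+\alpha z_0^{-1})$, and solving for $t$ gives the stated formula.

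The main obstacle is this reality and normalization step, together with the conjugation bookkeeping between $\lambda$, $\varphi'(0)$ and $b_{z_0}$. Specifically, one must show that the normalization makes $\overline{w_0}\,\overline{\varphi'(0)}^{-1}$ real, or equivalently that $\lambda\overline{\alpha}^{-1}\in\R$. I would try to verify this by differentiating $\frac{w_0}{|z_0|}b_{z_0}(z)e^{\lambda z}$ at $0$ and comparing arguments. If the reality fails in general, I would restate the charge relation as the linear equation $\lambda z_0+\bar\lambda z_0^{-1}=q+\ln|z_0|^2-\ln|w_0|^2$.
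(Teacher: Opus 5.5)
Your proposal follows the paper's proof step for step, and its computations are correct. The forward direction uses the inverse Faber transform of the simple pole at $w_0=\varphi(0)$ via Theorem~\ref{thm:LQDFTInverseProb}. The converse uses Theorem~\ref{thm:SCLQDCharacterization}, Equation~\ref{eqn:LQDhFormula} and Corollary~\ref{cor:LQDQuadFuncUniquenessZero}. The charge is the residue at $0$ of $\frac{1}{w}\ln(\varphi\varphi^{\#})\circ\psi$.

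The step you leave open cannot be derived, and differentiating $\frac{w_0}{|z_0|}b_{z_0}(z)e^{\lambda z}$ at $0$ will not give $\lambda/\overline{\alpha}\in\R$. The substitution $z\mapsto e^{i\theta}z$ preserves $\varphi(0)=w_0$ and the form of $\varphi$, while sending $z_0\mapsto e^{-i\theta}z_0$ and $\lambda\mapsto e^{i\theta}\lambda$. So the argument of $\lambda$ is free until you normalize. The missing ingredient is the normalization $\varphi'(0)>0$, which the paper imposes: it is built into the definition of the interior Faber transform in the appendix and stated right after the theorem. Your relation $\lambda=\overline{\alpha w_0/\varphi'(0)}$ holds for every rotation. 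Under $\varphi'(0)>0$ it becomes the stated $\overline{w_0}\,\overline{\alpha}/\varphi'(0)$. At $w_0=1$ it gives $\lambda=\overline{\alpha}\,t$ with $t=1/\varphi'(0)>0$, so your final step goes through.

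For $w_0\neq 1$, however, the displayed charge formula in the statement is not what your residue computation gives. Substituting $\lambda=\overline{\alpha}\,\overline{w_0}/\varphi'(0)$ into $q=\ln|w_0|^2-\ln|z_0|^2+\lambda z_0+\overline{\lambda}z_0^{-1}$ yields
\[
\lambda=\overline{\alpha}\,\overline{w_0}\,\frac{q-\ln|w_0|^2+\ln|z_0|^2}{\overline{\alpha}\,\overline{w_0}\,z_0+\alpha w_0 z_0^{-1}},
\]
which coincides with the stated expression only when $w_0=1$. The paper, like you, actually proves the formula only for $w_0=1$. Its closing sentence (replace $\varphi,\varphi'$ by $w_0^{-1}\varphi,w_0^{-1}\varphi'$) overlooks two things:
\begin{itemize}
\item The charge is not scale invariant. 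Substituting $w=w_0u$ in Equation~\ref{eqn:LQDCEZero} and multiplying by $w_0$ shows that passing to $w_0^{-1}\Omega$ replaces $q$ by $q-\ln|w_0|^2$.
\item The map $w_0^{-1}\varphi$ no longer satisfies $\varphi'(0)>0$ unless $w_0>0$.
\end{itemize}
So keep your general formula, and read the stated one with $w_0=1$.
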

Here, $\varphi$ is normalized so that $\varphi(0)=w_0$ and $\varphi'(0)>0$.

\begin{proof}[Proof of Theorem \ref{thm:BoundedSingularOnePtLQD}]\label{proof:BoundedSingularOnePtLQD}
For the forward direction, suppose that $0\in\Omega\in\QD_0\left(\frac{\alpha}{w-1}\right)$ with $\alpha\in\C\setminus\{0\}$ is bounded and simply connected with Riemann map $\varphi:\D\rightarrow\Omega$, normalized such that $\varphi(z_0)=0$, $\varphi(0)=1$, and $\varphi'(0)>0$. Note that, unlike in the non-singular case (\S\ref{subsec:BasicOnePTLQDEx}), we cannot assume $\alpha>0$ because $1\notin L^1_a(\Omega;\rho_0)$ when $0\in\Omega$.

By Theorem \ref{thm:LQDFTInverseProb}, $\varphi(z)=\frac{1}{|z_0|}b_{z_0}(z)e^{r^{\#}(z)}$, where $r(z)=\Phi_{\varphi}^{-1}\left(\frac{\alpha w}{w-1}-\alpha\right)(z)=\alpha\Phi_{\varphi}^{-1}\left(\frac{1}{w-1}\right)(z)$. Computing the Faber transform using Equation \ref{eqn:FaberPolyFormulae}, we obtain $r(z)=\alpha\frac{\psi'(1)}{z-\psi(1)}=z^{-1}\frac{\alpha}{\varphi'(0)}$. Hence, setting $\lambda=\frac{\overline{\alpha}}{\overline{\varphi'(0)}}$, we obtain
$$\varphi(z)=\frac{1}{|z_0|}b_{z_0}(z)e^{\lambda z}.$$

For the reverse direction, suppose there exist $\lambda\in\C$ and $z_0\in\D$ for which $\Omega=\varphi(\D)$, where $\varphi$ is univalent and given by Equation \ref{eqn:BoundedSingularOnePtLQD} (with $w_0=1$). Then, by Theorem \ref{thm:SCLQDCharacterization}, $\Omega\in\QD_0(h)$ for some $h$. We will now compute $h$ using Theorem \ref{thm:LQDFTInverseProb}, which tells us that there exists $C\in\C$ such that
\begin{align*}
h(w)&=\dfrac{\Phi_{\varphi}\left(\overline{\lambda}z^{-1}\right)(w)-C}{w}=\dfrac{\overline{\lambda}}{w}\Phi_{\varphi}\left(z^{-1}\right)(w)-\dfrac{C}{w}
\end{align*}
Computing the Faber transform via Equation \ref{eqn:FaberPolyFormulae} and substituting the formula for $\lambda$, we find
\begin{align*}
h(w)&=\dfrac{\overline{\lambda}}{w}\dfrac{\varphi'(0)}{w-\varphi(0)}-\dfrac{C}{w}=\dfrac{\alpha}{w-1}+\dfrac{\alpha-C}{w}.
\end{align*}
Hence $\Omega\in\QD_0\left(\frac{\alpha}{w-1}\right)$ by Corollary \ref{cor:LQDQuadFuncUniquenessZero}.

It remains to show the formula for $\lambda$ in terms of $q$ under the specification $\Omega\in\QD_0\left(\frac{\alpha}{w-1};q\right)$. We begin by computing the generalized Schwarz function. Note that $\frac{\ln|w|^2}{w}\dEquals \frac{\ln(\varphi\varphi^{\#})\circ\psi(w)}{w}$, so
\begin{align*}
    S_0(w)&:=\dfrac{\ln(\varphi\varphi^{\#})\circ\psi(w)}{w}=-\dfrac{\ln|z_0|^2}{w}+\frac{1}{\varphi'(0)}\dfrac{\overline{\alpha}\psi(w)+\alpha\psi(w)^{-1}}{w}
\end{align*}
is a generalized Schwarz function for $\Omega$. Noting the singularities at $w=0$ and $w=1$, computing their residues, and applying Mittag-Leffler's theorem, we find that there exists $G\in\A(\Omega)$ such that
\begin{align*}
    S_0(w)&=\dfrac{\frac{\alpha}{\varphi'(0)^2z_0}(z_0\overline{\alpha}+\varphi'(0)(|z_0|^2-1))}{w-1}+\dfrac{\frac{\overline{\alpha}z_0+\alpha z_0^{-1}}{\varphi'(0)}-\ln|z_0|^2}{w}+G(w).
\end{align*}
Hence, $q=\frac{\overline{\alpha}z_0+\alpha z_0^{-1}}{\varphi'(0)}-\ln|z_0|^2$. We conclude that $\lambda=\overline{\alpha}\frac{q+\ln|z_0|^2}{\overline{\alpha}z_0+\alpha z_0^{-1}}$.

We then obtain the formulae for arbitrary $w_0$ by replacing instances of $\varphi$ and $\varphi'$ with $w_0^{-1}\varphi$ and $w_0^{-1}\varphi'$ respectively.
\end{proof}
Hence, determining the region $\Omega\in\QD_0\left(\frac{\alpha}{w-1};q\right)$ for each value of $q$ has been reduced to determining $z_0$. This can be obtained directly via the relation $\varphi'(0)=\frac{\overline{\alpha}}{\lambda}$, which yields after simplification $(1-|z_0|^2)\lambda=(|\lambda|^2-\overline{\alpha})z_0$.

\paragraph{Univalence criteria}
We would like to characterize the set of values of $z_0$ and $\lambda$ for which $\varphi(z)=\frac{1}{|z_0|}b_{z_0}(z)e^{\lambda z}$ is univalent in $\D$. Notice that if we set $f(z):=\varphi\left(\frac{|\lambda|}{\lambda}z\right)=\frac{1}{|z_1|}b_{z_1}(z)e^{|\lambda|z}$, where $z_1=\frac{\lambda}{|\lambda|}z_0$, then $\varphi$ is univalent in $\D$ if and only if $f$ is univalent in $\D$. In particular, this shows that it is sufficient to consider $\varphi$ for $\lambda>0$.

Note that $|z_1|<1$ is necessary because otherwise $f$ is not analytic in $\D$. This is also a sufficient condition for $f$ to be analytic in $\D$. Hence, it is necessary and sufficient that $|z_1|<1$ and $f(\partial\D)$ has no self-intersections. Suppose there existed $z\neq w\in\partial\D$ such that $f(z)=f(w)$. Taking the modulus of both sides and recalling that $|b_{z_1}(z)|=1$, we find $e^{|\lambda|\Re(z)}=e^{|\lambda|\Re(w)}$, which implies $\Re(z)=\Re(w)$. It follows that $w=z^{-1}$, so there exist $z\neq w\in\partial\D$ such that $f(z)=f(w)$ if and only if there exists $z\in\partial\D\cap\{z\in\C:\Im(z)>0\}$ such that $f(z)=f(z^{-1})$. Rearranging, we find $f$ is univalent in $\D$ if and only if the following equation has no solutions $z\in\partial\D\cap\{z\in\C:\Im(z)>0\}$.
\begin{align*}
    e^{2i|\lambda|\Im(z)}&=\dfrac{(zz_1-1)(z\overline{z_1}-1)}{(z-\overline{z_1})(z-z_1)}.
\end{align*}
Note that $\frac{(zz_1-1)(z\overline{z_1}-1)}{(z-\overline{z_1})(z-z_1)}=\frac{\left(\overline{z^{-1}(z-\overline{z_1})(z-z_1)}\right)}{z^{-1}(z-\overline{z_1})(z-z_1)}=e^{-2i\Arg(z^{-1}(z-z_1)(z-\overline{z_1}))}$. Hence, setting $z=e^{i\theta}$ ($0<\theta<\pi$), and $z_1=\frac{\lambda}{|\lambda|}z_0=|z_0|e^{i(\Arg(z_0)+\Arg(\lambda))}$, we find $e^{2i(|\lambda|\sin(\theta)+\delta(\theta))}=1$, where
\begin{equation}\label{eqn:BoundedSingularOnePtLQDUnivalenceDelta}
\delta(\theta)=\Arg\left((1+|z_0|^2)\cos(\theta)-2|z_0|\cos(\Arg(z_0)+\Arg(\lambda))+i(1-|z_0|^2)\sin(\theta)\right)
\end{equation}
As $\delta(\theta)\in(0,\pi)$ and $|\lambda|\sin(\theta)\in(0,|\lambda|)$ when $0<\theta<\pi$, we find that if there is a solution to $e^{2i(|\lambda|\sin(\theta)+\delta(\theta))}=1$, there must be a solution such that $|\lambda|\sin(\theta)+\delta(\theta)=\pi$. Hence, if we set
\begin{equation}\label{eqn:BoundedSingularOnePtLQDUnivalenceMinimization}
\lambda_{\max}=\min_{0<\theta<\pi}\dfrac{\pi-\delta(\theta)}{\sin(\theta)},
\end{equation}
then $f$ is univalent in $\D$ if and only if $0<|\lambda|\leq\lambda_{\max}$. In particular, we have shown that

\begin{corollary}
$\varphi:\D\rightarrow\C$ from Equation \ref{eqn:BoundedSingularOnePtLQD} is univalent if and only if $|z_0|<1$, and $0<|\lambda|\leq\lambda_{\max}$.
\end{corollary}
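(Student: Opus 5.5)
The proof has two parts: first reduce to a rotated map with a positive real exponent, then reduce univalence to injectivity on $\partial\D$ and locate the first parameter at which the boundary curve touches itself.

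\emph{Step 1 (reduction).} Set $f(z)=\varphi(\frac{|\lambda|}{\lambda}z)=\frac{1}{|z_1|}b_{z_1}(z)e^{|\lambda|z}$ with $z_1=\frac{\lambda}{|\lambda|}z_0$. Precomposition with a rotation preserves univalence on $\D$, so it suffices to treat $f$. Necessity of $|z_0|<1$ is immediate: if $|z_0|\geq1$, then $b_{z_0}$ has a pole in $\Cl(\D)$, or (for $|z_0|=1$) is constant, so $\varphi$ is not a univalent analytic map of $\D$.

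\emph{Step 2 (argument principle).} Assume $|z_1|<1$. Then $f$ is analytic on a neighbourhood of $\Cl(\D)$. I will use the boundary-correspondence form of the argument principle: if $f|_{\partial\D}$ is injective, $f(\partial\D)$ is a Jordan curve of winding number $1$ about each interior point. For this, observe that $\arg f(e^{i\theta})$ increases by exactly $2\pi$, coming from the single zero $z_1$ of $b_{z_1}$, while $e^{|\lambda|z}$ contributes no net winding. Hence $f$ is univalent on $\D$.

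\emph{Step 3 (necessity of injectivity).} If $f|_{\partial\D}$ fails to be injective while $f$ is univalent on $\D$, then $f(\D)$ is a Jordan-type domain whose boundary has a double point. This is exactly the borderline case $|\lambda|=\lambda_{\max}$, analogous to the double point at $\alpha=\pi^2$ in Theorem \ref{thm:NZOnePtLQD}. So the criterion must be stated as: no \emph{transversal} self-intersection, with the first tangential contact allowed. I will make this precise by a continuity argument in $|\lambda|$. For small $|\lambda|$ the curve $f(\partial\D)$ is a small perturbation of a circle, hence Jordan. Univalence is a closed condition under locally uniform limits, by Hurwitz, so it persists up to and including the first parameter at which two boundary points collide. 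Beyond that parameter the boundary crossing is transversal, which forces winding number $2$ near the crossing, so univalence is lost.

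\emph{Step 4 (locating the first collision).} By the computation preceding the corollary, a boundary collision is equivalent to a solution $\theta\in(0,\pi)$ of $|\lambda|\sin\theta+\delta(\theta)\in\pi\Z$, and since both summands are positive, of $|\lambda|\sin\theta+\delta(\theta)=\pi$. The function $g_\theta(|\lambda|)=|\lambda|\sin\theta+\delta(\theta)$ is increasing in $|\lambda|$ for each $\theta$. Therefore the smallest $|\lambda|$ admitting a solution is $\min_\theta\frac{\pi-\delta(\theta)}{\sin\theta}=\lambda_{\max}$. I expect two technical obstacles here:
\begin{itemize}
\item showing that this minimum is attained in the open interval, by checking that $\frac{\pi-\delta(\theta)}{\sin\theta}\to+\infty$ at $\theta\to0^+$ and $\theta\to\pi^-$ whenever $\delta$ stays away from $\pi$, respectively $0$, at those endpoints;
\item showing that the collision at $|\lambda|=\lambda_{\max}$ is tangential, while for $|\lambda|>\lambda_{\max}$ the equation has transverse solutions. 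This follows because $g_\theta(|\lambda|)-\pi$ changes sign near the minimizing $\theta$.
\end{itemize}
This endpoint and transversality analysis is the main obstacle. Combining Steps 1–4 gives univalence if and only if $|z_0|<1$ and $0<|\lambda|\leq\lambda_{\max}$.
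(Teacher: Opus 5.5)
Your route is the paper's: rotate to $\lambda>0$, observe that a boundary collision can only pair $e^{i\theta}$ with $e^{-i\theta}$, and reduce to $|\lambda|\sin\theta+\delta(\theta)=\pi$. Your Steps 2--3 (Darboux/argument principle for sufficiency, Hurwitz at the borderline, a crossing forcing a winding-number jump for the converse) supply what the paper's paragraph skips when it calls ``no self-intersections'' necessary yet includes $|\lambda|=\lambda_{\max}$.

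\textbf{The failing step.} The step that fails is the one you name as the main obstacle in Step 4. With $\beta=\arg z_1$, as $\theta\to\pi^-$ the real part $(1+|z_1|^2)\cos\theta-2|z_1|\cos\beta$ tends to a value $\le-(1-|z_1|)^2<0$, while the imaginary part tends to $0^+$. So $\delta(\theta)\to\pi$, not away from it, and
\[
\frac{\pi-\delta(\theta)}{\sin\theta}\longrightarrow\frac{1-|z_1|^2}{|1+z_1|^2}<\infty .
\]
This limit is exactly the value of $|\lambda|$ at which $f'(-1)=0$, because $f'(-1)/f(-1)=|\lambda|-\frac{1-|z_1|^2}{|1+z_1|^2}$.

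Whether the infimum is attained inside $(0,\pi)$ depends on $z_1$:
\begin{itemize}
\item For $z_1=-\frac12$ it is attained (interior values near $2.4$ against the endpoint limit $3$), and you get the tangential double point you describe.
\item As $z_1\to0$ the function tends to $\frac{\pi-\theta}{\sin\theta}>1$, and the map is, up to a constant, $ze^{tz}$, which is univalent iff $t\le1$. For $z_1$ near $0$ the infimum is approached only as $\theta\to\pi^-$.
\end{itemize}
In that second regime there is no ``smallest $|\lambda|$ admitting a solution'': the solution set is the open ray $(\lambda_{\max},\infty)$. There is also no minimizing $\theta$ to perturb around, and the extremal boundary has a cusp at $f(-1)$, not a double point. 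So your attainment check and your ``first tangential contact'' picture cannot be carried out as planned. The paper's ``$\min$'' has the same defect and must be read as an infimum.

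\textbf{How to repair it.} The argument needs no attainment. Let $t=|\lambda|$, $G_t(\theta)=t\sin\theta+\delta(\theta)-\pi$, and let $\lambda_{\max}$ be the infimum.
\begin{itemize}
\item For $t<\lambda_{\max}$, $G_t<0$ on $(0,\pi)$, so $f|_{\partial\D}$ is injective and your Step 2 applies.
\item At $t=\lambda_{\max}$, Hurwitz gives univalence, as you say.
\item For $t>\lambda_{\max}$, choose $\theta_0$ with $G_t(\theta_0)>0$. Since $G_t(0^+)=-\pi$ and $G_t$ is real-analytic and not identically zero, it changes sign at an isolated zero.
\end{itemize}
To see that this sign change is a genuine crossing, use
\[
f(e^{-i\theta})/f(e^{i\theta})=e^{-2i(t\sin\theta+\delta(\theta))}
\]
together with the strict monotonicity of $|f(e^{\pm i\theta})|=|z_1|^{-1}e^{t\cos\theta}$ on $(0,\pi)$. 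The two boundary arcs are then polar graphs over the radius. Their angular separation is exactly $2(t\sin\theta+\delta(\theta))$, by continuity from $\theta=0^+$. The sign change means the arcs swap order, so the four local regions carry winding numbers $k-1,k,k,k+1$. Hence some point has at least two preimages.

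\textbf{Two smaller slips.}
\begin{itemize}
\item Positivity of both summands does not by itself reduce $\pi\Z$ to $\pi$; you need the intermediate value theorem with $G_t(0^+)=-\pi$.
\item For $|z_0|=1$ the factor $b_{z_0}$ is identically $1$, so $\varphi=e^{\lambda z}$, which is univalent for $0<|\lambda|\le\pi$. The condition $|z_0|<1$ comes from the setup ($z_0$ is the zero of $\varphi$ in $\D$), not from a failure of univalence.
\end{itemize}
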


\subsubsection{Singular Unbounded One-Point LQDs}\label{subsubsec:LOGWQDUnboundedOnePtZ}

\begin{theorem}\label{thm:UnboundedSingularOnePtLQD}
Take $w_0,\alpha\in\C\setminus\{0\}$. There exists an unbounded simply connected domain $\Omega$ containing zero for which $\Omega\in\QD_0\left(\frac{\alpha}{w-w_0}\right)$ if and only if there exist $\lambda\in\C$ and $z_0,z_1\in\D\IntComp$, for which $\Omega=\varphi(\D\IntComp)$, where $\varphi$ is univalent and given by
\begin{equation}\label{eqn:UnboundedSingularOnePtLQD}
    \varphi(z)=c|z_0|zb_{z_0}(z)e^{\frac{\lambda}{1-z\overline{z_1}}},
\end{equation}
where $\varphi(z_1)=w_0$ and $\lambda=\frac{\overline{\alpha}\overline{w_0}}{\overline{z_1}\overline{\varphi'(z_1)}}$. Moreover, if $\Omega\in\QD_0\left(\frac{\alpha}{w-w_0};q\right)$, then $q=\frac{\lambda}{1-z_0\overline{z_1}}+\frac{\overline{\lambda}}{1-z_0^{-1}z_1}+\ln|cz_0|^2$.
\end{theorem}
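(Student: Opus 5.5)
The plan mirrors the proofs of Theorems \ref{thm:UnboundedNonSingularOnePtLQD} and \ref{thm:BoundedSingularOnePtLQD}. By Theorem \ref{thm:ScalingLaw}, we may take $w_0=1$ and restore general $w_0$ at the end by replacing $\varphi,\varphi'$ with $w_0^{-1}\varphi,w_0^{-1}\varphi'$.

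\emph{Forward direction.} Let $0\in\Omega\in\QD_0\left(\frac{\alpha}{w-1}\right)$ be unbounded and simply connected, with Riemann map $\varphi:\D\IntComp\to\Omega$ satisfying $\varphi'(\infty)=c>0$, $\varphi(z_0)=0$ and $\varphi(z_1)=1$. Theorem \ref{thm:LQDFTInverseProb}(2) together with (\ref{eqn:LQDUnboundedInnerOuterFactorization}) gives $\varphi(z)=c|z_0|zb_{z_0}(z)e^{r^{\#}(z)}$, where
\[
r=\Phi_\varphi^{-1}\left(\tfrac{\alpha w}{w-1}\right)-\Phi_\varphi^{-1}\left(\tfrac{\alpha w}{w-1}\right)(0).
\]
Writing $\frac{\alpha w}{w-1}=\alpha+\frac{\alpha}{w-1}$ and using Equation \ref{eqn:FaberPolyFormulae} exactly as in the proof of Theorem \ref{thm:UnboundedNonSingularOnePtLQD}, we get $\Phi_\varphi^{-1}\left(\frac{1}{w-1}\right)(z)=\frac{\psi'(1)}{z-z_1}=\frac{1}{\varphi'(z_1)(z-z_1)}$. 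Subtracting the value at $0$ then gives
\[
r(z)=\frac{\alpha}{z_1\varphi'(z_1)}\,\frac{z}{z-z_1}.
\]
Reflecting, $r^{\#}(z)=\frac{\lambda}{1-z\overline{z_1}}$ with $\lambda=\frac{\overline{\alpha}}{\overline{z_1}\,\overline{\varphi'(z_1)}}$. Since $1\in\Omega$, we have $z_1\in\D\IntComp$; since $0\in\Omega$, we have $z_0\in\D\IntComp$.

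\emph{Reverse direction.} Given a univalent $\varphi$ of this form, Theorem \ref{thm:SCLQDCharacterization} gives $\Omega\in\QD_0(h)$ for some $h$. Equation \ref{eqn:LQDhFormula} then gives
\[
h(w)=\frac{\Phi_\varphi\left(\frac{\overline\lambda z}{z-z_1}\right)(w)-C}{w}=\frac{z_1\overline\lambda\,\varphi'(z_1)}{w(w-1)}+\frac{\overline\lambda-C}{w}=\frac{\alpha}{w-1}+\frac{\text{const}}{w},
\]
after substituting the relation for $\lambda$. Since $0\in\Omega$, Corollary \ref{cor:LQDQuadFuncUniquenessZero} lets us discard the $\frac{\text{const}}{w}$ term, so $\Omega\in\QD_0\left(\frac{\alpha}{w-1}\right)$. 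Unlike the non-singular case, $C$ is not pinned down here, and it does not need to be.

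\emph{The charge formula.} Following the computation for singular monomial LQDs, I would use the generalized Schwarz function $S_0(w)=\frac{\ln(\varphi\varphi^{\#})\circ\psi(w)}{w}$. Because $|b_{z_0}|\dEquals1$, on $\partial\D$ we have
\[
\ln(\varphi\varphi^{\#})=\ln|cz_0|^2+\ln\left(zb_{z_0}\cdot z^{-1}b_{z_0}^{-1}\right)+r^{\#}+r=\ln|cz_0|^2+\frac{\lambda}{1-z\overline{z_1}}+\frac{\overline\lambda}{1-z^{-1}z_1}.
\]
Thus the numerator $N(z):=\ln|cz_0|^2+\frac{\lambda}{1-z\overline{z_1}}+\frac{\overline\lambda}{1-z^{-1}z_1}$ is a rational function of $z$ alone, and $S_0(w)=N(\psi(w))/w$. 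By Theorem \ref{thm:LQDCEZero}, $q$ is the residue of $S_0$ at $w=0$ after the principal parts of $h$ are removed. The pole of $h$ at $w=1$ contributes nothing at $0$, and $G\in\A_0(\Omega)$ is regular at $0$, so $q=N(\psi(0))=N(z_0)$. This gives
\[
q=\frac{\lambda}{1-z_0\overline{z_1}}+\frac{\overline\lambda}{1-z_0^{-1}z_1}+\ln|cz_0|^2.
\]

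\emph{Main obstacle.} The hard step is justifying that the residue at $0$ is read off purely as $N(z_0)$. Two things must be checked. First, the behaviour at $\infty$: $S_0=\frac{N\circ\psi}{w}$ has $N(\infty)=\ln|cz_0|^2+\overline\lambda$, so $S_0=O(w^{-1})$ there. The decomposition $S_0=h+\frac{q}{w}+G$ with $h$ constant-free and $G(\infty)=0$ must not absorb any $\frac{1}{w}$ term at $\infty$ into $q$. Second, the branch of $\ln$ at the zero $z_0$ must not contribute, which holds because $b_{z_0}b_{z_0}^{\#}\equiv1$. I would verify both points carefully against the convention used in the monomial case.
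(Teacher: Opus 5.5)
Your proposal is correct and follows essentially the same route as the paper: reduction to $w_0=1$ by scaling, the Faber-transform computation of $r$ in the forward direction, Equation \ref{eqn:LQDhFormula} plus Corollary \ref{cor:LQDQuadFuncUniquenessZero} in the reverse direction, and reading $q$ off as the residue at $w=0$ of $S_0=N\circ\psi/w$, i.e.\ $q=N(z_0)$. The obstacle you flag at $\infty$ is not a real issue: $q$ is by definition the coefficient of the only term singular at $w=0$, while the $1/w$ behaviour of $S_0$ at $\infty$ is absorbed by $G=C_{\rho_0}^{\Omega\IntComp}$, whose expansion there begins with $A_{\rho_0}(\Omega\IntComp)/w$.
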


\begin{proof}[Proof of Theorem \ref{thm:UnboundedSingularOnePtLQD}]\label{proof:UnboundedSingularOnePtLQD}
Recall that, by Theorem \ref{thm:ScalingLaw}, $\Omega\in\left(\frac{\alpha}{w-w_0}\right)$ if and only if $w_0^{-1}\Omega\in\left(\frac{\alpha}{w-1}\right)$, so it is sufficient to consider the case in which $w_0=1$.

For the forward direction, suppose that $0\in\Omega\in\QD_0\left(\frac{\alpha}{w-1}\right)$ with $\alpha\in\C\setminus\{0\}$ is unbounded and simply connected with Riemann map $\varphi:\D\IntComp\rightarrow\Omega$, normalized such that $\varphi(z_0)=0$, $\varphi(z_1)=1$, and $\varphi'(\infty)=c>0$. By Theorem \ref{thm:LQDFTInverseProb}, $\varphi(z)=c|z_0|zb_{z_0}(z)e^{r^{\#}(z)}$, where $r(z)=\Phi_{\varphi}^{-1}\left(\frac{\alpha w}{w-1}\right)(z)-\Phi_{\varphi}^{-1}\left(\frac{\alpha w}{w-1}\right)(0)$. Computing the Faber transform using Equation \ref{eqn:FaberPolyFormulae}, we obtain
$$\Phi_{\varphi}^{-1}\left(\frac{\alpha w}{w-1}\right)(z)=\alpha+\alpha\Phi_{\varphi}^{-1}\left(\frac{1}{w-1}\right)(z)=\alpha\left(1+\dfrac{\psi'(1)}{z-\psi(1)}\right)=\alpha\left(1+\dfrac{1}{\varphi'(z_1)}\dfrac{1}{z-z_1}\right).$$
Thus $r(z)=\frac{\alpha}{z_1\varphi'(z_1)}\frac{z}{z-z_1}$ and, setting $\lambda=\frac{\overline{\alpha}}{\overline{z_1}\overline{\varphi'(z_1)}}$, we find
$$\varphi(z)=c|z_0|zb_{z_0}(z)e^{\frac{\lambda}{1-z\overline{z_1}}}$$

For the reverse direction, suppose there exist $\lambda\in\C$ and $z_0,z_1\in\D\IntComp$ for which $\Omega=\varphi(\D\IntComp)$, where $\varphi$ is univalent and given by Equation \ref{eqn:UnboundedSingularOnePtLQD} (with $w_0=1$). Then, by Theorem \ref{thm:SCLQDCharacterization}, $\Omega\in\QD_0(h)$ for some $h$. We will now compute $h$ using Theorem \ref{thm:LQDFTInverseProb}, which tells us that there exists $C\in\C$ such that
\begin{align*}
h(w)&=\dfrac{\Phi_{\varphi}\left(\frac{\overline{\lambda}z}{z-z_1}\right)(w)-C}{w}=\dfrac{\overline{\lambda}z_1}{w}\Phi_{\varphi}\left(\frac{1}{z-z_1}\right)(w)+\dfrac{\overline{\lambda}-C}{w}
\end{align*}
Computing the Faber transform via Equation \ref{eqn:FaberPolyFormulae} and substituting the formula for $\lambda$, we find
\begin{align*}
h(w)&=\dfrac{\overline{\lambda}z_1}{w}\dfrac{\varphi'(z_1)}{w-\varphi(z_1)}+\dfrac{\overline{\lambda}-C}{w}=\dfrac{\alpha}{w-1}+\dfrac{\overline{\lambda}-C-\alpha}{w}.
\end{align*}
Hence $\Omega\in\QD_0\left(\frac{\alpha}{w-1}\right)$ by Corollary \ref{cor:LQDQuadFuncUniquenessZero}.

It remains to show the formula for $q$ under the specification $\Omega\in\QD_0\left(\frac{\alpha}{w-1};q\right)$. We begin by computing the generalized Schwarz function. Note that $\frac{\ln|w|^2}{w}\dEquals \frac{\ln(\varphi\varphi^{\#})\circ\psi(w)}{w}$, so
\begin{align*}
    S_0(w)&:=\dfrac{\ln(\varphi\varphi^{\#})\circ\psi(w)}{w}=\dfrac{\ln|cz_0|^2+\left(\frac{\lambda}{1-\psi(w)\overline{z_1}}+\frac{\overline{\lambda}\psi(w)}{\psi(w)-z_1}\right)}{w}
\end{align*}
is a generalized Schwarz function for $\Omega$. Noting the singularities at $w=0$ and $w=1$, computing the residues, and applying Mittag-Leffler's theorem, we find that there exist $\widetilde{C}\in\C$ and $G\in\A_0(\Omega)$ such that
\begin{align*}
    S_0(w)&=\dfrac{\widetilde{C}}{w-1}+\dfrac{\frac{\lambda}{1-z_0\overline{z_1}}+\frac{\overline{\lambda}}{1-z_0^{-1}z_1}+\ln|cz_0|^2}{w}+G(w).
\end{align*}
Hence, $q=\frac{\lambda}{1-z_0\overline{z_1}}+\frac{\overline{\lambda}}{1-z_0^{-1}z_1}+\ln|cz_0|^2$.

We then recover formulae for arbitrary $w_0$ by replacing instances of $\varphi$ and $\varphi'$ with $w_0^{-1}\varphi$ and $w_0^{-1}\varphi'$ respectively.
\end{proof}

Note that both $\Omega$ and $\Omega^{-1}$ are unbounded domains containing zero. Moreover, by Corollary \ref{cor:LQDInversion}, $\Omega\in\QD_0\left(\frac{\alpha}{w-w_0};q\right)$ if and only if $\Omega^{-1}\in\QD_0\left(\frac{\alpha}{w-w_0^{-1}}-\frac{\alpha}{w};-q-A_{\rho_0}(\Omega\IntComp)\right)$.
$$\Omega^{-1}\in\QD_0\left(\frac{\alpha}{w-w_0^{-1}};-q-\alpha-A_{\rho_0}(\Omega\IntComp)\right).$$

Setting $w_0=1$, we find that $\Omega$ and $\Omega^{-1}$ have the same quadrature function, simply for different values of $q$.

\begin{figure}[ht]
  \centering
\begin{table}[H]
\centering
\begin{tblr}{
  cells={valign=m,halign=c},
  row{1}={bg=lightgray,font=\bfseries,rowsep=8pt},
  column{1}={bg=lightgray,font=\bfseries},
  colspec={QQQ},
  hlines,
  vlines
}
 & $\Omega$ Bounded & $\Omega$ Unbounded\\ 
$0\notin\Omega$ & \includegraphics[height=0.28\textwidth,width=.28\textwidth,valign=c
]{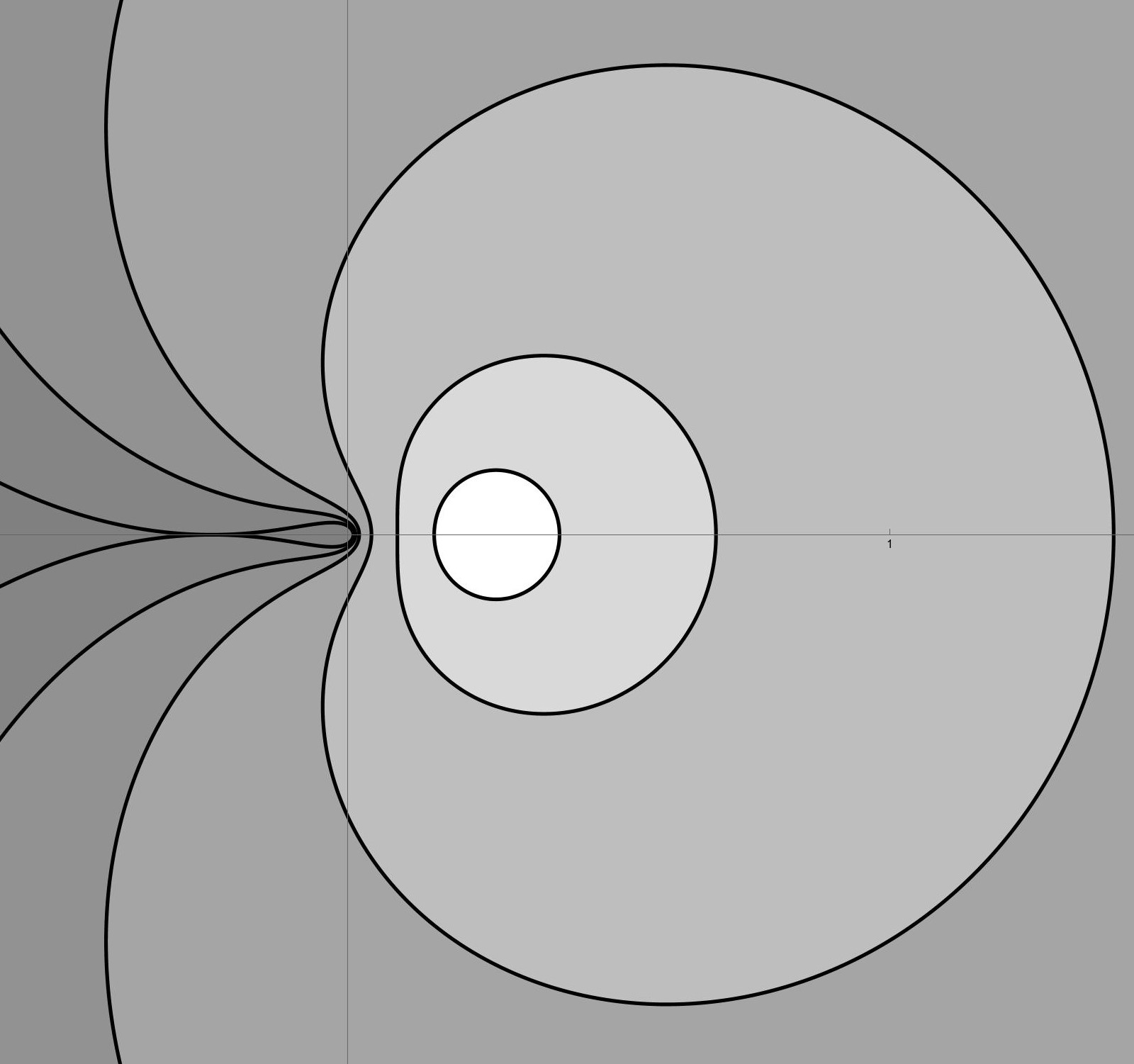} & \includegraphics[height=0.28\textwidth,width=.28\textwidth,valign=c
]{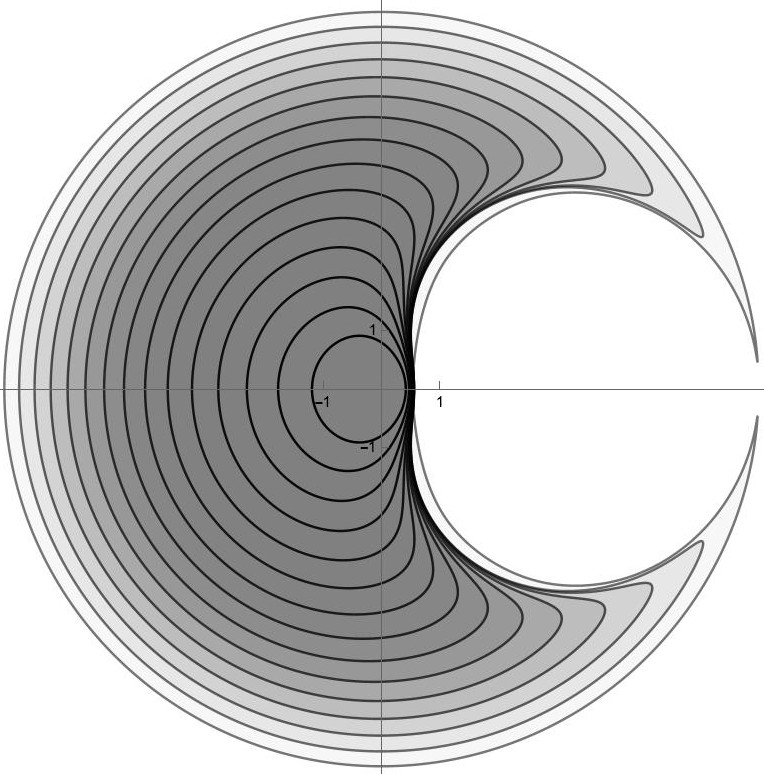}\;\;\includegraphics[height=0.28\textwidth,width=.28\textwidth,valign=c]{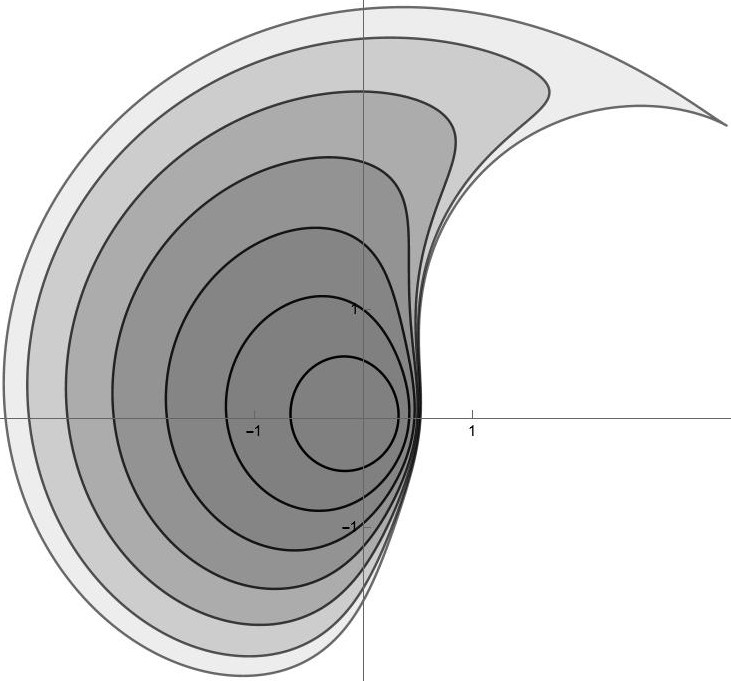}\\ 
$0\in\Omega$ & \includegraphics[height=0.28\textwidth,width=.28\textwidth,valign=c
]{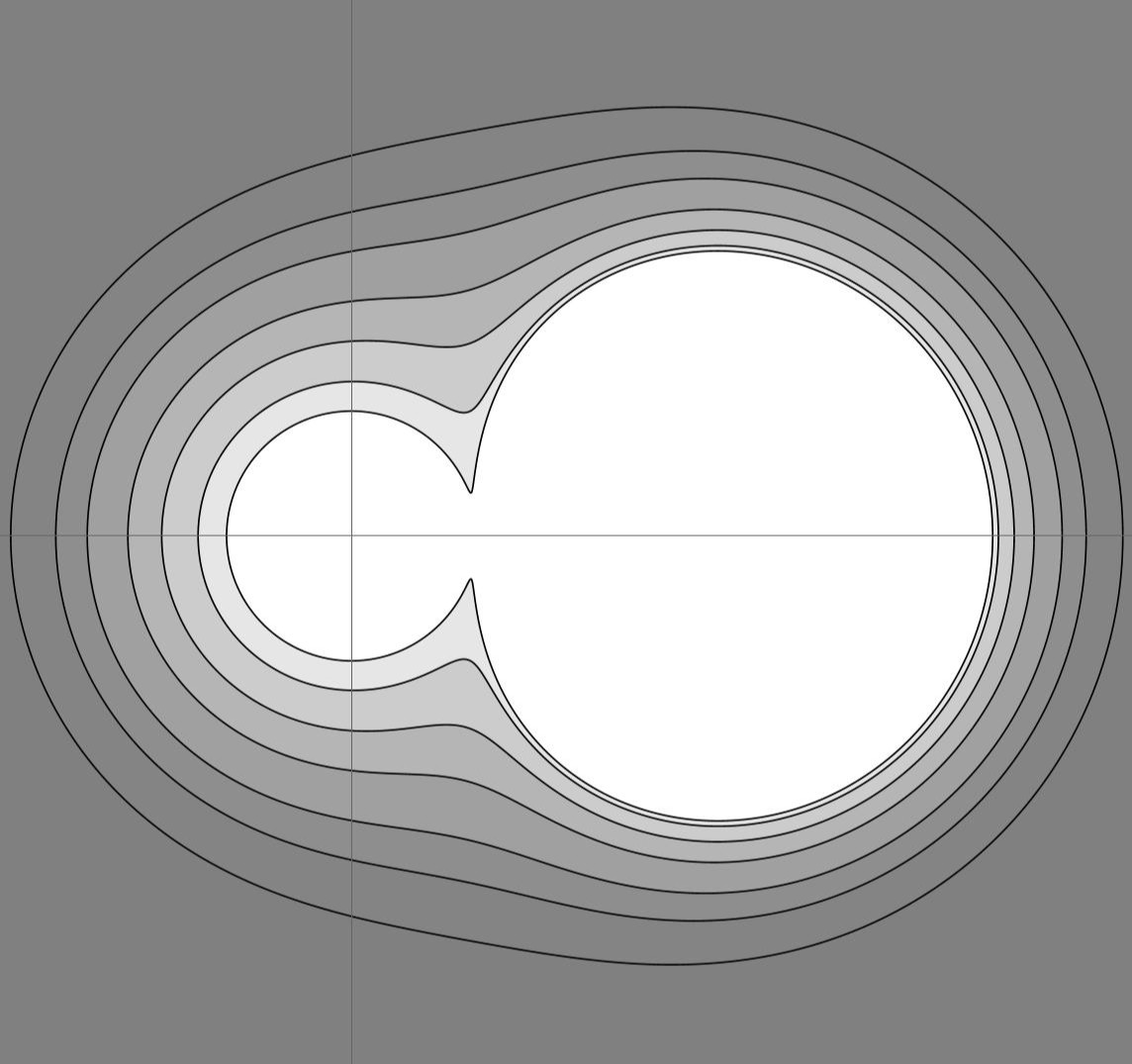} & \includegraphics[height=0.28\textwidth,width=.28\textwidth,valign=c
]{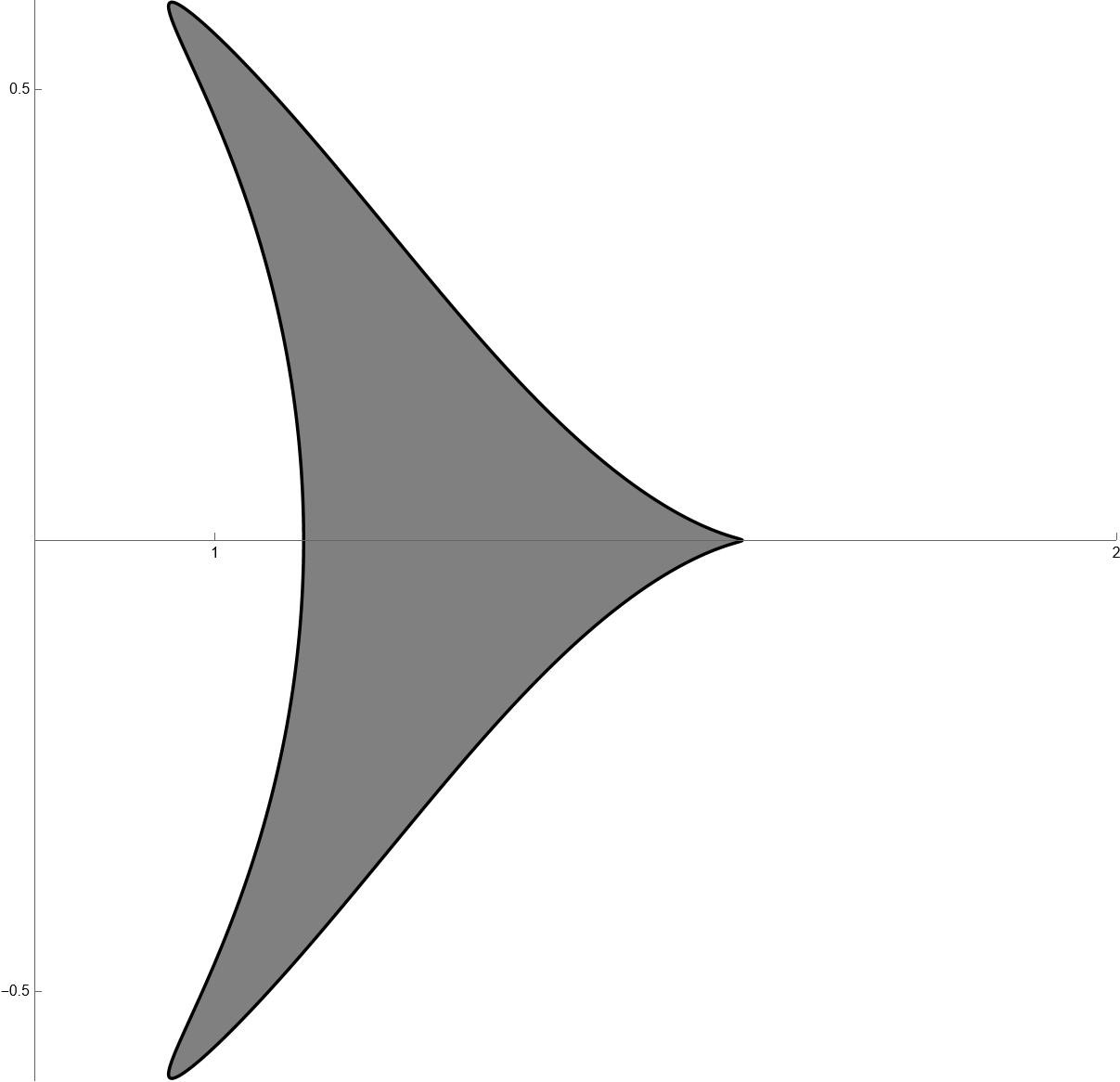}\\
\end{tblr}
\end{table}\vspace{.5em}
\caption{The families of (shaded complements of) one-point LQDs $\Omega\in\QD_0\left(\frac{\alpha}{w-w_0}\right)$ with}
\vspace{-0em}
\begin{itemize}
\itemindent=27pt
    \item $w_0=.25$, $2\leq \alpha\leq\pi^2$ (top left, \S\ref{subsec:BasicOnePTLQDEx});
    \item $w_0=1$, $\alpha=.7$, $q\in(-1.6,2.5)$ (bottom left, \S\ref{subsubsec:LOGWQDBoundedOnePtZ});
    \item $w_0=1.9$, $\alpha=1.5,1.5+.3i$ (top right, \S\ref{subsec:LOGWQDOnePtNZ});
    \item $w_0=2$, $\alpha=-.15$, $c=.389$, $z_0=-3.13$, $z_1=2.28$ (bottom right, \S\ref{subsubsec:LOGWQDUnboundedOnePtZ}).
\end{itemize}
\label{fig:LOGWQDOnePt}\vspace{1.5em}
\end{figure}

\begin{figure}[ht]
  \centering
\includegraphics[height=0.4\linewidth,width=.4\linewidth]{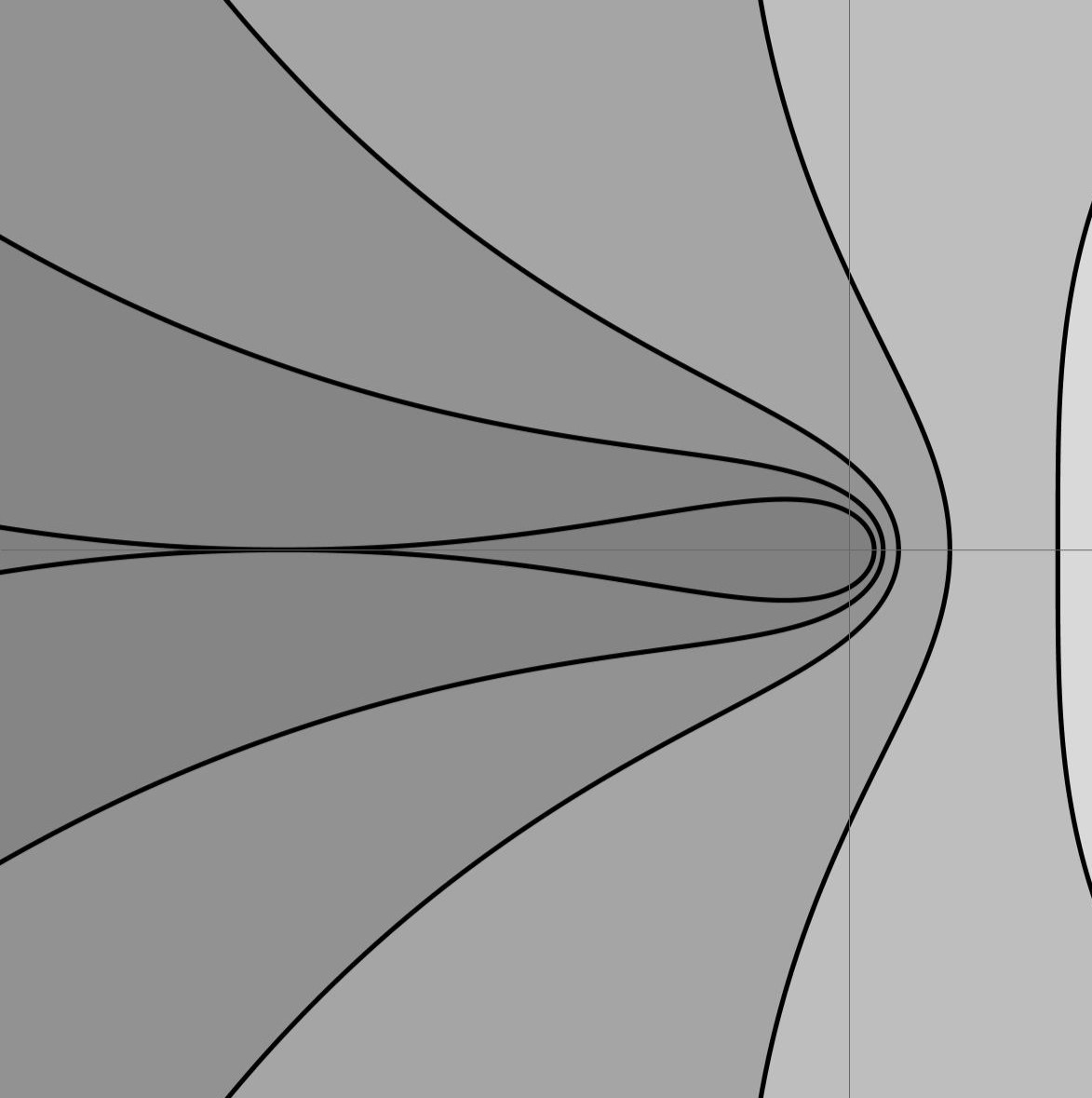}
\vspace{.5em}
\caption{Zoom of double-point formation in Figure \ref{fig:LOGWQDOnePt}, $\Omega\in\QD_0\left(\frac{\alpha}{w-w_0}\right)$ with $w_0=.25$ and $0<\alpha\leq\pi^2$}\label{fig:LOGWQDBoundedNoZeroOnePtZoom}\vspace{1.5em}
\end{figure}

\FloatBarrier

\subsection{Example: A Family of Two-Point LQDs}
In this section, we consider a doubly symmetric family of simply connected LQDs with quadrature function $h(w)=\frac{\alpha}{w-1}+\frac{\alpha}{w+1}$ for $\alpha>0$.

\begin{theorem}\label{thm:2PtSymmetricSingularLQDTheorem}
Fix $\alpha>0$ and $q\in\C$. If $\Omega\in\QD_0\left(\frac{\alpha}{w-1}+\frac{\alpha}{w+1};q\right)$ is a simply connected singular LQD symmetric about the real and imaginary axes, then $\Omega=\varphi(\D)$, where
\begin{equation}\label{eqn:2PtSymmetricLQDFormula2}
    \varphi(z)=\dfrac{z}{z_{+}}e^{\lambda\frac{z^2-z_+^2}{(zz_+)^2-1}}.
\end{equation}
Moreover, $\varphi(z_+)=1$, $\lambda=\frac{q+\ln(z_+^2)}{z_+^2+z_+^{-2}}$, and $\alpha=\lambda^2+\lambda\frac{z_+^4-1}{2z_{+}^2}$.
\end{theorem}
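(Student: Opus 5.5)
The plan is to run the bounded singular case of Theorem \ref{thm:LQDFTInverseProb} and let the two symmetries do most of the work. Since $h=\frac{\alpha}{w-1}+\frac{\alpha}{w+1}\in\Rat_0(\Omega)$ and $0\in\Omega$, $\Omega$ is a bounded singular LQD. Because $\Omega$ is symmetric about both axes, it is invariant under $w\mapsto -w$ and $w\mapsto\overline{w}$. I normalize the Riemann map by $\varphi(0)=0$, $\varphi'(0)>0$, so that $z_0=0$ and the factorization is $\varphi(z)=cze^{r^{\#}(z)}$ (the $\varphi(0)=0$ variant of (\ref{eqn:LQDBoundedInnerOuterFactorization})). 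By uniqueness of the normalized Riemann map, the symmetries give $\varphi(-z)=-\varphi(z)$ and $\varphi(\overline{z})=\overline{\varphi(z)}$. Hence $\psi=\varphi^{-1}$ is odd and real on the real axis, $\psi(1)=:z_+\in(0,1)$, $\psi(-1)=-z_+$, and $\psi'(-1)=\psi'(1)=1/\varphi'(z_+)>0$.

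Next I compute $r$ from (\ref{eqn:LQDBoundedFTInverseProbFormula}). We have $wh(w)=2\alpha+\frac{\alpha}{w-1}-\frac{\alpha}{w+1}$ and $\Res{\infty}h=-2\alpha$, so $wh(w)+\Res{\infty}h=\frac{\alpha}{w-1}-\frac{\alpha}{w+1}$. Using $\Phi_{\varphi}^{-1}\big(\frac{1}{w-a}\big)=\frac{\psi'(a)}{z-\psi(a)}$ (Equation \ref{eqn:FaberPolyFormulae}) together with the symmetry data, I get
$$r(z)=\alpha\psi'(1)\Big(\frac{1}{z-z_+}-\frac{1}{z+z_+}\Big)=\frac{2\alpha z_+}{\varphi'(z_+)}\,\frac{1}{z^2-z_+^2}.$$
Reflecting, $r^{\#}(z)=\frac{2\alpha z_+}{\varphi'(z_+)}\frac{z^2}{1-z_+^2z^2}$, which is real and even as the symmetries require.

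Then I put this in the stated form. The identity
$$\frac{z^2-z_+^2}{z_+^2z^2-1}=z_+^{-2}+\frac{z_+^{-2}-z_+^{2}}{z_+^2z^2-1}$$
shows that $r^{\#}$ equals $\lambda\frac{z^2-z_+^2}{(zz_+)^2-1}$ plus an additive constant, for a suitable real $\lambda$ that is an explicit multiple of $\alpha/\varphi'(z_+)$. The resulting exponential constant is absorbed into $c$. The condition $\varphi(z_+)=1$ kills the exponent at $z=z_+$, which forces the prefactor to be $1/z_+$ and gives (\ref{eqn:2PtSymmetricLQDFormula2}).

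Finally I derive the two relations.
\begin{itemize}
\item \emph{The $\alpha$ relation.} I differentiate (\ref{eqn:2PtSymmetricLQDFormula2}) at $z_+$ to get $\varphi'(z_+)$ in terms of $\lambda$ and $z_+$, and substitute into the relation between $\lambda$ and $\alpha/\varphi'(z_+)$ found in the previous step. This should produce $\alpha=\lambda^2+\lambda\frac{z_+^4-1}{2z_+^2}$.
\item \emph{The $q$ relation.} As in the proofs of Theorems \ref{thm:BoundedSingularOnePtLQD} and \ref{thm:UnboundedSingularOnePtLQD}, I form $S_0(w)=\frac{\ln(\varphi\varphi^{\#})\circ\psi(w)}{w}$. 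Here $\varphi\varphi^{\#}=z_+^{-2}\exp\big(\lambda(E(z)+E(z^{-1}))\big)$ with $E(z)=\frac{z^2-z_+^2}{z_+^2z^2-1}$. Since $\psi(0)=0$, the residue of $S_0$ at $w=0$ is $-\ln z_+^2+\lambda\big(E(0)+E(\infty)\big)=-\ln z_+^2+\lambda(z_+^2+z_+^{-2})$. Separating the principal parts at $\pm1$ via Mittag-Leffler identifies this residue with $q$ in Definition \ref{def:QD0hq}, which yields $\lambda=\frac{q+\ln(z_+^2)}{z_+^2+z_+^{-2}}$.
\end{itemize}

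The step I expect to be the main obstacle is the bookkeeping of constants in the $\alpha$ relation: matching $\lambda$ to $\frac{2\alpha z_+}{\varphi'(z_+)}$ through the partial-fraction rewrite, and then simplifying $\varphi'(z_+)$. The $q$ computation is routine once $S_0$ has been written down.
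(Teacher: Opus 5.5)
Your proposal is correct and follows the paper's proof essentially step for step. You normalize $\varphi(0)=0$, compute $r$ from the bounded Faber formula, and use the reflection symmetries to reduce it to $\frac{2\alpha z_+}{\varphi'(z_+)}\frac{1}{z^2-z_+^2}$. You then renormalize with $\varphi(z_+)=1$ and read off $q$ as the residue of $S_0$ at $w=0$. Your direct evaluation of that residue as $-\ln z_+^2+\lambda\bigl(E(0)+E(\infty)\bigr)$ is a slightly cleaner version of the paper's L'H\^opital computation. Your constants also close up: $\lambda=\frac{2\alpha z_+}{\varphi'(z_+)(z_+^4-1)}$ together with $\varphi'(z_+)=z_+^{-1}+\frac{2\lambda z_+}{z_+^4-1}$ gives exactly the stated relation for $\alpha$.

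One stated justification is wrong, although its conclusion is true. You infer that $\Omega$ is bounded from $h\in\Rat_0(\Omega)$ and $0\in\Omega$. But unbounded LQDs are only required to have $h\in\Rat(\Omega)$, and such an $h$ may vanish at $\infty$; Theorem \ref{thm:UnboundedSingularOnePtLQD} gives unbounded singular LQDs with exactly this kind of $h$.

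What actually rules out $\infty\in\Omega$ is the symmetry. If $\infty\in\Omega$, the complement would be a connected compact set $K\subset\C\setminus\{0\}$ with $K=-K$, and such a set always separates $0$ from $\infty$. To see this, suppose a path $\sigma$ from $0$ to $\infty$ missed $K$. Then $\sigma^2$ misses $K^2$ because $K=-K$, so $K^2$ lies in a simply connected component of $\Ch\setminus\sigma^2$ omitting $0$ and $\infty$. The two branches of the square root there would split $K$ into two disjoint, nonempty, relatively open pieces, contradicting connectedness.

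This is the same observation the paper makes for singular monomial LQDs. The paper's own proof of this theorem simply takes $\varphi:\D\to\Omega$ without comment, so you should replace your reason with this one rather than change anything else in your argument.
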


\begin{figure}[ht]
  \centering
\includegraphics[height=0.37\linewidth,width=.6\linewidth]{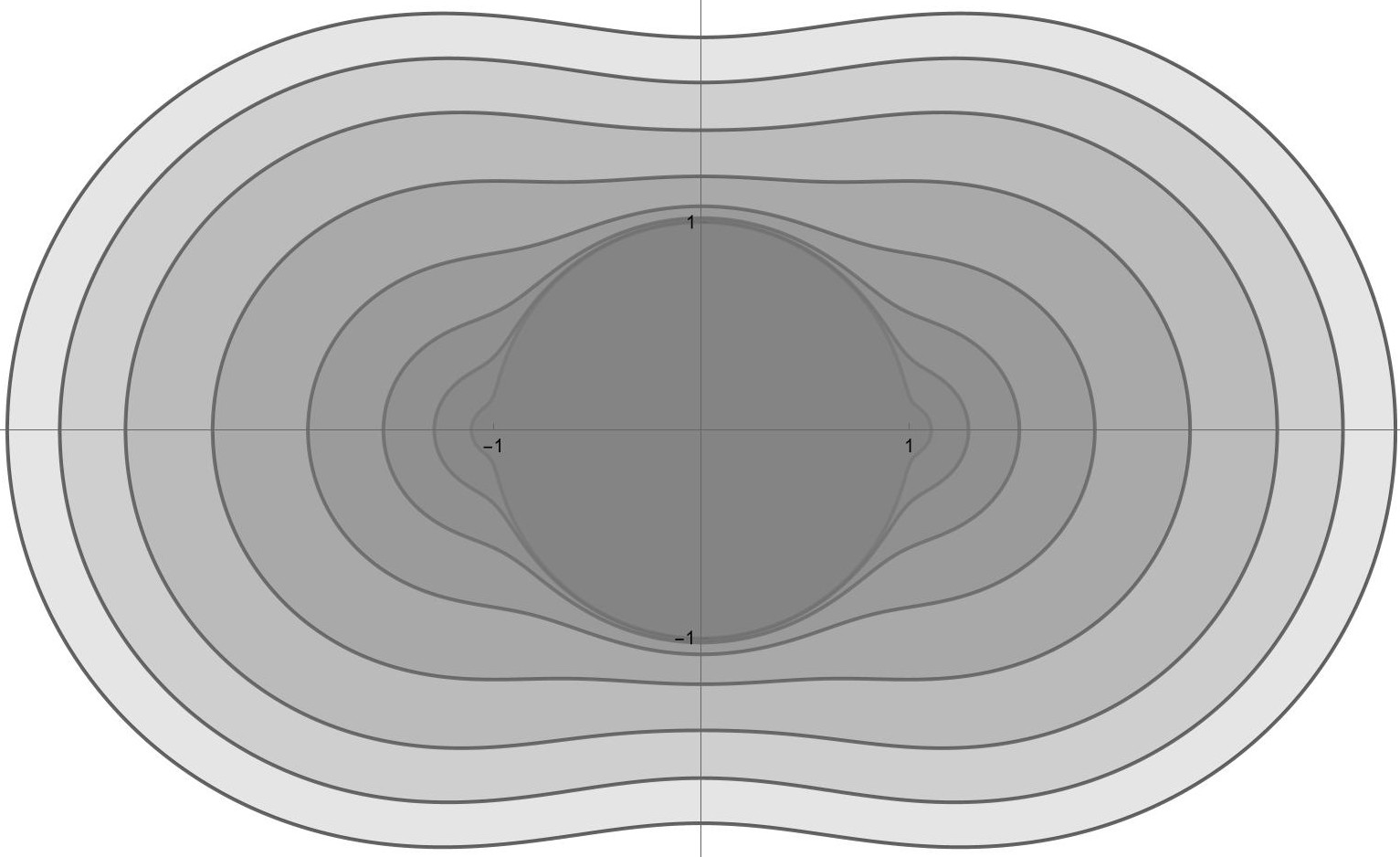}
\vspace{.5em}
\caption{Family of LQDs in $\QD_0\left(\frac{\alpha}{w-1}+\frac{\alpha}{w+1};q\right)$, for $q=0$ and $0<\alpha<1$.}\label{fig:TwoPointLOGWQDZero}
\end{figure}

\begin{proof}[Proof of Theorem \ref{thm:2PtSymmetricSingularLQDTheorem}]
Suppose that $\Omega\in\QD_0\left(\frac{\alpha}{w-1}+\frac{\alpha}{w+1}\right)$ for some $\alpha>0$. If the Riemann map associated to $\Omega$, $\varphi:\D\rightarrow\Omega$ takes $0$ to $0$, then $\varphi(z)=cze^{r^{\#}(z)}$, where $c=\varphi'(0)>0$. Also choose $z_+,z_-\in\D$ such that $\varphi(z_{\pm})=\pm1$. By Theorem \ref{thm:LQDFTInverseProb}, $r(z)=\Phi_{\varphi}^{-1}\left(\frac{\alpha}{w-1}+\frac{\alpha}{w+1}-2\alpha\right)(z)$. Applying the Faber transform formulae of Equation \ref{eqn:FaberPolyFormulae}, we find
\begin{align*}
    r(z)&=\alpha\Phi_{\varphi}^{-1}\left(\dfrac{1}{w-1}\right)(z)-\alpha\Phi_{\varphi}^{-1}\left(\dfrac{1}{w+1}\right)(z)\\
    &=\dfrac{\alpha}{\varphi'(z_{+})}\dfrac{1}{z-z_{+}}-\dfrac{\alpha}{\varphi'(z_{-})}\dfrac{1}{z-z_{-}}.
\end{align*}
Assuming the domain is symmetric about the real and imaginary axes, we find that $\varphi(-z)=-\varphi(z)$, $\varphi'(-z)=\varphi'(z)$, and $z_{-}=-z_{+}\in\R$, so this simplifies to $r(z)=\frac{2z_{+}\alpha}{\varphi'(z_{+})}\frac{1}{z^2-z_{+}^2}$, and we obtain
\begin{equation}\label{eqn:2PtSymmetricLQDFormula1}
    \varphi(z)=cze^{\frac{2z_{+}\alpha}{\varphi'(z_{+})}\frac{z^2}{1-z^2z_{+}^2}}.
\end{equation}
Using the relations for $\varphi(z_+)$ and $\varphi'(z_+)$, we find that
\begin{align*}
    \varphi(z)&=\dfrac{z}{z_{+}}e^{\frac{\ln(cz_{+})}{z_{+}^2}\frac{z^2-z_+^2}{(zz_+)^2-1}}.
\end{align*}
We now compute $q$ - that is, we know $\Omega\in\QD_0(h;q)$, and we would like to determine a relationship between the coefficients of $\varphi$ and the charge $q$ at $0$. We compute the Schwarz function,
\begin{align*}
    \dfrac{\ln|w|^2}{w}&\dEquals\dfrac{\ln(\varphi\varphi^{\#})\circ\psi(w)}{w}\\
    &=\dfrac{\dfrac{\ln(cz_{+})}{z_+^2}\left(\frac{z^2-z_+^2}{(zz_+)^2-1}+\frac{1-(zz_+)^2}{z_+^2-z^2}\right)\circ\psi(w)-\ln(z_+^2)}{w}\\
    &=\dfrac{\ln\left(\frac{wz_{+}}{\psi(w)}\right)+\left(\frac{\ln(cz_{+})}{z_+^2}\right)^2\frac{1}{\ln\left(\frac{wz_{+}}{\psi(w)}\right)}}{w}-\dfrac{\ln(z_+^2)}{w}
\end{align*}
By L'Hopital's rule,
$$\lim_{w\to0}\dfrac{wz_{+}}{\psi(w)}=\dfrac{z_{+}}{\psi'(0)}=z_{+}\varphi'(0)=cz_{+}.$$
So, expanding about $w=0$, we obtain
\begin{align*}
    \dfrac{\ln|w|^2}{w}&\dEquals\dfrac{\ln(cz_{+})(z_+^{-4}+1)-\ln(z_+^2)}{w}+O(w)
\end{align*}
Hence, $q=\ln(cz_{+})(z_+^{-4}+1)-\ln(z_+^2)$. Setting $\lambda=\frac{q+\ln(z_+^2)}{z_+^2+z_+^{-2}}$, we recover Equation \ref{eqn:2PtSymmetricLQDFormula2}. Finally, combining Equations \ref{eqn:2PtSymmetricLQDFormula1} and \ref{eqn:2PtSymmetricLQDFormula2}, we obtain the equation for $\alpha$ in the theorem.
\end{proof}


\section{Conclusion}

This paper develops the theory of log-weighted quadrature domains - plane domains satisfying a quadrature identity with respect to the weight $\rho_0(w)=|w|^{-2}$. The presence of the singularity at the origin leads to several new features absent from the classical theory. Principal among these is the fact that, when $0\in\Omega$, the quadrature function is no longer uniquely determined by the domain, but only up to the addition of a point charge at the origin. At the structural level, the paper establishes a generalized Schwarz function characterization of LQDs (Theorem \ref{thm:LQDSFEEquiv}) and derives a corresponding boundary regularity result (Theorem \ref{thm:LQDBoundaryRegularity}), parallel to the classical theory of quadrature domains. 

The main result in the simply connected setting is that a domain is an LQD if and only if the outer factor of its Riemann map extends to the exponential of a rational function (Theorem \ref{thm:SCLQDCharacterization}). This gives a natural analogue of the Riemann map characterization of classical QDs, while showing that in the log-weighted setting the correct object is not the full Riemann map itself, but rather its rational inner part together with the rational data encoded by the outer factor. From this characterization one obtains explicit Faber transform formulae relating the quadrature function to the Riemann map (Theorem \ref{thm:LQDFTInverseProb}) which, in turn, renders the inverse and direct problems effectively computable in a number of cases. These general results are then applied to classify null LQDs (Theorem \ref{thm:NullLQDClass}), to obtain a partial classification of one-point LQDs (\S\ref{sec:OnePointLQDs}), and to analyze monomial and one-point families in both the singular and non-singular settings.

Several natural directions remain open. First, while the simply connected case admits a clean description in terms of the inner/outer factorization of the Riemann map, it would be of interest to generalize this to the multiply connected setting. One promising direction is the work of Crowdy and Marshall \cite{CrowdyMultipleQDs2004}, which uses Schottky–Klein prime functions to address the multiply connected inverse problem for classical quadrature domains. Second, the families treated here suggest the possibility of sharper classification theorems for singular one-point and monomial LQDs, especially in the absence of symmetry. More broadly, this study of LQDs should be viewed as part of a larger program of understanding \emph{weighted quadrature domains} - especially those with singular/degenerate weights. It would be interesting to determine to what extent the methods developed here extend to other weighted theories. My hope is that the present work provides a useful framework for such problems by demonstrating that much of the classical QD machinery survives in this weighted setting, albeit in a modified form.

\newpage
\printbibliography

\newpage
\section{Appendix 1: The Cauchy Projection and the Faber Transform}\label{subsec:CauchyTransformFaberTransform}

\subsection{The Cauchy Projection}\label{subsec:CauchyTransformV2}
If $\Omega$ is a rectifiable domain, then there is a natural linear operator $\AnalyticInNoBracket{\cdot}{\Omega}:C^0(\partial\Omega)\rightarrow H^{\infty}(\Omega)$ called the {\it Cauchy projection},
\begin{equation}\label{eqn:analyticprojection}
\AnalyticIn{f}{\Omega}(w):=\oint_{\partial\Omega}\dfrac{f(\xi)}{\xi-w}d\xi,
\end{equation}

The Cauchy projection has a number of useful properties, summarized in the following lemma.
\begin{lemma}\label{lemma:AnalyticDecompositionLemmaV2}
Fix a bounded domain $\Omega$ with a piecewise $C^1$ boundary.
    \begin{enumerate}
        \item If $f$ is analytic on $\partial\Omega$,\footnote{meaning analytic in an open nbhd of $\partial\Omega$} then
        \begin{itemize}
            \item $\AnalyticInNoBracket{f}{\Omega}\in\A(\Omega)$,
            \item $\AnalyticInNoBracket{f}{\Omega\IntComp}\in\A_0(\Omega\IntComp)$,
            \item $f\dEquals\AnalyticInNoBracket{f}{\Omega}+\AnalyticInNoBracket{f}{\Omega\IntComp}$.
        \end{itemize}
        \item If $f\in\A(\Omega)$, then $\AnalyticInNoBracket{f}{\Omega}=f$ and $\AnalyticInNoBracket{f}{\Omega\IntComp}=0$.
        \item If $f\in\A_0(\Omega\IntComp)$, then $\AnalyticInNoBracket{f}{\Omega\IntComp}=f$ and $\AnalyticInNoBracket{f}{\Omega}=0$.
        \item If $f\in\M(\Omega)$ and extends continuously to $\partial\Omega$, then $\AnalyticInNoBracket{f}{\Omega\IntComp}\in\Rat_0(\Omega)$.
        \item If $f\in\M(\Omega\IntComp)$ and extends continuously to $\partial\Omega$, then $\AnalyticInNoBracket{f}{\Omega}\in\Rat(\Omega\IntComp)$.
    \end{enumerate}
    The analogous results hold for unbounded $\Omega$.
\end{lemma}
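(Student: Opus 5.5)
The statement to prove is the last item in the appendix, Lemma~\ref{lemma:AnalyticDecompositionLemmaV2}, which lists the properties of the Cauchy projection. The plan is to reduce everything to Cauchy's integral formula on $\Omega$, made available by the piecewise $C^1$ boundary. I treat the bounded case first and obtain the unbounded case by inversion, using Theorem-\ref{thm:LQDInversion}-style pullbacks. Throughout, I use the paper's normalization that contour integrals carry an implicit factor $1/(2\pi i)$.

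\textbf{Items (2) and (3).} Take $f\in\A(\Omega)$. Approximate $\Omega$ from inside by domains $\Omega_\epsilon$ with smooth boundaries converging to $\partial\Omega$. This works because a piecewise $C^1$ boundary can be pushed inward along a transversal vector field, away from the finitely many corners.
\begin{itemize}
\item For $w\in\Omega$, the Cauchy formula on $\Omega_\epsilon$ gives $f(w)$.
\item For $w\in\Omega\IntComp$, Cauchy's theorem gives $0$.
\item Uniform continuity of $f$ on $\Cl(\Omega)$ lets us pass to the limit $\epsilon\to0$.
\end{itemize}
Note the orientation: $\partial\Omega\IntComp=-\partial\Omega$. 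With this, (2) reads $\AnalyticInNoBracket{f}{\Omega}=f$ and $\AnalyticInNoBracket{f}{\Omega\IntComp}=0$. Item (3) is the same argument applied on $\Omega\IntComp\cup\{\infty\}$. Here we use $f(\infty)=0$, which removes the residue at infinity.

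\textbf{Item (1).} Let $f$ be analytic on a neighbourhood $N$ of $\partial\Omega$. Choose a thin collar, that is, a domain bounded by curves $\gamma_-\subset\Omega$ and $\gamma_+\subset\Omega\IntComp$ inside $N$. Deform contours to get:
\begin{itemize}
\item $\AnalyticInNoBracket{f}{\Omega}(w)=\oint_{\gamma_+}\frac{f(\xi)}{\xi-w}\,d\xi$ for $w$ on the $\Omega$-side of $\gamma_+$. This gives an analytic continuation across $\partial\Omega$, and hence membership in $\A(\Omega)$.
\item $\AnalyticInNoBracket{f}{\Omega\IntComp}(w)=-\oint_{\gamma_-}\frac{f(\xi)}{\xi-w}\,d\xi$ for $w$ beyond $\gamma_-$. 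This is $O(1/w)$ at infinity, so it lies in $\A_0(\Omega\IntComp)$.
\end{itemize}
For $w$ in the collar, the Cauchy formula on the collar gives $f=\AnalyticInNoBracket{f}{\Omega}+\AnalyticInNoBracket{f}{\Omega\IntComp}$. Restricting to $\partial\Omega$ yields the jump relation.

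\textbf{Items (4) and (5).} Take $f\in\M(\Omega)$, continuous up to the boundary. It has finitely many poles $a_k\in\Omega$, since poles cannot accumulate at $\partial\Omega$ given continuity there. Let $P$ be the sum of the principal parts of $f$ at these poles, so that $P\in\Rat_0(\Omega)$ and $f-P\in\A(\Omega)$. Then:
\begin{itemize}
\item By (2), $\AnalyticInNoBracket{f-P}{\Omega\IntComp}=0$.
\item By (3), applied with $\Omega\IntComp$ in the role of the domain, $\AnalyticInNoBracket{P}{\Omega\IntComp}=P$.
\end{itemize}
Hence $\AnalyticInNoBracket{f}{\Omega\IntComp}=P\in\Rat_0(\Omega)$.

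Item (5) is symmetric. One takes principal parts at the poles in $\Omega\IntComp$, including the polynomial part at $\infty$ if $\infty$ is a pole. This gives an element of $\Rat(\Omega\IntComp)$, where the polynomial part is kept because the $\Omega$-projection does not kill constants.

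\textbf{Main obstacle.} The delicate step is the boundary approximation in (2) and (3), namely the uniform convergence of the contour integrals along $\partial\Omega_\epsilon\to\partial\Omega$ at corners and cusps. A cusp is still piecewise $C^1$, with finitely many singular points. I would handle it by excising small discs around the singular points. Their boundary contributions are bounded by $\sup|f|$ times an arc length that tends to $0$, while the jump of the kernel is controlled for $w$ away from $\partial\Omega$.

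For the unbounded versions, I apply $w\mapsto 1/w$ about a point outside $\Cl(\Omega)$. This swaps the roles of $\A$ and $\A_0$, and I check that the kernel transforms correctly up to an additive constant. That constant is exactly what distinguishes $\Rat$ from $\Rat_0$.
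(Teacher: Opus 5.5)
Your proposal is correct. For items (2)--(5) it is the paper's argument. For (2)--(3) that means Cauchy's formula and theorem, where you add the interior-approximation and excision-near-cusps details the paper leaves implicit. For (4)--(5) it means splitting $f$ into its principal parts plus an $\A$-remainder and then applying (2)--(3).

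The real difference is item (1). The paper simply cites the Sokhotski--Plemelj theorem, while you deform $\partial\Omega$ onto $\gamma_\pm$ inside the neighbourhood $N$ where $f$ is analytic. Plemelj is the right tool when the density is only H\"older. However, its standard statements are for smooth arcs and need supplementing exactly at the corners and cusps allowed here. Your deformation uses only Cauchy's theorem and is indifferent to singular boundary points. It also yields more: each projection continues analytically across $\partial\Omega$, so membership in $\A(\Omega)$, membership in $\A_0(\Omega\IntComp)$ and the jump relation all come at once. The price is that it needs $f$ analytic near $\partial\Omega$, which is precisely what (1) assumes.

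Your inversion reduction for unbounded $\Omega$, via $\frac{d\xi}{\xi-w}=\bigl(\frac{1}{\eta-\zeta}-\frac{1}{\eta}\bigr)d\eta$, goes beyond the paper, which only asserts the unbounded analogues. It makes explicit that projections onto an unbounded domain are normalized to vanish at $\infty$.

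Two small tightenings:
\begin{enumerate}
\item At a double point $\Omega\IntComp$ is pinched, so $\gamma_+$ cannot be a single curve running parallel to $\partial\Omega$. The cleanest fix is to take $\gamma_\pm$ as cycles from the standard grid construction. Choose $\gamma_+$ with index $1$ on $\Cl(\Omega)$ and $0$ off $\Omega\cup N$. Choose $\gamma_-$ with index $1$ on $\Cl(\Omega)\setminus N$ and $0$ off $\Omega$. Then apply the homology form of Cauchy's theorem to $\gamma_+-\partial\Omega$ and to $\gamma_--\partial\Omega$ in $N\setminus\{w\}$. The winding numbers match off $N$ and at $w$, so no geometric collar is needed.
\item In (5), subtract the constant $f(\infty)$ together with the finite principal parts even when $\infty$ is not a pole. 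Otherwise the remainder is not in $\A_0(\Omega\IntComp)$ and (3) does not apply.
\end{enumerate}
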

\begin{proof}[Proof of Lemma \ref{lemma:AnalyticDecompositionLemmaV2}]\;\\
(1) Follows immediately from the Sokhotski-Plemelj theorem (see e.g. \cite{Henrici1986}, Theorem 14.1c).\\
(2-3) Follow from the Cauchy integral formula and the identity theorem.\\
(4) Follows from the principal part decomposition of $f$: $f=r+G$, where $r\in\Rat_0(\Omega)$ and $G\in\A(\Omega)$. Hence, applying (2-3), we obtain
$$\AnalyticInNoBracket{f}{\Omega\IntComp}=\AnalyticInNoBracket{r}{\Omega\IntComp}+\AnalyticInNoBracket{G}{\Omega\IntComp}=r$$
The argument for (5) is entirely analogous.\footnote{Note that $f$ has finitely many poles even if its domain is unbounded because if $f$ is meromorphic at $\infty$, then $f(w^{-1})$ has at most an isolated pole at $0$.}
\end{proof}
See theorem 2.3 of \cite{lenells2014matrixriemannhilbertproblemsjumps} for a related exposition. The Cauchy projection, $\AnalyticInNoBracket{f}{\Omega}$, is also referred to as the ``analytic part'' of $f$ in $\Omega$.\\

With the definition of the Cauchy projection established, we now prepared to define the Faber transform.

\subsection{The Faber Transform}\label{subsec:FaberTransform}
The Faber transform is a linear operator between functions analytic in the disk (or exterior disk) and functions analytic in a given simply connected domain. In the present paper it serves as the main mechanism for passing between the quadrature function and the Riemann map. In particular, it converts rational data on one side into rational data on the other, and it admits explicit formulae for rational inputs. See \cite{GravenMakarov2025} \S1.5.1, and \cite{FaberTransformRationalApprox1983,FaberTransformAnalyticContinuation1988,TheFaberOperator1984}, for further details on the Faber transform, Faber polynomials, and their properties. We will begin by defining the \emph{interior Faber transform}.

\begin{definition}
Let $\Omega$ be a bounded simply connected domain with Riemann map $\varphi:\D\rightarrow\Omega$, normalized so that $\varphi(0)=w_0\in\Omega$, and $\varphi'(0)>0$. Also write $\psi:=\varphi^{-1}$. The associated interior Faber transform is the map $\Phi_{\varphi}:\A_0(\D\IntComp)\rightarrow\A_0(\Omega\IntComp)$ given by
\begin{equation}\label{eqn:InteriorTransformFormula}
\Phi_{\varphi}(f)(w):=\AnalyticIn{f\circ\psi}{\Omega\IntComp}(w)=\oint_{\partial\Omega\IntComp}\dfrac{f\circ\psi(\xi)}{\xi-w}d\xi.
\end{equation}
\end{definition}
\noindent Its inverse $\Phi_{\varphi}^{-1}:\A_0(\Omega\IntComp)\rightarrow \A_0(\D\IntComp)$ is given by $\Phi_{\varphi}^{-1}(f):=\AnalyticIn{f\circ\varphi}{\D\IntComp}$.\\

\noindent The important properties for our purposes are the following:
\begin{itemize}
    \item $\Phi_{\varphi}$ is a linear isomorphism $\A_0(\D\IntComp)\rightarrow\A_0(\Omega\IntComp)$.
    \item $\Phi_{\varphi}$ restricts to an isomorphism $\Rat_0(\D)\rightarrow\Rat_{0}(\Omega\IntComp)$.
    \item If $f\in C^0(\partial\D)$, then
    \begin{equation}\label{eqn:InteriorFaberTransformProjectionExtension}
        \AnalyticIn{f\circ\psi}{\Omega\IntComp}=\Phi_\varphi\left(\AnalyticInNoBracket{f}{\D\IntComp}\right).
    \end{equation}
    Similarly, when $\partial\Omega$ is piecewise $C^1$ and $f\in C^0(\partial\Omega)$,
    \begin{equation}\label{eqn:InteriorInverseFaberTransformProjectionExtension}
        \AnalyticIn{f\circ\varphi}{\D\IntComp}=\Phi_{\varphi}^{-1}\left(\AnalyticInNoBracket{f}{\Omega\IntComp}\right).
    \end{equation}
\end{itemize}

The discussion for the \emph{exterior Faber transform} is largely analogous to the interior case, with the primary difference being the function spaces on which it is defined. Another important difference is 

\begin{definition}
Let $\Omega$ be an unbounded simply connected domain with Riemann map $\varphi:\D\IntComp\rightarrow\Omega$, normalized so that $\varphi(\infty)=\infty$, and $\varphi'(\infty)=c>0$. Also write $\psi:=\varphi^{-1}$. The associated exterior Faber transform is the map $\Phi_{\varphi}:\A(\D)\rightarrow\A(\Omega\IntComp)$, given by the same formula:
\begin{equation}\label{eqn:ExteriorTransformFormula}
\Phi_{\varphi}(f)(w):=\AnalyticIn{f\circ\psi}{\Omega\IntComp}(w)=\oint_{\partial\Omega\IntComp}\dfrac{f\circ\psi(\xi)}{\xi-w}d\xi.
\end{equation}
\end{definition}
\noindent Its inverse $\Phi_{\varphi}^{-1}:\A(\Omega\IntComp)\rightarrow\A(\D)$ is given by $\Phi_{\varphi}^{-1}(f)=\AnalyticIn{f\circ\varphi}{\D}$.\\

The corresponding properties are completely analogous:
\begin{itemize}
    \item $\Phi_{\varphi}$ is a linear isomorphism $\A(\D)\rightarrow\A(\Omega\IntComp)$.
    \item $\Phi_{\varphi}$ restricts to an isomorphism $\Rat(\D\IntComp)\rightarrow\Rat(\Omega\IntComp)$.
    \item If $f\in C^0(\partial\D)$, then
    \begin{equation}\label{eqn:ExteriorFaberTransformProjectionExtension}
        \AnalyticIn{f\circ\psi}{\Omega\IntComp}=\Phi_\varphi\left(\AnalyticInNoBracket{f}{\D}\right).
    \end{equation}
    Similarly, when $\partial\Omega$ is piecewise $C^1$ and $f\in C^0(\partial\Omega)$,
    \begin{equation}\label{eqn:ExteriorInverseFaberTransformProjectionExtension}
        \AnalyticIn{f\circ\varphi}{\D}=\Phi_{\varphi}^{-1}\left(\AnalyticInNoBracket{f}{\Omega\IntComp}\right).
    \end{equation}
\end{itemize}

\noindent{\it Faber polynomials}: The Faber polynomials associated to an unbounded domain $\Omega$ (and $\varphi$) are defined by
$$F_n=\Phi_{\varphi}(z^n)=\AnalyticIn{\psi^n}{\Omega\IntComp},\;\;\;n\geq0.$$
Similarly, the inverse Faber polynomials $\{W_n\}_{n\geq0}$ are defined as the inverse Faber transform of the standard monomial basis. Concretely, $F_n$ and $W_n$ are the polynomial parts of $\psi^n$ and $\varphi^n$ respectively. They also satisfy the following asymptotic identities
$$F_n\circ\varphi(z)=z^n+O(1),\;\;\;\;\;\;W_n\circ\psi(w)=w^n+O(1).$$
For applications throughout the paper, an important fact is that the Faber transform of a rational function can be computed explicitly via the calculus of residues. If $w_0\in\Omega$ and $z_0\in\D$ ($\D\IntComp$ for the exterior transform), then
\begin{equation}\label{eqn:FTFormulae}
\begin{alignedat}{1}
\Phi_{\varphi}\left(\dfrac{1}{(z-z_0)^{n}}\right)(w)&=\frac{1}{(n-1)!}\left.\left(\frac{\varphi'(\xi)}{w-\varphi(\xi)}\right)^{(n-1)}\right\vert_{\xi=z_0}\\
\Phi_{\varphi}^{-1}\left(\dfrac{1}{(w-w_0)^{n}}\right)(z)&=\frac{1}{(n-1)!}\left.\left(\frac{\psi'(\xi)}{z-\psi(\xi)}\right)^{(n-1)}\right\vert_{\xi=w_0}
\end{alignedat}
\end{equation}
These admit closed-form representations in terms of the incomplete Bell polynomials $B_{n,k}$:
\begin{equation}
    \Phi_{\varphi}\left(\dfrac{1}{(z-z_0)^{n}}\right)(w)=\sum_{k=1}^{n}\dfrac{(k-1)!}{(n-1)!}\dfrac{B_{n,k}(\varphi'(z_0),\hdots,\varphi^{(n-k+1)}(z_0))}{(w-\varphi(z_0))^{k}}
\end{equation}
The formula is analogous for $\Phi_{\varphi}^{-1}$. The specific formulae for $n=1$ and $n=2$ are provided below.

\begin{equation}\label{eqn:FaberPolyFormulae}
\begin{alignedat}{2}
\Phi_{\varphi}\left(\dfrac{1}{z-z_0}\right)(w)&=\dfrac{\varphi'(z_0)}{w-\varphi(z_0)},\;\;\;&\Phi_{\varphi}\left(\dfrac{1}{(z-z_0)^{2}}\right)(w)&=\dfrac{\varphi''(z_0)}{w-\varphi(z_0)}+\dfrac{\varphi'(z_0)^2}{(w-\varphi(z_0))^2}\\
\Phi_{\varphi}^{-1}\left(\dfrac{1}{w-w_0}\right)(z)&=\dfrac{\psi'(w_0)}{z-\psi(w_0)},\;\;\;&\Phi_{\varphi}^{-1}\left(\dfrac{1}{(w-w_0)^{2}}\right)(z)&=\dfrac{\psi''(w_0)}{z-\psi(w_0)}+\dfrac{\psi'(w_0)^2}{(z-\psi(w_0))^2}\\
& & &\\
F_1(w)&=\dfrac{w}{c}-\dfrac{f_0}{c}, &F_2(w)&=\dfrac{w^2}{c^2}-\dfrac{2f_0}{c^2}w+\dfrac{f_0^2-2cf_1}{c^2}\\
W_1(z)&=cz+f_0, &W_2(z)&=c^2z^2+2cf_0z+f_0^2+2cf_1,
\end{alignedat}
\end{equation}
where the $f_j$ are the coefficients in the Laurent expansion of $\varphi$ at $\infty$, $\varphi(z)=cz+f_0+f_1z^{-1}+...$. These formulae are used repeatedly throughout the paper.

\section{Appendix 2: Proofs}

\subsection{Electrostatic Interpretation of LQDs Proof}\label{subsection:ElectrostaticLQDInterpretation}

\begin{proof}[Proof of Theorem \ref{thm:ElectrostaticLQDInterpretation}]\label{proof:ElectrostaticLQDInterpretation}
We first consider the non-singular case ($0\notin\Omega$):\\
For the forward direction, note that if $\Omega\in\QD_0(h)$ then the quadrature identity implies that for each $w\in\Omega\IntComp$,
\begin{align*}
    C_{\rho_0}^{\Omega}(w)&=\int_{\Omega}\dfrac{|\xi|^{-2}}{w-\xi}dA(\xi)=\oint_{\partial\Omega}\dfrac{h(\xi)}{w-\xi}=h(w).
\end{align*}
For the reverse direction, suppose that $C_{\rho_0}^{\Omega}=h$ on $\Omega\IntComp$. Note that this identity extends continuously to the boundary, as the poles of $h$ are absent in an open neighborhood of $\partial\Omega$. Then, by Lemma \ref{lemma:LOGRenormCauchyTrans}, for each $w\in(\D_{r})\IntComp\supseteq\partial\Omega$,
\begin{align*}
    C_{\rho_0}^{\Omega}(w)&=C_{\rho_0}^{\C\setminus\D_r}(w)-C_{\rho_0}^{\Omega\IntComp\setminus\D_r}(w)\\
    &=\dfrac{\ln|w|^2-\ln|r|^2}{w}-C_{\rho_0}^{\Omega\IntComp\setminus\D_r}(w).
\end{align*}
Substituting our formula $C_{\rho_0}^{\Omega}(w)=h(w)$, we find that for all $w\in\partial\Omega$,
\begin{align*}
    \dfrac{\ln|w|^2}{w}&=h(w)+\dfrac{\ln|r|^2}{w}+C_{\rho_0}^{\Omega\IntComp\setminus\D_r}(w).
\end{align*}
Hence, if $f\in L_a^1(\Omega;\rho_0)$ then Green's theorem implies
\begin{align*}
    \int_{\Omega}\dfrac{f(w)}{|w|^2}dA(w)&=\oint_{\partial\Omega}f(w)\dfrac{\ln|w|^2}{w}dw\\
    &=\oint_{\partial\Omega}f(w)\left(h(w)+\dfrac{\ln|r|^2}{w}+C_{\rho_0}^{\Omega\IntComp\setminus\D_r}(w)\right)dw\\
    &=\oint_{\partial\Omega}f(w)h(w)dw,
\end{align*}
where the last step follows from the fact that $f(w)C_{\rho_0}^{\Omega\IntComp\setminus\D_r}(w)$ and $f(w)w^{-1}$ are analytic in $\Omega$. Hence, $\Omega\in\QD_0(h)$.\\\\
We now consider the singular case ($0\in\Omega$):\\
For the forward direction, suppose that $\Omega\in\QD_0(h;q)$. By Lemma \ref{lemma:LOGRenormCauchyTrans}
\begin{align*}
    C_{\rho_0}^{\Omega\setminus\D_r}(w)&=C_{\rho_0}^{\C\setminus\D_r}(w)-C_{\rho_0}^{\Omega\IntComp}(w)=\dfrac{\ln|w|^2-\ln|r|^2}{w}-C_{\rho_0}^{\Omega\IntComp}(w).
\end{align*}
By Theorem \ref{thm:LQDCEZero}, for each $w\in\partial\Omega$,
\begin{align*}
    C_{\rho_0}^{\Omega\setminus\D_r}(w)&=\dfrac{\ln|w|^2-\ln|r|^2}{w}-\left(\dfrac{\ln|w|^2}{w}-h(w)-\frac{q}{w}\right)\\
    &=h(w)+\dfrac{q-\ln|r|^2}{w}.
\end{align*}
For the reverse direction, suppose that $C_{\rho_0}^{\Omega\setminus\D_r}(w)=h(w)+\dfrac{q-\ln|r|^2}{w}$ for all $w\in\Omega\IntComp$. Again by Lemma \ref{lemma:LOGRenormCauchyTrans}, for each $w\in(\D_{r})\IntComp\supseteq\partial\Omega$,
\begin{align*}
    C_{\rho_0}^{\Omega\setminus\D_r}(w)&=C_{\rho_0}^{\C\setminus\D_r}(w)-C_{\rho_0}^{\Omega\IntComp}(w)=\dfrac{\ln|w|^2-\ln|r|^2}{w}-C_{\rho_0}^{\Omega\IntComp}(w).
\end{align*}
Hence, $\frac{\ln|w|^2}{w}\dEquals h(w)+\dfrac{q}{w}+C_{\rho_0}^{\Omega\IntComp}(w)$. If $f\in L_a^1(\Omega;\rho_0)$ then Green's theorem implies
\begin{align*}
    \int_{\Omega}\dfrac{f(w)}{|w|^2}dA(w)&=\oint_{\partial\Omega}f(w)\dfrac{\ln|w|^2}{w}dw\\
    &=\oint_{\partial\Omega}f(w)\left(h(w)+\dfrac{q}{w}+C_{\rho_0}^{\Omega\IntComp}(w)\right)dw\\
    &=\oint_{\partial\Omega}f(w)h(w)dw,
\end{align*}
where the last step follows from the fact that  $f(w)C_{\rho_0}^{\Omega\IntComp}(w)$ and $f(w)w^{-1}$ are analytic in $\Omega$. Hence, $\Omega\in\QD_0(h)$.
\end{proof}

\subsection{Inner/Outer Factorization Formula}
\begin{proof}[Proof of Theorem \ref{thm:RiemannMapInnerOuterRepresentation}]\label{proof:RiemannMapInnerOuterRepresentation}\;\\
We will begin by considering the {\bf bounded} case: Let $\varphi:\D\rightarrow\Omega$ be the Riemann map associated to a simply connected bounded domain $\Omega$ with piecewise $C^1$ boundary such that $0\notin\partial\Omega$. Then $\varphi\in H^\infty(\D)\cap C^{0}(\Cl(\D))$ by Caratheodory's theorem, and has no roots on $\partial\D$. Hence, by Lemma \ref{lemma:InnerOuterFactorization}, we can write $\varphi=\varphi_{\rm in}\varphi_{\rm out}$, where $\varphi_{\rm in}$ is a Blaschke product consisting of the zeros of $\varphi$ and $\varphi_{\rm out}$ is an outer function given by
\begin{align*}
    \varphi_{\rm out}(z)&=\exp\left(\dfrac{1}{2}\oint_{\partial\D}\dfrac{\xi+z}{\xi-z}\dfrac{\ln|\varphi(\xi)|^2}{\xi}d\xi\right).
\end{align*}
Reflecting about the unit circle, we obtain for each $z\in\D\IntComp$
\begin{align*}
    \varphi_{\rm out}^{\#}(z)&=\exp\left(-\dfrac{1}{2}\oint_{\partial\D}\dfrac{\xi^{-1}+z^{-1}}{\xi^{-1}-z^{-1}}\dfrac{\ln|\varphi(\xi)|^2}{\xi^{-1}}\left(-\dfrac{d\xi}{\xi^2}\right)\right)=\exp\left(\oint_{\partial\D\IntComp}\dfrac{\ln|\varphi(\xi)|^2}{\xi-z}d\xi-\dfrac{1}{2}\oint_{\partial\D\IntComp}\dfrac{\ln|\varphi(\xi)|^2}{\xi}d\xi\right)
\end{align*}
The second integral is simply a constant depending on $\varphi$, and we recognize the first integral as the Cauchy projection onto $\D\IntComp$. Hence, we obtain $\varphi_{\rm out}^{\#}(z)=C\exp\left(\AnalyticIn{\ln|\varphi(z)|^2}{\D\IntComp}\right)$. As $\partial\Omega$ is piecewise $C^1$ and $\ln|w|^2\in C^0(\partial\Omega)$, Equation \ref{eqn:InteriorInverseFaberTransformProjectionExtension} tells us that
$$\varphi_{\rm out}(z)=\overline{C}\exp\left(\Phi_{\varphi}^{-1}\left(\AnalyticIn{\ln|w|^2}{\Omega\IntComp}\right)^{\#}(z)\right).$$
Moreover, by construction, if $0\notin\Omega$, then $\varphi(z)\neq0$, so $\varphi_{\rm in}=1$. On the other hand, if $0\in\Omega$, then $\varphi$ has a unique zero $z_0\in\D$, in which case the Blaschke product collapses to a single factor $\varphi_{\rm in}=b_{z_0}$.

We will now consider the {\bf unbounded} case: Suppose $\varphi:\D\IntComp\rightarrow\Omega$ is the Riemann map associated to a simply connected unbounded domain $\Omega$ with piecewise $C^1$ boundary such that $0\notin\partial\Omega$, and set $f(z):=\frac{\varphi(z)}{z}$. Then $f\in H^{\infty}(\D\IntComp)\cap C^0(\Cl(\D\IntComp))$ by Caratheodory's theorem, and has no roots in $\partial\D\IntComp$. Hence, by Lemma \ref{lemma:InnerOuterFactorization}, we can write $f=f_{\rm in}f_{\rm out}$, where $f_{\rm in}$ is a Blaschke product consisting of the zeros of $\varphi$ and $f_{\rm out}$ is an outer function given by
\begin{align*}
    f_{\rm out}(z)&=\exp\left(\dfrac{1}{2}\oint_{\partial\D\IntComp}\dfrac{\xi+z}{\xi-z}\dfrac{\ln\left|\frac{\varphi(\xi)}{\xi}\right|^2}{\xi}d\xi\right)=\exp\left(\dfrac{1}{2}\oint_{\partial\D\IntComp}\dfrac{\xi+z}{\xi-z}\dfrac{\ln\left|\varphi(\xi)\right|^2}{\xi}d\xi\right)
\end{align*}
Reflecting about the unit circle, and taking $z\in\D$, we obtain
\begin{align*}
    f_{\rm out}^{\#}(z)&=\exp\left(-\dfrac{1}{2}\oint_{\partial\D\IntComp}\dfrac{\xi^{-1}+z^{-1}}{\xi^{-1}-z^{-1}}\dfrac{\ln\left|\varphi(\xi)\right|^2}{\xi^{-1}}\left(-\dfrac{d\xi}{\xi^2}\right)\right)=\exp\left(\oint_{\partial\D}\dfrac{\ln|\varphi(\xi)|^2}{\xi-z}d\xi-\dfrac{1}{2}\oint_{\partial\D}\dfrac{\ln|\varphi(\xi)|^2}{\xi}d\xi\right)
\end{align*}
The second integral is simply a constant depending on $\varphi$, and we recognize the first integral as the Cauchy projection onto $\D$. Hence, we obtain $f_{\rm out}^{\#}(z)=C\exp\left(\AnalyticIn{\ln|\varphi(z)|^2}{\D}\right)$. As $\ln|w|^2\in C^0(\partial\Omega)$, Equation \ref{eqn:ExteriorInverseFaberTransformProjectionExtension} tells us that
\begin{align*}
    f_{\rm out}(z)&=\overline{C}\exp\left(\Phi_{\varphi}^{-1}\left(\AnalyticIn{\ln|w|^2}{\Omega\IntComp}\right)^{\#}(z)\right).
\end{align*}
By construction, if $0\notin\Omega$, then $f(z)\neq0$, so $f_{\rm in}=1$. On the other hand, if $0\in\Omega$, then $f$ has a unique zero $z_0\in\D$, in which case the Blaschke product collapses to a single factor $f_{\rm in}=b_{z_0}$.
\end{proof}

\subsection{Symmetry of the Domain Implies Symmetry of the Quadrature Function}
\begin{proof}[Proof of Lemma \ref{lemma:LQDDomainSymmetryToQuadSymmetry}]\label{proof:LQDDomainSymmetryToQuadSymmetry}
We will proceed with the proof of the lemma only in the singular case; however, the non-singular argument is entirely analogous.
Suppose $0\in\Omega$ is a $k-$fold rotationally symmetric disjoint union of LQDs, $\Omega=\bigsqcup_{l}\Omega_l$, $\Omega_l\in\QD_0(h_l)$ and we set $h=\sum_lh_l$. Also suppose $\Omega_0$ is the connected component of $\Omega$ containing zero by Theorem \ref{thm:ElectrostaticLQDInterpretation}, for each $w\in\Omega\IntComp$,
\begin{align*}
    h_0(w)&=C_{\rho_0}^{\Omega_0\setminus\D_r}(w)-\dfrac{q-\ln|r|^2}{w}=\int_{\Omega_0\setminus\D_r}\dfrac{|\xi|^{-2}}{\xi-w}dA(w)-\dfrac{q-\ln|r|^2}{w},\\
    h_{l>0}(w)&=C_{\rho_0}^{\Omega_l}(w)=\int_{\Omega_l}\dfrac{|\xi|^{-2}}{\xi-w}dA(w).
\end{align*}
Summing these expressions over $l$, we obtain
\begin{align*}
    h(w)&=C_{\rho_0}^{\Omega\setminus\D_r}(w)-\dfrac{q-\ln|r|^2}{w}=\int_{\Omega\setminus\D_r}\dfrac{|\xi|^{-2}}{\xi-w}dA(w)-\dfrac{q-\ln|r|^2}{w}.
\end{align*}
Hence, applying the symmetry of $\Omega$, we find that for each $j\in\Z$ and $w\in\Omega\IntComp$
\begin{align*}
    e^{2\pi i\frac{j}{k}}h(e^{2\pi i\frac{j}{k}}w)&=e^{2\pi i\frac{j}{k}}\int_{\Omega\setminus\D_r}\dfrac{|\xi|^{-2}}{\xi-e^{2\pi i\frac{j}{k}}w}dA(w)-e^{2\pi i\frac{j}{k}}\dfrac{q-\ln|r|^2}{e^{2\pi i\frac{j}{k}}w}\\
    &=\int_{\Omega\setminus\D_r}\dfrac{|e^{2\pi i\frac{j}{k}}\xi|^{-2}}{e^{-2\pi i\frac{j}{k}}\xi-w}dA(w)-\dfrac{q-\ln|r|^2}{w}\\
    &=\int_{\Omega\setminus\D_r}\dfrac{|\xi|^{-2}}{\xi-w}dA(w)-\dfrac{q-\ln|r|^2}{w}=h(w).
\end{align*}
By the identity theorem, we conclude that $e^{2\pi i\frac{j}{k}}h(e^{2\pi i\frac{j}{k}}w)=h(w)$ for all $j\in\Z$, and $w\in\C$. Now consider the function $f(w)=w^{-(k-1)}h(w)$, so that $f(e^{2\pi i\frac{j}{k}}w)=f(w)$.

If we write $f(w)=\frac{P_0(w)}{P_1(w)}$ for coprime polynomials $P_0$ and $P_1$, then we find that for each root $z_0$ of $P_0$ or $P_1$, every element of its orbit, $\{e^{2\pi i\frac{j}{k}}z_0\}$, is also a root, and of the same multiplicity. In particular if $z_0$ is a root of multiplicity $n$ of $P_i$, then
$$P_{i}(w)=(w-z_0)^n(w-e^{2\pi i\frac{1}{k}}z_0)^n\hdots(w-e^{2\pi i\frac{k-1}{k}}z_0)^n\widetilde{P_i}(w)=(w^k-z_0^k)^n\widetilde{P_i}(w),$$
where $\widetilde{P_i}$ has no roots on the orbit of $z_0$. Iterating this procedure, we find that we can write $P_0$ and $P_1$ each as a product of factors of the form $(w^k-a^k)$. In particular, there exists a rational function $g$ such that $f(w)=g(w^k)$. Hence, $h(w)=w^{k-1}g(w^k)$ for some rational function $g$.
\end{proof}

\end{document}